\newtheorem{theorem}{Theorem}[section]
\newtheorem{theoremx}{Theorem}[section]
\newtheorem{lemma}[theorem]{Lemma}
\newtheorem{proposition}[theorem]{Proposition}
\newtheorem{corollary}[theorem]{Corollary}
\theoremstyle{definition}
\newtheorem{definition}[theorem]{Definition}
\newtheorem{example}[theorem]{Example}
\newtheorem{remark}[theorem]{Remark}
\theoremstyle{remark}
\numberwithin{equation}{section}
\DeclareMathOperator{\emb}{emb}
\DeclareMathOperator{\Id}{Id}
\DeclareMathOperator{\Span}{Span}
\DeclareMathOperator{\Mat}{Mat}
\DeclareMathOperator{\Cstar}{C^*}
\DeclareMathOperator{\Nor}{Nor}
\DeclareMathOperator{\qNor}{qNor}
\newcommand{\compact}{\mathbb{K}}
\newcommand{\bound}{\mathbb{B}}
\newcommand{\CC}{\mathbb{C}}
\newcommand{\EE}{\mathbb{E}}
\newcommand{\FF}{\mathbb{F}}
\newcommand{\KK}{\mathbb{K}}
\newcommand{\NN}{\mathbb{N}}
\newcommand{\ZZ}{\mathbb{Z}}
\newcommand{\calC}{\mathcal{C}}
\newcommand{\calH}{\mathcal{H}}
\newcommand{\calJ}{\mathcal{J}}
\newcommand{\calK}{\mathcal{K}}
\newcommand{\calL}{\mathcal{L}}
\newcommand{\calQ}{\mathcal{Q}}
\newcommand{\calS}{\mathcal{S}}
\newcommand{\calU}{\mathcal{U}}
\newcommand{\calV}{\mathcal{V}}
\newcommand{\calW}{\mathcal{W}}
\newcommand{\calZ}{\mathcal{Z}}
\newcommand{\boldA}{\mathbf{A}}
\newcommand{\boldM}{\mathbf{M}}
\newcommand{\uu}{\mathbf{u}}
\newcommand{\vv}{\mathbf{v}}
\newcommand{\ww}{\mathbf{w}}
\newcommand{\zz}{\mathbf{z}}
\let\epsilon\varepsilon
\DeclareMathOperator{\op}{op}
\DeclareMathOperator{\Ad}{Ad}
\DeclareMathOperator{\Dist}{Dist}
\DeclareMathOperator{\Radius}{Radius}
\newcommand{\Kompact}{\mathbb{K}}
\newcommand{\astred}{*^{\min}}
\newcommand{\Conv}{{\rm Conv}}
\newcommand{\Link}{\textrm{Link}}
\newcommand{\Star}{\textrm{Star}}
\newcommand{\Rigid}{\textrm{Rigid}}
\newcommand{\Crigid}{\mathcal{C}_{{\rm Rigid}}}
\newcommand{\Ccomplete}{\mathcal{C}_{{\rm Complete}}}
\newcommand{\Cvert}{\mathcal{C}_{{\rm Vertex}}}
\newcommand{\Cao}{\mathcal{C}_{{\rm (AO)}}}
\newcommand{\Cantifree}{\mathcal{C}_{{\rm anti-free}}}
\title[Rigid graph products]{Rigid graph products}
\date{\noindent \today.  MSC2010 keywords:  46L51, 46L54.  MC is  supported by the NWO Vidi grant VI.Vidi.192.018 `Non-commutative harmonic analysis and rigidity of operator algebras'.}
\author[Matthijs Borst, Martijn Caspers, Enli Chen]{Matthijs Borst, Martijn Caspers, Enli Chen}
\address{TU Delft, EWI/DIAM,
	P.O.Box 5031,
	2600 GA Delft,
	The Netherlands}
\email{M.J.Borst@tudelft.nl}
\email{M.P.T.Caspers@tudelft.nl}
\email{E.Chen-1@tudelft.nl}
\begin{document}

\begin{abstract}
 We prove rigidity properties for von Neumann algebraic graph products.
 We introduce the notion of  rigid graphs and define a class of II$_1$-factors named $\calC_{\Rigid}$. For von Neumann algebras in this class we show a unique rigid graph product decomposition. In particular, we obtain unique prime factorization results and unique free product decomposition results for new classes of von Neumann algebras.
 Furthermore, we show that for many graph products of II$_1$-factors, including the hyperfinite II$_1$-factor,  we can, up to a constant 2, retrieve the radius of the graph from the graph product.
  We also prove several technical results concerning relative amenability and embeddings of (quasi)-normalizers   in graph products.
 Furthermore, we give sufficient conditions for a graph product to be nuclear and characterize   strong solidity, primeness and free-indecomposability for graph products. 
\end{abstract}

\maketitle

\section{Introduction}
The advent of Popa's deformation-rigidity theory has led to major applications to the structure of von Neumann algebras and their decomposability properties for crossed products, tensor products and free products.  For instance, in \cite{ozawaClassIIFactors2010c} Ozawa and Popa studied the notion of strongly solid von Neumann algebras (see  \cref{Dfn=StronglySolid}) and proved that the free group factors possess this property. Consequently, these von Neumann algebras do not admit certain crossed product decompositions, and they are prime factors (see \cref{Dfn=Prime}),   meaning that they can not decompose as tensor products in non-trivial way (see also \cite{ozawaSolidNeumannAlgebras2004}, \cite{popaOrthogonalPairsSubalgebras1983}).    More general prime factorization results were then obtained in e.g. \cite{ChifanKidaEtAl,  ChifanSinclairUdrea,houdayerUniquePrimeFactorization2017, isonoPrimeFactorizationResults2017,OzawaKurosh,   PetersonInventiones, SakoPrime, SizemoreWinchester,   }. In the same spirit, decompositions of von Neumann algebras in terms of free products and Kurosh type results were studied in e.g. \cite{houdayerRigidityFreeProduct2016,ioanaAmalgamatedFreeProducts2008,OzawaKurosh,    PetersonInventiones,  }.\\ 

This paper contributes to decomposability and rigidity results for von Neumann algebras that appear as graph products. The von Neumann algebraic graph product was  introduced in \cite{caspersGraphProductsOperator2017a},\cite{mlotkowskiLfreeProbability2004}. Given a simple graph $\Gamma$ and for each vertex $v \in \Gamma$ a von Neumann algebra $M_v$ with a normal faithful state $\varphi_v$, one can construct the graph product von Neumann algebra $M_{\Gamma} := *_{v,\Gamma}(M_v,\varphi_v)$, see \cref{prelim:graph-products} for the exact definition. The construction of graph products naturally generalizes the notion of free products and tensor products. Indeed, if $\Gamma$ has no edges then $M_{\Gamma}$ is simply the von Neumann algebraic free product $*_{v \in \Gamma}(M_v,\varphi_v)$, while if $\Gamma$ is a complete graph then $M_{\Gamma}$ is the tensor product $\overline{\bigotimes}_{v\in \Gamma}M_{v}$. The construction of graph products was originally introduced by Green in \cite{greenGraphProductsGroups1990} for the setting of groups. For groups, the graph product $G_{\Gamma} = *_{v,\Gamma}G_v$ generalizes both free products and direct sums. The two notions of graph products naturally correspond since for group von Neumann algebras we have $\calL(*_{v,\Gamma}G_v) = *_{v,\Gamma}\calL(G_v)$.\\ 

We will prove rigidity results for graph products of von Neumann algebras. We first discuss our main result \cref{intro:thm:rigid-graph-decomposition} which establishes unique rigid graph product decompositions. Thereafter, we give new unique prime factorization results and unique free product decomposion results.
Furthermore, we state results that characterize primeness, free indecomposability and strong solidity for graph products. Hereafter, we present other main results that are needed in the proofs. Last, we give an overview of the structure of the paper.

\subsection{Unique rigid graph product decomposition} \label{intro:sub:unique-rigid-graph-factorizaton}
 Our main result, \cref{intro:thm:rigid-graph-decomposition}, concerns the question whether from the graph product $*_{v,\Gamma}(M_v,\varphi_v)$ we can, under some conditions, retrieve the graph $\Gamma$ and the vertex von Neumann algebras $M_v$. Such questions have already been studied for graph products of groups.
In \cite[Theorem 4.12]{greenGraphProductsGroups1990}  Green showed the following rigidity result, which for graph products $*_{v,\Gamma}G_v$ of prime cycles $G_v$ asserts that the graph $\Gamma$ and the vertex groups $G_v$ can be retrieved from the graph product group.

\vspace{0.3cm}

 \noindent {\bf Theorem (Green).}  {\it Let $\Gamma,\Lambda$ be finite graphs, $G_{\Gamma} := *_{v,\Gamma}G_v$ and $H_{\Lambda} := *_{w,\Lambda}H_w$ be graph products of groups $G_v := \ZZ/p_{v}\ZZ$ and $H_w := \ZZ/q_w\ZZ$ with some prime numbers $p_v,q_w$. If  $G_{\Gamma}$ and $H_{\Lambda}$ are isomorphic, then there is a graph isomorphism $\alpha:\Gamma\to \Lambda$ such that $H_{\alpha(v)} \simeq G_v$. }

\vspace{0.3cm}

 In the current paper we prove an analogy of this result for graph products $M_{\Gamma} = *_{v,\Gamma}(M_v,\tau_v)$ of tracial von Neumann algebras $(M_v,\tau_v)$. Earlier rigidity results for von Neumann algebraic graph products have already been proven in  \cite[Theorem A and C]{chifanRigidityNeumannAlgebras2022}  for group von Neumann algebras $M_v :=\calL(G_v)$ for certain discrete property (T) groups $G_v$ and for graphs $\Gamma$ from a class called CC$_1$. In our main result, \cref{intro:thm:rigid-graph-decomposition}, we also prove rigidity results for graph products of von Neumann algebras $M_{\Gamma} = *_{v,\Gamma}(M_v,\tau_v)$. Our result compares to  \cite{chifanRigidityNeumannAlgebras2022, ChifanDaviesDrimbeII} as follows. On the one hand we cover a much richer class of graphs than CC$_1$ and our vertex von Neumann algebras $M_v$ come from a different class than \cite{chifanRigidityNeumannAlgebras2022, ChifanDaviesDrimbeII}. In our paper $M_v$ are not even necessarily group von Neumann algebras. On the other hand the type of rigidity obtained in  \cite{chifanRigidityNeumannAlgebras2022, ChifanDaviesDrimbeII}  is stronger as it recovers the groups up to isomorphism, and not just the von Neumann algebras. Furthermore, \cite{chifanRigidityNeumannAlgebras2022, ChifanDaviesDrimbeII}  obtains a so-called superrigidity result, meaning that the group can be recovered from an isomorphism of $\mathcal{L}(G)$ with any other group von Neumann algebra, whereas our rigidity results are usually for an isomorphism of two von Neumann algebras in the class $\Crigid$ introduced below.  Such a superrigidity result is simply not true in the context of the current paper as we argue in \cref{remark:need-for-rigid-graphs}.\\

  The condition we impose on the vertex von Neumann algebras $M_v$ is that they lie in the class $\Cvert$ of all non-amenable II$_1$-factors that satisfy property strong (AO) (see \cref{def:class-vertex-class-rigid}) and have separable preduals. This is a natural class of von Neumann algebras including the (interpolated) free group factors $\calL(\FF_t)$ for $1<t < \infty$, the group von Neumann algebras $\calL(G)$ of non-amenable hyperbolic icc groups $G$ \cite{HigsonGuentner}, $q$-Gaussian von Neumann algebras $M_q(H_{\mathbb{R}})$ associated with real Hilbert spaces $H_{\mathbb{R}}$ with $2 \leq \dim(H_{\mathbb{R}}) < \infty$ \cite[Remark 4.5]{borstIsomorphismClassGaussian2023}, \cite{Kuzmin}, free orthogonal quantum groups \cite{VaesVergnioux} as well as several common series of easy  quantum groups and free wreath products of quantum groups \cite[Theorem 0.5]{CaspersJussieu}.  

   \vspace{0.3cm}

  The condition we impose on the graph $\Gamma$ is that each vertex $v$ in $\Gamma$ satisfies $\Link(\Link(v)) = \{v\}$ (for the definition of $\Link$ see \cref{prelim:simple-graph}). Such graphs, which we call \textit{rigid}, form a large natural class of graphs containing for example complete graphs and cyclic graphs with at least 5 vertices. We also observe that all graphs in CC$_1$ are rigid (see \cref{remark:class-CC1}). We stress that some restrictions on the graphs need to be imposed. Indeed, for general graphs $\Gamma$, and  graph products
  $M_{\Gamma}= *_{v,\Gamma}(M_v,\tau_v)$ with $M_v\in \Cvert$, it is not possible to retrieve the graphs $\Gamma$ from $M_{\Gamma}$ (see \cref{remark:need-for-rigid-graphs}). This is due to the fact that the free product $(M_v,\tau_v) *(M_w,\tau_w)$ of factors $M_v,M_w\in \Cvert$ again lies again in the class $\Cvert$ (see \cref{remark:classes-graph-products}).\\
    
We now state our main result which shows rigidity for the class $\Crigid$ of all graph products $M_{\Gamma} = *_{v,\Gamma}(M_v,\tau_v)$ with $\Gamma$ non-empty, rigid graphs and with $M_v\in \Cvert$. 

\begin{theoremx}[\cref{thm:rigid-graph-decomposition} and \cref{thm:rigid-graph-decomposition-irreducible-components}]
\label{intro:thm:rigid-graph-decomposition}
 Let $\Gamma$ be a rigid graph and for $v \in \Gamma$ let $M_v$ be von Neumann algebras in class $\Cvert$ with faithful normal state $\tau_v$.
Let $M_\Gamma = \ast_{v, \Gamma} (M_{v},\tau_v)$ be their graph product. Suppose there is another graph product decomposition of $M_{\Gamma}$ over another rigid graph $\Lambda$ and other von Neumann algebras $N_w\in\Cvert$, $w\in\Lambda$, i.e. $M_{\Gamma}=*_{w,\Lambda}(N_w,\tau_w)$.
Then there is a graph isomorphism $\alpha: \Gamma \rightarrow \Lambda$, and for each $v\in \Gamma$ there is a unitary $u_v\in M_{\Gamma}$ and a real number $0<t_v<\infty$ such that:
\begin{align}
    M_{\Star(v)}= u_v^*N_{\Star(\alpha(v))}u_v& &and&  
 &M_v \simeq N_{\alpha(v)}^{t_v}.
\end{align}
Furthermore, for the connected component $\Gamma_v\subseteq \Gamma$ of any vertex $v\in \Gamma$, we have $M_{\Gamma_v} = u_v^*N_{\alpha(\Gamma_v)}u_v$; and for any irreducible component $\Gamma_0\subseteq \Gamma$, $\exists t_0\in(0,\infty)$ such that $M_{\Gamma_0} \simeq N_{\alpha(\Gamma_0)}^{t_0}$.
\end{theoremx}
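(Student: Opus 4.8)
The plan is to reduce the rigidity statement to a statement about maximal cliques and their stars, following the strategy that has proven successful for graph products of groups (Green, Radcliffe) but now carried out at the von Neumann algebra level using the deformation-rigidity toolbox. Write $M = M_\Gamma = \ast_{v,\Gamma}(M_v,\tau_v) = \ast_{w,\Lambda}(N_w,\tau_w)$. The first step is to identify the building blocks intrinsically: for each $v \in \Gamma$ the subalgebra $M_{\Star(v)}$ is a crossed-product-like corner sitting inside $M$, and $M_v$ itself is (up to amplification) the ``irreducible non-amenable piece'' of $M_{\Star(v)}$ complementary to the tensor factor $M_{\Link(v)}$. The key is that, because each $M_v \in \Cvert$ is a non-amenable II$_1$-factor with strong (AO), the subalgebras $M_v$ and $N_w$ are rigid in a strong sense (primeness / strong solidity of the relevant corners), so that any inclusion of one into the other, or of one into a graph-product subalgebra, must respect the combinatorial structure. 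I would invoke the earlier technical results of the paper on relative amenability and on embeddings of (quasi-)normalizers in graph products (advertised in the abstract and stated before this theorem) as black boxes: these say precisely that a subalgebra of $M$ isomorphic to something in $\Cvert$ must, up to unitary conjugacy, be supported on a star $\Star(v)$, and that its quasi-normalizer generates $M_{\Star(v)}$.

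Granting this, the second step is purely combinatorial bookkeeping combined with a $K$-theoretic / amplification argument to pin down the amplification constants $t_v$. For each vertex $v \in \Gamma$, the subalgebra $M_v \subseteq M$ must, by the embedding theorems, be conjugate by a unitary $u_v$ into some $N_{\Star(w)}$, and in fact into the ``vertex piece'' $N_w^{t_v}$ of it; symmetrically each $N_w$ goes into some $M_{\Star(v')}$. A standard back-and-forth / minimality argument — using that the vertex algebras are \emph{atoms} for this process, i.e.\ they cannot be decomposed further as graph products inside $\Cvert$ in a nontrivial way (this is where non-amenability of the $M_v$ and the fact that a free product of two $\Cvert$-factors is again in $\Cvert$ must be used carefully, cf.\ \cref{remark:classes-graph-products}) — produces a bijection $\alpha : \Gamma \to \Lambda$ with $u_v^* N_{\Star(\alpha(v))} u_v \supseteq M_v$ and, by comparing commutants inside $M$, in fact $M_{\Star(v)} = u_v^* N_{\Star(\alpha(v))} u_v$. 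Since $\Gamma$ is rigid, $\Star(v)$ determines $v$ (this is exactly the condition $\Link(\Link(v)) = \{v\}$: the star of $v$ is the unique maximal ``star-shaped'' set through $v$, so the map $v \mapsto \Star(v)$ is injective and its image is recoverable), hence $\alpha$ is well-defined and injective; applying the same to $\alpha^{-1}$ shows $\alpha$ is a bijection. That $\alpha$ is a graph isomorphism follows because $v,v'$ are adjacent in $\Gamma$ iff $M_{\Star(v)}$ and $M_{\Star(v')}$ ``commute up to the shared tensor factor'' in a way detectable from the inclusions, equivalently iff $\Star(v) \cap \Star(v')$ is again a star — again using rigidity to translate this back to adjacency in $\Lambda$. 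The amplification isomorphism $M_v \simeq N_{\alpha(v)}^{t_v}$ then drops out of $u_v^* N_{\Star(\alpha(v))} u_v = M_{\Star(v)}$ by taking the appropriate relative commutant (the $\Link$ part) on both sides and using that $M_{\Star(v)} = M_v \,\overline\otimes\, M_{\Link(v)}$ with $M_{\Link(v)}$ the common tensor factor; a cut-down by a projection accounts for $t_v$.

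For the last two assertions: once $\alpha$ is a graph isomorphism with $M_{\Star(v)} = u_v^* N_{\Star(\alpha(v))} u_v$ for all $v$, one passes to connected components by noting that $M_{\Gamma_v}$ is generated by $\{M_{\Star(v')} : v' \in \Gamma_v\}$ and that along a path in $\Gamma_v$ the conjugating unitaries can be matched up (adjacent stars share a vertex, so the corresponding corners overlap in a factor, forcing $u_v$ and $u_{v'}$ to differ by a unitary in that overlap; chaining along the connected component gives a single $u_v$ that works for the whole component), yielding $M_{\Gamma_v} = u_v^* N_{\alpha(\Gamma_v)} u_v$. For an irreducible component $\Gamma_0$ (a clique that is ``maximal'' in the relevant decomposition sense, so that $M_{\Gamma_0}$ is a factor with no further tensor splitting respected by the $\Cvert$ pieces), one cannot in general match the unitaries globally, but one can still amplify: $M_{\Gamma_0} = \overline\otimes_{v \in \Gamma_0} M_v$ and $N_{\alpha(\Gamma_0)} = \overline\otimes_{v \in \Gamma_0} N_{\alpha(v)}$, and $\prod_v t_v =: t_0$ gives $M_{\Gamma_0} \simeq N_{\alpha(\Gamma_0)}^{t_0}$ directly from the vertexwise amplification isomorphisms.

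The main obstacle I anticipate is the rigidity of the \emph{inclusions} — Step one: showing that a copy of a $\Cvert$-algebra inside the graph product $M$ must live, up to conjugacy, on a single star, and that its quasi-normalizer fills out $M_{\Star(v)}$ rather than something larger or smaller. This is where Popa's intertwining-by-bimodules, control of relative amenability in graph products, and the strong (AO) / strong solidity properties of the vertex algebras all have to be combined; everything after that is combinatorics and amplification bookkeeping. I would therefore lean heavily on the technical embedding and normalizer theorems stated earlier in the paper, treating them as the crux, and spend the bulk of the written proof on carefully extracting $\alpha$ and the $t_v$ from them and verifying that $\alpha$ preserves adjacency in both directions.
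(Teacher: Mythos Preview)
Your overall architecture is sound, but there is a concrete misconception that breaks the last part of the argument, and your description of the key black-box differs from what the paper actually proves and uses.

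\textbf{Irreducible components are not cliques.} You write ``for an irreducible component $\Gamma_0$ (a clique that is `maximal' \ldots), $M_{\Gamma_0} = \overline\otimes_{v \in \Gamma_0} M_v$'' and then conclude $M_{\Gamma_0} \simeq N_{\alpha(\Gamma_0)}^{t_0}$ with $t_0 = \prod_v t_v$. This is wrong: an irreducible component is a maximal subgraph that does \emph{not} decompose as a join, so $M_{\Gamma_0}$ is generally a genuine graph product (indeed a prime factor by \cref{thm:primeness-for-graph-products-II1-factors}), not a tensor product of vertex algebras. For example, in the rigid graph $\mathbb{Z}_5$ the whole graph is one irreducible component and it is certainly not complete. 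Consequently there is no way to multiply the $t_v$ together. The paper's actual argument (\cref{thm:rigid-graph-decomposition-irreducible-components}) is entirely different: one applies \cref{Thm=KeyAlternatives} to $M_{\Gamma\setminus\Gamma_0}$ (whose relative commutant $M_{\Gamma_0}$ is non-amenable), runs a minimality argument to show the resulting intertwining target is exactly $N_{\alpha(\Gamma\setminus\Gamma_0)}$, takes commutants to get $N_{\alpha(\Gamma_0)} \prec_{M_\Gamma} M_{\Gamma_0}$, and then invokes \cite[Lemma~4.13]{houdayerUniquePrimeFactorization2017} together with primeness of $N_{\alpha(\Gamma_0)}$ to extract the amplification.

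\textbf{The mechanism for producing $\alpha$.} You assert that the earlier technical results say ``a subalgebra of $M$ isomorphic to something in $\Cvert$ must, up to unitary conjugacy, be supported on a star.'' That is not quite what \cref{Thm=KeyAlternatives} says, and the paper does not start from $M_v$. Instead, the paper applies \cref{Thm=KeyAlternatives} to $Q = M_{\Link(v)}$: rigidity of $\Gamma$ gives $\Link(\Link(v)) = \{v\}$, hence $Q' \cap M_\Gamma = M_v$ is non-amenable, so $M_{\Link(v)} \prec_{M_\Gamma} N_{\Lambda_0}$ for some $\Lambda_0$ with $\Link_\Lambda(\Lambda_0) \neq \emptyset$. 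Taking relative commutants (\cref{Lem=RelativeCom}) gives $N_{\Link(\Lambda_0)} \prec_{M_\Gamma} M_v$. A back-and-forth with the symmetric map $\beta$, together with transitivity of stable intertwining and \cref{Lem=Selfembedding}, forces $\alpha(v)$ to be a singleton and $\alpha, \beta$ to be mutually inverse bijections. Edge-preservation is then obtained by the same commutant-and-transitivity mechanism: for $v' \in \Link(v)$ one chains $N_{\alpha(v')} \prec M_{v'} \prec M_{\Link(v)} \prec N_{\Link(\alpha(v))}$ and applies \cref{Lem=Selfembedding} again. Your proposed adjacency criterion (``$\Star(v) \cap \Star(v')$ is again a star'') is neither the paper's route nor correct in general. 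The unitary conjugacy of stars and the connected-component statement then come from \cref{lemma:vNa-isomorphism-graph-products}, which fixes \emph{one} unitary $u_v$ and iteratively enlarges the subgraph on which equality holds via \cref{prop:embed}, rather than matching different $u_v$'s along a path as you suggest.
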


We remark that in the setting of \cite[Theorem 7.9]{chifanRigidityNeumannAlgebras2022}, it is possible to obtain unitary conjugacy between the vertex von Neumann algebras $M_v = \calL(G_v)$. In our setting it is generally only possible to obtain isomorphisms up to amplification between the vertex von Neumann algebras. The reason is that the tensor product $M_v\overline{\otimes} M_w$ of II$_1$-factors is isomorphic to the tensor product $M_v^t \overline{\otimes} M_w^{1/t}$ for any $0<t<\infty$.  This is however the only obstruction to unitary conjugacy of the vertex von Neumann algebras in Theorem A. Indeed, for certain subgraphs $\Gamma_0\subseteq \Gamma$ we show in Theorem A unitary conjugacy of the graph products $M_{\Gamma_0}$ to $N_{\alpha(\Gamma_0)}$ inside $M_{\Gamma}$; in particular this applies to the  case $\Gamma_0 = \Star(v)$ for some vertex $v$ of $\Gamma$ (for the definition of $\Star$ see \cref{prelim:simple-graph}). We also obtain unitary conjugacy in case $\Gamma_0$ is a connected component of $\Gamma$.   Moreover, for $\Gamma_0$ an irreducible component of $\Gamma$ we are able to show that $M_{\Gamma_0}$ is isomorphic to an amplification of $N_{\alpha(\Gamma_0)}$.  \\

\subsection{Unique prime factorization}\label{intro:sub:unique-prime-factorizaton}
For classes of von Neumann algebras we are interested in unique prime factorization results. Recall that a II$_1$-factor $M$ is prime if it can not decompose as a tensor product $M=M_1\overline{\otimes} M_2$ of diffuse factors $M_1,M_2$. The first example of a prime factor was given by Popa in \cite{popaOrthogonalPairsSubalgebras1983}. Thereafter, Ge showed in \cite{gePrimeFactors1996} that  $\calL(\FF_n)$ is a prime factor for $n\geq 2$ by computing Voiculescu's free entropy. Later, in \cite{ozawaSolidNeumannAlgebras2004} Ozawa introduced a new property, called solidity, which for non-amenable factors implies primeness. He showed that all finite von Neumann algebras satisfying the Akemann-Ostrand property are solid. We note that in particular all von Neummann algebras in $\Cvert$ are prime. There are many more examples of prime factors, see e.g.  \cite{boutonnetStrongSolidityFree2018, chifanTensorProductDecompositions2018,   ChifanKidaEtAl,  ChifanSinclairUdrea, drimbePrimeII1Factors2019, PetersonInventiones, SakoPrime,  SizemoreWinchester,    }.\\

Given a class $\calC$ of von Neumann algebras, a natural question is whether any von Neumann algebra $M\in \calC$ has a tensor product decomposition $M=M_1\overline{\otimes} \cdots \overline{\otimes} M_m$ for some $m\geq 1$ and prime factors $M_1,\ldots, M_m\in \calC$, which is called prime  factorization inside $\calC$, and whether the prime factorization is unique. This is to say, given another prime factorization $M=N_1\overline{\otimes} \cdots \overline{\otimes} N_n$, with $n\geq 1$ and prime factors $N_1,\ldots, N_n\in \calC$, do we have $n=m$ and, up to permutation of the indices, any $M_i$ is isomorphic to an amplification of $N_i$. The first unique prime factorization (UPF) results were established by Ozawa and Popa in \cite{ozawaPrimeFactorizationResults2004} for tensor products of group von Neumann algebras $\calL(G_v)$ for certain groups $G_v$. The groups they considered included non-amenable, icc groups that are hyperbolic or are discrete subgroups of connected simple Lie groups of rank one. Later, in \cite{isonoPrimeFactorizationResults2017} Isono studied UPF results for free quantum group factors. Thereafter, by combining results from \cite{ozawaPrimeFactorizationResults2004} and \cite{isonoPrimeFactorizationResults2017},  Houdayer and Isono showed in \cite{houdayerUniquePrimeFactorization2017}  more general UPF results for tensor products of factors from a class called $\Cao$. We note that our class $\Cvert$ is very similar to $\Cao$ and that $\Cvert\subseteq \Cao$. In the setting of graph products, UPF results have been obtained in \cite[Theorem 6.16]{chifanTensorProductDecompositions2018} under the condition that the vertex von Neumann algebras are group von Neumann algebras. \\

We observe that we can use \cref{intro:thm:rigid-graph-decomposition} to obtain UPF results. Indeed, let $\Ccomplete$ be the class of all tensor products of von Neumann algebras in $\Cvert$. If in \cref{intro:thm:rigid-graph-decomposition} we restrict our attention to complete graphs (which are rigid) then we precisely obtain UPF results for the class $\Ccomplete$ (see \cref{corollary:rigidy-for-complete-graphs}). This partially retrieves the UPF results from \cite{houdayerUniquePrimeFactorization2017}. To obtain more general UPF results we prove the following result which characterizes primeness for graph products of II$_1$-factors (see also \cref{thm:primeness:irreducible-graph,thm:primeness:graph-product} in the case the vertex von Neumann algebras are not II$_1$-factors).
\begin{theoremx}[\cref{thm:primeness-for-graph-products-II1-factors}]\label{intro:thm:primeness-for-graph-products-II1-factors}
    Let $\Gamma$ be a finite simple graph of size $|\Gamma|\geq 2$. For any $v\in \Gamma$, let $M_v$ be a II$_1$-factor. The graph product $M_{\Gamma} = *_{v,\Gamma}(M_v,\tau_v)$ is prime if and only if $\Gamma$ is irreducible.
\end{theoremx}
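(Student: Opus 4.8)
The proof splits into the two implications, which have rather different flavours.

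The implication ``$\Gamma$ reducible $\Rightarrow$ $M_\Gamma$ not prime'' is immediate: if $\Gamma=\Gamma_1\ast\Gamma_2$ is a non-trivial join of full subgraphs (so $\Gamma_1,\Gamma_2$ are non-empty and every vertex of $\Gamma_1$ is adjacent to every vertex of $\Gamma_2$), then by the defining properties of the graph product (\cref{prelim:graph-products}) one has $M_\Gamma=M_{\Gamma_1}\overline{\otimes}M_{\Gamma_2}$, and since each $M_v$ is a diffuse factor, each $M_{\Gamma_i}$ is again a $\mathrm{II}_1$-factor, in particular a diffuse factor; hence $M_\Gamma$ is not prime. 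Equivalently, primeness of $M_\Gamma$ forces $\Gamma$ to be irreducible.

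For the converse I would argue by contraposition: assuming $M_\Gamma=P\overline{\otimes}Q$ for diffuse $\mathrm{II}_1$-factors $P,Q$, the goal is to exhibit $\Gamma$ as a non-trivial join. Here $P=Q'\cap M_\Gamma$, $Q=P'\cap M_\Gamma$ and $P\vee Q=M_\Gamma$; since $Q$ lies in the quasi-normalizer of $P$ this gives $\mathcal{QN}_{M_\Gamma}(P)''=M_\Gamma$, and I will also use the standard fact that $M_\Gamma\not\prec_{M_\Gamma}M_\Lambda$ for any full subgraph $\Lambda\subsetneq\Gamma$ (witnessed by a weakly null net of unitaries in $M_w$ for any $w\notin\Lambda$). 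Since $|\Gamma|\ge 2$ a complete graph is already a non-trivial join, so we may assume $\Gamma$ is not complete; then $\Gamma$ has no universal vertex and $\Star(v)\subsetneq\Gamma$ for all $v$. The first step is to show that $P\prec_{M_\Gamma}M_\Lambda$ for some full subgraph $\Lambda\subsetneq\Gamma$. Graph products satisfy a relative form of bi-exactness with respect to the peripheral subalgebras $\{M_{\Star(v)}\}_{v\in\Gamma}$ (a structural fact recalled in the preliminaries), whose standard consequence for a commuting pair of diffuse subalgebras yields, after interchanging $P$ and $Q$ if needed, that $P$ is amenable relative to $M_{\Star(v)}$ for some $v$, with $\Star(v)\subsetneq\Gamma$ as $v$ is not universal. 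One then feeds this into the amalgamated free product decomposition $M_\Gamma=M_{\Star(v)}\ast_{M_{\Link(v)}}M_{\Gamma\setminus v}$ (valid since $v$ is adjacent to no vertex outside $\Star(v)$) together with the technical results on relative amenability and (quasi-)normalizer embeddings in graph products: by the amalgamated-free-product dichotomy either $P\prec_{M_\Gamma}M_{\Star(v)}$, or $P$ is amenable relative to the amalgam $M_{\Link(v)}$ and one recurses through the strictly smaller graphs (an induction on $|\Gamma|$, with $|\Gamma|=2$ immediate); either way one arrives at $P\prec_{M_\Gamma}M_\Lambda$ for a proper full subgraph $\Lambda$, which we may take minimal, and then $\Lambda\neq\emptyset$ because $P$ is diffuse.

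The second step extracts a join from this intertwining. Applying the quasi-normalizer embedding result to the minimal $\Lambda$ and using $\mathcal{QN}_{M_\Gamma}(P)''=M_\Gamma$ gives $M_\Gamma\prec_{M_\Gamma}M_{\Lambda\cup\Link(\Lambda)}$; by the non-intertwining fact above this forces $\Lambda\cup\Link(\Lambda)=\Gamma$. Since $\Lambda\cap\Link(\Lambda)=\emptyset$, this says precisely that every vertex outside $\Lambda$ is adjacent to every vertex of $\Lambda$, that is, $\Gamma=\Lambda\ast(\Gamma\setminus\Lambda)$; as $\emptyset\neq\Lambda\subsetneq\Gamma$ this is a non-trivial join, contradicting the irreducibility of $\Gamma$ and completing the proof.

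The main obstacle is the first step: upgrading ``$P$ is amenable relative to the peripheral subalgebra $M_{\Star(v)}$'' to an honest intertwining of $P$ into a \emph{proper} full subgraph. The statement there is sharp — for a reducible $\Gamma$ such as a $4$-cycle the minimal $\Lambda$ satisfies $\Lambda\cup\Link(\Lambda)=\Gamma$, so no contradiction is, nor should be, reached — and carrying it out is exactly the role of the paper's relative-amenability and (quasi-)normalizer-embedding machinery for graph products. It relies on the iterated amalgamated-free-product structure of $M_\Gamma$, an induction on $|\Gamma|$ to handle the amalgam $M_{\Link(v)}$ (again a graph product, but over a proper subgraph, and in general not a factor), and the fact that $P$, being a tensor factor, is quasi-regular in $M_\Gamma$; amplifications must be tracked along the way but do not affect the graph-theoretic conclusion.
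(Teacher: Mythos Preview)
Your second step---extracting a join from $P\prec_{M_\Gamma}M_\Lambda$ with $\Lambda$ minimal and proper---is correct and is precisely the contrapositive of \cref{lemma:ICC-group-not-embed-in-smaller-graph-extended}. The gap is in the first step. No ``relative bi-exactness'' of graph products is available for arbitrary II$_1$-factor vertex algebras: the only result of that flavour in the paper, \cref{Thm=KeyAlternatives}, requires each $M_v$ to satisfy strong property (AO), which is not assumed in \cref{thm:primeness-for-graph-products-II1-factors}. Without such an input there is no mechanism to conclude that $P$ is amenable relative to some $M_{\Star(v)}$, so \cref{thm:prelim:alternatives} cannot be applied to $P$ and your recursion never starts. (Even granting the starting point, alternative (iii) of \cref{thm:prelim:alternatives} would yield that $\Nor_{M_\Gamma}(P)''=M_\Gamma$ is amenable relative to $M_{\Link(v)}$, not that $P$ itself is, so the recursion you sketch does not obviously continue either.)

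The paper's argument avoids bi-exactness entirely by running your second step \emph{first}, under the hypothesis that $\Gamma$ is irreducible. Since $\Nor_{M_\Gamma}(P)''=M_\Gamma$ and $M_\Gamma\not\prec_{M_\Gamma}M_{\Gamma_0}$ for any proper $\Gamma_0$ (\cref{Lem=Selfembedding}), irreducibility together with \cref{lemma:ICC-group-not-embed-in-smaller-graph-extended} forces $P\not\prec_{M_\Gamma}M_{\Gamma_0}$ for every proper $\Gamma_0$. Now choose a diffuse \emph{amenable} $A\subseteq Q$; since $P\subseteq\Nor_{M_\Gamma}(A)''$, the same lemma gives $A\not\prec_{M_\Gamma}M_{\Gamma_0}$ for every proper $\Gamma_0$. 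As $A$ is genuinely amenable, \cref{thm:prelim:alternatives} applies to $A$ in each decomposition $M_\Gamma=M_{\Star(v)}\ast_{M_{\Link(v)}}M_{\Gamma\setminus\{v\}}$, and the two non-intertwining facts just established rule out alternatives (i) and (ii), leaving only (iii): $\Nor_{M_\Gamma}(A)''$ is amenable relative to $M_{\Link(v)}$ for \emph{every} $v\in\Gamma$. Iterating \cref{Thm=Square} over all $v$ (using $\bigcap_v\Link(v)=\emptyset$) shows that $\Nor_{M_\Gamma}(A)''$, hence $P$, is amenable; by symmetry so is $Q$, and $M_\Gamma$ is amenable, contradicting \cref{Thm=AmenableGraphProduct}. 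The key reversal you are missing is that irreducibility should be used to \emph{exclude} all intertwinings of $P$ and $A$ into proper subgraphs---so that only the relative-amenability alternative survives everywhere---rather than to \emph{produce} an intertwining.
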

We then use \cref{intro:thm:rigid-graph-decomposition} and \cref{intro:thm:primeness-for-graph-products-II1-factors} to prove the following theorem which covers UPF results for a new class of von Neumann algebras (see \cref{remark:new-UPF-results}). 
\begin{theoremx}[\cref{thm:unique-prime-factorization}]
    \label{intro:thm:unique-prime-factorization}
    Any von Neumann algebra $M\in \Crigid^f$ has a prime factorization inside $\Crigid^f$, i.e.
    \begin{align}
        M = M_1\overline{\otimes} \cdots \overline{\otimes} M_m,
    \end{align}
    for some $1\leq m< \infty$ and prime factors $M_1,\ldots, M_m\in \Crigid^f$.  
    
    Suppose $M$ has another prime factorization inside $\Crigid^f$, i.e.
    \begin{align}
        M = N_1\overline{\otimes} \cdots \overline{\otimes} N_n,
    \end{align}
    for some $1\leq n< \infty$, and prime factors $N_1,\ldots, N_n\in \Crigid^f$.
    Then $m=n$ and there is a permutation $\sigma$ of $\{1,\ldots, m\}$ such that $M_{i}$ is stably isomorphic to $N_{\sigma(i)}$ for $1\leq i\leq m$.
\end{theoremx}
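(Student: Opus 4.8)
The plan is to deduce Theorem~\ref{intro:thm:unique-prime-factorization} from the two previous main theorems together with basic facts about irreducible decompositions of finite graphs. First I would establish \emph{existence} of a prime factorization. Given $M\in\Crigid$, by definition $M = M_\Gamma = *_{v,\Gamma}(M_v,\tau_v)$ for a finite rigid graph $\Gamma$ with $M_v\in\Cvert$. Decompose $\Gamma$ into its irreducible components $\Gamma = \Gamma_1\circ\cdots\circ\Gamma_m$ (the join decomposition: $\Gamma$ is the join of the $\Gamma_i$, each $\Gamma_i$ irreducible), which exists and is unique for any finite graph. Since a join of graphs yields a tensor product of the corresponding graph products, we get $M = M_{\Gamma_1}\overline{\otimes}\cdots\overline{\otimes}M_{\Gamma_m}$. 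Each $\Gamma_i$ is irreducible, and one checks that an irreducible subgraph of a rigid graph is again rigid (or, more directly, that each $M_{\Gamma_i}$ lies in $\Crigid$ — this needs a small argument that $\Gamma_i$ is rigid, which should follow from $\Link$ being computed componentwise/joinwise, using $|\Gamma_i|\geq 2$ when $\Gamma_i$ is not a single vertex and the rigidity of the ambient graph). By \cref{intro:thm:primeness-for-graph-products-II1-factors}, since each $M_{\Gamma_i}$ is a II$_1$-factor (graph products of II$_1$-factors over nonempty graphs are II$_1$-factors) and $\Gamma_i$ is irreducible, each $M_{\Gamma_i}$ is prime. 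This gives the desired factorization with $M_i := M_{\Gamma_i}\in\Crigid$.

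Next, \emph{uniqueness}. Suppose $M = N_1\overline{\otimes}\cdots\overline{\otimes}N_n$ is another prime factorization inside $\Crigid$. Each $N_j = N_{\Lambda_j}$ for a finite rigid graph $\Lambda_j$ with vertex algebras in $\Cvert$; since $N_j$ is prime, \cref{intro:thm:primeness-for-graph-products-II1-factors} forces $\Lambda_j$ to be irreducible. The tensor product $N_1\overline{\otimes}\cdots\overline{\otimes}N_n$ is then itself the graph product over the join $\Lambda := \Lambda_1\circ\cdots\circ\Lambda_n$, with vertex algebras in $\Cvert$, and $\Lambda$ is rigid (a join of rigid graphs whose $\Link$'s behave well — again the small componentwise check). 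So $M = M_\Gamma = N_\Lambda$ are two rigid graph product decompositions of the same von Neumann algebra, and \cref{intro:thm:rigid-graph-decomposition} applies: there is a graph isomorphism $\alpha\colon\Gamma\to\Lambda$, and for each irreducible component $\Gamma_0$ of $\Gamma$ there is $t_0\in(0,\infty)$ with $M_{\Gamma_0}\simeq N_{\alpha(\Gamma_0)}^{t_0}$.

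To finish I would match up the irreducible components on both sides. A graph isomorphism $\alpha$ must carry the irreducible-component (join) decomposition of $\Gamma$ to that of $\Lambda$: since $\Lambda = \Lambda_1\circ\cdots\circ\Lambda_n$ with each $\Lambda_j$ irreducible, and the join decomposition into irreducibles is unique, the sets $\alpha(\Gamma_1),\ldots,\alpha(\Gamma_m)$ are exactly $\Lambda_1,\ldots,\Lambda_n$ up to reordering; in particular $m = n$, and there is a permutation $\sigma$ with $\alpha(\Gamma_i) = \Lambda_{\sigma(i)}$. Then $M_i = M_{\Gamma_i}\simeq N_{\alpha(\Gamma_i)}^{t_i} = N_{\Lambda_{\sigma(i)}}^{t_i} = N_{\sigma(i)}^{t_i}$, so $M_i$ is stably isomorphic to $N_{\sigma(i)}$, as claimed.

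The main obstacle I anticipate is not the factor-matching bookkeeping but the \emph{rigidity-is-preserved-under-join-decomposition} point: I need that if $\Gamma$ is rigid then each irreducible component $\Gamma_i$ (viewed as a graph in its own right) is rigid, and conversely that a join of irreducible rigid graphs is rigid. Concretely one must show $\Link_{\Gamma_i}(\Link_{\Gamma_i}(v)) = \{v\}$ for $v\in\Gamma_i$; because in a join every vertex of $\Gamma_j$ ($j\neq i$) is adjacent to all of $\Gamma_i$, the $\Link$ operator within $\Gamma$ restricted to $\Gamma_i$ must be compared carefully with the $\Link$ operator computed inside $\Gamma_i$, and one has to rule out the degenerate case where $\Gamma_i$ is a single vertex (which is vacuously fine) versus where $\Link(v)$ could pick up vertices from other components. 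This is a purely combinatorial lemma about $\Link$ and joins; once it is in place, everything else is a direct application of \cref{intro:thm:rigid-graph-decomposition} and \cref{intro:thm:primeness-for-graph-products-II1-factors}.
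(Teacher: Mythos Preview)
Your proposal is correct and follows essentially the same route as the paper: decompose $\Gamma$ into irreducible components to get existence (via \cref{intro:thm:primeness-for-graph-products-II1-factors}), then for uniqueness realize each $N_j$ as a graph product over an irreducible rigid $\Lambda_j$, form the join $\Lambda$, and apply \cref{intro:thm:rigid-graph-decomposition} to match components. The combinatorial obstacle you flag --- that irreducible components of a rigid graph are rigid, and that a join of rigid graphs is rigid --- is exactly what the paper isolates and proves separately (\cref{lemma:graph-product-of-rigid-graphs} and \cref{remark:rigid-components}), using that a join is the graph product over a complete graph $\Pi$ and complete graphs satisfy $\Link_\Pi(\Link_\Pi(v))=\{v\}$.
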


\subsection{Unique free product decomposition}
\label{intro:sub:unique-free-factorizaton}
In \cite{OzawaKurosh} Ozawa extended the results \cite{ozawaPrimeFactorizationResults2004} for tensor products to the setting of free products. In particular, he showed for $M=M_1*\cdots *M_m$ a von Neumann algebraic free products of non-prime, non-amenable, semiexact finite factors $M_1,\ldots, M_m$ that if $M = N_1*\cdots *N_n$ is another free product decomposition into non-prime, non-amenable, semiexact finite factors $N_1,\ldots, N_n$, then $m=n$ and, up to permutation of the indices, $M_i$ unitarily conjugates to $N_i$ inside $M$ for each $1<i<m$.
This can be seen as a von Neumann algebraic version of the Kurosh isomorphism theorem \cite{Kurosch}, which states that any discrete group uniquely decomposes as a free product of freely-indecomposable groups. Versions of Ozawa's result were later shown for other classes of von Neumann algebras, see \cite{asherKuroshtypeTheoremType2009},\cite{ioanaAmalgamatedFreeProducts2008},\cite{PetersonInventiones}.
In \cite{houdayerRigidityFreeProduct2016} these results were then extended by Houdayer and Ueda to a single, large class of von Neumann algebras,   that includes free products of nonamenable factors that are either (i) non-prime, (ii) have property Gamma, (iii) possess a Cartan subalgebra, or (iv) are of type II$_1$ and possess a regular diffuse von Neumann subalgebra with relative property (T).   Other Kurosh type theorems have recently been obtained in \cite[Corollary 8.1]{drimbeMeasureEquivalenceRigidity2023}, \cite[Corollary 1.8]{dingStructureRelativelyBiexact2024}.\\

In the current paper we obtain unique free product decomposition results for a new class of von Neumann algebras. First, we prove the following result which characterizes precisely when a graph product $M_\Gamma = *_{v,\Gamma}(M_v,\tau_v)$ can decompose as tracial free product of II$_1$-factors.
\begin{theoremx}[\cref{theorem:free-indecompose}]\label{intro:theorem:free-indecompose}
Let $\Gamma$ be a simple graph of size $|\Gamma|\geq 2$, and for each $v\in \Gamma$ let $M_v$ be II$_1$-factor with separable predual. Then the graph product $M_{\Gamma}:= *_{v,\Gamma}(M_{v},\tau_v)$ can decompose as a tracial free product $M_{\Gamma} = (M_1,\tau_1)*(M_2,\tau_2)$ of II$_1$-factors  $M_1$,$M_2$ if and only if $\Gamma$ is not connected.
\end{theoremx}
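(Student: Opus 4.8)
The plan is to prove both directions of the equivalence. The easy direction is $(\Leftarrow)$: if $\Gamma$ is not connected, write $\Gamma = \Gamma_1 \sqcup \Gamma_2$ as a disjoint union of two non-empty subgraphs with no edges between them. Then by the very definition of the graph product, $M_\Gamma = *_{v,\Gamma}(M_v,\tau_v) = (M_{\Gamma_1},\tau)*(M_{\Gamma_2},\tau)$ is a tracial free product. Since each $\Gamma_i$ is non-empty and every $M_v$ is a II$_1$-factor, each $M_{\Gamma_i}$ is a II$_1$-factor (the graph product of II$_1$-factors with respect to trace states is again a II$_1$-factor — this is a standard fact about graph products, available from the preliminaries). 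This gives the desired decomposition.

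The substantial direction is $(\Rightarrow)$: suppose $\Gamma$ is connected with $|\Gamma|\geq 2$, and suppose for contradiction that $M_\Gamma = (M_1,\tau_1)*(M_2,\tau_2)$ with $M_1,M_2$ II$_1$-factors. The strategy is to use the structural/malleability results of the paper on (quasi-)normalizers and relative amenability inside graph products, combined with the fact that connectedness forces a lot of commutation. Concretely, since $\Gamma$ is connected with at least two vertices, pick a vertex $v$; then $v$ has a neighbour, so the subalgebra $M_v$ has a nontrivial commutant direction inside $M_\Gamma$ (namely $M_w$ for $w \in \Link(v)$), hence $M_v$ is not contained, up to unitary conjugacy and corners, in either free product factor $M_i$ in a way compatible with the free product position — one exploits that in a free product $(M_1,\tau_1)*(M_2,\tau_2)$ two commuting diffuse subalgebras must (by Ozawa–Popa / IPP-type results) both be conjugate into the \emph{same} $M_i$. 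I would run this argument along a spanning tree of $\Gamma$: starting from one vertex and moving along edges, each adjacent pair $M_v, M_w$ ($w\in\Link(v)$) commutes, so all $M_v$ must be conjugable into a single $M_i$, say $M_1$. Then $M_\Gamma$ is generated by (a conjugate of) $\{M_v : v\in\Gamma\}$ together with $M_1$'s relative position, forcing $M_2$ to be atomic or trivial — contradicting that $M_2$ is a II$_1$-factor.

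More precisely, the key input I expect to use is a statement of the form: \emph{if $P_1, P_2 \subseteq M = (M_1,\tau_1)*(M_2,\tau_2)$ are von Neumann subalgebras with $P_1$ diffuse and $[P_1,P_2]=0$, then there is $i\in\{1,2\}$ and a unitary $u$ with $u P_1 u^* \subseteq M_i$ and $uP_2 u^*\subseteq M_i$} — this is the free-product rigidity / intertwining dichotomy (a version of \cite{ozawaKuroshtypeTheoremType2006a} or of the normalizer-embedding results proved earlier in this paper). Applying it inductively along edges of $\Gamma$ (using that $\Gamma$ is connected so the "same $M_i$" propagates), and using that each $M_v$ is diffuse (being a II$_1$-factor), yields a unitary $u$ with $u M_v u^* \subseteq M_1$ for all $v$, hence $u M_\Gamma u^* \subseteq M_1 \subsetneq M$ unless $M_2 = \mathbb{C}$. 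This contradicts $M_2$ being a II$_1$-factor, completing the proof.

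I expect the main obstacle to be making the inductive "propagation of the common $M_i$" step fully rigorous: the intertwining-into-a-corner machinery (Popa's $\preceq$) produces conjugations only on corners, so one must carefully track partial isometries and use that the $M_v$ are factors (so that corners are again factors and amplifications behave well) to upgrade corner-intertwining to honest unitary conjugacy, and to glue the conjugations coming from different edges into a single unitary. The connectedness of $\Gamma$ is exactly what is needed to avoid two "independent" clusters being conjugated into $M_1$ and $M_2$ respectively; conversely this is why the statement fails when $\Gamma$ is disconnected. The relevant gluing lemmas and the behaviour of quasi-normalizers in graph products are precisely the "technical results concerning relative amenability and embeddings of (quasi)-normalizers in graph products" advertised in the abstract, so I would invoke those rather than reprove them here.
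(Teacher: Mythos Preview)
Your easy direction is fine and your propagation-along-edges idea is essentially what the paper does in one branch of its argument. The gap is that your ``key input'' is stated too optimistically, and the missing case is not a technicality.

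The tool that actually applies here is the IPP/Vaes trichotomy for amalgamated free products (\cref{thm:prelim:alternatives}). To invoke it you must work with an \emph{amenable} subalgebra, so one first replaces $M_v$ by an amenable II$_1$-subfactor $N_0\subseteq M_v$ with $N_0'\cap M_\Gamma = M_v'\cap M_\Gamma$ (this exists by \cite[Proposition~13]{ozawaPrimeFactorizationResults2004}). The trichotomy then says: either $N_0\prec_{M_\Gamma}\CC$, or $\Nor_{M_\Gamma}(N_0)''\prec_{M_\Gamma} M_i$ for some $i$, or $\Nor_{M_\Gamma}(N_0)''$ is amenable. Your argument (and the paper's) handles the second option by propagating along a path in $\Gamma$ to reach $u^*M_\Gamma u\subseteq M_1$, a contradiction. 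But you give no argument for the third option, and it is not vacuous: nothing in the hypotheses prevents the vertex algebras $M_v$ from all being hyperfinite, in which case $\Nor(N_0)''$ could well be amenable rather than intertwining into an $M_i$. The unqualified statement ``commuting diffuse subalgebras of $M_1*M_2$ unitarily conjugate into a common $M_i$'' is not a theorem for arbitrary II$_1$-factors $M_1,M_2$.

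The paper closes this gap as follows. If the third option holds then $M_{\Link(v)}\subseteq \Nor_{M_\Gamma}(N_0)''$ is amenable. Since $v$ was arbitrary, $M_{\Link(v)}$ is amenable for every $v\in\Gamma$; the amenability classification for graph products (\cref{Thm=AmenableGraphProduct}) then forces each $\Link(v)$ to be a clique with amenable vertex algebras, and connectedness of $\Gamma$ forces $\Gamma$ to be complete. Hence $M_\Gamma=\overline{\bigotimes}_{v}M_v$ is amenable, i.e.\ the hyperfinite II$_1$-factor, which cannot be a free product of II$_1$-factors --- the final contradiction. This third branch is essential and is where both connectedness and the II$_1$-factor hypothesis do real work.
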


Using  \cref{intro:thm:rigid-graph-decomposition} and \cref{intro:theorem:free-indecompose} we obtain unique free product decomposition for the class $\Crigid\setminus \Cvert$.

\begin{theoremx}[\cref{thm:free-product-decomposition}]
\label{intro:thm:free-product-decomposition}
      Any von Neumann algebra $M\in \Crigid\setminus \Cvert$ can decompose as a tracial free product inside $\Crigid\setminus\Cvert$, i.e.
     \begin{align}\label{intro:eq:free-product-decomposition}
         M = *_{i\in I}M_i,
     \end{align}
     for some index set $I$, and for every $i\in I$ a factor $M_i\in \Crigid\setminus \Cvert$ that can not decompose as any tracial free product of II$_1$-factors. 
     
     Suppose $M$ can decompose as another tracial free product inside  $\Crigid\setminus\Cvert$, i.e.
     $$M=*_{j\in J}N_j$$ 
     for another index set $J$ and for every $j\in J$ a factor $N_j\in \Crigid\setminus\Cvert$ that can not decompose as tracial free product of II$_1$-factors. Then $|I|=|J|$ and there is a bijection $\sigma$ between $J$ and $I$ such that $N_j$ unitarily conjugates to  $M_{\sigma(j)}$ in $M$.
\end{theoremx}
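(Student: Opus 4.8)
The plan is to read off \cref{intro:thm:free-product-decomposition} from the rigid graph product decomposition theorem \cref{intro:thm:rigid-graph-decomposition} together with the free indecomposability criterion \cref{intro:theorem:free-indecompose}. The bridge between the free product picture and the graph product picture is the elementary identity $\ast_{v,\Gamma_1\sqcup\Gamma_2}(M_v,\varphi_v)=\bigl(\ast_{v,\Gamma_1}(M_v,\varphi_v)\bigr)\ast\bigl(\ast_{v,\Gamma_2}(M_v,\varphi_v)\bigr)$, together with the observation that a rigid graph on at least two vertices has no isolated vertex: if $\Link(v)=\emptyset$ then $\Link(\Link(v))$ is the whole vertex set, which rigidity forces to be $\{v\}$. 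From this one checks at once that every connected component of a rigid graph, and every disjoint union of connected rigid graphs each on at least two vertices, is again rigid.

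For existence, I would fix a realization $M=M_\Gamma=\ast_{v,\Gamma}(M_v,\tau_v)$ with $\Gamma$ a finite non-empty rigid graph and $M_v\in\Cvert$. Since $M\notin\Cvert$, the graph $\Gamma$ is not a single vertex, hence has no isolated vertices, so each of its connected components $\Gamma_1,\dots,\Gamma_m$ has at least two vertices and is rigid; thus $M_{\Gamma_i}\in\Crigid$. Each $\Gamma_i$ is connected with $|\Gamma_i|\ge2$, so it contains an edge $\{v,w\}$, and then the commuting non-amenable subalgebras $M_v,M_w\subseteq M_{\Gamma_i}$ show that $M_{\Gamma_i}$ is not solid; since members of $\Cvert$ are solid, $M_{\Gamma_i}\in\Crigid\setminus\Cvert$. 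As $M=M_{\Gamma_1}\ast\cdots\ast M_{\Gamma_m}$ and, by \cref{intro:theorem:free-indecompose}, a graph product over a connected graph on at least two vertices admits no tracial free product decomposition into $\mathrm{II}_1$-factors, this is the required decomposition (with $m=1$ when $\Gamma$ is connected).

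For uniqueness, suppose $M=M_1\ast\cdots\ast M_m=N_1\ast\cdots\ast N_n$ with all factors in $\Crigid\setminus\Cvert$ and none a tracial free product of $\mathrm{II}_1$-factors. Writing each $M_i$ as a graph product $\ast_{v,\Sigma_i}(P^i_v,\psi^i_v)$ over a finite rigid graph $\Sigma_i$ with vertex algebras in $\Cvert$, the assumption $M_i\notin\Cvert$ gives $|\Sigma_i|\ge2$, and the indecomposability of $M_i$ together with \cref{intro:theorem:free-indecompose} forces $\Sigma_i$ to be connected. Thus $\Sigma:=\Sigma_1\sqcup\cdots\sqcup\Sigma_m$ is a finite non-empty rigid graph whose connected components are exactly $\Sigma_1,\dots,\Sigma_m$, and $M=M_\Sigma$; the same reasoning for the $N_j$ yields a rigid graph $\Sigma'=\Sigma'_1\sqcup\cdots\sqcup\Sigma'_n$ with components $\Sigma'_1,\dots,\Sigma'_n$ and $M=M_{\Sigma'}$. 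I would then apply \cref{intro:thm:rigid-graph-decomposition} to the two graph product decompositions $M_\Sigma=M_{\Sigma'}$: it provides a graph isomorphism $\alpha\colon\Sigma\to\Sigma'$ and, for each component $\Sigma_i$, a unitary $u_i\in M$ with $M_{\Sigma_i}=u_i^*N_{\alpha(\Sigma_i)}u_i$. Since a graph isomorphism maps connected components bijectively onto connected components, $m=n$ and $\alpha$ induces a bijection $i\mapsto j(i)$ with $\alpha(\Sigma_i)=\Sigma'_{j(i)}$; as $M_{\Sigma_i}=M_i$ and $N_{\Sigma'_{j(i)}}=N_{j(i)}$ this gives $N_{j(i)}=u_iM_iu_i^*$, and setting $\sigma=j^{-1}$ yields $N_\ell=u_{\sigma(\ell)}M_{\sigma(\ell)}u_{\sigma(\ell)}^*$, i.e.\ $N_\ell$ is unitarily conjugate to $M_{\sigma(\ell)}$ inside $M$.

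Given \cref{intro:thm:rigid-graph-decomposition} and \cref{intro:theorem:free-indecompose}, the remaining work is essentially bookkeeping. The two points needing a short verification are that passing to connected components and to disjoint unions preserves rigidity (via the no-isolated-vertex observation above) and that a graph product over a connected graph on at least two vertices fails to be solid, hence lands in $\Crigid\setminus\Cvert$ rather than in $\Cvert$. I do not expect any genuine obstacle beyond these; the substantive step — comparing two graph product decompositions of the same von Neumann algebra and extracting a compatible graph isomorphism — has already been carried out in \cref{intro:thm:rigid-graph-decomposition}.
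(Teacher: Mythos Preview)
Your proposal is correct and follows essentially the same strategy as the paper: realize each free product decomposition as a graph product over a rigid graph (the disjoint union of connected rigid pieces, each on at least two vertices), then invoke \cref{intro:thm:rigid-graph-decomposition} and read off the bijection between connected components together with the unitary conjugacy. The only noteworthy difference is in showing $M_{\Gamma_i}\notin\Cvert$: the paper appeals to \cref{intro:thm:rigid-graph-decomposition} itself (a single-vertex rigid decomposition would force $|\Gamma_i|=1$), while your argument via an edge in $\Gamma_i$ and failure of solidity is more elementary and works just as well.
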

Let us remark that von Neumann algebras in the class $\Ccomplete\setminus \Cvert$ are examples of non-prime, non-amenable, semiexact, finite factors. Thus Ozawa's result in particular asserts a unique free product decomposition for free products of factors in $\Ccomplete\setminus \Cvert$.
The same result is also covered by 
\cref{intro:thm:free-product-decomposition} since any free product of factors in $\Ccomplete\setminus \Cvert$ lies in the class $\Crigid\setminus \Cvert$. We observe that, in contrast to Ozawa's result, in \cref{intro:thm:free-product-decomposition} it is possible for the factors $M_i, i \in I$ to be prime. More generally, we remark that the result of \cref{intro:thm:free-product-decomposition} is not covered by the result from  \cite{houdayerRigidityFreeProduct2016} (see \cref{remark:class-anti-free}). Thus our examples of unique free product decompositions are again new.\\

\subsection{Graph radius rigidity}
We are interested in the question whether from the graph product  $M_{\Gamma} = *_{v,\Gamma}(M_v,\tau_v)$ of II$_1$-factors $M_v$ we can retrieve the radius of the graph $\Gamma$. To study this question we introduce the notion of the radius of a von Neumann algebra $M$ (see \cref{definition:radius-von-Neumann-algebra}). As we show in the following theorem, we are in many cases able to estimate the radius of the von Neumann algebra $M_{\Gamma}$ with the radius of the graph $\Gamma$.
\begin{theoremx}[\cref{theorem:radius-graph-AO} and \cref{theorem:radius-graph-groups}]\label{theorem:graph-radius-rigidity}
    Let $\Gamma$ be a non-complete simple graph. For $v\in \Gamma$ let $M_v$ be a II$_1$-factor and let $M_{\Gamma} = *_{v,\Gamma}(M_v,\tau_v)$ be the tracial graph product. 
    Suppose one of the following holds true:
    \begin{enumerate}
        \item For all $v\in \Gamma$ the vertex algebra $M_v$ possesses strong (AO) and has separable predual.
        \item For all $v\in \Gamma$ we have $M_v = \calL(G_v)$ for some countable icc group $G_v$.
    \end{enumerate}
    Then,
    $$\Radius(\Gamma)-2 \leq \Radius(M_{\Gamma})\leq \max\{2,\Radius(\Gamma)\}.$$
\end{theoremx}
The above result allows us to distinguish certain von Neumann algebras coming from graph products. In particular, for graph products $R_{\Gamma_i}=*_{v,\Gamma_i}(R_v,\tau_v)$ of hyperfinite II$_1$-factors $R_v$, we are able to show that $R_{\Gamma_1}\not\simeq R_{\Gamma_2}$ whenever $2\leq \Radius(\Gamma_1)<\Radius(\Gamma_2)-2$ (see \cref{remark:distinguish-vNa-using-radius}).

We remark that when $\Lambda_i$ for $i=1,2$ are graphs of size $2\leq |\Lambda_1| =: n<  |\Lambda_2| =: m$ and with no edges, then  $R_{\Lambda_1} = \calL(\FF_{n})$ and $R_{\Lambda_2} = \calL(\FF_m)$ by \cite{dykemaInterpolatedFreeGroup1994}. 
In this case, it is very hard to distinguish $R_{\Lambda_1}$ from $R_{\Lambda_2}$ as this is precisely the free factor problem. Of course, \cref{theorem:graph-radius-rigidity} is of no use here since $\Radius(\Lambda_1) = \infty = \Radius(\Lambda_2)$.

\subsection{Strong solidity}
For a finite von Neumann algebra $M$ the notion of strong solidity was introduced by Ozawa and Popa in \cite{ozawaClassIIFactors2010c}. This property, which in particular implies solidity, asserts that for any diffuse amenable von Neumann subalgebra $A\subseteq M$, its normalizers $\Nor_{M}(A)$ generates a von Neumann algebra that is amenable. This property implies that for a non-amenable von Neumann algebra it does not have a Cartan subalgebra, and hence can not decompose as a crossed product in a natural way. {In \cite{ozawaClassIIFactors2010c}, it was shown in that the free group factors $\calL(\FF_t)$ are strong solidity. Nowadays, many examples of strongly solid von Neumann algebras are known, see e.g. \cite{CaspersJussieu,chifanStructuralTheoryRm2013,ding2023biexactvonneumannalgebras, IsonoTAMS, popaUniqueCartanDecomposition2012,     }.
  Moreover, we remark that using the resolution of the Peterson-Thom conjecture (see \cite{hayesRandomMatrixApproach2022a}, \cite{bordenaveNormMatrixvaluedPolynomials2024} ,\cite{belinschiStrongConvergenceTensor2024}), it has been shown in \cite{hayesConsequencesRandomMatrix2024} that the free group factors even satisfy a strengthened version of strong solidity.

In this paper we study strong solidity for graph products of von Neumann algebras. In \cite{borstClassificationRightangledCoxeter2023} strong solidity was characterized for group von Neumann algebras $\calL(\calW_{\Gamma})$ of right-angled Coxeter groups $\calW_{\Gamma}$. Using similar techniques, we characterize strong solidity for arbitrary graph products over finite graphs.

\begin{theoremx}[\cref{Thm=MainImplication}] \label{intro:Thm=MainImplication}
	Let $\Gamma$ be a finite simple graph, and for each $v\in \Gamma$ let $M_{v}$ ($\not=\CC$) be a von Neumann algebra with normal faithful trace $\tau_{v}$. Then $M_{\Gamma}$ is strongly solid if and only if the following conditions are  satisfied:
	\begin{enumerate}
		\item   For each vertex  $v\in \Gamma$ the von Neumann algebra $M_v$ is strongly solid;
    \item    For each subgraph $\Lambda\subseteq \Gamma$ with $M_{\Lambda}$ non-amenable, we have that $M_{\Link(\Lambda)}$  is not diffuse;
		\item   For each subgraph $\Lambda\subseteq \Gamma$ with $M_{\Lambda}$ non-amenable and diffuse, we have moreover that $M_{\Link(\Lambda)}$  is atomic. 
   \end{enumerate}
\end{theoremx}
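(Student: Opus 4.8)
\emph{Strategy.} I would prove the two implications separately. The forward implication is comparatively soft, and the converse is the technical core, where the graph-product machinery developed earlier in the paper --- control of (quasi-)normalizers and relative amenability in graph products --- is used.

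\emph{Necessity.} Assume $M_\Gamma$ is strongly solid. Condition (1) follows because strong solidity passes to von Neumann subalgebras: for a diffuse amenable $A\subseteq N\subseteq M$ one has $\Nor_N(A)\subseteq\Nor_M(A)$, so $\Nor_N(A)''$ is a von Neumann subalgebra of the amenable algebra $\Nor_M(A)''$, hence amenable; apply this to $M_v\subseteq M_\Gamma$. For (2) and (3) the point is that $\Lambda$ and $\Link(\Lambda)$ are disjoint and joined by all edges, so $M_\Lambda$ and $M_{\Link(\Lambda)}$ are tensor independent in $M_\Gamma$, the copy of $M_\Lambda\,\overline{\otimes}\,M_{\Link(\Lambda)}$ being the range of a trace-preserving conditional expectation. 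If $M_{\Link(\Lambda)}$ were diffuse, a diffuse abelian unital subalgebra $A\subseteq M_{\Link(\Lambda)}$ is amenable and is normalized by $M_\Lambda$, so $M_\Lambda\subseteq\Nor_{M_\Gamma}(A)''$ forces $M_\Lambda$ amenable --- this gives (2). For (3), when $M_\Lambda$ is diffuse and $M_{\Link(\Lambda)}$ is not atomic, pick a nonzero projection $p\in M_{\Link(\Lambda)}$ with $pM_{\Link(\Lambda)}p$ diffuse, a diffuse abelian $B_0\subseteq pM_{\Link(\Lambda)}p$, and a diffuse abelian $Q\subseteq(1-p)M_\Gamma(1-p)$ (possible since $M_\Lambda$ diffuse forces $M_\Gamma$ diffuse); then $A:=B_0\oplus Q$ is diffuse, abelian and unital, every unitary $xp+(1-p)$ with $x\in\mathcal{U}(M_\Lambda)$ centralizes $A$, and hence $\Nor_{M_\Gamma}(A)''\supseteq M_\Lambda p\cong M_\Lambda$ is non-amenable, again a contradiction. (Cut-downs need some care when the $M_v$ are not factors.)

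\emph{Sufficiency.} Assume (1)--(3); I would prove $M_\Gamma$ strongly solid by induction on $|\Gamma|$. For $|\Gamma|=1$ this is (1). If $\Gamma=\Gamma_1\ast\Gamma_2$ is a nontrivial join then $M_\Gamma=M_{\Gamma_1}\,\overline{\otimes}\,M_{\Gamma_2}$; conditions (1)--(3) descend to $\Gamma_1$ and $\Gamma_2$, and evaluating (2)--(3) on $\Lambda=\Gamma_1$ and $\Lambda=\Gamma_2$ shows that at most one of $M_{\Gamma_1},M_{\Gamma_2}$ is non-amenable and that the other is of the restricted type (atomic, resp.\ amenable) for which the tensor product with a strongly solid algebra is again strongly solid; combined with the inductive hypothesis and the stability of strong solidity under amplifications and direct sums, this handles the join case. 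The essential case is $\Gamma$ irreducible with $|\Gamma|\geq 2$; fix a diffuse amenable $A\subseteq M_\Gamma$ and set $\mathcal N:=\Nor_{M_\Gamma}(A)''$. Here one invokes the (quasi-)normalizer-control results for graph products: either $\mathcal N$ is amenable, and we are done, or $A$ intertwines in Popa's sense into $M_{\Star(v)}$ for some vertex $v$, with a corresponding location statement pushing a corner of $\mathcal N$ into a conjugate of $M_{\Star(v)}$. Since $\Gamma$ is irreducible of size at least $2$, $\Star(v)$ is a proper subgraph, so $M_{\Star(v)}$ is strongly solid by induction; applying this to the intertwined image of $A$ yields that $\mathcal N$ is amenable. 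The dichotomy itself --- ``$\mathcal N$ amenable or $A\prec M_{\Star(v)}$'' --- is obtained from a spectral-gap/deformation argument attached to the free-product--type decomposition of $M_\Gamma$ relative to the family of stars, together with the relative-amenability statements for graph products.

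\emph{Main obstacle.} I expect the crux to be the irreducible case of sufficiency: establishing that the normalizer of an arbitrary diffuse amenable subalgebra of $M_\Gamma$ either is amenable or is pushed, by intertwining, into some star $M_{\Star(v)}$, and then organising the induction so that exactly conditions (2) and (3) are what is needed to close it. The relative-amenability input for graph products and the deformation estimate are the technical heart, and some extra bookkeeping (amplifications, central cut-downs) is required because the vertex algebras are not assumed to be factors.
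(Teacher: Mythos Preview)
Your necessity argument is essentially the paper's, and the overall inductive architecture for sufficiency is right. The gap is in the irreducible (equivalently, ``no cone vertex'') case, where your claimed dichotomy ``$\mathcal N$ amenable or $A\prec_{M_\Gamma} M_{\Star(v)}$ with $\mathcal N$ pushed into $M_{\Star(v)}$'' is not what one actually gets.

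What the paper does is fix $v$, use the amalgamated free product decomposition $M_\Gamma=M_{\Star(v)}\ast_{M_{\Link(v)}}M_{\Gamma\setminus\{v\}}$, and apply Vaes's trichotomy to the central cut-down $zA$ (where $z$ kills amenable summands of $P=\Nor_{M_\Gamma}(A)''$). The three outcomes are: (i) $zA\prec M_{\Link(v)}$; (ii) $zP\prec M_{\Star(v)}$ or $zP\prec M_{\Gamma\setminus\{v\}}$; (iii) $zP$ is amenable relative to $M_{\Link(v)}$. In case (ii) one finishes by induction and \cref{Prop=EmbedSolid}. In case (i) the control-of-normalizers result (\cref{prop:embed}) does \emph{not} push $zP$ into $M_{\Star(v)}$; one must first pass to a \emph{minimal} $\Lambda\subseteq\Link(v)$ with $zA\prec M_\Lambda$ and $zA\not\prec M_{\widetilde\Lambda}$ for $\widetilde\Lambda\subsetneq\Lambda$, and then the conclusion is $zP\prec M_{\Lambda\cup\Link(\Lambda)}$. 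The set $\Lambda\cup\Link(\Lambda)$ need not be contained in $\Star(v)$ and can equal all of $\Gamma$ --- and this is precisely where conditions (2) and (3) are invoked \emph{directly} rather than merely passed down the induction: if $\Lambda\cup\Link(\Lambda)=\Gamma$ one argues case-by-case on amenability/diffuseness of $M_\Lambda$ and $M_{\Link(\Lambda)}$ using (2), (3), and \cref{prop:strong-solid-tensor-product}, either reaching a contradiction (e.g.\ $zA\prec$ atomic) or concluding $M_\Gamma$ is amenable or not diffuse. Your outline omits this subcase entirely, and with it the actual role of (2)--(3) in the induction.

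Finally, you do not get a genuine dichotomy from a single vertex: the argument must range over \emph{all} $v\in\Gamma$, and only when one lands in case (iii) for \emph{every} $v$ does one conclude amenability of $zP$, via the relative-amenability intersection result \cref{Thm=Square} (intersecting over all $\Link(v)$ gives $\emptyset$). Your allusion to ``relative-amenability statements'' is on the right track, but this specific step is not optional --- it is how the ``soft'' alternative is closed.
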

We remark that for a large class of vertex von Neumann algebras $M_v$ it can be verified whether the conditions (1),(2) and (3) hold true for the graph products $M_{\Lambda}$ and $M_{\Link(\Lambda)}$.  In particular, for right-angled Hecke von Neumann algebras this characterizes strong solidity (using \cref{thm:Hecke-algebras:diffuseness-and-amenability} from \cite{caspersGraphProductKhintchine2021}, \cite{raumFactorialMultiparameterHecke2023}). Partial results in this direction had already been obtained in 
\cite{borstBimoduleCoefficientsRiesz2023}
and
\cite{caspersAbsenceCartanSubalgebras2020}.

\subsection{Other results}
The proofs of the stated theorems require several main results that are of independent interest, which we present here. Firstly, we give sufficient conditions for a graph product of unital C$^\ast$-algebras to be nuclear. This is a generalizion of Ozawa's result for free products \cite{OzawaNuclearFree} and is needed in the proof of \cref{intro:thm:rigid-graph-decomposition}.

\begin{theoremx}[\cref{thm:nuclearity-graph-productNEW}]
\label{intro:thm:nuclearity-graph-product} Let $\Gamma$ be a simple graph. 
Let $A_{\Gamma} = \astred_{v,\Gamma}(A_v,\varphi_v)$ be the reduced C$^\ast$-algebraic graph product of nuclear, unital C$^\ast$-algebras $A_v$ with GNS-faithful state $\varphi_v$. Let $\calH_v := L^2(A_v,\varphi_v)$ and let $\pi_v:A_v\to \bound(\calH_v)$ be the GNS-representation. If for any $v\in \Gamma$,   $\pi_v(A_v)$   contains the space of compact operators $\compact(\calH_v)$, then $A_\Gamma$  is nuclear. 
\end{theoremx}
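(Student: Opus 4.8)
The plan is to adapt and extend Ozawa's proof of nuclearity of reduced free products (\cite{OzawaNuclearFree}), running an induction on $|\Gamma|$ and relying throughout on the standard permanence properties of nuclearity: it passes to quotients, is preserved by minimal tensor products and inductive limits, and, crucially, if $J\trianglelefteq B$ is a closed ideal with both $J$ and $B/J$ nuclear then $B$ is nuclear. The base case $|\Gamma|=1$ is the hypothesis on $A_v$, and if $\Gamma$ has a cone point $v$ (adjacent to all other vertices) then $A_\Gamma = A_v\otimes A_{\Gamma\setminus\{v\}}$ (minimal tensor product) and we are done by induction and tensoriality; so we may assume $\Gamma$ has no cone point.

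First I would realize $A_\Gamma$ on the graph product Fock space $\mathcal{F}_\Gamma=\bigoplus_w \mathcal{H}_w$, where $w$ ranges over reduced words in the vertices, $\mathcal{H}_w = \mathring{\mathcal{H}}_{v_1}\otimes\cdots\otimes\mathring{\mathcal{H}}_{v_n}$ with $\mathring{\mathcal{H}}_v=\mathcal{H}_v\ominus\CC\hat 1_v$, and $\Omega$ is the vacuum. For each vertex $v$ there is the usual identification $\mathcal{F}_\Gamma\cong\mathcal{H}_v\otimes\mathcal{F}_\Gamma^{(v)}$, where $\mathcal{F}_\Gamma^{(v)}$ is spanned by the reduced words admitting no expression that begins with a $v$-letter, and under this identification $A_v$ acts as $\pi_v(A_v)\otimes 1$. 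The key step now uses the hypothesis $\compact(\mathcal{H}_v)\subseteq\pi_v(A_v)$: the rank-one projection $e_v=|\hat 1_v\rangle\langle\hat 1_v|$ lies in $\compact(\mathcal{H}_v)\subseteq\pi_v(A_v)$, so the projection $f_v := e_v\otimes 1$ of $\mathcal{F}_\Gamma$ onto $\mathcal{F}_\Gamma^{(v)}$ lies in $A_v\subseteq A_\Gamma$. The $f_v$ act as $0$ or $1$ on each summand $\mathcal{H}_w$, hence are pairwise commuting, and $\prod_{v\in\Gamma}f_v$ is the projection onto $\bigcap_v\mathcal{F}_\Gamma^{(v)}=\CC\Omega$; thus $e:=|\Omega\rangle\langle\Omega|=\prod_v f_v\in A_\Gamma$. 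Since $\Omega$ is cyclic for $A_\Gamma$, the closed ideal it generates is the closed linear span of $\{xey : x,y\in A_\Gamma\}$, which equals $\compact(\mathcal{F}_\Gamma)$. Hence $\compact(\mathcal{F}_\Gamma)\trianglelefteq A_\Gamma$, and by the extension permanence of nuclearity it suffices to prove that the Calkin quotient $Q_\Gamma:=A_\Gamma/\compact(\mathcal{F}_\Gamma)$ is nuclear.

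The hard part is this last step. Modulo $\compact(\mathcal{F}_\Gamma)$ each vertex algebra $\pi_v(A_v)$ contracts onto its image $\pi_v(A_v)/\compact(\mathcal{H}_v)$, which is still nuclear but no longer has the compact-containing property; one wants to recognize $Q_\Gamma$ as, or to fit it into an extension / inductive limit built from, graph products over proper subgraphs of $\Gamma$, using the reduced-word decomposition of $\mathcal{F}_\Gamma$ and functoriality of graph products under the vertex quotients, so that the inductive hypothesis applies. I expect two points to constitute the main obstacle. First, the inductive statement must be reformulated so that it only requires nuclearity of the vertex algebras appearing after passing to the quotient (not the compact-containing property) — this bookkeeping is exactly where Ozawa's free-product argument does its real work, and it has to be arranged so the induction still closes for graph products. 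Second, the commutation relations among linked vertices complicate the Fock-space analysis of which words survive in $Q_\Gamma$, so the combinatorial part is genuinely heavier than in the purely free case.

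An alternative route, which would also suffice and which I would try in parallel, is to bypass $Q_\Gamma$ entirely by exhibiting a conditional expectation onto $A_\Gamma$ from a manifestly nuclear C$^\ast$-algebra: either a graph product Toeplitz algebra generated by the creation operators on $\mathcal{F}_\Gamma$ (nuclear as an iterated extension of Cuntz–Toeplitz-type pieces), or a model coming from the amalgamated free product decomposition $A_\Gamma=A_{\Star(v)}\ast_{A_{\Link(v)}}A_{\Gamma\setminus\{v\}}$, combined with a nuclearity criterion for reduced amalgamated free products in which one side is $A_v\otimes A_{\Link(v)}$ with $A_v$ satisfying the compact hypothesis. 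Either way, the global strategy is the extension argument of the previous paragraphs, with the analysis of the Calkin quotient (or the construction of the expectation) being the step that carries the essential difficulty.
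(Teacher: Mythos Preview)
Your main line of attack is left unfinished at precisely the step where all the content lies: you show $\compact(\mathcal{F}_\Gamma)\trianglelefteq A_\Gamma$ (which is correct and elegant), but then you only \emph{describe} obstacles to proving that $Q_\Gamma=A_\Gamma/\compact(\mathcal{F}_\Gamma)$ is nuclear rather than overcoming them. As you yourself note, after passing to the quotient the vertex algebras lose the compact-containing property, so the induction hypothesis no longer matches what you need; you have not proposed a concrete reformulation that closes the loop. So as written this is not a proof but a plan with its central step missing.

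The irony is that your ``alternative route'' in the last paragraph is exactly what the paper does, and it is short. One uses the amalgamated free product decomposition $A_\Gamma = A_{\Star(v)} \ast_{A_{\Link(v)}} A_{\Gamma\setminus\{v\}}$ for a vertex $v$ with $\Star(v)\neq\Gamma$, and applies Ozawa's theorem \cite[Theorem~1.1]{OzawaNuclearFree} directly. The only thing to verify is Ozawa's hypothesis: that the GNS-representation $\pi_{\mathbb{E}_{\Link(v)}}$ of $A_{\Star(v)}$ on the Hilbert $A_{\Link(v)}$-module $\calH_{\mathbb{E}_{\Link(v)}}$ contains the module-compact operators $\compact(\calH_{\mathbb{E}_{\Link(v)}})$. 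This follows in one paragraph from the assumption $\compact(\calH_v)\subseteq\pi_v(A_v)$: since $A_{\Star(v)}=A_v\otimes A_{\Link(v)}$ and $\calH_{\mathbb{E}_{\Link(v)}}\cong \calH_v\otimes A_{\Link(v)}$, the rank-one projection $p_v=|\hat 1_v\rangle\langle\hat 1_v|\in\pi_v(A_v)$ gives $p_v\otimes 1\in\pi_{\mathbb{E}_{\Link(v)}}(A_{\Star(v)})$, and then $\theta_{a\otimes b,\,c\otimes d}=(a\otimes b)(p_v\otimes 1)(c^*\otimes d^*)$ shows all finite-rank module operators lie in the image. With this lemma in hand, induction on $|\Gamma|$ gives nuclearity of both amalgamated-free-product factors, and Ozawa's theorem finishes. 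No Calkin-quotient analysis is needed.
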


The following result is the graph product analogue of \cite[Theorem 5.1]{houdayerUniquePrimeFactorization2017} and \cite[Theorem 3.3.]{OzawaKurosh}, and is crucial in the proof of  \cref{intro:thm:rigid-graph-decomposition} for establishing the graph isomorphism.
\begin{theoremx}[\cref{Thm=KeyAlternatives}]
\label{intro:Thm=KeyAlternatives}
    Let $\Gamma$ be a simple graph, $(M_\Gamma, \tau) = \ast_{v, \Gamma} (M_v, \tau_v)$ be the graph product of finite von Neumann algebras $M_v$ that satisfy condition strong (AO) and have separable preduals. Let $Q \subseteq M_\Gamma$ be a diffuse von Neumann subalgebra. At least one of the following holds:
    \begin{enumerate}
    \item \label{Item=MainOneIntro}The relative commutant $Q' \cap M_\Gamma$ is amenable;
    \item \label{Item=MainTwoIntro} There exists $\Gamma_0  \subseteq \Gamma$ such that $Q \prec_{M_\Gamma} M_{\Gamma_0}$ and $\Link(\Gamma_0) \not = \varnothing$. 
    \end{enumerate}
\end{theoremx}

The following result concerning relative amenability extends \cite[Theorem 3.7]{borstClassificationRightangledCoxeter2023} to the setting of graph product. This result is needed in the proof of the characterizations given in \cref{intro:thm:primeness-for-graph-products-II1-factors}, \cref{intro:theorem:free-indecompose} and \cref{intro:Thm=MainImplication}.

\begin{theoremx}[\cref{Thm=Square}]\label{intro:thm:Square}
	Let $\Gamma$ be a simple graph with subgraphs $\Gamma_1, \Gamma_2\subseteq \Gamma$. For each $v\in \Gamma$ let $(M_{v},\tau_{v})$ be a von Neumann algebra with a normal faithful trace.
	Let $P \subset M_\Gamma$ be a von Neumann subalgebra that is amenable relative to $M_{\Gamma_i}$ inside $M_{\Gamma}$ for  $i = 1,2$.   Then $P$ is amenable relative to $M_{\Gamma_1\cap \Gamma_2}$ inside $M_{\Gamma}$.
\end{theoremx}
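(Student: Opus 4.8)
The plan is to reduce to the bimodule formulation of relative amenability, exploit the fact that the graph product subalgebras $M_{\Gamma_1}$ and $M_{\Gamma_2}$ form a commuting square with apex $M_{\Gamma_1\cap\Gamma_2}$, and then analyse the resulting Connes fusion using the reduced-word structure of $M_\Gamma$. First I would invoke the Ozawa--Popa characterisation of relative amenability: it suffices to produce a $P$-central state on the basic construction $\langle M_\Gamma,e_{M_{\Gamma_1\cap\Gamma_2}}\rangle$ whose restriction to $M_\Gamma$ is $\tau$, equivalently a net of unit vectors $\zeta_k\in L^2\langle M_\Gamma,e_{M_{\Gamma_1\cap\Gamma_2}}\rangle$ with $\langle x\zeta_k,\zeta_k\rangle\to\tau(x)$ for $x\in M_\Gamma$ and $\|u\zeta_k-\zeta_k u\|\to 0$ for $u\in\mathcal{U}(P)$. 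The hypotheses supply such nets $(\xi_k)$ and $(\eta_l)$ over $M_{\Gamma_1}$ and $M_{\Gamma_2}$ respectively, and one may assume $P$ (hence $M_\Gamma$) non-amenable, since otherwise $P$ is amenable and there is nothing to prove. In the language of weak containment of $P$-$M_\Gamma$ bimodules the hypotheses read ${}_PL^2M_\Gamma{}_{M_\Gamma}\prec {}_PL^2\langle M_\Gamma,e_{M_{\Gamma_i}}\rangle{}_{M_\Gamma}$ for $i=1,2$, and the goal is the same statement with $M_{\Gamma_1\cap\Gamma_2}$ in place of $M_{\Gamma_i}$.

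The combinatorial input is the following. Using that for a subgraph $\Lambda$ the conditional expectation $E_{M_\Lambda}$ fixes a reduced word $m_{v_1}\cdots m_{v_n}$ (with $m_{v_i}\in M_{v_i}\ominus\CC 1$) exactly when all the $v_i$ lie in $\Lambda$ and kills it otherwise, one obtains the commuting square $E_{M_{\Gamma_1}}E_{M_{\Gamma_2}}=E_{M_{\Gamma_1\cap\Gamma_2}}=E_{M_{\Gamma_2}}E_{M_{\Gamma_1}}$; in particular $M_{\Gamma_1}\cap M_{\Gamma_2}=M_{\Gamma_1\cap\Gamma_2}$ and $e_{M_{\Gamma_1}}e_{M_{\Gamma_2}}=e_{M_{\Gamma_1\cap\Gamma_2}}=e_{M_{\Gamma_2}}e_{M_{\Gamma_1}}$ on $L^2M_\Gamma$. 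A finer reduced-word analysis decomposes $L^2M_\Gamma$ as an $M_{\Gamma_1}$-$M_{\Gamma_2}$ bimodule, $L^2M_\Gamma=\bigoplus_d\calH_d$, with $\calH_d\cong L^2M_{\Gamma_1}\otimes_{M_{\Lambda_d}}L^2M_{\Gamma_2}$ for subgraphs $\Lambda_d\subseteq\Gamma_1\cap\Gamma_2$, the ``diagonal'' index $d_0$ giving $\Lambda_{d_0}=\Gamma_1\cap\Gamma_2$ and $\calH_{d_0}\cong L^2\big(\overline{\spn(M_{\Gamma_1}M_{\Gamma_2})}\big)$ --- this last isomorphism being exactly a restatement of the commuting square.

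Applying the identification $L^2\langle M_\Gamma,e_{M_\Lambda}\rangle\cong L^2M_\Gamma\otimes_{M_\Lambda}L^2M_\Gamma$ and tensoring a copy of $L^2M_\Gamma$ on either side of the previous decomposition yields an isomorphism of $M_\Gamma$-$M_\Gamma$ bimodules
\[
L^2\langle M_\Gamma,e_{M_{\Gamma_1}}\rangle\otimes_{M_\Gamma}L^2\langle M_\Gamma,e_{M_{\Gamma_2}}\rangle\;\cong\;\bigoplus_d L^2\langle M_\Gamma,e_{M_{\Lambda_d}}\rangle ,
\]
whose $d_0$-summand is $L^2\langle M_\Gamma,e_{M_{\Gamma_1\cap\Gamma_2}}\rangle$. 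Since Connes fusion respects weak containment, fusing the first hypothesis on the right by $L^2\langle M_\Gamma,e_{M_{\Gamma_2}}\rangle$ and then composing with the second hypothesis gives ${}_PL^2M_\Gamma{}_{M_\Gamma}\prec {}_P\big(\bigoplus_d L^2\langle M_\Gamma,e_{M_{\Lambda_d}}\rangle\big){}_{M_\Gamma}$, and this weak containment is witnessed concretely by the fused vectors $\xi_k\otimes\eta_l$, which are asymptotically $P$-central and asymptotically $\tau$-tracial for the left $M_\Gamma$-action.

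The hard part is the last step: upgrading this to weak containment in the single diagonal summand, i.e. ${}_PL^2M_\Gamma{}_{M_\Gamma}\prec {}_PL^2\langle M_\Gamma,e_{M_{\Gamma_1\cap\Gamma_2}}\rangle{}_{M_\Gamma}$. A naive direct-sum argument fails: the off-diagonal corners $M_{\Lambda_d}$ are \emph{proper} subalgebras of $M_{\Gamma_1\cap\Gamma_2}$, so the corresponding summands are \emph{larger} basic constructions (for $\Lambda_d=\emptyset$ one even recovers the coarse bimodule) and are not weakly contained in the diagonal one. Instead I would argue that the $d$-component of a fused vector $\xi_k\otimes\eta_l$, expressed in $L^2M_\Gamma\otimes_{M_{\Gamma_1}}L^2M_\Gamma\otimes_{M_{\Gamma_2}}L^2M_\Gamma$, is controlled by the middle tensor leg, and that the commuting square of expectations permits --- after passing to a subnet and using the approximate $P$-centrality --- replacing that leg by its projection onto $\overline{\spn(M_{\Gamma_1}M_{\Gamma_2})}$ without destroying either the near-$\tau$ property or the near-centrality, thereby landing the witnessing net inside $L^2\langle M_\Gamma,e_{M_{\Gamma_1\cap\Gamma_2}}\rangle$. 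This step genuinely uses the reduced-word combinatorics of the graph product, not merely the abstract commuting square; a convenient way to organise it --- and essentially the route taken for right-angled Coxeter groups in \cite[Theorem 3.7]{borstClassificationRightangledCoxeter2023} --- is to first reduce, by induction on $|\Gamma_1\triangle\Gamma_2|$ together with the fact that relative amenability passes to larger subgraphs, to the case $\Gamma_1=\Lambda\sqcup\{v\}$, $\Gamma_2=\Lambda\sqcup\{w\}$ with $v\neq w$ and $\Lambda=\Gamma_1\cap\Gamma_2$, and then to treat this local configuration directly, where the pieces $\calH_d$ and their corners are explicit.
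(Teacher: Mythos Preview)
Your setup is correct and matches the paper: form the Connes fusion $\calH=L^2\langle M_\Gamma,e_{M_{\Gamma_1}}\rangle\otimes_{M_\Gamma}L^2\langle M_\Gamma,e_{M_{\Gamma_2}}\rangle$ and deduce ${}_{M_\Gamma}L^2(M_\Gamma)_P\prec {}_{M_\Gamma}\calH_P$ from the two hypotheses. The commuting-square observation and the reduced-word bimodule analysis are also in the right spirit. Where you go wrong is in what you call the ``hard part'': it does not exist.

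You correctly note that the off-diagonal summands $L^2\langle M_\Gamma,e_{M_{\Lambda_d}}\rangle$ with $\Lambda_d\subsetneq\Gamma_1\cap\Gamma_2$ are \emph{not} weakly contained in $L^2\langle M_\Gamma,e_{Q_0}\rangle$ (where $Q_0=M_{\Gamma_1\cap\Gamma_2}$), but you are chasing the wrong conclusion. To produce a $P$-central state on $\langle M_\Gamma,e_{Q_0}\rangle$ restricting to $\tau$, one does not need ${}_{M_\Gamma}\calH_{M_\Gamma}\prec L^2\langle M_\Gamma,e_{Q_0}\rangle$; it suffices that $\calH$ embed isometrically, as an $M_\Gamma$-$M_\Gamma$-bimodule, into some bimodule whose left $M_\Gamma$-action extends to a normal $\langle M_\Gamma,e_{Q_0}\rangle$-action commuting with the right $M_\Gamma$-action. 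Your own decomposition already delivers this: since $\Lambda_d\subseteq\Gamma_1\cap\Gamma_2$, the projection $e_{Q_0}$ is right $M_{\Lambda_d}$-linear, so each summand $L^2(M_\Gamma)\otimes_{M_{\Lambda_d}}L^2(M_\Gamma)$ carries a well-defined left $\langle M_\Gamma,e_{Q_0}\rangle$-action. No induction, no projection onto the diagonal, no subnet extraction is needed.

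The paper (following \cite[Theorem~3.7]{borstClassificationRightangledCoxeter2023}, which does \emph{not} argue by induction on $|\Gamma_1\triangle\Gamma_2|$ as you suggest) packages this even more cleanly: it writes down a single isometric $M_\Gamma$-$M_\Gamma$-bimodule map
\[
U:\calH\longrightarrow L^2(M_\Gamma)\otimes_{Q_0}L^2(M_\Gamma)\otimes_{Q_0}L^2(M_\Gamma),\qquad xe_{M_{\Gamma_1}}y\otimes_{M_\Gamma}e_{M_{\Gamma_2}}z\longmapsto x\otimes_{Q_0}y\otimes_{Q_0}z,
\]
for $y\in\mathring{M}_{\vv}$ with $\vv$ not starting in $\Gamma_1$ and not ending in $\Gamma_2$. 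That $U$ is isometric is exactly the identity
\[
\EE_{M_{\Gamma_2}}\bigl(y^*\EE_{M_{\Gamma_1}}(x^*x')y'\bigr)=\tau(y^*y')\,\EE_{M_{\Gamma_1\cap\Gamma_2\cap\Link(\vv)}}(x^*x')=\EE_{Q_0}\bigl(y^*\EE_{Q_0}(x^*x')y'\bigr),
\]
which is the content of \cref{lem:Expectation-for-n=2}. The triple tensor over $Q_0$ is visibly a left $\langle M_\Gamma,e_{Q_0}\rangle$-module (acting on the first leg), and the proof is finished.
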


\subsection{Paper overview}
In \cref{Sect=Prelim} we recall Popa's interwtining-by-bimodule technique and introduce our notation for simple graphs and for graph products.
In \cref{Sect=RigidGraph} we introduce the notion of rigid graphs and study some basic properties. Here, we also define graph products of graphs and precisely characterize when a graph product of graphs is rigid.
In \cref{Sect=NuclearGraphProduct} we prove \cref{intro:thm:nuclearity-graph-product} which establishes sufficient conditions for a graph product to be nuclear. In \cref{Sect=technical-results-graph-products} we prove some technical results concerning graph products.  In particular, using calculation for iterated conditional expectations in graph products we prove \cref{intro:thm:Square} regarding relative-amenability, and prove some embedding results for graph products.   In \cref{Sect=GraphProductRigidity} we prove \cref{intro:Thm=KeyAlternatives} which we then use to prove the major part of \cref{intro:thm:rigid-graph-decomposition}. In \cref{Sect=StrongSolidity} we prove \cref{intro:Thm=MainImplication} which characterizes strong solidity for graph products. In \cref{Sect=Primeness} we prove \cref{intro:thm:primeness-for-graph-products-II1-factors} which characterizes primeness in graph products. Moreover, we also complete the proof of \cref{intro:thm:rigid-graph-decomposition} and we prove \cref{intro:thm:unique-prime-factorization} which establishes UPF results for the class $\Crigid$. In \cref{Sect=freely-indecomposable} we prove \cref{intro:theorem:free-indecompose} which characterizes free-indecomposability for graph products and we prove \cref{intro:thm:free-product-decomposition} which establishes unique free product decomposition results for the class $\Crigid\setminus\Cvert$. Last, in \cref{section:graph-radius-rigidity} we define the radius of a von Neumann algebra and prove \cref{theorem:graph-radius-rigidity} which for graph products provides good estimates on the radius of the graph.

\subsection*{Comments} After our results have appeared on the arXiv new rigidity results for graph products have appeared in \cite{CaspersChen}, \cite{VaesDrimbe} and \cite{IoanaHorbez} for new classes of graph products. In particular \cite{CaspersChen}, \cite{VaesDrimbe} also consider infinite graphs. After these results appeared we revised this manuscript and updated earlier statements for finite graphs that hold for infinite graphs as well with the same proof.

\subsection*{Acknowledgements} The authors wish to thank David Jekel, Ben Hayes and the anonymous referees for communicating several suggestions of improvement for our paper.

\section{Preliminaries}\label{Sect=Prelim} 

\subsection{General notation} For a Hilbert space $\calH$ we denote $\bound(\calH)$ for the space of bounded operators on $\calH$ and denote $\compact(\calH)$ for the space of compact operators on $\calH$. For a von Neumann algebra $M$ we denote $\calZ(M) = M\cap M'$ for the center and denote $\calU(M)$ for the set of all unitaries. 
Furthermore, for $0<t<\infty$ and a II$_1$-factor $M$ we denote $M^t$ for the amplification of $M$ by $t$.
For $u \in \calU(\bound(\calH))$ we write $\Ad_u(x) = u x u^\ast, x \in \bound(\calH)$.
Inclusions of von Neumann algebras are always assumed to be unital inclusions. We write $1_M$ for the unit of a von Neumann algebra $M$. For a von Neumann subalgebra $A\subseteq 1_{A}M1_{A}$ we denote the set of normalizers and quasi-normalizers respectively by 
\begin{align*}
    \Nor_{M}(A) &:= \{u\in \calU(M): u^*Au = A\},\\
    \qNor_{M}(A) &:= \{x\in M: \exists x_1,\ldots, x_n,y_1,\ldots, y_m \in M \text{ s.t. } Ax \subseteq \sum_{i=1}^n x_iA \text{ and } xA \subseteq \sum_{i=1}^m A y_i\}.
\end{align*}
Tensor products of von Neumann algebras are defined as the strong closure of the their spacial tensor products.

\subsection{Jones extension}
Let $(M,\tau)$ be a tracial von Neumann algebra and $Q\subseteq M$ a  von Neumann subalgebra. We denote $\EE_{Q}:M\to Q$ for the trace-preserving conditional expectation, and denote $e_{Q}:L^2(M,\tau)\to L^2(Q,\tau)$ for its $L^2$-extension. We regard $M\subseteq \bound(L^2(M,\tau))$ as a subalgebra, and denote $\langle M,e_{Q}\rangle$ for the Jones extension which is the von Neumann algebra generated by $M\cup \{e_{Q}\}$. 

\subsection{Popa's intertwining-by-bimodule theory}\label{Sect=IntertwiningByBimodules} 

We recall the following definition from the fundamental work of \cite{popaStrongRigidityII12006, popaStrongRigidityII12006a}. In this section we let $M$ be a finite von Neumann algebra. 

\begin{definition}[Embedding $A \prec_{M}  B$]\label{Dfn=Intertwine} 
For von Neumann subalgebras $A \subseteq 1_A M 1_A,  B \subseteq 1_B M1_B$ we will say that \textit{$A$ embeds in $B$ inside $M$} (denoted by $A \prec_{M} B$) if one of the following equivalent statements holds:
 \begin{enumerate}
     \item\label{Item=Intertwine1}  There exist projections $p\in A$, $q\in B$, a  normal $\ast$-homomorphism $\theta: p A p \to q B q$ and a non-zero partial isometry $v\in q M p$ such that
	$\theta(x) v = vx$ for all $x\in p A p$;
 \item\label{Item=Intertwine2} There exists no net of unitaries $(u_i)_i$ in $A$ such that for any $x,y \in 1_A M 1_B$ we have that $\Vert \mathbb{E}_B(x^\ast u_i y) \Vert_2 \rightarrow 0$;
 \item  \label{Item=Intertwine3}  There exists a Hilbert $A$-$B$ bimodule $\calK \subseteq L^2(M, \tau)$ such that $\dim_{B} \calK < \infty$.  
 \end{enumerate}
 We say that \textit{$A$ embeds stably in $B$ inside $M$} (denoted by $A \prec_{M}^s B$) if  for any projection $r \in A' \cap M$ we have  $Ar \prec_{M} B$.
\end{definition}

\begin{lemma} [ Lemma 2.4 in \cite{drimbePrimeII1Factors2019}, see also \cite{vaesExplicitComputationsAll2008}]\label{Lem=StableEmbedding}
    Let $(M, \tau)$ be a tracial von Neumann algebra and let $P\subseteq 1_PM1_P$, $Q\subseteq 1_{Q}M1_{Q}$ and $R\subseteq 1_{R}M1_{R}$ be von Neumann subalgebras. Then the following hold:
    \begin{enumerate}
        \item \label{Item=StableEmbedding:transative}Assume that $P\prec_M Q$ and $Q\prec_M^s R$. Then $P\prec_M R$;
        \item \label{Item=StableEmbedding:condition} Assume that, for any non-zero projection $z\in \Nor_{1_{P}M1_{P}}(P)'\cap 1_{P}M1_{P}\subseteq \calZ(P'\cap 1_{P}M1_{P})$, we have $Pz\prec_M Q$. Then $P\prec_M^s Q$.
    \end{enumerate}
    In particular, we note that if $Q'\cap 1_{Q}M1_{Q}$ is a factor and $P\prec_{M}Q$ and $Q\prec_{M} R$ then $P\prec_{M} R$.
\end{lemma}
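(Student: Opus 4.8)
My plan is to derive all three claims from the equivalent formulations of \cref{Dfn=Intertwine}, played off against the definition of $\prec_{M}^{s}$. Parts (1) and (2) are the substantive statements, and the closing ``in particular'' clause will drop out as a corollary of (1) and (2).

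For (1) I would use the partial-isometry characterization. Since $P \prec_{M} Q$, after the standard reductions there are projections $p_0 \in P$, $q_0 \in Q$, a normal $\ast$-homomorphism $\theta\colon p_0 P p_0 \to q_0 Q q_0$ which we may take unital and injective onto $Q_1 := \theta(p_0 P p_0) \subseteq q_0 Q q_0$, and a partial isometry $v \in q_0 M p_0$ with $v^\ast v \in (p_0 P p_0)' \cap p_0 M p_0$, $v v^\ast = q_0 = \theta(p_0)$, and $\theta(x) v = v x$ for all $x \in p_0 P p_0$. The crucial point is that $Q_1$ sits inside the corner $q_0 Q q_0$ of $Q$, and that $Q \prec_{M}^{s} R$ forces $q_0 Q q_0 \prec_{M} R$: with the central support $z := z_Q(q_0) \in \calZ(Q) \subseteq Q' \cap 1_Q M 1_Q$ stability gives $Q z \prec_{M} R$, while $q_0$ is a full projection in $Qz$, and $\prec_{M}$ is invariant under passing to a full corner of the first algebra, so $q_0 Q q_0 = q_0 (Qz) q_0 \prec_{M} R$. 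Restricting a finite-index $(q_0 Q q_0)$--$R$ subbimodule of $L^2(M)$ to the subalgebra $Q_1$ (the bimodule characterization) yields $Q_1 \prec_{M} R$. Applying the partial-isometry characterization once more gives $q_1 \in Q_1$ (so $q_1 \le q_0$), $r_1 \in R$, a normal $\ast$-homomorphism $\sigma\colon q_1 Q_1 q_1 \to r_1 R r_1$ with $w^\ast w = q_1$ and $\sigma(y) w = w y$; setting $p_1 := \theta^{-1}(q_1)$ and $v_1 := w v$ one checks that $v_1 v_1^\ast = w w^\ast \neq 0$ (using $v v^\ast = q_0 \geq q_1 = w^\ast w$) and that $(\sigma \circ \theta)(x) v_1 = v_1 x$ for all $x \in p_1 P p_1$, which exhibits $P \prec_{M} R$. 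The main obstacle is purely the bookkeeping of support projections and the passage through the corner $q_0 Q q_0$ — which is exactly what the word ``stable'' in $\prec_{M}^{s}$ is designed to absorb.

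For (2) I would isolate a maximal obstruction. Consider $\calS := \{\, r \in P' \cap 1_P M 1_P : r \text{ a projection},\ Pr \not\prec_{M} Q \,\} \cup \{0\}$. By the bimodule characterization $\calS$ is closed under arbitrary suprema: if $P(\bigvee_\alpha r_\alpha) \prec_{M} Q$ is witnessed by a nonzero finite-index $P(\bigvee_\alpha r_\alpha)$--$Q$ subbimodule $\calK \subseteq L^2(M)$, then each $r_\alpha \calK$ is a finite-index $P r_\alpha$--$Q$ bimodule, hence $r_\alpha \calK = 0$ for every $\alpha$ (trivially if $r_\alpha = 0$, and because $Pr_\alpha \not\prec_{M} Q$ otherwise), so $\bigvee_\alpha r_\alpha$ acts as $0$ on $\calK$, contradicting $\calK \neq 0$. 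Thus $r_0 := \bigvee \calS$ lies in $\calS$ and is its greatest element. Next I claim $r_0$ is fixed by $\Nor_{1_P M 1_P}(P)$: for such a $w$, left multiplication by $w^\ast$ restricts to an isometric right-$Q$-module isomorphism carrying any $P(w r_0 w^\ast)$--$Q$ subbimodule of $L^2(M)$ onto a $P r_0$--$Q$ subbimodule of the same finite $Q$-dimension (using $w P w^\ast = P$ and $w^\ast w = 1_P$), so $P r_0 \not\prec_{M} Q$ implies $P(w r_0 w^\ast) \not\prec_{M} Q$, hence $w r_0 w^\ast \leq r_0$ by maximality, and symmetrically $w r_0 w^\ast = r_0$. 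Therefore $r_0$ is a projection in $\Nor_{1_P M 1_P}(P)' \cap 1_P M 1_P$, so the hypothesis forces $r_0 = 0$; equivalently, $Pr \prec_{M} Q$ for every nonzero projection $r \in P' \cap 1_P M 1_P$, which is the assertion $P \prec_{M}^{s} Q$.

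Finally, if $Q' \cap 1_Q M 1_Q$ is a factor then $\calZ(Q' \cap 1_Q M 1_Q) = \CC 1_Q$, so by the inclusion recorded in the statement $\Nor_{1_Q M 1_Q}(Q)' \cap 1_Q M 1_Q = \CC 1_Q$, whose only nonzero projection is $1_Q$. Since $Q \cdot 1_Q = Q \prec_{M} R$ by hypothesis, part (2) applied with $(P,Q)$ replaced by $(Q,R)$ gives $Q \prec_{M}^{s} R$, and then part (1) yields $P \prec_{M} R$.
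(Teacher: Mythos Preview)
The paper does not give its own proof of this lemma; it is quoted from \cite{drimbePrimeII1Factors2019} and \cite{vaesExplicitComputationsAll2008} as a known result. Your argument is correct and follows the standard route: in (1) you correctly pass through the corner $q_0 Q q_0$ via the central support of $q_0$ in $Q$ to exploit stability, then restrict the resulting finite right-$R$-dimensional bimodule to the image $Q_1$ and compose the intertwining data; in (2) the maximal-obstruction argument and the normalizer-invariance of the supremum are exactly the intended mechanism, and the ``in particular'' clause follows as you indicate.
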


\subsection{Simple graphs}
\label{prelim:simple-graph}

A \textit{simple graph} $\Gamma$ is an undirected graph without double edges and without edges between a vertex and itself. We write $v \in \Gamma$ for saying that $v$ is a vertex of $\Gamma$. We write $\Lambda \subseteq \Gamma$ to say that $\Lambda$ is a subgraph of $\Gamma$, meaning that the vertices of $\Lambda$ form a subset of the vertices of $\Gamma$, and two vertices in $\Lambda$ share an edge iff they share an edge in $\Gamma$. For $v \in \Gamma$ we set 
\[
\begin{split}
\Link_{\Gamma}(v) = & \{ w \in \Gamma \mid  v \textrm{ and } w \textrm{ share an edge} \}, \\
\Star_{\Gamma}(v) =&   \{ v \} \cup \Link_\Gamma(v).  
\end{split}
\]
This entails in particular that $v \not \in \Link_{\Gamma}(v)$.   For $\Lambda \subseteq \Gamma$ we set $\Link_{\Gamma}(\Lambda) = \bigcap_{v \in \Lambda} \Link(v)$ with the convention $\Link_{\Gamma}(\varnothing) = \Gamma$. We remark that $v\in \Link_{\Gamma}(\Link_{\Gamma}(v))$. When the graph $\Gamma$ is fixed, we will leave out the subscript and simply write $\Link(v),\Star(v),\Link(\Lambda)$. We denote $|\Gamma|$ for the size of the graph, i.e. the number of vertices. We will call a graph $\Gamma$ \textit{irreducible} if we can not write $\Gamma = \Gamma_1\cup \Gamma_2$ for some non-empty subgraphs $\Gamma_1,\Gamma_2\subseteq \Gamma$ with $\Link(\Gamma_1)=\Gamma_2$. We call a graph $\Gamma$ connected if there exists a path between any two distinct vertices $v,w\in \Gamma$. An irreducible component of a graph $\Gamma$ is a non-empty subgraph $\Lambda\subseteq \Gamma$ that is irreducible and satisfies $\Link_{\Gamma}(\Lambda) = \Gamma\setminus \Lambda$. 
A connected component of a graph $\Gamma$ is a subgraph $\Lambda\subseteq \Gamma$ that is connected and satisfies for $v\in \Lambda$ that $\Link_{\Gamma}(v)\subseteq \Lambda$. We call a graph $\Gamma$ complete if any two vertices in $\Gamma$ share an edge. A complete subgraph  $\Lambda\subseteq \Gamma$ is called a clique of $\Gamma$.

\begin{lemma} \label{Lem=CompleteAdd}
Let $\Gamma$ be a connected simple graph such that for every $v \in \Gamma$ we have that $\Link(v)$ is a clique. Then $\Gamma$ is a complete graph. 
\end{lemma}
\begin{proof}
Let $v \in \Gamma$. As $\Link(v)$ is a clique $\Star(v)$ is complete. Now take $w \in \Link(v)$. Then any point in $\Link(w)$ is a clique and as $v \in \Link(w)$ every point in $\Link(w)$ is connected to $v$  and thus contained in $\Star(v)$. We conclude that $\Star(v)$ is a connected component of $\Gamma$. As $\Gamma$ is connected it equals $\Star(v)$ and thus is complete.   
\end{proof}

\subsection{Right-angled Coxeter groups}
Let $\Gamma$ be a simple graph. We let $\calW_{\Gamma}$ be the right-angled Coxeter group corresponding to the graph $\Gamma$, that is, $\calW_{\Gamma}$ is the group generated by the vertex set of $\Gamma$, subject to the relations that $vw=wv$ whenever $v,w$ share an edge in $\Gamma$ and $v^2 = e$ with $e$ the group unit (thus $\calW_{\Gamma}$ is equal to the graph product group $*_{v,\Gamma}(\ZZ/2\ZZ)$). For $\vv\in \calW_{\Gamma}$ we write $|\vv|$ for the length of $\vv$. For $\vv_1,\ldots, \vv_n\in \calW_{\Gamma}$ we say that the expression $\vv=\vv_1\cdots \vv_n$ is \textit{reduced} if $|\vv| = |\vv_1| + \ldots + |\vv_n|$.   We call a word $\ww\in \calW_{\Gamma}$ a \textit{clique word} if it has a reduced expression $\ww = w_1\cdots w_n$ with $w_i\in \Gamma$ such that $w_i$ commutes with $w_j$ for any $1\leq i,j\leq n$. For  a reduced word $\vv =v_1\cdots v_n\in \calW_{\Gamma}$ we write 
\begin{equation}\label{Eqn=LinkForRef}
\Link_{\Gamma}(\vv) = \bigcap_{1\leq i\leq n}\Link_{\Gamma}(v_i).
\end{equation}
For a subset $S\subseteq \calW_{\Gamma}$ we denote
\[
\begin{split}
    \calW(S) &:= \{\ww\in \calW_{\Gamma}: \uu\ww \text{ is reduced for any } \uu\in S\}; \\
     \calW'(S) &:= \{\ww\in \calW_{\Gamma}: \ww\uu \text{ is reduced for any } \uu\in S\}. 
\end{split}
\]
We apply this notation when $S\subseteq \Gamma\subseteq \calW_{\Gamma}$ is a subgraph of $\Gamma$ or when $S = \{\uu\}\subseteq \calW_{\Gamma}$ is a singleton. In the latter case we simply write $\calW(\uu)$ respectively $\calW'(\uu)$ for $\calW(\{\uu\})$ respectively $\calW'(\{\uu\})$.

\subsection{Graph products}\label{prelim:graph-products}
We introduce the notion of operator algebraic graph products as in \cite{caspersGraphProductsOperator2017a}, \cite{mlotkowskiLfreeProbability2004},  where we mainly follow the first reference.

Given a simple graph $\Gamma$ and for each $v\in \Gamma$ given a unital C$^\ast$-algebra $A_v$ with a GNS-faithful state $\varphi_v$. Let $\calH_{v}= L^2(A_v,\varphi_v)$ and let $\xi_v\in \calH_v$ be the cyclic vector s.t. $\varphi_v(x) = \langle x\xi_v,\xi_v\rangle$. We denote $\mathring{\calH}_v = (\mathbb{C}\xi_v)^{\perp}$ and $\mathring{A}_{v} = \ker \varphi_v$. For $\vv\in \calW_{\Gamma}\setminus \{e\}$ we fix a reduced representative $(v_1,\ldots, v_n)$ which we call the minimal representative. Then we define the Hilbert space $\mathring{\calH}_{\vv} := \mathring{\calH}_{v_1}\otimes \cdots \otimes \mathring{\calH}_{v_n}$ and furthermore we put $\mathring{\calH}_{e} := \CC\Omega$. 
\begin{remark}
We recall the following notational convention, that appeared first in \cite[top of page 8]{caspersGraphProductKhintchine2021}. 
In case $(v_1', \ldots, v_n')$ is another reduced representative for $\vv$ there is a unique permutation $\sigma$ of the $\{1, \ldots, n\}$ such that $v_{\sigma(i)} = v_i'$ and if $i < j$ are such that $v_i = v_j$ then $\sigma(i) < \sigma(j)$. Thus there exists a unique unitary map 
\[
U_\sigma: \mathring{\calH}_{\vv} := \mathring{\calH}_{v_1}\otimes \cdots \otimes \mathring{\calH}_{v_n} \rightarrow  \mathring{\calH}_{\vv}' := 
\mathring{\calH}_{v_1'}\otimes \cdots \otimes \mathring{\calH}_{v_n'},
\]
mapping $\xi_1 \otimes \ldots \otimes \xi_n$ to $\xi_{\sigma(1)} \otimes \ldots \otimes \xi_{\sigma(n)}$. We will from this point identify the spaces $\mathring{\calH}_{\vv}$ and $\mathring{\calH}_{\vv}'$ through $U_\sigma$ and omit $U_\sigma$ in the notation.  We say that vectors are identified this way through shuffle equivalence. 
\end{remark}
We denote 
$$\calH_{\Gamma} := \bigoplus_{\vv \in \calW_{\Gamma}}\mathring{\calH}_{\vv},$$
where we let $\Omega = 1$ in  $\mathring{\calH}_{e}$ and $\mathring{\calH}_{e} = \mathbb{C}$. $\calH_{\Gamma}$  
  is the graph product of the vertex Hilbert spaces $(\calH_{v},\xi_v)$. 
Furthermore, for $S\subseteq \calW_{\Gamma}$ we write
\begin{align*}
    \calH(S):= \bigoplus_{\vv\in \calW(S)} \mathring{\calH}_{\vv}& &and& &
    \calH'(S):= \bigoplus_{\vv\in \calW'(S)} \mathring{\calH}_{\vv}.
\end{align*}
Again, we apply this notation mainly when $S\subseteq \Gamma\subseteq \calW_{\Gamma}$ is a subgraph of $\Gamma$ or when $S = \{\uu\}\subseteq \calW_{\Gamma}$ is a singleton and in the latter case we simply write $\calH(\uu)$ respectively $\calH'(\uu)$ for $\calH(\{\uu\})$ respectively $\calH'(\{\uu\})$.
For $\vv_1,\ldots, \vv_n \in \calW_{\Gamma}$ with $\vv = \vv_1\cdots \vv_n$ reduced, we let $$\calQ_{(\vv_1,\ldots,\vv_n)}:\mathring{\calH}_{\vv_1}\otimes \cdots \otimes \mathring{\calH}_{\vv_n}\to \mathring{\calH}_{\vv}$$ 
be the natural unitary from \cite{borstCCAPGraphProducts2024}. Observe that simply $\calQ_{(\vv_1,\ldots, \vv_n)}(\eta_1\otimes \cdots \otimes \eta_n) \simeq \eta_1\otimes \cdots \otimes \eta_n$ up to shuffle equivalence when $\vv_i\not=e$ for all $1\leq i\leq n$.

For a subgraph $\Lambda\subseteq \Gamma$ we define two unitaries:
\begin{align*}
	U_{\Lambda}: \calH_{\Lambda} \otimes \calH(\Lambda)\to \calH_{\Gamma} & \quad \textrm{as} &
	U_{\Lambda}|_{\mathring{\calH}_{\uu}\otimes \mathring{\calH}_{\ww}} &= \calQ_{(\uu,\ww)} & \text{ for } \uu\in \calW_{\Lambda}, \ww\in \calW(\Lambda),\\
	U_{\Lambda}' : \calH'(\Lambda)\otimes \calH_{\Lambda}\to \calH_{\Gamma} & \quad \textrm{as} &
	U_{\Lambda}|_{\mathring{\calH}_{\uu}\otimes \mathring{\calH}_{\ww}} &= \calQ_{(\uu,\ww)} & \text{ for } \uu\in \calW'(\Lambda), \ww\in \calW_{\Lambda},
\end{align*}
and for $u\in \Gamma$ simply write $U_{u}$ respectively $U_{u}'$ instead of $U_{\{u\}}$ respectively $U_{\{u\}}'$.

As in \cite{caspersGraphProductsOperator2017a} we define the embeddings
\begin{align*}
    \lambda_v: \bound(\calH_{v}) \to  \bound(\calH_{\Gamma})  & \quad \textrm{ as } & \lambda_v(a) &= U_{v}(a\otimes 1)U_v^*,\\
     \rho_v: \bound(\calH_{v})\to \bound(\calH_{\Gamma}) & \quad  \textrm{ as } & \rho_v(a) &= U_{v}'(1\otimes a)(U_v')^*.
\end{align*}
As in \cite{borstCCAPGraphProducts2024}, for a word $\vv\in \calW_{\Gamma}\setminus \{e\}$ with minimal representative $(v_1,\ldots v_n)$ we denote  
$$\mathring{\boldA}_{\vv} := \mathring{A}_{v_1}\otimes \cdots \otimes \mathring{A}_{v_n}$$
for the algebraic tensor product and put $\mathring{\boldA}_{e} = \CC1$.  We denote $\boldA_{\Gamma} := \bigoplus_{\vv\in \calW_{\Gamma}}\mathring{\boldA}_{\vv}$ for the algebraic direct sum.
We let $\lambda:\boldA_{\Gamma}\to \bound(\calH_{\Gamma})$ be the linear embedding, from \cite[Equation (18)]{borstCCAPGraphProducts2024}, which is defined as 
\[
a_{1} \otimes \ldots \otimes a_n \mapsto \lambda_{v_1}(a_1) \ldots \lambda_{v_n}(a_n),
\]
where $a_i \in \mathring{A}_{v_i}$. We also put $\lambda(1) = \Id_{\calH_{\Gamma}}$. We call an operator of the form $a=\lambda_{v_1}(a_1)\cdots \lambda_{v_n}(a_n)$ with $\vv=v_1\cdots v_n$ reduced and $a_i\in \mathring{A}_{v_i}$ for $1\leq i\leq n$ a \textit{reduced operator}. Oftentimes, we leave out the embeddings $\lambda_{v_i}$ and simply write $a = a_1\cdots a_n$.\\

\noindent \textit{Graph products of unital $\Cstar$-algebras.} We denote $A_{\Gamma}:=\astred_{v,\Gamma}(A_v,\varphi_v)$ for the norm closure of $\lambda(\boldA_{\Gamma})$ which we call the reduced graph product of the C$^\ast$-algebras $A_v$ with respect to states $\varphi_v$.
Then $\varphi_\Gamma(x) = \langle x \Omega, \Omega \rangle$  is the graph product state which restricts to $\varphi_v \circ \lambda_v^{-1}$ on $\lambda_v(A_v)$. 
The vertex C*-algebras $A_v$ are included in $A_\Gamma$ through $\lambda_v$ and we simply identify  $A_v$ as subalgebras of $A_\Gamma$. By the universal property \cite[Proposition 3.12, Proposition 3.22]{caspersGraphProductsOperator2017a} these inclusions extend to an inclusion of $A_\Lambda \subseteq A_\Gamma$, for $\Lambda\subseteq\Gamma$.   This inclusion admits a unique $\varphi_\Gamma$-preserving conditional expectation   $\mathbb{E}_{A_\Lambda}: A_\Gamma \rightarrow A_\Lambda$ that is determined by the following formula, where $ a_1 \ldots a_n$ is a reduced operator with $a_i \in \mathring{A}_{v_i}$, 
\[
 \mathbb{E}_{A_\Lambda}( a_1 \ldots a_n) = \left\{
\begin{array}{cc}
a_1 \ldots a_n, & \forall i, v_i \in \Lambda; \\
0, & \textrm{ otherwise.}
\end{array}
 \right. 
\]  

\noindent \textit{Graph products of von Neumann algebras.}
In case  $A_v, v \in \Gamma$ is a von Neumannn algebra, usually denoted by $M_v$,  equipped with normal faithful state $\varphi_v$, the graph product von Neumann algebra $M_\Gamma = *_{v,\Gamma}(M_v,\varphi_v)$ is constructed as the strong operator topology closure of the reduced C$^\ast$-algebraic graph product constructed above.  Moreover, we define $\mathring{M}_{\vv}$ as the closure of $\lambda(\mathring{\boldM}_{\vv})$ in the strong operator topology. We also define the graph product state $\varphi_\Gamma(x) = \langle x \Omega, \Omega \rangle$  which restricts to $\varphi_v \circ \lambda_v^{-1}$ on $\lambda_v(M_v)$. The conditional expectation $\mathbb{E}_{A_\Lambda}$ extends to a normal conditional expectation $\mathbb{E}_{M_\Lambda}: M_\Gamma \rightarrow M_\Lambda$.   \\

\noindent \textit{Amalgamated free product decomposition.}
Let $v \in \Gamma$ and set $\Gamma_1 = \Star(v), \Lambda = \Link(v), \Gamma_2 = \Gamma \backslash \{v\}$ and assume $\Gamma_2$ is not empty. Recall that we have natural inclusions $A_{\Lambda} \subseteq A_{\Gamma_1}$ and  $A_{\Lambda} \subseteq A_{\Gamma_2}$ with conditional expectations.  Also for the von Neumann algebras $M_{\Lambda} \subseteq M_{\Gamma_1}$ and  $M_{\Lambda} \subseteq M_{\Gamma_2}$ with conditional expectations. 
Then we have a reduced amalgamated free product decomposition  \cite{caspersGraphProductsOperator2017a}
\begin{equation}\label{Eqn=Amalgam}
A_\Gamma = A_{\Gamma_1} \ast_{A_{\Lambda}} A_{\Gamma_2}, \qquad 
M_\Gamma = M_{\Gamma_1} \ast_{M_{\Lambda}} M_{\Gamma_2}, 
\end{equation}
for both the reduced C$^\ast$-algebraic and von Neumann algebraic graph products.

\subsection{Creation and annihilation operators}
Let $\Gamma$ be a simple graph as before and for each $v \in \Gamma$ let $M_v$ be a von Neumann algebra with faithful normal state $\varphi_v$. 
Let $M_\Gamma$ be the graph product von Neumann algebra.  
As in \cite[Equation (27)]{borstCCAPGraphProducts2024}, for $\ww\in \calW_{\Gamma}$ we denote
\begin{align}
    \calS_{\ww} := \biggl\{(\ww_1,\ww_2,\ww_3)\in \calW_{\Gamma}^3: \begin{array}{l}\ww = \ww_1\ww_2\ww_3 \text{ reduced}, \\ \ww_2 \text{ is a clique word.}\end{array}\biggr\}
\end{align}
and $\calS_{\Gamma} = \bigcup_{\ww\in \calW_{\Gamma}}\calS_{\ww}$. For $v\in \Gamma$ let $P_v\in \bound(\calH_{\Gamma})$ be the projection onto $\calH(v)^\perp$. For $(\ww_1,\ww_2,\ww_3)\in \calS_{\Gamma}$ we let $\lambda_{(\ww_1,\ww_2,\ww_3)}: \boldM_{\Gamma}\to \bound(\calH_{\Gamma})$ be the linear map defined in \cite[Equation (26)]{borstCCAPGraphProducts2024}. This map satisfies $\lambda_{(\ww_1,\ww_2,\ww_3)}(a)=0$ for $a\in \mathring{\boldM}_{\vv}$ with $\vv\not=\ww$. Furthermore, for $a\in \mathring{\boldM}_{\ww}$ it is given as follows. Write $\ww_i = w_{i,1}\cdots w_{i,n_i}$ for $i=1,2,3$, and let $a_i := a_{i,1}\otimes \cdots \otimes a_{i,n_i}\in \mathring{\boldM}_{\ww_i}$ for $i=1,2,3$ be such that $\lambda(a)=\lambda(a_1)\lambda(a_2)\lambda(a_3)$. Then 
\[
\begin{split}
    \lambda_{(\ww_1,\ww_2,\ww_3)}(a) = &(P_{w_{1,1}}a_{1,1}P_{w_{1,1}}^{\perp})\cdots (P_{w_{1,n_1}}a_{1,n_1}P_{w_{1,n_1}}^{\perp}) \cdot\\ 
    &(P_{w_{2,1}}a_{2,1}P_{w_{2,1}})\cdots (P_{w_{2,n_1}}a_{2,n_2}P_{w_{2,n_2}})\cdot \\
&(P_{w_{3,1}}^{\perp}a_{3,1}P_{w_{3,1}})\cdots (P_{w_{3,n_1}}^{\perp}a_{3,n_3}P_{w_{3,n_3}}).
\end{split}
\]
Intuitively, for $a\in \boldM_{\Gamma}$ the operator $\lambda_{(\ww_1,\ww_2,\ww_3)}(a)$ is the part of $\lambda(a)$ that acts as annihilation on a word of type $\ww_3^{-1}$, acts diagonally on a word of type $\ww_2$ and acts as creation on a word of type $\ww_1$. We will use the following two result. \cref{prelim:lemma-actions-graph-products} was originally proven in \cite{caspersGraphProductKhintchine2021}.

\begin{lemma}[Lemma 2.2. in \cite{borstCCAPGraphProducts2024}]\label{prelim:lemma-actions-graph-products}
Let $\ww\in \calW_{\Gamma}$ and $a\in \mathring{M}_{\ww}$. We have
    \begin{align*}
        \lambda = \sum_{(\ww_1,\ww_2,\ww_3) \in \calS_{\Gamma}} \lambda_{(\ww_1,\ww_2,\ww_3)} &\quad\text{ and }\quad 
        \lambda(a) = \sum_{(\ww_1,\ww_2,\ww_3) \in \calS_{\ww}} \lambda_{(\ww_1,\ww_2,\ww_3)}(a).
    \end{align*}
\end{lemma}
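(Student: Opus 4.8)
The plan is to reduce the statement to an explicit computation of how a reduced operator acts on the Fock-type space $\calH_\Gamma = \bigoplus_{\uu \in \calW_\Gamma} \mathring\calH_\uu$ and then to match the outcome term by term against the definition of the operators $\lambda_{(\ww_1,\ww_2,\ww_3)}$. First I would observe that it is enough to prove the second identity for a simple reduced operator $a = \lambda_{v_1}(a_1)\cdots\lambda_{v_n}(a_n)$, with $(v_1,\ldots,v_n)$ the minimal representative of $\ww$ and $a_i \in \mathring M_{v_i}$: by linearity of $\lambda$ and of each $\lambda_{(\ww_1,\ww_2,\ww_3)}$ this covers all of $\lambda(\mathring{\boldM}_\ww)$, and the passage to its SOT-closure $\mathring M_\ww$ is routine. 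The first identity then follows by summing the second over all $\ww \in \calW_\Gamma$ and using that $\lambda_{(\ww_1,\ww_2,\ww_3)}$ vanishes on $\mathring{\boldM}_\vv$ whenever $\vv \neq \ww_1\ww_2\ww_3$, so that on any fixed element of $\boldM_\Gamma$ only finitely many terms on the right-hand side are nonzero.

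The computational input is the corner decomposition of a single vertex operator. Let $P_v$ denote the projection of $\calH_\Gamma$ onto the closed span of the words starting with $v$, i.e.\ onto $\calH(v)^\perp$. One verifies on simple tensors that, for $b \in \mathring M_v$,
\[
\lambda_v(b) = P_v\lambda_v(b)P_v^\perp + P_v\lambda_v(b)P_v + P_v^\perp\lambda_v(b)P_v,
\]
the fourth corner $P_v^\perp\lambda_v(b)P_v^\perp$ vanishing precisely because $\varphi_v(b) = 0$; the three surviving corners are the creation, diagonal and annihilation parts of $\lambda_v(b)$, which send a vector in $\mathring\calH_\uu$ into $\mathring\calH_{v\uu}$ when $v\uu$ is reduced (creation) and, when $\uu = v\uu'$ is reduced, back into $\mathring\calH_\uu$ (diagonal) or into $\mathring\calH_{\uu'}$ (annihilation). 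Substituting this into $\lambda(a) = \lambda_{v_1}(a_1)\cdots\lambda_{v_n}(a_n)$ and expanding, I would write $\lambda(a)$ as a sum over $\epsilon \in \{c,d,a\}^n$ of the ordered products $\lambda_{v_1}^{\epsilon_1}(a_1)\cdots\lambda_{v_n}^{\epsilon_n}(a_n)$ of the corresponding corners of the $\lambda_{v_i}(a_i)$. On the other hand, by its very definition $\lambda_{(\ww_1,\ww_2,\ww_3)}(a)$ is exactly such an ordered product of corners, in the order dictated by the reduced expression $\ww_1\ww_2\ww_3$, the letters of $\ww_1$ carrying creation corners $P(\cdot)P^\perp$, those of $\ww_2$ diagonal corners $P(\cdot)P$, and those of $\ww_3$ annihilation corners $P^\perp(\cdot)P$; so the problem reduces to identifying the two sums.

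This identification is the heart of the matter, and is where the reduced-word combinatorics of the right-angled Coxeter group $\calW_\Gamma$ enters; I would carry it out by induction on $n = |\ww|$. Peeling off $\lambda_{v_1}^{\epsilon_1}(a_1)$ on the left and composing with each term $\lambda_{(\ww_1',\ww_2',\ww_3')}(a_2\otimes\cdots\otimes a_n)$, $(\ww_1',\ww_2',\ww_3')\in\calS_{v_2\cdots v_n}$, provided by the inductive hypothesis, one shows that each of the three corners of $\lambda_{v_1}(a_1)$ composed with each such triple yields either $0$ or a term $\lambda_{(\ww_1,\ww_2,\ww_3)}(a)$ for an explicitly determined $(\ww_1,\ww_2,\ww_3) \in \calS_\ww$, and that the resulting assignment is, on its nonzero outputs, a bijection onto $\calS_\ww$. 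The nonvanishing of a term and the clique condition on $\ww_2$ are both forced by reducedness of $\ww = v_1\cdots v_n$: an annihilation corner survives against later creations only if $v_1$ genuinely commutes past the newly created letters, and the letters that act diagonally must all have been simultaneously exposed at the front of the word, hence pairwise commute and form a clique word; identifying the surviving term with the explicit $\lambda_{(\ww_1,\ww_2,\ww_3)}$ is then a routine check through shuffle equivalence. The main obstacle is exactly this bookkeeping: because of repeated and commuting vertices, several distinct corner patterns may collapse onto one word-triple (all but one of them then vanishing), and one has to verify that the nonzero contributions match $\calS_\ww$ exactly and with coefficient one.
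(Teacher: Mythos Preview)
The paper does not give its own proof of this lemma: it is quoted verbatim as \cite[Lemma~2.2]{borstCCAPGraphProducts2024}, with the remark that it was originally proven in \cite{caspersGraphProductKhintchine2021}. So there is no proof in the present paper to compare against; what you have written is a self-contained sketch of the argument.

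Your sketch is the natural one and matches the strategy of the cited references: split each single-letter operator $\lambda_v(b)$, $b\in\mathring M_v$, into its three surviving corners $P_v\lambda_v(b)P_v^\perp$, $P_v\lambda_v(b)P_v$, $P_v^\perp\lambda_v(b)P_v$ (the fourth vanishes since $\varphi_v(b)=0$), expand the product $\lambda_{v_1}(a_1)\cdots\lambda_{v_n}(a_n)$ over $\{c,d,a\}^n$, and then identify the surviving corner-patterns with the triples $(\ww_1,\ww_2,\ww_3)\in\calS_\ww$ by an induction on $n$. Your description of the combinatorial bookkeeping---that commuting letters force the diagonal block to be a clique word, and that shuffle equivalence is needed to reorder the corners into the canonical $\ww_1\ww_2\ww_3$ form---is accurate.

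One small caution: the step ``the passage to the SOT-closure $\mathring M_\ww$ is routine'' deserves a sentence of justification. The maps $\lambda_{(\ww_1,\ww_2,\ww_3)}$ are defined in the paper on the algebraic space $\boldM_\Gamma$, and the decomposition of $a$ as $a_1a_2a_3$ with $a_i\in\mathring M_{\ww_i}$ is \emph{a priori} only available for pure tensors. The clean way to extend is to observe that for fixed $(\ww_1,\ww_2,\ww_3)$ one can realise $\lambda_{(\ww_1,\ww_2,\ww_3)}(a)$ as $Q_1\,\lambda(a)\,Q_2$ for suitable orthogonal projections $Q_1,Q_2$ on $\calH_\Gamma$ (namely onto words with prescribed initial segments $\ww_1\ww_2$ and $\ww_3^{-1}\ww_2$ respectively, minus the contributions of the other triples with the same in/out behaviour); this makes each summand SOT-continuous in $a$ and the finite sum over $\calS_\ww$ then extends automatically. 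This is implicit in the cited papers, but in a self-contained write-up it should be stated.
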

\begin{lemma}\label{lemma:word_action_non_zero}
    Let $\ww,\vv\in \calW_{\Gamma}$, let $(\ww_1,\ww_2,\ww_3)\in \calS_{\ww}$  and let $a\in \mathring{M}_{\ww}$ and $\eta\in \mathring{\calH}_{\vv}$. Then  
    $\lambda_{(\ww_1,\ww_2,\ww_3)}(a)\eta \in \mathring{\calH}_{\uu}$
    where $\uu = \ww_1\ww_3\vv$. Moreover, if $\lambda_{(\ww_1,\ww_2,\ww_3)}(a)\eta$ is non-zero, then $\vv$ and $\uu$ start with $\ww_3^{-1}\ww_2$ and $\ww_1\ww_2$ respectively.
    \begin{proof}
        Let $\ww,\vv, (\ww_1,\ww_2,\ww_3), a$ and $\eta$ be a stated.  Suppose $\lambda_{(\ww_1,\ww_2,\ww_3)}(a)\eta$ is non-zero. We may assume that $a$ is of the form $a = a_1a_2a_3$  with $a_i \in \mathring{M}_{\ww_i}$.
        We use the comments stated after \cite[Equation (25)]{borstCCAPGraphProducts2024}. These imply that 
        $\lambda_{(e,e,\ww_3)}(a_3)\eta\in \mathring{\calH}_{\ww_3\vv}$ and that $\vv$ starts with $\ww_3^{-1}$.
        Moreover the comments then imply that $\lambda_{(e,\ww_2,\ww_3)}(a_2a_3)\eta = \lambda_{(e,\ww_2,e)}(a_2)\lambda_{(e,e,\ww_3)}(a_3)\eta\in \mathring{\calH}_{\ww_3\vv}$ and that $\ww_3\vv$ starts with $\ww_2$. This already shows that $\vv$ starts with $\ww_3^{-1}\ww_2$ (observe that $\ww_2 = \ww_2^{-1}$). Last, the comments imply that $\lambda_{(\ww_1,\ww_2,\ww_3)}(a) = \lambda_{(\ww_1,e,e)}(a_1)\lambda_{(e,\ww_2,\ww_3)}(a_2a_3)\eta\in \mathring{\calH}_{\ww_1\ww_3\vv}$ and that $\ww_1\ww_3\vv$ starts with $\ww_1$. Hence $\uu = \ww_1\ww_3\vv$ starts with $\ww_1\ww_2$.
    \end{proof}
\end{lemma}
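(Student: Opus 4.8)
The plan is to reduce to an elementary reduced operator and then peel off the creation, diagonal and annihilation pieces one at a time. First, since $\lambda_{(\ww_1,\ww_2,\ww_3)}$ is linear and both assertions are preserved under linear combinations and under passing to limits in $a$, it suffices to treat a reduced operator $a = \lambda(a_1)\lambda(a_2)\lambda(a_3)$ with $a_i \in \mathring{\boldM}_{\ww_i}$, where $\ww_i = w_{i,1}\cdots w_{i,n_i}$ are the minimal representatives. From the defining formula for $\lambda_{(\ww_1,\ww_2,\ww_3)}$, or more conveniently from the compatibility identities recorded after \cite[Equation (25)]{borstCCAPGraphProducts2024}, one obtains the factorization
\[
\lambda_{(\ww_1,\ww_2,\ww_3)}(a) \;=\; \lambda_{(\ww_1,e,e)}(a_1)\,\lambda_{(e,\ww_2,e)}(a_2)\,\lambda_{(e,e,\ww_3)}(a_3),
\]
so the whole statement reduces to tracking the behaviour of three elementary operators on the grading $\calH_\Gamma = \bigoplus_{\zz\in\calW_\Gamma}\mathring{\calH}_{\zz}$.

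Second, I would invoke the three facts about these elementary operators from \cite{borstCCAPGraphProducts2024} (the comments after Equation (25)): the pure annihilation operator $\lambda_{(e,e,\ww_3)}(a_3)$ sends $\mathring{\calH}_{\vv}$ into $\mathring{\calH}_{\ww_3\vv}$ and is zero unless $\ww_3\vv$ is reduced, equivalently unless $\vv$ starts with $\ww_3^{-1}$; the diagonal operator $\lambda_{(e,\ww_2,e)}(a_2)$ sends $\mathring{\calH}_{\zz}$ into itself and is zero unless $\zz$ starts with the clique word $\ww_2$; the pure creation operator $\lambda_{(\ww_1,e,e)}(a_1)$ sends $\mathring{\calH}_{\zz}$ into $\mathring{\calH}_{\ww_1\zz}$ and is zero unless $\ww_1\zz$ is reduced. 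Composing in the order annihilate, diagonal, create, starting from $\eta\in\mathring{\calH}_{\vv}$, we land successively in $\mathring{\calH}_{\ww_3\vv}$, then $\mathring{\calH}_{\ww_3\vv}$ again, then $\mathring{\calH}_{\ww_1\ww_3\vv}$; this gives $\uu = \ww_1\ww_3\vv$. For the \emph{moreover} part, non-vanishing of the composite forces non-vanishing at every stage, so $\vv$ starts with $\ww_3^{-1}$ and $\ww_3\vv$ starts with $\ww_2$ (using $\ww_2 = \ww_2^{-1}$ for clique words), whence $\vv$ starts with $\ww_3^{-1}\ww_2$; and $\ww_1(\ww_3\vv)$ is reduced, which together with $\ww_3\vv$ starting with $\ww_2$ shows $\uu$ starts with $\ww_1\ww_2$.

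I do not anticipate a real obstacle: the combinatorics of creation and annihilation operators in graph products is already worked out in \cite{borstCCAPGraphProducts2024}, and this lemma is close to a formal corollary. The only points needing a little attention are checking that the factorization into the three elementary operators is literally supplied by \cite{borstCCAPGraphProducts2024} (so that intermediate expressions such as $\lambda_{(e,\ww_2,\ww_3)}(a_2 a_3) = \lambda_{(e,\ww_2,e)}(a_2)\lambda_{(e,e,\ww_3)}(a_3)$ are legitimate), and observing that once the annihilation step is non-zero the intermediate word $\ww_3\vv$ is genuinely reduced, so that the creation step is applied to a well-defined homogeneous vector and the expression $\uu = \ww_1\ww_3\vv$ refers to a reduced product.
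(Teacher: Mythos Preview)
Your proposal is correct and follows essentially the same approach as the paper: reduce to $a = a_1a_2a_3$ with $a_i \in \mathring{M}_{\ww_i}$, invoke the factorization $\lambda_{(\ww_1,\ww_2,\ww_3)}(a) = \lambda_{(\ww_1,e,e)}(a_1)\lambda_{(e,\ww_2,e)}(a_2)\lambda_{(e,e,\ww_3)}(a_3)$ from \cite{borstCCAPGraphProducts2024}, and then track the effect of each piece on the grading using the comments after \cite[Equation~(25)]{borstCCAPGraphProducts2024}. The paper's argument is organized slightly differently (it groups the diagonal and annihilation parts as $\lambda_{(e,\ww_2,\ww_3)}(a_2a_3)$ before applying the creation part), but the substance is identical.
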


\section{Rigid graphs}\label{Sect=RigidGraph} 
In this section we introduce the notion of rigid graphs. 

\begin{definition}[Rigid graphs] \label{def:rigid-graphs}
We say that a simple graph $\Gamma$ is \textit{rigid} if for every $v \in \Gamma$ we have $\Link_{\Gamma}(\Link_{\Gamma}(v)) = \{v\}$. When $|\Gamma|\geq 2$ this means in particular for each $v \in \Gamma$ that $\Link_{\Gamma}(v)$ is not empty. 
\end{definition}

\begin{example}\label{example:rigid-graphs}
We give some examples of rigid graphs which are easy to check:
    \begin{enumerate}
 \item By the convention $\Link_{\Gamma}(\varnothing) = \Gamma$ it follows that if $\vert \Gamma \vert = 1$ then $\Gamma$ is rigid. 

        \item Any complete graph is rigid.
        \item\label{Item=rigid-graphsZn} For $n\geq 2$ let $\ZZ_n = \{1,\ldots, n\}$ be the cyclic graph of length $n$, i.e. $i,j$ share an edge if and only if $|i-j| =1$ or $\{i,j\} = \{1,n\}$. Then for $n\geq 5$ the graph $\ZZ_n$ is rigid. Note also that $\ZZ_2$ and $\ZZ_3$ are rigid, but $\ZZ_4$ is not.
        \item Consider $\ZZ$ as the infinite cyclic graph, i.e. $i,j$ share an edge in $\ZZ$ if and only if $|i-j|=1$. Then $\ZZ$ is rigid.
    \end{enumerate}
\end{example}

We will now define the notion of graph products of graphs, and construct a large variety of rigid graphs.
\begin{definition}\label{def:graph-product-graph}
    Let $\Gamma$ be a simple graph and for each $v\in \Gamma$ let $\Lambda_{v}$ be a simple graph. We denote $\Lambda_{\Gamma} := *_{v,\Gamma}\Lambda_v$ for the graph product of the graphs $\{\Lambda_{v}\}_{v\in \Gamma}$. This is defined as the graph with vertices set 
    \begin{align}
    \{(v,s): v\in \Gamma, s\in \Lambda_{v}\},
    \end{align}
    where vertices $(v,s)$ and $(w,t)$ share an edge in $\Lambda_{\Gamma}$ if either $v=w$ and $t,s$ share an edge in $\Lambda_{v}$ or $v\not=w$ and $v,w$ share an edge in $\Gamma$.
\end{definition}
We observe that $\Lambda_{\Gamma}$ contains the graphs $\Lambda_v$ for $v\in \Gamma$ as (mutually disjoint) subgraphs. Furthermore, we observe that if we take $|\Lambda_v|=1$ for each $v\in \Gamma$ then $\Lambda_{\Gamma} = \Gamma$.

\begin{remark}\label{remark:graph-product-of-graphs-consistency}
    For a simple graph $\Gamma$ and graphs $\{\Lambda_{v}\}_{v\in \Gamma}$, and groups $G_{w}$ and von Neumann algebras $(N_w,\varphi_w)$ with normal GNS-faithful state, with  $w\in \Lambda_v$, we have
\begin{align}
    *_{w,\Lambda_{\Gamma}}G_w &= *_{v,\Gamma}(*_{w,\Lambda_v} G_{w})\\
    *_{w,\Lambda_{\Gamma}}(N_w,\varphi_w),
 &= *_{v,\Gamma}(*_{w,\Lambda_v} (N_w,\varphi_w)).
\end{align}
Indeed, this follows by the defining universal property of graph products of groups as well as its analogue for operator algebras that can be found in \cite[Proposition 3.22]{caspersGraphProductsOperator2017a}.
\end{remark}

\begin{lemma}\label{lemma:graph-product-of-rigid-graphs}
    Let $\Gamma$ be a simple graph and for each $v\in \Gamma$ let $\Lambda_{v}$ be a non-empty graph. 
    Then the graph product graph $\Lambda_{\Gamma}$ is rigid if and only if for each vertex $v\in \Gamma$ the graph $\Lambda_v$ is rigid and the vertex $v$ satisfies at least one of the following conditions:
    \begin{enumerate}
        \item $\Link_{\Gamma}(\Link_{\Gamma}(v)) = \{v\};$ \label{item:rigid-graph-products:rigidity-at-vertex}
        \item $|\Lambda_{v}|\geq 2$.\label{item:rigid-graph-products:size-vertex-graph}
    \end{enumerate} 
    \begin{proof} We may assume $\Gamma$ is non-empty.
     First, suppose the conditions in the lemma are satisfied. We show $\Lambda_{\Gamma}$ is rigid.
        Let $(v,j) \in \Lambda_{\Gamma}$ for some $v\in \Gamma$, $j\in \Lambda_v$. Let $(z,k)\in \Link_{\Lambda_{\Gamma}}(\Link_{\Lambda_{\Gamma}}(v,j))$. We need 
 to show that $(z,k) = (v,j)$. 
 
 Suppose first that $|\Lambda_v|\geq 2$. Then, as $\Lambda_v$ is rigid, we have that $\Link_{\Lambda_v}(j)$ is non-empty. Let $l\in \Link_{\Lambda_v}(j)$. Then $(v,l)\in \Link_{\Lambda_{\Gamma}}(v,j)$ and similarly $(z,k)\in \Link_{\Lambda_{\Gamma}}(v,l)$. 
        If $z\not=v$ then by the definition of the graph product graph this implies $z\in \Link_{\Gamma}(v)$. But then, again by the definition of the graph product graph, we obtain $(z,k)\in \Link_{\Lambda_{\Gamma}}(v,j)$. However, as $(z,k)\not\in \Link_{\Lambda_{\Gamma}}(z,k)$, this contradicts that 
        $(z,k)\in \Link_{\Lambda_{\Gamma}}(\Link_{\Lambda_{\Gamma}}(v,j))$. We conclude that $z=v$. Hence, since $(z,k)\in \Link_{\Lambda_{\Gamma}}(v,l)$ we obtain that $k\in \Link_{\Lambda_v}(l)$. Since this holds true for all $l\in  \Link_{\Lambda_v}(j)$, we obtain that $k\in \Link_{\Lambda_v}(\Link_{\Lambda_v}(j))$, so that $k=j$ by rigidity of $\Lambda_v$. Thus $(z,k) = (v,j)$.

        Now suppose that $|\Lambda_v|<2$, i.e. $\Lambda_v = \{j\}$, and just assume that $\Link_{\Gamma}(\Link_{\Gamma}(v)) = \{v\}$. If $|\Gamma|=1$ then $\Lambda_{\Gamma} = \Lambda_v$ is rigid. Thus we can assume $|\Gamma|\geq 2$. Then $\Link_{\Gamma}(v)$ must be non-empty since $\Link(\varnothing)=\Gamma\not=\{v\}$.
        Take $w\in \Link_{\Gamma}(v)$. Then, as by assumption $\Lambda_w$ is non-empty, we can pick $i\in \Lambda_w$. Now $(w,i)\in \Link_{\Lambda_{\Gamma}}(v,j)$, by the definition of the graph $\Lambda_{\Gamma}$. Thus $(z,k)\in \Link_{\Lambda_{\Gamma}}(w,i)$. If $w=z$ then $z\in \Link_{\Gamma}(v)$ and so also $(z,k)\in \Link_{\Lambda_{\Gamma}}(v,j)$. But as $(z,k)\not\in \Link_{\Lambda_{\Gamma}}(z,k)$, this contradicts that $(z,k)\in \Link_{\Lambda_{\Gamma}}(\Link_{\Lambda_{\Gamma}}(v,j))$. Thus $w\not=z$, and therefore, as  $(z,k) \in \Link_{\Lambda_{\Gamma}}(w,i)$, we obtain that $z\in \Link_{\Gamma}(w)$. Therefore, since this holds for any $w\in \Link_{\Gamma}(v)$, we obtain that $z\in \Link_{\Gamma}(\Link_{\Gamma}(v)) = \{v\}$ and thus $z=v$. Thus as $k\in \Lambda_{z} = \Lambda_v = \{j\}$, we obtain $(z,k) = (v,j)$. 

        We now prove the reverse direction. First, suppose there is a vertex $v\in \Gamma$ such that $\Lambda_{v}$ is not rigid. Take $j\in \Lambda_{v}$ such that $\Link_{\Lambda_{v}}(\Link_{\Lambda_v}(j))\not=\{j\}$ so that we can choose $k\in \Link_{\Lambda_{v}}(\Link_{\Lambda_v}(j))$ with $k\not=j$. Now, one can check that $(v,k)\in \Link_{\Lambda_{\Gamma}}(\Link_{\Lambda_{\Gamma}}(v,j))$, hence $\Lambda_{\Gamma}$ is not rigid.

        Now suppose there is vertex $v\in \Gamma$ such that $\Link_{\Gamma}(\Link_{\Gamma}(v))\not= \{v\}$ and $|\Lambda_v|=1$, i.e. $\Lambda_v=\{j\}$ for some $j$.  Then $\Link_{\Lambda_{\Gamma}}(v,j) = \bigcup_{w\in \Link_{\Gamma}(v)}\{(w,i): i \in \Lambda_{w}\}$.
        We can choose a $z\in \Link_{\Gamma}(\Link_{\Gamma}(v))$ with $z\not=v$ and let $k\in \Lambda_z$. Then we see that $(z,k)\in \Link_{\Lambda_{\Gamma}}(\Link_{\Lambda_{\Gamma}}(v,j))$, which shows $\Lambda_{\Gamma}$ is not rigid.
    \end{proof}
\end{lemma}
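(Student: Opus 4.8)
The plan is to reduce the whole statement to one explicit formula for links in the graph $\Lambda_\Gamma$ and then run both implications by elementary case analysis. The formula I would record first, straight from \cref{def:graph-product-graph}, is that for a vertex $(v,j)\in\Lambda_\Gamma$,
\[
\Link_{\Lambda_\Gamma}(v,j)=\{(v,l):l\in\Link_{\Lambda_v}(j)\}\ \cup\ \{(w,i):w\in\Link_\Gamma(v),\ i\in\Lambda_w\}.
\]
Iterating this gives a workable handle on $\Link_{\Lambda_\Gamma}(\Link_{\Lambda_\Gamma}(v,j))$, and the lemma becomes a matter of tracking which pairs $(z,k)$ can lie in it. I will use repeatedly the basic fact that a vertex is never in its own link.

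For ``$\Leftarrow$'', assume each $\Lambda_v$ is rigid and each $v$ satisfies (1) or (2); fix $(v,j)$ and $(z,k)\in\Link_{\Lambda_\Gamma}(\Link_{\Lambda_\Gamma}(v,j))$, and show $(z,k)=(v,j)$ by splitting on $|\Lambda_v|$. If $|\Lambda_v|\ge 2$, rigidity of $\Lambda_v$ makes $\Link_{\Lambda_v}(j)$ nonempty; picking $l$ there gives $(v,l)\in\Link_{\Lambda_\Gamma}(v,j)$, hence $(z,k)\in\Link_{\Lambda_\Gamma}(v,l)$. If $z\ne v$ the edge rule forces $z\in\Link_\Gamma(v)$, whence $(z,k)\in\Link_{\Lambda_\Gamma}(v,j)$, contradicting that $(z,k)\notin\Link_{\Lambda_\Gamma}(z,k)$; so $z=v$, and then $k\in\Link_{\Lambda_v}(l)$ for every $l\in\Link_{\Lambda_v}(j)$, i.e.\ $k\in\Link_{\Lambda_v}(\Link_{\Lambda_v}(j))=\{j\}$. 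If instead $|\Lambda_v|=1$, say $\Lambda_v=\{j\}$, I invoke (1): when $|\Gamma|=1$ we have $\Lambda_\Gamma=\Lambda_v$, a single-vertex graph, hence rigid; otherwise $\Link_\Gamma(v)\ne\emptyset$ (else $\Link_\Gamma(\Link_\Gamma(v))=\Gamma\ne\{v\}$), and for any $w\in\Link_\Gamma(v)$ and any $i\in\Lambda_w$ we get $(w,i)\in\Link_{\Lambda_\Gamma}(v,j)$, so $(z,k)\in\Link_{\Lambda_\Gamma}(w,i)$; the same ``own link'' argument excludes $z=w$, forcing $z\in\Link_\Gamma(w)$ for all such $w$, i.e.\ $z\in\Link_\Gamma(\Link_\Gamma(v))=\{v\}$, and $k\in\Lambda_v=\{j\}$.

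For ``$\Rightarrow$'' I argue by contraposition, exhibiting in each failure case a vertex $(z,k)\ne(v,j)$ in $\Link_{\Lambda_\Gamma}(\Link_{\Lambda_\Gamma}(v,j))$. If some $\Lambda_v$ is not rigid, pick $j$ and $k\ne j$ with $k\in\Link_{\Lambda_v}(\Link_{\Lambda_v}(j))$; then $(v,k)$ is adjacent to every $(v,l)$ with $l\in\Link_{\Lambda_v}(j)$ and, being a distinct vertex over an edge of $\Gamma$, to every $(w,i)$ with $w\in\Link_\Gamma(v)$, so $(v,k)$ works. If instead every $\Lambda_v$ is rigid but some $v$ has $|\Lambda_v|=1$ and $\Link_\Gamma(\Link_\Gamma(v))\ne\{v\}$, pick $z\in\Link_\Gamma(\Link_\Gamma(v))$ with $z\ne v$ and any $k\in\Lambda_z$; here $\Link_{\Lambda_\Gamma}(v,j)=\{(w,i):w\in\Link_\Gamma(v),\ i\in\Lambda_w\}$, and since $z\in\Link_\Gamma(\Link_\Gamma(v))$ forces $z\notin\Link_\Gamma(v)$ (a vertex is not adjacent to itself), $z$ is $\Gamma$-adjacent to every such $w$, so $(z,k)$ works.

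The routine part is the edge-rule bookkeeping in the formula above; the one genuinely delicate point is keeping the case split clean — in particular remembering to peel off the degenerate subcase $|\Gamma|=1$ in the forward ``$|\Lambda_v|=1$'' branch (so that $\Link_\Gamma(v)$ may be taken nonempty), and, in the reverse direction, using that $z\in\Link_\Gamma(\Link_\Gamma(v))$ forces $z\notin\Link_\Gamma(v)$, which is exactly what keeps the witness $(z,k)$ out of its own link. I expect that bookkeeping, rather than any conceptual obstacle, to be the main difficulty.
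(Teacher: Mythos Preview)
Your proposal is correct and follows essentially the same approach as the paper's proof: the same explicit description of $\Link_{\Lambda_\Gamma}(v,j)$, the same case split on $|\Lambda_v|$ in the forward direction (including peeling off $|\Gamma|=1$), and the same two contrapositive witnesses in the reverse direction. The only cosmetic difference is that you spell out the link formula and the observation $z\notin\Link_\Gamma(v)$ explicitly, whereas the paper leaves these implicit in the phrase ``one can check''.
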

By the result of \cref{lemma:graph-product-of-rigid-graphs}, it is possible to construct many different rigid graphs using the rigid graphs from \cref{example:rigid-graphs}. 

\begin{remark}\label{remark:rigid-components}
    Let $\Gamma$ be a rigid graph. Then any connected component of $\Gamma$ is rigid and any irreducible component of $\Gamma$ is rigid. Indeed, if $\Lambda_1,\ldots, \Lambda_n$ are the irreducible components of $\Gamma$ and we let $\Pi = \{1,\ldots, n\}$ be a complete graph, then $\Gamma = *_{v,\Pi}\Lambda_v = \Lambda_{\Pi}$. Hence, by \cref{lemma:graph-product-of-rigid-graphs} and rigidity of $\Gamma$ we obtain that the graphs $\Lambda_1,\ldots, \Lambda_n$ are rigid. 
    Similarly, if we let $\Lambda_1',\ldots, \Lambda_m'$ be connected components of $\Gamma$ and we let $\Pi' =\{1,\ldots,m\}$ be a graph with no edges, then $\Gamma = *_{v,\Pi'}\Lambda_v' = \Lambda_{\Pi'}'$ so that by \cref{lemma:graph-product-of-rigid-graphs}  and rigidity of $\Gamma$ we obtain that $\Lambda_1',\ldots, \Lambda_m'$ are rigid.
\end{remark}

We now define the core of a graph.
 \begin{definition}[Core of a graph]\label{Dfn=DoubleGraph}
  Let $\Gamma$ be a simple graph. We say that two vertices $v,w \in \Gamma$ are {\it core equivalent}, with notation $v \sim w$,  if $\Star(v)   = \Star(w)$.  Let $\overline{v}$ be the core equivalence class of $v \in \Gamma$.  We define the {\it core} of $\Gamma$, with notation $\mathcal{C}\Gamma$, as the graph whose vertices set is the set of all core equivalence classes of $\Gamma$. The edges set of $\mathcal{C}\Gamma$ is defined by declaring that  $\overline{v}, \overline{w} \in \mathcal{C}\Gamma$ with $\overline{v} \not = \overline{w}$ share an edge in $\mathcal{C}\Gamma$ if and only if $v,w$ share an edge in $\Gamma$.
\end{definition}
We remark that $\calC\calC\Gamma = \calC\Gamma$, that is, the core of the core of a graph is equal to the core of the graph. In the following lemma we show that any graph can be written as a graph product over its core.
\begin{lemma}\label{Lem=CoreReduction}
    Let $\Gamma$ be a simple graph. For $\overline{v}\in \calC\Gamma$ let $\Lambda_{\overline{v}}$ be the complete graph of size $|\Lambda_{\overline{v}}| = |\overline{v}|$. Then $\Gamma \simeq \Lambda_{\calC \Gamma}$. Furthermore, if $\calC\Gamma$ is rigid, then so is $\Gamma$.
    \begin{proof}
        Indeed, as for $\overline{v}\in \calC\Gamma$ we have $|\overline{v}| = |\Lambda_{\overline{v}}|$, we can build a bijection $\iota_{\overline{v}}: \overline{v}\to \Lambda_{\overline{v}}$. We then define the bijection
        $\iota: \Gamma\to \Lambda_{\calC\Gamma}$ as $\iota(v) = (\overline{v},\iota_{\overline{v}}(v))$. We show this is a graph isomorphism. Let $v\not=w\in \Gamma$. If $v,w$ do not share an edge in $\Gamma$ then $\overline{v}\not= \overline{w}$ and  $\overline{v}, \overline{w}$ do not share an edge in $\calC\Gamma$. Hence $(\overline{v},\iota_{\overline{v}}(v))$ and $(\overline{w},\iota_{\overline{w}}(w))$ do not share an edge in $\Lambda_{\calC\Gamma}$. 
        Now suppose $v,w$ do share an edge in $\Gamma$. If $\overline{v} = \overline{w}$ then since $\Lambda_{\overline{v}} = \Lambda_{\overline{w}}$ is complete we obtain that $(\overline{v},\iota_{\overline{v}}(v))$ and $(\overline{w},\iota_{\overline{w}}(w))$ share an edge in $\Lambda_{\calC\Gamma}$ . On the other hand, if $\overline{v}\not= \overline{w}$, then $\overline{v}, \overline{w}$ share an edge in $\calC\Gamma$ so that also $(\overline{v},\iota_{\overline{v}}(v))$ and $(\overline{w},\iota_{\overline{w}}(w))$ share an edge in $\Lambda_{\calC\Gamma}$. This shows that $\iota$ is an isomorphism and hence $\Gamma\simeq \Lambda_{\calC\Gamma}$.
        
        We prove the last statement. Suppose $\calC\Gamma$ is rigid.  Since for each $\overline{v}\in \calC\Gamma$ the graph $\Lambda_{\overline{v}}$ is rigid (since it is complete) and since by rigidity of $\calC\Gamma$ we have $\Link_{\calC\Gamma}(\Link_{\calC\Gamma}(\overline{v})) = \overline{v}$, we obtain by \cref{lemma:graph-product-of-rigid-graphs} that $\Lambda_{\calC\Gamma}$ is rigid. Thus $\Gamma \simeq \Lambda_{\calC\Gamma}$ is rigid.
    \end{proof}
\end{lemma}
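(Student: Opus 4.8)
The plan is to establish the two assertions in turn: first construct the isomorphism $\Gamma\simeq\Lambda_{\calC\Gamma}$ explicitly, and then read off rigidity of $\Gamma$ from rigidity of $\calC\Gamma$ using \cref{lemma:graph-product-of-rigid-graphs}. The one preliminary observation I would record is that any two \emph{distinct} core-equivalent vertices $v,w\in\Gamma$ are adjacent: since $\Star(v)=\Star(w)$ and $w\in\Star(w)$, we get $w\in\Star(v)=\{v\}\cup\Link(v)$, and $w\ne v$ forces $w\in\Link(v)$. This is exactly what makes the complete graph the correct choice for $\Lambda_{\overline v}$.

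For the isomorphism, because $|\overline v|=|\Lambda_{\overline v}|$ for each class $\overline v\in\calC\Gamma$ I would fix a bijection $\iota_{\overline v}\colon\overline v\to\Lambda_{\overline v}$ and set $\iota\colon\Gamma\to\Lambda_{\calC\Gamma}$, $\iota(v)=(\overline v,\iota_{\overline v}(v))$, which is patently a bijection of vertex sets. To see it is a graph isomorphism I would take $v\ne w$ in $\Gamma$ and split into two cases. If $\overline v=\overline w$, then $v,w$ are adjacent in $\Gamma$ by the observation above, while $\iota(v)$ and $\iota(w)$ have equal first coordinate and distinct second coordinates, hence are adjacent in $\Lambda_{\calC\Gamma}$ because $\Lambda_{\overline v}$ is complete; so the adjacency statuses agree. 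If $\overline v\ne\overline w$, then by \cref{def:graph-product-graph} $\iota(v),\iota(w)$ are adjacent in $\Lambda_{\calC\Gamma}$ iff $\overline v,\overline w$ are adjacent in $\calC\Gamma$, which by \cref{Dfn=DoubleGraph} holds iff $v,w$ are adjacent in $\Gamma$; so again the statuses agree. Hence $\iota$ is an isomorphism and $\Gamma\simeq\Lambda_{\calC\Gamma}$.

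For the second statement, assume $\calC\Gamma$ is rigid; the empty case being trivial, we may assume $\calC\Gamma$ is nonempty. Each $\Lambda_{\overline v}$ is complete, hence nonempty and rigid by \cref{example:rigid-graphs}, and rigidity of $\calC\Gamma$ gives $\Link_{\calC\Gamma}(\Link_{\calC\Gamma}(\overline v))=\{\overline v\}$ for every $\overline v\in\calC\Gamma$, so condition~(1) of \cref{lemma:graph-product-of-rigid-graphs} holds at every vertex. That lemma then yields that $\Lambda_{\calC\Gamma}$ is rigid, and therefore so is the isomorphic graph $\Gamma$.

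The proof is essentially bookkeeping; the only step requiring any thought is the $\overline v=\overline w$ case of the edge check, where one must notice that core equivalence of distinct vertices already implies adjacency — this is why $\Lambda_{\overline v}$ is taken complete rather than edgeless. Once the isomorphism is in hand, the rigidity claim is an immediate application of \cref{lemma:graph-product-of-rigid-graphs}.
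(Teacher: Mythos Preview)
Your proof is correct and follows essentially the same approach as the paper: the same explicit map $\iota(v)=(\overline v,\iota_{\overline v}(v))$ is constructed, and rigidity is deduced via \cref{lemma:graph-product-of-rigid-graphs}. The only cosmetic difference is that you split the edge-check by whether $\overline v=\overline w$ (stating the observation that distinct core-equivalent vertices are adjacent upfront), whereas the paper splits by whether $v,w$ share an edge and uses the contrapositive of that observation implicitly.
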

We make two remarks on \cref{Lem=CoreReduction}
\begin{remark}
We remark that if a simple graph $\Gamma$ is rigid, then its core is, in general, not rigid. Indeed, let $\Pi = \{v,w\}$ denote the simple graph of size $2$ with no edges and let $\Lambda_v, \Lambda_w$ denote complete graphs of size $|\Lambda_v|,|\Lambda_w|\geq 2$. Then the graph $\Gamma:= \Lambda_{\Pi}$ is rigid by \cref{lemma:graph-product-of-rigid-graphs} but $\calC\Gamma = \Pi$ is not rigid.    
\end{remark}

\begin{remark}\label{remark:class-CC1}
If a graph $\Gamma$ is in the class CC$_1$ as described in \cite{chifanRigidityNeumannAlgebras2022} then $\Gamma$ is rigid. Indeed if $\Gamma$ is  CC$_1$ then its core $\mathcal{C}\Gamma$, which is in fact also CC$_1$,  is given by the graph of \cite[Eqn. (1.1)]{chifanRigidityNeumannAlgebras2022}. This graph is rigid as can be checked directly from the very definition of rigidity.  We can then apply  \cref{Lem=CoreReduction} to obtain that $\Gamma$ is rigid. It thus follows that the graphs considered in the current paper form a much  richer class   than \cite{chifanRigidityNeumannAlgebras2022}.
\end{remark}

\section{Graph products of nuclear C$^\ast$-algebras}\label{Sect=NuclearGraphProduct}
The aim of this section is to give a sufficient condition for when the reduced graph product of nuclear C$^\ast$-algebras is nuclear again. Such a result cannot hold in full generality as it is clear from the fact that the free product of amenable discrete groups is non-amenable as soon as one group has at least 2 elements and the other group has at least 3 elements.  Hence the stability result in this section   requires particular conditions on the states with respect to which we take the graph product. Such a result was obtained by Ozawa in \cite{OzawaNuclearFree} for amalgamated free products and we use the amalgamated free product decomposition of graph products \eqref{Eqn=Amalgam}  to show that the same holds for graph products. 

\vspace{0.3cm}

Let $\Gamma$ be a simple graph. 
Let $(A_v, \varphi_v)$ with $v \in \Gamma$ be unital C$^\ast$-algebras $A_v$,  GNS-faithful states $\varphi_v$ and GNS-representation $\pi_v$ of $A_v$ on the Hilbert space $\calH_v = L^2(A_v, \varphi_v)$.

For Hilbert C$^\ast$-modules we refer to \cite{lanceHilbertModulesToolkit1995}.  
Consider the reduced graph product C$^\ast$-algebras $(A_\Lambda, \varphi_\Lambda)$ for any $\Lambda \subseteq \Gamma$ which is a subalgebra of $(A_\Gamma, \varphi_\Gamma)$ with conditional expectation $\mathbb{E}_\Lambda$.

\begin{definition}
We construct a Hilbert C$^\ast$-module  $\calH_{\mathbb{E}_\Lambda}$ as the completion of $A_\Gamma$ with respect to the $A_\Lambda$-valued inner product
\[
\langle a, b \rangle_{\mathbb{E}_\Lambda}  =  \mathbb{E}_\Lambda(b^\ast a)
\]
and the corresponding Hilbert $A_\Lambda$-module norm $\Vert a \Vert = \Vert \langle a, a \rangle^{\frac{1}{2}} \Vert$.    Let $\pi_{\mathbb{E}_\Lambda}: A_\Gamma \rightarrow \bound(\calH_{\mathbb{E}_\Lambda})$ be the GNS-representation of $A_\Gamma$ on the Hilbert C$^\ast$-module  $\calH_{\mathbb{E}_\Lambda}$ by adjointable operators. Then $\pi_{\mathbb{E}_\Lambda}$  is given by extending left multiplication 
\[
\pi_{\mathbb{E}_\Lambda}(x) a = xa, x \in A_\Gamma, a \in A_\Gamma \subseteq \calH_{\mathbb{E}_\Lambda}
\]
and we shall omit $\pi_{\mathbb{E}_\Lambda}$  in the notation if the module action is clear.
\end{definition}

\begin{definition}
An operator on the Hilbert $A_\Lambda$-module $\calH_{\mathbb{E}_\Lambda}$ is called {\it finite rank} if it is  in the linear span of operators of the form 
\[
\theta_{\eta_2, \eta_1}: \xi \mapsto  \eta_2 \langle \xi, \eta_1 \rangle_{\mathbb{E}_\Lambda}, \qquad \eta_i \in \calH_{\mathbb{E}_\Lambda}. 
\]
The closure of the space of all finite rank operators are defined as the space of {\it compact operators} $\compact(\mathcal{H}_{\mathbb{E}_\Lambda})$. 
\end{definition}

\begin{lemma}\label{Lem=ContainsCompacts}
Suppose there exists $v \in \Gamma$ such that $\Gamma = \Star(v)$.  
If $\pi_v(A_v)$ contains $\Kompact(\calH_v)$ then $\pi_{\mathbb{E}_{\Link(v)}}(A_{\Star(v)})$ contains   $\Kompact(\calH_{\mathbb{E}_{\Link(v)}})$. 
\end{lemma}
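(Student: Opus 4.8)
The claim reduces a statement about graph products to one about amalgamated free products, for which Ozawa's work applies. The hypothesis is $\Gamma = \Star(v)$, so writing $\Lambda = \Link(v)$ we have $\Gamma = \{v\} \cup \Lambda$ and $v$ is connected to every vertex of $\Lambda$. In this situation the graph product decomposes as a \emph{tensor-type} amalgam: since $v$ commutes with all of $\Lambda$, one gets $A_\Gamma = A_v \otimes_? A_\Lambda$ in an appropriate amalgamated sense — more precisely, using \eqref{Eqn=Amalgam} with $\Gamma_1 = \Star(v) = \Gamma$ is degenerate, so instead I would directly exploit that $A_{\Star(v)}$ is generated by commuting copies of $A_v$ and $A_\Lambda$ with $\mathbb{E}_\Lambda(A_v \cdot A_\Lambda)$ behaving like a conditional expectation onto $A_\Lambda$. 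The cleanest route: observe that the Hilbert module $\calH_{\mathbb{E}_\Lambda}$ over $A_\Lambda$ decomposes as $\calH_{\mathbb{E}_\Lambda} \cong \calH_v \otimes A_\Lambda$ (exterior tensor product of the Hilbert space $\calH_v = L^2(A_v,\varphi_v)$ with the trivial Hilbert module $A_\Lambda$), because a reduced operator in $A_{\Star(v)}$ can be written with its $v$-part pulled to the front and everything else landing in $A_\Lambda$ after applying $\mathbb{E}_\Lambda$.

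**Key steps.** First I would make precise the identification $\calH_{\mathbb{E}_\Lambda} \cong \calH_v \otimes A_\Lambda$ as Hilbert $A_\Lambda$-modules. On the dense subspace spanned by reduced operators $a_1 \cdots a_n$ with letters in $\mathring{A}_{v}$ and the vertex algebras of $\Lambda$: since $v$ commutes with all of $\Lambda$, such an operator is shuffle-equivalent to $a_{v} \cdot b$ where $a_v \in A_v$ and $b \in A_\Lambda$; then $\mathbb{E}_\Lambda((a_v b)^\ast (a_v' b')) = b^\ast \varphi_v(a_v^\ast a_v') b' = \langle a_v, a_v'\rangle_{\calH_v} \, b^\ast b'$, which is exactly the inner product on the exterior tensor product $\calH_v \otimes A_\Lambda$. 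Under this identification, the left action of $A_v \subseteq A_{\Star(v)}$ becomes $\pi_v(\cdot) \otimes 1$ (acting only on the $\calH_v$ leg), and the left action of $A_\Lambda$ becomes something that at least contains $1 \otimes (\text{left mult. of } A_\Lambda)$. Second, recall the standard fact (see Lance's book) that for the exterior tensor product $\calH_v \otimes A_\Lambda$ one has $\compact(\calH_v \otimes A_\Lambda) \cong \compact(\calH_v) \otimes \mathrm{id}_{A_\Lambda}$ — i.e. the compact operators on the module are generated by $\compact(\calH_v)$ acting on the first leg — more precisely $\compact(\calH_v \otimes A_\Lambda) = \overline{\mathrm{span}}\{ k \otimes 1 : k \in \compact(\calH_v)\}$ as a $C^\ast$-algebra acting on the module (here the "$1$" denotes the identity adjointable operator). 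Third, since by hypothesis $\pi_v(A_v) \supseteq \compact(\calH_v)$, the image $\pi_{\mathbb{E}_\Lambda}(A_{\Star(v)})$ contains $\pi_{\mathbb{E}_\Lambda}(A_v) = \pi_v(A_v) \otimes 1 \supseteq \compact(\calH_v) \otimes 1$, and taking norm closure inside the adjointable operators gives $\compact(\calH_v \otimes A_\Lambda) = \compact(\calH_{\mathbb{E}_\Lambda})$, as desired.

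**Main obstacle.** The technical heart is the first step: rigorously setting up the unitary (module) isomorphism $\calH_{\mathbb{E}_\Lambda} \cong \calH_v \otimes A_\Lambda$ and checking that left multiplication by $A_v$ corresponds to $\pi_v \otimes 1$. One must be careful that $\calH_v$ here is the GNS \emph{Hilbert space} $L^2(A_v,\varphi_v)$, not $A_v$ itself, so the "letters from $\mathring{A}_v$" description only gives a dense submodule and one completes; the shuffle-equivalence identifications from the preliminaries (pulling the $v$-letter to the front) are what make $a_1\cdots a_n \mapsto a_v \otimes b$ well-defined, and one should note $a_v$ is really the class of $a_v \xi_v$ in $\calH_v$. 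A secondary point to verify is that the action of $A_v$ extends from reduced operators to all of $A_v$ continuously in the module norm, and that the formula for the inner product indeed gives the exterior tensor product structure rather than something more entangled — this is where commutativity of $v$ with $\Lambda$ is essential and where one invokes that $\mathbb{E}_\Lambda$ is $\varphi_\Gamma$-preserving. Once the identification is in hand, the remaining two steps are essentially citations to Hilbert-module folklore and are routine.
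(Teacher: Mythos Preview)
Your overall strategy---identify $\calH_{\mathbb{E}_\Lambda}$ with the exterior tensor product $\calH_v \otimes A_\Lambda$ and then read off the compacts---matches the paper's approach exactly. The module identification and the computation of the inner product are fine.

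The gap is in your second step. Your claimed ``standard fact'' is false: the compact operators on the Hilbert $A_\Lambda$-module $\calH_v \otimes A_\Lambda$ are \emph{not} $\compact(\calH_v) \otimes 1$. The correct statement (which is what Lance actually gives) is $\compact(E \otimes F) \cong \compact(E) \otimes \compact(F)$ for the exterior tensor product of Hilbert modules. Here $F = A_\Lambda$ regarded as a right module over itself, and $\compact(A_\Lambda) = A_\Lambda$ acting by left multiplication, so $\compact(\calH_v \otimes A_\Lambda) = \compact(\calH_v) \otimes A_\Lambda$. You can see this directly: the rank-one operator $\theta_{\eta_1 \otimes a_1,\, \eta_2 \otimes a_2}$ sends $\xi \otimes b$ to $\langle \xi, \eta_2\rangle\, \eta_1 \otimes a_1 a_2^\ast b$, which is $\theta_{\eta_1,\eta_2}^{\calH_v} \otimes L_{a_1 a_2^\ast}$, not something of the form $k \otimes 1$. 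Consequently your third step, which only uses $\pi_{\mathbb{E}_\Lambda}(A_v) = \pi_v(A_v) \otimes 1$, does not reach all of $\compact(\calH_{\mathbb{E}_\Lambda})$.

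The fix is easy and you already set it up: you noted that the left action of $A_\Lambda$ is $1 \otimes (\text{left mult.})$, but then never used it. Combining, $\pi_{\mathbb{E}_\Lambda}(A_{\Star(v)})$ contains the products $(\compact(\calH_v) \otimes 1)(1 \otimes A_\Lambda)$, whose closed linear span is $\compact(\calH_v) \otimes A_\Lambda = \compact(\calH_{\mathbb{E}_\Lambda})$. The paper does essentially this, but phrased more concretely: it observes that the rank-one projection $p_v$ onto $\mathbb{C}\xi_v$ lies in $\pi_v(A_v)$, hence $p_v \otimes 1 \in \pi_{\mathbb{E}_\Lambda}(A_{\Star(v)})$, and then checks by hand that every rank-one module operator $\theta_{a\otimes b,\, c\otimes d}$ equals $\pi_{\mathbb{E}_\Lambda}(a\otimes b)(p_v \otimes 1)\pi_{\mathbb{E}_\Lambda}(c^\ast \otimes d^\ast)$.
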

\begin{proof}
We have that $A_{\Star(v)} = A_v \otimes A_{\Link(v)}$ where the tensor product is the minimal tensor product and under this correspondence we have 
\[
\langle a \otimes b, c \otimes d \rangle_{\mathbb{E}_{\Link(v)}} = 
\varphi_v(c^\ast a) d^\ast b, \qquad a,c \in A_v, b,d \in A_{\Link(v)}.
\]
We thus may identify $\calH_{\mathbb{E}_{\Link(v)}}$ as the closure of the algebraic tensor product $\calH_{v} \otimes A_{\Link(v)}$ with respect to the inner product $\langle \xi \otimes b, \eta \otimes d \rangle = \langle \xi, \eta \rangle d^\ast b$. 
Further, under this correspondence  $\pi_{\mathbb{E}_{\Link(v)}}  = \pi_v \otimes \pi_l$ 
where $\pi_l(x)a = xa, x,a \in A_{\Link(v)}$ is the left multiplication.   Let $p_v$ be the projection of $\calH_v$ onto $\mathbb{C} \xi_v$. Then $p_v \otimes 1$ equals the extension of  $\mathbb{E}_{\Link(v)}$ as a bounded map on $\calH_{\mathbb{E}_{\Link(v)}}$ identified with the closure of  $\calH_{v} \otimes A_{\Link(v)}$.  As by assumption $p_v$ lies in  $\pi_v(A_v)$ it thus follows that $p_v \otimes 1$ lies in   $\pi_{\mathbb{E}_{\Link(v)}}(A_{\Star(v)})$. It thus follows that for  $a,c,x \in A_v, b,d,y \in A_{\Link(v)}$ we have 
\[
\theta_{a \otimes b, c \otimes d}(x \otimes y) = 
\varphi_v(c^\ast x)  a \otimes    b d^\ast y
 = \pi_{\mathbb{E}_{\Link(v)}}(a \otimes b) (p_v \otimes 1) \pi_{\mathbb{E}_{\Link(v)}} (c^\ast \otimes d^\ast) (x \otimes y). 
\]
The right hand side is contained in  $  \pi_{\mathbb{E}_{\Link(v)}}(A_{\Star(v)})$. Hence  $  \pi_{\mathbb{E}_{\Link(v)}}(A_{\Star(v)})$  contains a dense set of finite rank operators and hence must contain all compact operators. 
\end{proof}

\begin{theorem}\label{thm:nuclearity-graph-productNEW}
Let $\Gamma$ be a  simple graph. If for each $v\in\Gamma$, $A_v$ is nuclear and   $\pi_v(A_v)$   contains the    compact operators $\compact(\calH_v)$, then $A_\Gamma$  is nuclear. 
\end{theorem}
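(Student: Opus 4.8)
The plan is to reduce to the case that $\Gamma$ is finite and then argue by induction on the number of vertices $|\Gamma|$, the engine of the induction being the amalgamated free product decomposition \eqref{Eqn=Amalgam}, Ozawa's nuclearity theorem for reduced amalgamated free products from \cite{OzawaNuclearFree}, and \cref{Lem=ContainsCompacts}.

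First I would dispose of the case of an infinite graph: every element of $\boldA_\Gamma$ lies in $\boldA_\Lambda$ for some finite $\Lambda\subseteq\Gamma$, so $A_\Gamma$ is the norm closure of the increasing union of the subalgebras $A_\Lambda$ over finite $\Lambda$, and nuclearity is preserved under such inductive limits; hence it suffices to treat finite $\Gamma$. For finite $\Gamma$ I induct on $|\Gamma|$. For $|\Gamma|\leq 1$ there is nothing to prove, as $A_\emptyset=\CC$ and $A_{\{v\}}=A_v$. For the inductive step assume $|\Gamma|\geq 2$ and split into two cases. If there is a vertex $v$ with $\Star(v)=\Gamma$, then $v$ is adjacent to every other vertex, so $A_\Gamma=A_{\Star(v)}=A_v\otimes_{\min}A_{\Link(v)}$ (the spatial identification already used in the proof of \cref{Lem=ContainsCompacts}); here $A_v$ is nuclear by hypothesis and $A_{\Link(v)}$ is nuclear by the induction hypothesis applied to the proper subgraph $\Link(v)$ (the standing hypotheses on the vertex algebras are clearly inherited by every subgraph), so $A_\Gamma$ is nuclear as a minimal tensor product of nuclear algebras.

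If instead $\Star(v)\subsetneq\Gamma$ for every vertex $v$, fix any $v$ and put $\Gamma_1=\Star(v)$, $\Lambda=\Link(v)$ and $\Gamma_2=\Gamma\setminus\{v\}$, which is non-empty; then \eqref{Eqn=Amalgam} gives $A_\Gamma=A_{\Gamma_1}\ast_{A_\Lambda}A_{\Gamma_2}$, with the canonical GNS-faithful conditional expectations onto $A_\Lambda$ coming from \cite{caspersGraphProductsOperator2017a}. Since $\Gamma_1,\Lambda,\Gamma_2$ each have fewer than $|\Gamma|$ vertices, the induction hypothesis shows $A_{\Gamma_1},A_\Lambda,A_{\Gamma_2}$ are nuclear. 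Applying \cref{Lem=ContainsCompacts} with ambient graph $\Star(v)$ — inside which $\Star(v)$ is the star of $v$ and the link of $v$ is still $\Lambda$, and using the hypothesis $\pi_v(A_v)\supseteq\compact(\calH_v)$ — shows that $\pi_{\mathbb{E}_\Lambda}(A_{\Gamma_1})$ contains $\compact(\calH_{\mathbb{E}_\Lambda})$. Ozawa's theorem \cite{OzawaNuclearFree} for amalgamated free products then gives that $A_\Gamma=A_{\Gamma_1}\ast_{A_\Lambda}A_{\Gamma_2}$ is nuclear, which closes the induction.

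The step I expect to require the most care is the application of Ozawa's amalgamated-free-product theorem in the recursive case, namely checking all of its hypotheses simultaneously: that the amalgam $A_\Lambda$ and the two factors $A_{\Gamma_1},A_{\Gamma_2}$ are nuclear (induction hypothesis, using that the vertex hypotheses descend to subgraphs), that the conditional expectations onto $A_\Lambda$ are GNS-faithful (built into the graph product construction), and — the key point — that $\compact(\calH_{\mathbb{E}_\Lambda})\subseteq\pi_{\mathbb{E}_\Lambda}(A_{\Gamma_1})$, which is exactly \cref{Lem=ContainsCompacts} but invoked for the star graph $\Star(v)$ rather than for $\Gamma$ itself. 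One must also make sure that the Hilbert module $\calH_{\mathbb{E}_\Lambda}$ and representation $\pi_{\mathbb{E}_\Lambda}$ of \cref{Lem=ContainsCompacts} coincide with the data attached to $\mathbb{E}_{A_\Lambda}\colon A_{\Gamma_1}\to A_\Lambda$ appearing in \eqref{Eqn=Amalgam}; this is automatic, since that conditional expectation, and hence the associated module and GNS representation, depend only on the inclusion $A_\Lambda\subseteq A_{\Gamma_1}$ and not on the ambient graph.
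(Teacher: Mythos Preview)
Your proof is correct and follows essentially the same approach as the paper: reduce to finite graphs via inductive limits, then induct on $|\Gamma|$ using the amalgamated free product decomposition \eqref{Eqn=Amalgam}, \cref{Lem=ContainsCompacts}, and Ozawa's theorem. The only cosmetic difference is the case split: the paper distinguishes ``$\Gamma$ complete'' (handled as a single minimal tensor product of all vertex algebras) versus ``$\Gamma$ not complete'' (pick one $v$ with $\Star(v)\neq\Gamma$), whereas you distinguish ``some $v$ has $\Star(v)=\Gamma$'' (peel off $A_v$ as a tensor factor and recurse on $\Link(v)$) versus ``no such $v$''; both partitions work equally well and the inductive engine is identical.
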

\begin{proof}
It suffices to prove the theorem for $\Gamma$ a finite graph as inductive limits of inclusions of nuclear C$^\ast$-algebras are nuclear.

Our proof proceeds by induction to the number of vertices in $\Gamma$. So we assume that for any $\Lambda \subsetneq \Gamma$ we have proved that $A_\Lambda$ is nuclear. We shall prove that $A_\Gamma$ is nuclear.

If $\Gamma$ is complete then $A_\Gamma$ is the minimal tensor product of $A_v, v \in \Gamma$ which is nuclear as each $A_v$ is nuclear. 

Assume $\Gamma$ is not complete. Then we may take $v \in \Gamma$ such that $\Star(v) \not =  \Gamma$. In this case 
\[
A_\Gamma = A_{\Star(v) } \ast_{A_{\Link(v)}} A_{\Gamma \backslash \{v\}},
\]
where all graph products and amalgamated free products are reduced. By induction  $A_{\Star(v) }$ and $A_{\Gamma \backslash \{v\}}$ are nuclear. Further the GNS-representation of $A_{\Star(v) }$ with respect to its conditional expectation onto $A_{\Link(v)}$ contains all compact operators by \cref{Lem=ContainsCompacts}. Hence \cite[Theorem 1.1]{OzawaNuclearFree} concludes that $A_\Gamma$  is nuclear.  
\end{proof}

\section{Relative amenability, quasi-normalizers and embeddings in graph products}\label{Sect=technical-results-graph-products}
In this section we establish the required machinery we need  throughout the paper.
First in \cref{subsection:calculating-conditional-expectation} we discuss how to calculate conditional expectations in graph products. This will be used in \cref{subsection:RelativeAmenability} to prove a result concerning relative amenability in graph products.
The calculations from \cref{subsection:calculating-conditional-expectation} will furthermore be used in \cref{sub:embedding-quasi-normalizers} to keep control of certain quasi-normalizers in graph products. Last, in \cref{subsection:unitary-conjugacy-in-graph-products} we apply results from \cref{sub:embedding-quasi-normalizers} to establish a unitary embedding of certain subalgebras in graph products. 

\subsection{Calculating conditional expectations in graph products}\label{subsection:calculating-conditional-expectation}

For a simple graph $\Gamma$, a graph product $(M_{\Gamma}, \varphi_\Gamma) = *_{v,\Gamma}(M_v,\varphi_v)$ and subgraphs $\Gamma_1,\Gamma_2\subseteq \Gamma$ we discuss how to calculate iterated conditional expectations of the form $\EE_{M_{\Gamma_2}}(a\EE_{M_{\Gamma_1}}(x)b)$ for $a,b,x\in M_{\Gamma}$. Such calculations have been done in \cite{borstClassificationRightangledCoxeter2023} in the setting of right-angled Coxeter groups (i.e. the setting $M_v = \calL(\ZZ/2\ZZ)$ for all $v\in \Gamma$) and in \cite{charlesworthStructureGraphProduct2024} for general graph products. We state \cref{lem:Expectation-for-n=2} which largely follows from \cite[Lemma 3.17]{charlesworthStructureGraphProduct2024}.

In this section we use the following notation. For $\Lambda \subseteq \Gamma$ any subgraph there exists a unique normal $\varphi_{\Gamma}$-perserving conditional expecation of $M_\Gamma$ onto $M_\Lambda$ that we denote by  $\EE_{M_{\Lambda}}$. Then $\EE_{M_{\Lambda}}(x) = 0$ for any reduced operator $x \in M_\Gamma \backslash M_\Lambda$, c.f. \cite[Remark 2.14]{caspersGraphProductsOperator2017a}. 
Recall that for an element $\uu$ in the right angled Coxeter group $\calW_{\Gamma}$ we have defined $\Link(\uu)$ as a subgraph of $\Gamma$ in \eqref{Eqn=LinkForRef}.

\begin{proposition}\label{lem:Expectation-for-n=2}
	Let $\Gamma$ be a simple graph and let $\Gamma_1,\Gamma_2$ be its subgraphs. Let $\uu,\vv\in \calW_{\Gamma}$ and write $\uu=\uu_l\uu_c\uu_r$ and $\vv = \vv_l\vv_c\vv_r$ (both reduced) with $\uu_l,\vv_l\in \calW_{\Gamma_1}$, $\uu_r,\vv_r\in \calW_{\Gamma_2}$ and such that $\uu_c,\vv_c$ do not start with letters from $\Gamma_1$ and do not end with letters from $\Gamma_2$.
	
	For $v\in \Gamma$ let $(M_{v},\varphi_{v})$ be a von Neumann algebra with a normal faithful state.
	Let $a = a_la_ca_r$ and $b = b_lb_cb_r$ where $a_l\in \mathring{M}_{\uu_l}$, $a_c\in \mathring{M}_{\uu_c}$, $a_r\in \mathring{M}_{\uu_r}$ and  $b_l\in \mathring{M}_{\vv_l}$, $b_c\in \mathring{M}_{\vv_c}$, $b_r\in \mathring{M}_{\vv_r}$.
	Then for $x\in M_{\Gamma}$ we have
	\[
		\EE_{M_{\Gamma_2}}(a^*\EE_{M_{\Gamma_1}}(x)b) = \varphi(a_c^*b_c)a_r^*\EE_{M_{\Gamma_1\cap \Gamma_2\cap \Link(\uu_c)}}(a_l^*xb_l)b_r.
	\]
	\begin{proof}
		As $a_l^*,b_l\in M_{\Gamma_1}$ and $a_r^*,b_r\in M_{\Gamma_2}$ we have
		\begin{align}\label{eq:calculate-expectation-eq1}
			\EE_{M_{\Gamma_2}}(a^*\EE_{M_{\Gamma_1}}(x)b) &=a_r^*	\EE_{M_{\Gamma_2}}(a_c^*\EE_{M_{\Gamma_1}}(a_l^*xb_l)b_c)b_r.
		\end{align}
  We will now apply \cite[Lemma 3.17]{charlesworthStructureGraphProduct2024}. In that lemma we take   $V_1 = \Gamma_1, V_2 = \Gamma_2$ and $w = \uu_c$, so that $U = \Gamma_1 \cap \Gamma_2 \cap \Link(\uu_c)$. Further, we take $y = \EE_{M_{\Gamma_1}}(a_l^*xb_l)\in M_{\Gamma_1}$ and further note that $a_{c}\in \mathring{M}_{\uu_c}$ and $b_c\in \mathring{M}_{\vv_c}$.  Applying   \cite[Lemma 3.17]{charlesworthStructureGraphProduct2024} yields
		\begin{equation}
  \begin{split}
			\EE_{M_{\Gamma_2}}(a_c^*\EE_{M_{\Gamma_1}}(a_l^*xb_l)b_c)
			&= \varphi(a_c^*b_c)\EE_{M_{\Gamma_1\cap \Gamma_2\cap \Link(\uu_c)}}(\EE_{M_{\Gamma_1}}(a_l^*xb_l))\\
			&= \varphi(a_c^*b_c)\EE_{M_{\Gamma_1\cap \Gamma_2\cap \Link(\uu_c)}}(a_l^*xb_l).
			\label{eq:calculate-expectation-eq3}
		\end{split}
  \end{equation}
		This proves the statement by combining \eqref{eq:calculate-expectation-eq1} and \eqref{eq:calculate-expectation-eq3}.
		
	\end{proof}	
\end{proposition}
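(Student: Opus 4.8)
The plan is to peel off the ``outer'' factors $a_l,a_r,b_l,b_r$ using bimodularity of the two conditional expectations, and then reduce everything to the second part of \cref{lemma:expectation-of-triple}. First I would write $a^\ast = a_r^\ast a_c^\ast a_l^\ast$ and $b = b_l b_c b_r$, observing that $a_l^\ast, b_l \in M_{\Gamma_1}$ (since $\uu_l,\vv_l\in\calW_{\Gamma_1}$) while $a_r^\ast, b_r \in M_{\Gamma_2}$ (since $\uu_r,\vv_r\in\calW_{\Gamma_2}$). Pulling $a_r^\ast$ and $b_r$ out of $\EE_{M_{\Gamma_2}}$ by $M_{\Gamma_2}$-bimodularity, and absorbing $a_l^\ast$ and $b_l$ into the argument of $\EE_{M_{\Gamma_1}}$ by $M_{\Gamma_1}$-bimodularity, this identifies
\[
\EE_{M_{\Gamma_2}}\!\big(a^\ast\,\EE_{M_{\Gamma_1}}(x)\,b\big) = a_r^\ast\,\EE_{M_{\Gamma_2}}\!\big(a_c^\ast\,\EE_{M_{\Gamma_1}}(a_l^\ast x b_l)\,b_c\big)\,b_r .
\]

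Next I would apply the second statement of \cref{lemma:expectation-of-triple} to the inner expression, with $\EE_{M_{\Gamma_1}}(a_l^\ast x b_l)\in M_{\Gamma_1}$ playing the role of the middle element and $a_c^\ast\in\mathring M_{\uu_c^{-1}}$, $b_c\in\mathring M_{\vv_c}$ as the flanking reduced operators. The hypotheses of that lemma require exactly that $\uu_c$ and $\vv_c$ do not begin with a letter of $\Gamma_1$ nor end with a letter of $\Gamma_2$, which is precisely the defining property of the normal forms $\uu=\uu_l\uu_c\uu_r$, $\vv=\vv_l\vv_c\vv_r$. This yields
\[
\EE_{M_{\Gamma_2}}\!\big(a_c^\ast\,\EE_{M_{\Gamma_1}}(a_l^\ast x b_l)\,b_c\big) = \varphi(a_c^\ast b_c)\,\EE_{M_{\Gamma_1\cap\Gamma_2\cap\Link(\uu_c)}}\!\big(\EE_{M_{\Gamma_1}}(a_l^\ast x b_l)\big).
\]

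Finally I would invoke the tower property of the $\varphi$-preserving conditional expectations: since $\Gamma_1\cap\Gamma_2\cap\Link(\uu_c)\subseteq\Gamma_1$ we have $M_{\Gamma_1\cap\Gamma_2\cap\Link(\uu_c)}\subseteq M_{\Gamma_1}$, so the inner $\EE_{M_{\Gamma_1}}$ can simply be dropped, leaving $\EE_{M_{\Gamma_1\cap\Gamma_2\cap\Link(\uu_c)}}(a_l^\ast x b_l)$; substituting back through the two displays produces the claimed formula. I do not expect a genuine obstacle here: the argument is essentially bookkeeping built on \cref{lemma:expectation-of-triple}, and the only step demanding care is verifying that the central words $\uu_c,\vv_c$ literally satisfy the ``no $\Gamma_1$ letter at the start, no $\Gamma_2$ letter at the end'' hypothesis of that lemma, which is guaranteed by the way the normal forms are chosen. (As a consistency check, $\varphi(a_c^\ast b_c)$ vanishes unless $\uu_c=\vv_c$, so the formula is automatically trivial in the remaining case, though this observation is not needed for the proof.)
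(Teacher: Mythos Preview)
Your proposal is correct and follows essentially the same approach as the paper: peel off $a_r^\ast,b_r$ by $M_{\Gamma_2}$-bimodularity, absorb $a_l^\ast,b_l$ into $\EE_{M_{\Gamma_1}}$ by $M_{\Gamma_1}$-bimodularity, then apply the second part of \cref{lemma:expectation-of-triple} and use the tower property $\EE_{M_{\Gamma_1\cap\Gamma_2\cap\Link(\uu_c)}}\circ\EE_{M_{\Gamma_1}}=\EE_{M_{\Gamma_1\cap\Gamma_2\cap\Link(\uu_c)}}$.
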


\subsection{Relative amenability in graph products}
\label{subsection:RelativeAmenability}
We state the definition of relative amenability for which we refer to \cite[Proposition 2.4]{popaUniqueCartanDecomposition2014}. \begin{definition}\label{Dfn=RelativeAmenableFirst} Let $(M,\tau)$ be a tracial von Neumann algebra and let $P\subseteq 1_{P}M1_{P}, Q\subseteq M$ be von Neumann subalgebras. We say that $P$ is amenable relative to $Q$ inside $M$ if  there exists a $P$-central positive functional on 
	$1_P \langle M, e_Q \rangle        1_P$ that restricts to the trace       $\tau$ on $1_P M 1_P$.
    \end{definition}
    
    Using the calculations of conditional expectations we will prove \cref{Thm=Square} which asserts that when a von Neumann algebra $P\subseteq M_{\Gamma}$ is amenable relative to $M_{\Gamma_i}$ inside $M_{\Gamma}$ for some subgraphs $\Gamma_i\subseteq \Gamma$ for $i=1,2$, then $P$ is also amenable relative to $M_{\Gamma_1\cap \Gamma_2}$ inside $M_{\Gamma}$. \cref{Thm=Square}
    generalizes \cite[Theorem 3.7]{borstClassificationRightangledCoxeter2023} where the statement was proven in the setting of right-angled Coxeter groups.  The proof of \cref{Thm=Square} is more or less identical to \cite[Theorem 3.7] {borstClassificationRightangledCoxeter2023}.
    In the proof of \cref{Thm=Square} we will therefore refer to the proof and notation from \cite[Section 3, Theorem 3.7]{borstClassificationRightangledCoxeter2023}. We remark that bimodule computations we do in the proof of \cref{Thm=Square} are also related to those done in \cite[Section 5]{charlesworthStructureGraphProduct2024}.  Further note that the proof uses the characterisations of relative amenability in terms of bimodules that were given in \cite[Section 5.2]{popaUniqueCartanDecomposition2014}.  

\begin{theorem}\label{Thm=Square}
	Let $\Gamma$ be a simple graph and let $\Gamma_1, \Gamma_2\subseteq \Gamma$ be subgraphs. For $v\in \Gamma$ let $(M_{v},\tau_{v})$ be a von Neumann algebra with a normal faithful trace.
	Let $P \subset 1_{P}M_\Gamma 1_{P}$ be a von Neumann subalgebra that is amenable relative to $M_{\Gamma_i}$ inside $M_{\Gamma}$ for  $i = 1,2$.   Then $P$ is amenable relative to $M_{\Gamma_1\cap \Gamma_2}$ inside $M_{\Gamma}$.
    \begin{proof}
        By \cite[Remark 2.3]{borstClassificationRightangledCoxeter2023} we may assume that the inclusion $P\subseteq M_{\Gamma}$ is unital.
        As in  \cite[Theorem 3.7]{borstClassificationRightangledCoxeter2023}  let $Q_i := M_{\Gamma_i}$ for $i=1,2$, put $Q_0 = Q_1\cap Q_2$ and put the Connes relative tensor product bimodule
        $$\calH = L^2( \langle M_{\Gamma},e_{Q_1} \rangle )\otimes_{M_{\Gamma}}L^2( \langle M_{\Gamma},e_{Q_2} \rangle).$$ Similar to the proof of  \cite[Theorem 3.7]{borstClassificationRightangledCoxeter2023}  we obtain that the bimodule $_{M_{\Gamma}} L^2(M_{\Gamma})_{P}$ is weakly contained in $_{M_{\Gamma}}\calH_P$. 
        Denote 
        $$\calV = \{\vv\in \calW_{\Gamma}: \vv \text{ does not start with letters from } \Gamma_1 \text{ and does not end with letters from } \Gamma_2\},$$
        and  define the subspace 
        $$\calH_{0} = \Span\{xe_{Q_1}y\otimes_{M_{\Gamma}} e_{Q_2}z : \vv\in \calV, x,z\in M_{\Gamma}, y\in \mathring{M_{\vv}}\},$$ which is dense in $\calH$ (see  \cite[Theorem 3.7]{borstClassificationRightangledCoxeter2023}). We also define a bimodule $\calK := L^2(M_{\Gamma})\otimes_{Q_0} L^2(M_{\Gamma})$.   Now let $x,x',z,z'\in M_{\Gamma}$, $\vv\in \calV$ and $y,y'\in \mathring{M}_{\vv}$. By \cref{lem:Expectation-for-n=2} we have
        \begin{align} 
        \EE_{Q_0}(y^*\EE_{Q_0}(x^*x')y') = \tau(y^*y')\EE_{Q_0\cap \Link(\vv)}(x^*x') = \EE_{Q_1}(y^*\EE_{Q_2}(x^*x')y').
        \end{align}
        Similarly to   \cite[Theorem 3.7]{borstClassificationRightangledCoxeter2023}  we can use this to show that \begin{align}
            \langle x' \otimes_{Q_0} y' \otimes_{Q_0} z', x \otimes_{Q_0} y \otimes_{Q_0} z\rangle = \langle x'e_{Q_1}y' \otimes_{M_{\Gamma}} e_{Q_2}z', x e_{Q_1}y \otimes_{M_{\Gamma}} e_{Q_2}z\rangle.
        \end{align}
     Therefore there exists a well-defined unitary map  $U:\calH_{0}\to L^2(M_{\Gamma})\otimes_{Q_0} \calK$ that is given by 
        $$xe_{Q_1}y\otimes_{M_{\Gamma}}e_{Q_2}z \mapsto x\otimes_{Q_0} y \otimes_{Q_0} z \quad x,z\in M_{\Gamma}, 
        \vv\in \calV, y\in \mathring{M}_{\vv}.$$   
        As in  \cite[Theorem 3.7]{borstClassificationRightangledCoxeter2023} we conclude $P$ is amenable relative to $Q_0= Q_1\cap Q_2$ inside $M_{\Gamma}$.
    \end{proof}
\end{theorem}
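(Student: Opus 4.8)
The plan is to reduce the statement to the special case of two subgraphs handled by the "doubling" method of \cite[Theorem 3.7]{borstClassificationRightangledCoxeter2023}, now that the combinatorial input (the conditional-expectation formula of \cref{lem:Expectation-for-n=2}) has been upgraded from the right-angled Coxeter setting to arbitrary vertex von Neumann algebras. First I would invoke \cite[Remark 2.3]{borstClassificationRightangledCoxeter2023} to assume that the inclusion $P \subseteq M_\Gamma$ is unital, so that we may work with the genuine trace $\tau$ rather than a corner. Then I would set $Q_i = M_{\Gamma_i}$ for $i = 1, 2$ and $Q_0 = Q_1 \cap Q_2 = M_{\Gamma_1 \cap \Gamma_2}$, using the fact (from the graph-product formalism, via the universal property and the conditional expectations $\EE_{M_\Lambda}$) that $M_{\Gamma_1} \cap M_{\Gamma_2} = M_{\Gamma_1 \cap \Gamma_2}$ inside $M_\Gamma$.

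The core of the argument is the standard relative-amenability manipulation: from the hypotheses $P \prec$-amenable relative to $Q_1$ and relative to $Q_2$, one knows that the coarse bimodules $L^2\langle M_\Gamma, e_{Q_i}\rangle$ weakly absorb ${}_{M_\Gamma}L^2(M_\Gamma)$ in the relevant sense, so that ${}_{M_\Gamma}L^2(M_\Gamma)_P$ is weakly contained in the Connes fusion $\calH := L^2(M_\Gamma, e_{Q_1}) \otimes_{M_\Gamma} L^2(M_\Gamma, e_{Q_2})$ as an $M_\Gamma$-$P$-bimodule. The goal is to show $\calH$ is weakly contained in ${}_{M_\Gamma} L^2\langle M_\Gamma, e_{Q_0}\rangle_{M_\Gamma}$, which then gives that $P$ is amenable relative to $Q_0$. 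To do this I would introduce the comparison bimodule $\calK := L^2(M_\Gamma) \otimes_{Q_0} L^2(M_\Gamma)$ and the densely-defined map $U$ sending $x e_{Q_1} y \otimes_{M_\Gamma} e_{Q_2} z \mapsto x \otimes_{Q_0} y \otimes_{Q_0} z$ on the dense subspace $\calH_0$ spanned by vectors with $y \in \mathring{M}_\vv$ for $\vv$ in the "reduced middle" set $\calV$ of words not starting with a $\Gamma_1$-letter and not ending with a $\Gamma_2$-letter (this normal-form decomposition of $M_\Gamma$ as a $Q_1$-$Q_2$-bimodule is exactly what makes $\calH_0$ dense). The key identity to check is that for $x, x', z, z' \in M_\Gamma$, $\vv \in \calV$, $y, y' \in \mathring{M}_\vv$,
\begin{align}
\EE_{Q_0}\bigl(y^* \EE_{Q_0}(x^* x') y'\bigr) = \tau(y^* y')\,\EE_{Q_0 \cap \Link(\vv)}(x^* x') = \EE_{Q_1}\bigl(y^* \EE_{Q_2}(x^* x') y'\bigr),
\end{align}
which is precisely \cref{lem:Expectation-for-n=2} applied with the two subgraphs $\Gamma_1, \Gamma_2$ (noting $\uu_l = \vv_l = e$ since $\vv \in \calV$ does not start with $\Gamma_1$-letters, so $Q_0 \cap \Link(\vv) = M_{\Gamma_1 \cap \Gamma_2 \cap \Link(\vv)}$). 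Unwinding the definitions of the Connes-fusion inner products on $\calH$ and on $L^2(M_\Gamma) \otimes_{Q_0} \calK$ then shows $U$ is isometric, hence extends to a bimodule embedding, which together with the observation that $L^2(M_\Gamma) \otimes_{Q_0} \calK$ is a multiple of (a weak submodule of) $L^2\langle M_\Gamma, e_{Q_0}\rangle \otimes_{M_\Gamma} \cdots$ built from the coarse $Q_0$-bimodule yields the desired weak containment.

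The main obstacle — and the reason the proof is not entirely a black-box citation — is verifying that the displayed double-expectation identity really does follow from \cref{lem:Expectation-for-n=2} in the generality of arbitrary vertex algebras: one must be careful that the "shuffle equivalence" identifications and the decomposition $M_\Gamma = \bigoplus_{\vv} Q_1 \mathring{M}_\vv Q_2$ behave correctly, in particular that the middle piece $y \in \mathring{M}_\vv$ transforms as claimed and that $\Link(\uu_c)$ in \cref{lem:Expectation-for-n=2} is genuinely $\Link(\vv)$ when $\uu = \vv \in \calV$ (so $\uu_l = \uu_r = e$, $\uu_c = \uu = \vv$). Once this is in hand, the remaining bookkeeping — isometry of $U$, passing from the dense subspace to the full Hilbert module, and reading off relative amenability — is formally identical to \cite[Section 3, Theorem 3.7]{borstClassificationRightangledCoxeter2023}, so I would simply cite that reference for those steps rather than reproduce them, as the authors do.
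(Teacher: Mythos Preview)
Your proposal is correct and follows essentially the same route as the paper's proof: reduce to a unital inclusion via \cite[Remark 2.3]{borstClassificationRightangledCoxeter2023}, set up the Connes fusion $\calH$, define the same dense subspace $\calH_0$ indexed by the set $\calV$, construct the isometry $U$ into $L^2(M_\Gamma)\otimes_{Q_0}\calK$, and verify the double-expectation identity using \cref{lem:Expectation-for-n=2}, deferring the remaining bimodule bookkeeping to \cite[Theorem 3.7]{borstClassificationRightangledCoxeter2023}. The only thing to watch is the precise matching of roles when applying \cref{lem:Expectation-for-n=2} to the right-hand term $\EE_{Q_1}(y^*\EE_{Q_2}(x^*x')y')$ (inner subgraph $\Gamma_2$, outer subgraph $\Gamma_1$), but you have correctly identified that $\vv\in\calV$ forces the central piece to be all of $\vv$ in the relevant decompositions.
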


\subsection{Embeddings of quasi-normalizers in graph products}\label{sub:embedding-quasi-normalizers}
We prove \cref{prop:embed} and \cref{Lem=Selfembedding} concerning embeddings in graph products. To prove \cref{prop:embed} we need some auxiliary lemmas. We state Lemma 2.1 from \cite{borstClassificationRightangledCoxeter2023}, which was essentially proven in \cite[Remark 3.8]{vaesExplicitComputationsAll2008}. The result is surely known but for completeness we give the proof. 
\begin{lemma}[Lemma 2.1 in \cite{borstClassificationRightangledCoxeter2023}]\label{Lem=TransitiveEmbedding}
	Let $A, \{B_k\}_{k\in I}, Q \subseteq M$ be von Neumann subalgebras with $B_k \subseteq Q$ for every $k\in I$ where $I$ is some index set.  Assume that $A \prec_M Q$ but $A \not \prec_M B_k$ for any $k \in I$. Then there exist projections $p \in A, q \in Q$, a non-zero partial isometry $v \in q M p$ and a normal $\ast$-homomorphism $\theta: pAp \rightarrow q Q q$ such that $\theta(x) v = vx, x \in pAp$ and such that $\theta(pAp) \not \prec_Q B_k$ for any $k\in I$. 
   Moreover, it may be assumed that $p$ is majorized by the support of $\EE_{A}(v^*v)$.
\end{lemma}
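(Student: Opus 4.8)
The plan is to run the standard maximality argument for intertwiners, in the spirit of \cite[Remark~3.8]{vaesExplicitComputationsAll2008}, choosing the intertwining datum ``as large as possible'' so that its image cannot re-embed into any $B_i$. First I would record the harmless normalizations of a datum $(p,q,v,\theta)$ realizing $A\prec_M Q$ in the sense of the first formulation in \cref{Dfn=Intertwine}. As usual one checks that $v^\ast v\in (pAp)'\cap pMp$ and $vv^\ast\in\theta(pAp)'\cap qMq$, that $\theta$ may be taken injective (its kernel is a central projection of $pAp$ annihilated on the right by $v$, so one shrinks $p$), and that $q$ may be replaced by $\theta(p)$. Then $\EE_A(v^\ast v)\in\calZ(pAp)_+$ with $\tau(\EE_A(v^\ast v))=\tau(v^\ast v)>0$, so replacing $p$ by the support projection $z\le p$ of $\EE_A(v^\ast v)$, $v$ by $vz$, $q$ by $\theta(z)$ and $\theta$ by its restriction keeps the datum non-zero and achieves the ``moreover'' clause $p=\supp\EE_A(v^\ast v)$. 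The point of this last normalization is that afterwards every non-zero projection $p_1\in pAp$ satisfies $p_1 v^\ast v\neq 0$ (indeed $p_1\le\supp\EE_A(v^\ast v)$ commutes with the central element $\EE_A(v^\ast v)$, so $p_1\EE_A(v^\ast v)=\EE_A(p_1 v^\ast v p_1)\neq 0$), equivalently $\theta(p_1)\,vv^\ast\neq 0$. I will call a datum meeting all of these requirements \emph{normalized}.

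Next I would pass to a maximal normalized datum. Order normalized data by declaring $(p_1,q_1,v_1,\theta_1)\le(p_2,q_2,v_2,\theta_2)$ when $v_2\,v_1^\ast v_1=v_1$ and $\theta_2|_{p_1Ap_1}=\theta_1$; this forces $v_1^\ast v_1\le v_2^\ast v_2$, $v_1v_1^\ast\le v_2v_2^\ast$, $p_1\le p_2$, $q_1\le q_2$. A chain has an upper bound obtained by strong limits: the partial isometry $v=\mathrm{SOT\text-}\lim v_\alpha$, the projections $p=\sup p_\alpha$, $q=\sup q_\alpha$ and the coherent limit $\ast$-homomorphism $\theta$ again form a normalized datum; here normality of $\EE_A$ is used to see that $p=\sup\supp\EE_A(v_\alpha^\ast v_\alpha)=\supp\EE_A(v^\ast v)$, so that the ``moreover'' clause survives the limit, and similarly for $\EE_Q$. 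By Zorn's lemma fix a maximal normalized datum $(p,q,v,\theta)$.

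Finally I would show this datum does the job, i.e.\ $\theta(pAp)\not\prec_Q B_i$ for every $i$. Suppose instead $\theta(pAp)\prec_Q B_i$ and choose witnessing data $(e,f,w,\rho)$ inside $Q$: a non-zero projection $e\in\theta(pAp)$ — hence $e=\theta(p_1)$ for a non-zero projection $p_1\in pAp$, since $\theta$ is injective — a projection $f\in B_i$, a non-zero partial isometry $w\in fQe$ and a normal $\ast$-homomorphism $\rho\colon e\theta(pAp)e\to fB_if$ with $\rho(y)w=wy$; moreover one arranges, using the normalization $\theta(p_1)\,vv^\ast\neq0$, that this data can be cut down so that $w^\ast w$ is not orthogonal to $vv^\ast$. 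Put $\sigma:=\rho\circ\theta|_{p_1Ap_1}\colon p_1Ap_1\to fB_if$ and $v':=wv$. Using $\theta(p_1)v=vp_1$ one gets $v'=wvp_1\in fMp_1$ and $\sigma(x)v'=\rho(\theta(x))wv=w\theta(x)v=wvx=v'x$ for all $x\in p_1Ap_1$, and $v'\neq0$ because $v^\ast w^\ast w v\neq0$. Hence $(p_1,\sigma(p_1),v',\sigma)$ witnesses $A\prec_M B_i$, contradicting the hypothesis. Therefore $\theta(pAp)\not\prec_Q B_i$ for all $i$, completing the proof.

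The step I expect to be the main obstacle is the non-vanishing of the composed partial isometry $v'=wv$ in the last paragraph — equivalently, ruling out the degenerate case $w^\ast w\perp vv^\ast$. This is precisely where the ``moreover'' normalization $p=\supp\EE_A(v^\ast v)$ is doing real work: it must be combined with an appropriate choice of the data witnessing $\theta(pAp)\prec_Q B_i$ (so that the domain projection $w^\ast w$ genuinely overlaps the range projection $vv^\ast$ of $v$), which requires a careful use of the intertwining-by-bimodules machinery to cut the $\theta(pAp)$–$B_i$ bimodule down to the part seen by $vv^\ast$.
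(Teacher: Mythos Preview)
Your first paragraph correctly establishes the ``moreover'' clause and matches the paper's argument for that part. The paper then simply cites \cite[Lemma~2.1]{borstClassificationRightangledCoxeter2023} (which in turn follows \cite[Remark~3.8]{vaesExplicitComputationsAll2008}) for the remaining claim, so your attempt to spell out the composition argument goes beyond what the paper actually writes. There are, however, two issues.

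The Zorn's lemma paragraph is superfluous: maximality of the datum is never invoked in your contradiction step, so that entire paragraph can be deleted without loss.

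The gap you honestly flag is real, and the normalization you rely on is too weak to close it. Knowing $\theta(p_1)\,vv^\ast\neq 0$ only says that $e=\theta(p_1)$ is not orthogonal to $vv^\ast$; it does \emph{not} force the smaller projection $w^\ast w\le e$ to be non-orthogonal to $vv^\ast$ (already in matrices one can take $e$ of rank two with $vv^\ast$ and $w^\ast w$ complementary rank-one subprojections). The standard fix --- and this is what the cited references do --- is a second normalization, on the $Q$-side rather than the $A$-side: replace $q$ by $r_0:=\supp\EE_Q(vv^\ast)\in\theta(pAp)'\cap qQq$ and $\theta$ by $\theta(\,\cdot\,)r_0$. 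Since $\tau\bigl((q-r_0)vv^\ast\bigr)=\tau\bigl((q-r_0)\EE_Q(vv^\ast)\bigr)=0$ one gets $vv^\ast\le r_0$, so $v$ is unchanged and the ``moreover'' clause survives intact. After this, every non-zero projection $r\le r_0$ in $Q$ satisfies $\tau(r\,vv^\ast)=\tau\bigl(r\,\EE_Q(vv^\ast)\bigr)>0$; in particular $w^\ast w\cdot vv^\ast\neq 0$, i.e.\ $wv\neq 0$, for \emph{any} witness $(e,f,w,\rho)$ of $\theta(pAp)\prec_Q B_i$. Your composition argument then yields $A\prec_M B_i$, the desired contradiction. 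Your closing intuition about ``cutting down to the part seen by $vv^\ast$'' is therefore on the right track --- it just has to be implemented via $\EE_Q(vv^\ast)$ once and for all, \emph{before} the contradiction step, rather than inside it.
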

\begin{proof}
Let $p \in A, q \in Q$ and $\theta: pAp \rightarrow q Qq$ be a normal $\ast$-homomorphism such that there is a partial isometry $v \in q M p$ such that $\theta(x) v = vx$ for all $x \in p Ap$. We first prove that without loss of generality we can assume that $p$ is majorized by the support of $\EE_{A}(v^*v)$. 

Let $z$ be the support of $\EE_{A}(v^*v)$. As $p \EE_{A}(v^*v) p =  \EE_{A}( p v^*v p ) =  \EE_{A}(  v^*v )  $ it follows that $z \in p Ap$. Further for $x \in p Ap$ we have  $x \EE_{A}(v^*v) = \EE_{A}(xv^*v) = \EE_{A}(v^* \theta(x) v ) =  \EE_{A}(v^*v x) =  \EE_{A}(v^*v ) x$ so that $z \in (pA p)'$. We conclude $z \in (pAp)'  \cap pAp$.  Now let $p' := p z \in A$,  let $\theta': p'  A p' \rightarrow q Q q$ be the restriction of $\theta$ to  $p'  A p'$ and let $v' := v z \in q M p' $. Then for $x \in p'  A p'$ we have $\theta' (x) v' = \theta(x) v z = v x z = vz x$. We claim further that $v'$ is non-zero. Indeed, $v'  = vz = 0$ iff $z v^\ast v z = 0$ iff $0 = \mathbb{E}_A(z v^\ast v z) =  z \mathbb{E}_A( v^\ast v) z$. But as $v$ is non-zero $\mathbb{E}_A(v^\ast v)$ is non-zero and hence  $z \mathbb{E}_A( v^\ast v) z \not = 0$ by construction of $z$. We conclude that $v'  \not = 0$. In all the tuple $(\theta, p' , q, v' )$ witnesses that $A \prec_M Q$ and the support of $\mathbb{E}_A((v')^\ast v')$ majorizes $p'$.  

For the remainder of the proof one just follows \cite[Lemma 2.1]{borstClassificationRightangledCoxeter2023} which does not affect the assumption that $p$ is majorized by the support of $\EE_{A}(v^*v)$. Althought the statement of \cite[Lemma 2.1]{borstClassificationRightangledCoxeter2023} is for $I$ with finite cardinality, the same proof can also be applied to $I$ with infinite cardinality.
\end{proof}

The following lemma is similar to \cite[Remark 2.3]{drimbePrimeII1Factors2019}. 
 
\begin{lemma}\label{lemma:non-embedding-single-netNEW} 
    Let $(M,\tau)$ be a tracial von Neumann algebra and let $A,  \{ B_k\}_{k\in I}$ be (possibly non-unital) von Neumann subalgebras of $M$. Here $I$ is an index set. Assume $A\not\prec_{M} B_k$ for every $k\in I$.  
    Then there is a single net $(u_i)_i$ of unitaries in $A$ such that for $k\in I$ and $a,b\in 1_{A}M1_{B_k}$ we have $\|\EE_{B_k}(a^*u_ib)\|_{2}\to 0$ as $i\to \infty$
    \begin{proof}
        Put 
        \begin{align}
            \widetilde{B} &= \bigoplus_{k\in I} B_k, &\quad 
            \widetilde{M} &= \bigoplus_{k\in I} M.
        \end{align} 
        Let $\pi:M\to \widetilde{M}$ be the (normal) diagonal embedding $\pi(x) = \bigoplus_{k\in I} x$.
        Suppose $\pi(A)\prec_{\widetilde{M}} \widetilde{B}$. Then there are projections $p\in \pi(A)$, $q\in \widetilde{B}$, a normal $*$-homomorphism $\theta:p\pi(A)p\to q\widetilde{B}q$ and a non-zero partial isometry $v\in q\widetilde{M}p$ s.t. $\theta(x)v = vx$ for $x\in p\pi(A)p$. For $k\in I$ let $\pi_k:\widetilde{M}\to M$ be the coordinate projections. Denote $p_k := \pi_k(p) \in A$, $q_k := \pi_k(q)\in B_k$ and $v_k := \pi_k(v) \in \pi_k(q\widetilde{M}p) = q_kMp_k$.
        Define a normal $*$-homomorphism $\theta_k: p_kAp_k\to q_kBq_k$ as $\theta_k(x) = \pi_k(\theta(\pi(x)))$. Then
        $\theta_k(x)v_k = \pi_k(\theta(\pi(x))v) = \pi_k(v\pi(x)) = v_k\pi_k(\pi(x)) = v_kx$. Since  $0\not=v = \bigoplus_{k\in I} v_k$ there is $ k_0\in I$ s.t. $v_{k_0}\not=0$. This then shows that $A\prec_{M} B_{k_0}$ which is a contradiction. We conclude that $\pi(A)\not\prec_{\widetilde{M}}\widetilde{B}$.

        Thus, there is a net of unitaries $(\widetilde{u}_i)_i$ in $\pi(A)$ s.t. for $a',b'\in 1_{\pi(A)}\widetilde{M}1_{\widetilde{B}}$ we have $\|\EE_{\widetilde{B}}(a'^*\widetilde{u}_ib')\|_2\to 0$ as $i\to \infty$. Let $u_i\in A$ be the unitary s.t. $\pi(u_i) = \widetilde{u_i}$. Fix $1\leq k\leq n$ and let $a,b\in 1_{A}M1_{B_k}$. We can choose $\widetilde{a},\widetilde{b}\in 1_{\pi(A)}\widetilde{M}1_{\widetilde{B}}$ s.t. $\pi_k(\widetilde{a}) = a$ and $\pi_k(\widetilde{b}) = b$. We have 
        \begin{align}
            \|\EE_{B_k}(a^*u_i b)\|_{2} = \|\EE_{\pi_k(\widetilde{B})}(\pi_k(\widetilde{a}^*\widetilde{u}_i\widetilde{b}))\|_{2} \leq \|\EE_{\widetilde{B}}(\widetilde{a}^*\widetilde{u}_i\widetilde{b})\|_2\to 0 &\quad \text{ as } i\to \infty.
        \end{align} This shows the net $(u_i)_i$ satisfies the stated property. 
    \end{proof}
\end{lemma}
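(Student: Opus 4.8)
The strategy is to collapse the finite family $B_1,\dots,B_n$ into a single subalgebra by passing to a finite direct sum, and then to invoke the net of unitaries that is already built into the second equivalent characterization of non-intertwining in \cref{Dfn=Intertwine}. Concretely, I would introduce the tracial von Neumann algebra $\widetilde{M} = \bigoplus_{k=1}^{n} M$ equipped with the faithful normal trace $\widetilde{\tau} = \frac{1}{n}\sum_{k} \tau\circ\pi_{k}$, where $\pi_{k}\colon\widetilde{M}\to M$ denotes the $k$-th coordinate projection, together with the subalgebra $\widetilde{B} = \bigoplus_{k=1}^{n} B_{k}\subseteq\widetilde{M}$ and the diagonal normal $\ast$-homomorphism $\pi\colon M\to\widetilde{M}$, $\pi(x) = (x,\dots,x)$.

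The key step is to show that $A\not\prec_{M} B_{k}$ for all $k$ forces $\pi(A)\not\prec_{\widetilde{M}}\widetilde{B}$, which I would prove by contraposition. Suppose $\pi(A)\prec_{\widetilde{M}}\widetilde{B}$ and take intertwining data as in the first characterization of \cref{Dfn=Intertwine}: projections $p\in\pi(A)$ and $q\in\widetilde{B}$, a normal $\ast$-homomorphism $\theta\colon p\pi(A)p\to q\widetilde{B}q$, and a nonzero partial isometry $v\in q\widetilde{M}p$ with $\theta(x)v = vx$ for all $x\in p\pi(A)p$. Applying the coordinate projections $\pi_{k}$ yields projections $p_{k} = \pi_{k}(p)\in A$, $q_{k} = \pi_{k}(q)\in B_{k}$, normal $\ast$-homomorphisms $\theta_{k}(\cdot) = \pi_{k}(\theta(\pi(\cdot)))\colon p_{k}Ap_{k}\to q_{k}B_{k}q_{k}$, and partial isometries $v_{k} = \pi_{k}(v)\in q_{k}Mp_{k}$ satisfying $\theta_{k}(x)v_{k} = v_{k}x$ on $p_{k}Ap_{k}$. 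Since $v = (v_{1},\dots,v_{n})$ is nonzero, some $v_{k_{0}}$ is nonzero, and that coordinate witnesses $A\prec_{M}B_{k_{0}}$, contradicting the hypothesis. Hence $\pi(A)\not\prec_{\widetilde{M}}\widetilde{B}$.

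Having established this, the second characterization of \cref{Dfn=Intertwine} produces a net of unitaries $(\widetilde{u}_{i})_{i}$ in $\pi(A)$ with $\|\EE_{\widetilde{B}}({a'}^{\ast}\widetilde{u}_{i}b')\|_{2}\to 0$ for all $a',b'\in 1_{\pi(A)}\widetilde{M}1_{\widetilde{B}}$. Writing $\widetilde{u}_{i} = \pi(u_{i})$ for a unique net of unitaries $u_{i}\in A$, and using that $\EE_{\widetilde{B}} = \bigoplus_{k}\EE_{B_{k}}$ and that any $a\in 1_{A}M1_{B_{k}}$ lifts to $\widetilde{a}\in 1_{\pi(A)}\widetilde{M}1_{\widetilde{B}}$ whose $k$-th coordinate is $a$ and whose other coordinates vanish, one obtains for $a,b\in 1_{A}M1_{B_{k}}$ that $\|\EE_{B_{k}}(a^{\ast}u_{i}b)\|_{2} = \|\pi_{k}(\EE_{\widetilde{B}}(\widetilde{a}^{\ast}\widetilde{u}_{i}\widetilde{b}))\|_{2}\leq \sqrt{n}\,\|\EE_{\widetilde{B}}(\widetilde{a}^{\ast}\widetilde{u}_{i}\widetilde{b})\|_{2}\to 0$, which is the assertion. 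The only mildly delicate points — making sure the lifts $\widetilde{a},\widetilde{b}$ genuinely land in $1_{\pi(A)}\widetilde{M}1_{\widetilde{B}}$ given the possibly non-unital supports, that $\pi_{k}$ intertwines $\EE_{\widetilde{B}}$ with $\EE_{B_{k}}$, and that the $\|\cdot\|_{2}$-comparison is insensitive to the normalization of the trace on $\widetilde{M}$ — are routine bookkeeping once the direct-sum picture is in place, so I do not expect a genuine obstacle here.
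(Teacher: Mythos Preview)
Your proposal is correct and follows essentially the same approach as the paper: both pass to the direct sum $\widetilde{M}=\bigoplus_k M$ with the diagonal embedding $\pi$, show $\pi(A)\not\prec_{\widetilde{M}}\widetilde{B}$ by projecting hypothetical intertwining data to a coordinate, and then pull back the resulting net of unitaries. The only cosmetic difference is that you make the trace normalization on $\widetilde{M}$ explicit, which accounts for the harmless extra factor $\sqrt{n}$ in your final estimate.
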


In order to have control over quasi-normalizers we need the following lemma. The lemma is stated in \cite[Lemma D.3]{vaesRigidityResultsBernoulli2006a} for sequences, but holds equally well for nets.

\begin{remark}\label{Rmk=ExpecationCorners} Consider an inclusion $B \subseteq 1_B M 1_B$ of finite von Neumann algebras with conditional expectation $\EE_B: 1_B M 1_B \rightarrow B$. We extend it to $\EE_B: M \rightarrow B$ by setting $\EE_B(x) = \EE_B(1_B x 1_B)$. Fix a normal faithful tracial state $\tau$ on $M$.  If $p \in B$ is a non-zero projection then $p$ is in the multiplicative domain of $\EE_B$ and so $\EE_B: p M p \rightarrow p B p$ is a conditional expectation. If $\EE_B$ preserves $\tau$ then it also preserves the normal faithful tracial state $\tau(p)^{-1} \tau$ on $pMp$. 
  
\end{remark}

\begin{lemma}[Lemma D.3 in \cite{vaesRigidityResultsBernoulli2006a}]\label{lemma:sufficient-condition-quasi-normalizer}
	Let $(M,\tau)$ be a finite von Neumann algebra with normal faithful trace $\tau$ and let $B\subseteq 1_{B}M1_{B}$ and $A\subseteq 1_{A}B1_{A}$ be von Neumann subalgebras. Suppose there is a net of unitaries $(u_i)_i$ in $A$ such that for all $a,b\in M$ with $\EE_{B}(a) = \EE_{B}(b)=0$ we have
	\begin{align}
		\|\EE_{B}(au_ib)\|_{2}\to 0 \text{ as } i\to \infty.
	\end{align}
        Then if $n\geq 1$, $x_0,x_1,\ldots, x_n\in M$ satisfy $Ax_0\subseteq \sum_{k=1}^n x_kB$ then we have that $1_{A}x_01_{A}\in B$. 
        \begin{proof}                   
            We put $B_0 = 1_{A}B1_{A}$ and $M_0 = 1_{A}M1_{A}$ so that $A\subseteq B_0\subseteq M_0$ are unital inclusions. We observe $B_0 = B\cap M_0$.
            Now let $a,b\in M_0$ be such that $\EE_{B_0}(a)=\EE_{B_0}(b) =0$.             
             Then by \cref{Rmk=ExpecationCorners} with $p = 1_A$ we find $\EE_{B}(a) =   \EE_{B_0}(a) =0$ and similarly $\EE_{B}(b)=0$.  Thus by assumption $\|\EE_{B}(au_ib)\|_{2}\to 0$ as $i\to \infty$. Hence,  since $B_0 = 1_A B 1_A$  we obtain $\|\EE_{B_0}(au_ib)\|_{2}\to 0$ as $i\to \infty$.

                 Choose a central projection $z \in B \cap B'$ such that there exists $m\geq 1$ and partial isometries $v_i\in B$ for $1\leq i\leq m$ with $v_iv_i^*\leq 1_{A}$ and $\sum_{i=1}^m v_i^*v_i = z$. 
            Now let $n\geq 1$, $x_0,x_1,\ldots, x_n\in M$ be such that $Ax_0\subseteq \sum_{k=1}^n x_kB$. Then 
            \[
            A(1_{A} x_0 z 1_{A}) = ( A x_0 z ) 1_{A} \subseteq \sum_{k=1}^n x_k B z 1_{A}  = \sum_{k=1}^n\sum_{i=1}^m x_k (v_i^*1_{A}v_i)  B 1_{A}
            \subseteq \sum_{k=1}^n\sum_{i=1}^m x_k v_i^*B_0.
            \]          
            Multiplying both sides from the left with $1_A$ gives $ A(1_{A} x_0 z 1_{A}) \subseteq \sum_{k=1}^n\sum_{i=1}^m 1_A x_k v_i^*B_0$ where $ 1_A x_k v_i^* \in 1_A M 1_A$. 
            By the existence of the net $(u_i)_i$ this implies, by applying \cite[Lemma D.3]{vaesRigidityResultsBernoulli2006a} to the inclusions $A \subseteq B_0 \subseteq M_0$, that $1_{A}x_0 z 1_{A}\in B$. As we may let $z$ approximate $1_B$ in the strong topology we find that $1_{A}x_0 1_{A}\in B$. 
        \end{proof}
\end{lemma}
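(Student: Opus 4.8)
The plan is to reduce everything to the standard version of this statement for \emph{unital} inclusions, \cite[Lemma D.3]{vaesRigidityResultsBernoulli2006a}, applied inside the corner $M_0 := 1_A M 1_A$ with $B_0 := 1_A B 1_A$; note that $A \subseteq B_0 \subseteq M_0$ are unital inclusions and $B_0 = B \cap M_0$. Three points have to be arranged: that the mixing hypothesis on $(u_i)_i$ descends to the triple $A \subseteq B_0 \subseteq M_0$; that the containment $A x_0 \subseteq \sum_k x_k B$ can be massaged into one with coefficients that are fixed and lie in $M_0$; and that the gap between $B$ and $B_0$ --- caused by $1_A$ possibly having non-full central support in $B$ --- is harmless.

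First I would check that the hypothesis transfers. If $a, b \in M_0$ satisfy $\EE_{B_0}(a) = \EE_{B_0}(b) = 0$, then, since $1_A$ lies in the multiplicative domain of $\EE_B$, \cref{Rmk=ExpecationCorners} gives $\EE_B(a) = \EE_{B_0}(a) = 0$ and likewise for $b$; the assumed convergence $\|\EE_B(a u_i b)\|_2 \to 0$ then reads $\|\EE_{B_0}(a u_i b)\|_2 \to 0$ because $B_0 = 1_A B 1_A$ and $a, b \in M_0$. Hence $(u_i)_i$ is a valid mixing net for $A \subseteq B_0 \subseteq M_0$.

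The main work is the reduction of the containment. Using the center-valued trace $T$ of the finite von Neumann algebra $B$, one exhausts the central support $c(1_A)$ of $1_A$ in $B$ by an increasing net of central projections $z \in \calZ(B)$, each of the form $z = \sum_{i=1}^m v_i^* v_i$ for finitely many partial isometries $v_i \in B$ with $v_i v_i^* \leq 1_A$ --- one splits $\chi_{[\varepsilon, 1]}(T(1_A))$ into $\lceil 1/\varepsilon \rceil$ pieces, each subequivalent to $1_A$ by the comparison theorem. The inequalities $v_i v_i^* \leq 1_A$ force $1_A v_i = v_i$ and $v_i^* 1_A = v_i^*$, and each source projection $v_i^* v_i$ is central in $B$. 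Fixing such a $z$ and writing, for $a \in A$, $a x_0 = \sum_k x_k c_k$ with $c_k \in B$, centrality of $v_i^* v_i$ gives $c_k\, v_i^* v_i\, 1_A = v_i^* (v_i c_k 1_A)$ with $v_i c_k 1_A \in B_0$; multiplying the resulting inclusion on the left by $1_A$ then yields
\[
A\,(1_A x_0 z 1_A)\ \subseteq\ \sum_{k, i} (1_A x_k v_i^*)\, B_0,
\]
where now each coefficient $1_A x_k v_i^* \in M_0$ is independent of $a$ and $1_A x_0 z 1_A \in M_0$. Applying \cite[Lemma D.3]{vaesRigidityResultsBernoulli2006a} to the unital triple $A \subseteq B_0 \subseteq M_0$, with the mixing net of the first step, gives $1_A x_0 z 1_A \in B_0 \subseteq B$. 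Finally, letting $z \nearrow c(1_A)$ and using that $B$ is strongly closed together with $1_A c(1_A) = c(1_A) 1_A = 1_A$, we obtain $1_A x_0 1_A = 1_A x_0 c(1_A) 1_A \in B$, as required.

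The step I expect to be the main obstacle is the non-unitality bookkeeping: coaxing the containment into the precise shape $A y \subseteq \sum_j y_j B_0$ with the $y_j$ fixed (this is exactly where the centrality of the source projections $v_i^* v_i$ is used) and then exhausting $c(1_A)$ by admissible central projections $z$. Once this is in place, the substantial analytic content is entirely outsourced to the unital case \cite[Lemma D.3]{vaesRigidityResultsBernoulli2006a}.
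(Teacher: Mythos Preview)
Your approach is the same as the paper's: pass to the unital corner $A\subseteq B_0\subseteq M_0$, check the mixing net survives there via \cref{Rmk=ExpecationCorners}, use partial isometries $v_i\in B$ with $v_iv_i^*\leq 1_A$ and $\sum_i v_i^*v_i=z$ central to rewrite the containment with coefficients in $M_0$, apply \cite[Lemma D.3]{vaesRigidityResultsBernoulli2006a}, and let $z$ increase. Your observation that the limit is really $c(1_A)$ rather than $1_B$ is more precise than the paper's phrasing, and the conclusion is the same since $1_A c(1_A)=1_A$.

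There is one slip. You assert that each individual source projection $v_i^*v_i$ is central in $B$; this is not achievable in general (if $B$ is a factor and $\tau(1_A)<1$ you need $m\geq 2$ orthogonal projections summing to $1$, and these cannot all be central). Your construction via the center-valued trace only yields that the \emph{sum} $z=\sum_i v_i^*v_i$ is central. Fortunately that is all you need: from $A x_0\subseteq\sum_k x_k B$ and centrality of $z$ one gets
\[
A(1_A x_0 z 1_A)\ \subseteq\ \sum_k x_k B z\,1_A\ =\ \sum_k x_k\Bigl(\sum_i v_i^* v_i\Bigr) B\,1_A\ =\ \sum_{k,i} x_k v_i^*\,(v_i B 1_A)\ \subseteq\ \sum_{k,i} x_k v_i^* B_0,
\]
using only $v_i=1_A v_i$ (so $v_i B 1_A\subseteq 1_A B 1_A=B_0$); then multiply on the left by $1_A$. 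This is exactly the paper's computation. So drop the claim that each $v_i^*v_i$ is central and replace your line ``centrality of $v_i^*v_i$ gives \ldots'' by the displayed computation using centrality of $z$; the rest of your argument is correct.
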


We are now able to show the following result which generalizes \cite[Proposition 2.3]{borstClassificationRightangledCoxeter2023} to general graph products. The second statement in the proposition should be compared to \cite[Lemma 9.4]{ioanaCartanSubalgebrasAmalgamated2015}. While the inclusion $M_{\Lambda}\subseteq M_{\Gamma}$ is generally not mixing, we still have enough control over the (quasi)-normalizer of subalgebras. 
The proof of \cref{prop:embed}(1) uses \cref{lemma:non-embedding-single-netNEW}, \cref{lemma:sufficient-condition-quasi-normalizer}  and the results from \cref{subsection:calculating-conditional-expectation} for calculating conditional expectations in graph products. The proof of \cref{prop:embed}(2) uses (1) and \cref{Lem=TransitiveEmbedding} which is analogues to \cite{borstClassificationRightangledCoxeter2023}, but we included it for convenience.

\begin{proposition}\label{prop:embed}
    Let $\Gamma$ be a simple graph and  for $v\in \Gamma$ let $(M_{v},\tau_v)$ be a finite von Neumann algebra with normal faithful trace $\tau_v$. Let $\Lambda\subseteq \Gamma$ be a subgraph, and $\{\Lambda_j\}_{j\in \calJ}$ be a non-empty, collection of subgraphs of $\Gamma$. Define  
    \begin{align}\label{eq:embedding-graph}
        \Lambda_{\emb}:= \Lambda\cup \bigcap_{j\in \calJ}\bigcup_{v\in \Lambda\setminus \Lambda_j}\Link_{\Gamma}(v). 
    \end{align}
     Let $A\subseteq 1_A  M_{\Gamma} 1_A$ be a von Neumann subalgebra. 
    \begin{enumerate}
        \item \label{Item=Embed1} If $A\subseteq 1_A M_{\Lambda} 1_A$ and 
        $A\not\prec_{M_{\Gamma}}M_{\Lambda_j}$ for all $j\in \calJ$ then the following properties hold true:
         \begin{enumerate}
                \item There is a net $(u_i)_i$ of unitaries in $A$ such that for all $a,b\in 1_{A}M_{\Gamma}1_{A}$ with $\EE_{M_{\Lambda_{\emb}}}(a)=\EE_{M_{\Lambda_{\emb}}}(b)=0$ we have    
                $       \|\EE_{M_{\Lambda_{\emb}}}(au_ib)\|_{2} \rightarrow  0$;
                \label{Item=Embed1a}
                \item $1_{A}\qNor_{M_{\Gamma}}(A)''1_{A}\subseteq M_{\Lambda_{\emb}}$;\label{Item=Embed1b}
                \item For any unitary $u\in M_{\Gamma}$ satisfying $u^*Au\subseteq M_{\Lambda_{\emb}}$ we have $1_{A}u1_{A}\in M_{\Lambda_{\emb}}$. \label{Item=Embed1c}
            \end{enumerate}
        \item \label{Item=Embed2} Denote $P=\Nor_{M_{\Gamma}}(A)''$ and let $r\in P\cap P'$ be a projection. If $rA\prec_{M_{\Gamma}}M_{\Lambda}$ and $rA\not\prec_{M_{\Gamma}}M_{\Lambda_j}$ for $j\in \calJ$ then $rP\prec_{M_{\Gamma}}M_{\Lambda_{\emb}}$.
    \end{enumerate}
    We remark that if $\{\Lambda_j\}_{j\in \calJ}$ enumerates all strict subgraphs of $\Lambda$ then $\Lambda_{\emb} = \Lambda\cup \Link_{\Gamma}(\Lambda)$.     
\end{proposition}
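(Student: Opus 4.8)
The plan is to prove (1)(a) as the main technical step, to deduce (1)(b) and (1)(c) formally from it via \cref{lemma:sufficient-condition-quasi-normalizer} and \cref{lemma:non-embedding-single-netNEW}, and to obtain (2) from (1)(b) by the standard normalizer-transfer argument built on \cref{Lem=TransitiveEmbedding}. Throughout I use that, each $\tau_v$ being a trace, the graph product state $\varphi=\tau$ is a trace, so all conditional expectations below are $\tau$-preserving, $\EE_{M_{\Sigma'}}=\EE_{M_{\Sigma'}}\circ\EE_{M_\Sigma}$ and $\|\EE_{M_{\Sigma'}}(x)\|_2\le\|\EE_{M_\Sigma}(x)\|_2$ for subgraphs $\Sigma'\subseteq\Sigma\subseteq\Gamma$, and that $A\subseteq M_\Lambda\subseteq M_{\Lambda_{\emb}}$ since $\Lambda\subseteq\Lambda_{\emb}$.

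For (1)(a): since $\calJ$ is finite and $A\not\prec_{M_\Gamma}M_{\Lambda_j}$ for all $j$, \cref{lemma:non-embedding-single-netNEW} gives one net $(u_i)_i$ of unitaries in $A$ with $\|\EE_{M_{\Lambda_j}}(a^*u_ib)\|_2\to0$ for every $j\in\calJ$ and all $a,b\in 1_AM_\Gamma 1_A$, and I claim this net works. A routine density argument — writing a general $a\in 1_AM_\Gamma 1_A$ with $\EE_{M_{\Lambda_{\emb}}}(a)=0$ as a $\|\cdot\|_2$-limit of finite sums $\sum 1_Aa_\uu 1_A$ with $a_\uu\in\mathring M_\uu$, $\uu\in\calW_\Gamma\setminus\calW_{\Lambda_{\emb}}$, and using Kaplansky's density theorem to keep the operator norms of the approximants bounded — reduces the claim to $a=1_Aa_\uu 1_A$, $b=1_Ab_\vv 1_A$ of this form. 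Applying \cref{lem:Expectation-for-n=2} with $\Gamma_1=\Lambda$ and $\Gamma_2=\Lambda_{\emb}$ (so $\EE_{M_\Lambda}(u_i)=u_i$) expresses $\EE_{M_{\Lambda_{\emb}}}(a_\uu u_i b_\vv)$ as a scalar — nonzero only when the ``core parts'' $\uu_c$ and $\vv_c$ appearing in the canonical decompositions $\uu=\uu_l\uu_c\uu_r$, $\vv=\vv_l\vv_c\vv_r$ of that proposition coincide — times a reduced operator of norm $\le\|a_\uu\|_\infty$, times $\EE_{M_{\Lambda\cap\Link(\uu_c)}}(a_l^*\,u_i\,b_l)$ with $a_l,b_l\in M_\Lambda$, times a reduced operator of norm $\le\|b_\vv\|_\infty$. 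The combinatorial heart is that, as $\uu\notin\calW_{\Lambda_{\emb}}$ while $\uu_l\in\calW_\Lambda$ and $\uu_r\in\calW_{\Lambda_{\emb}}$ use only letters of $\Lambda_{\emb}$, the core $\uu_c$ contains a letter $w\notin\Lambda_{\emb}$; by \eqref{eq:embedding-graph} this means $w\notin\Lambda$ and there is $j\in\calJ$ with $\Link_\Gamma(w)\cap\Lambda\subseteq\Lambda_j$, whence $\Lambda\cap\Link(\uu_c)\subseteq\Lambda\cap\Link_\Gamma(w)\subseteq\Lambda_j$. Therefore $\|\EE_{M_{\Lambda\cap\Link(\uu_c)}}(a_l^*u_ib_l)\|_2\le\|\EE_{M_{\Lambda_j}}(a_l^*u_ib_l)\|_2\to0$ (after a harmless $1_A$-compression, using $u_i=1_Au_i1_A$), so $\|\EE_{M_{\Lambda_{\emb}}}(a_\uu u_i b_\vv)\|_2\to0$.

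For (1)(b), (1)(c) and the concluding remark: one applies \cref{lemma:sufficient-condition-quasi-normalizer} to $A\subseteq 1_AM_{\Lambda_{\emb}}1_A\subseteq M_\Gamma$ with the net of (1)(a), noting that by \cref{Rmk=ExpecationCorners} an element of $1_AM_\Gamma 1_A$ is annihilated by $\EE_{M_{\Lambda_{\emb}}}$ exactly when it is annihilated by the corner expectation onto $1_AM_{\Lambda_{\emb}}1_A$, so (1)(a) is precisely the hypothesis of \cref{lemma:sufficient-condition-quasi-normalizer}. For (1)(b): if $x\in\qNor_{M_\Gamma}(A)$ then $Ax\subseteq\sum_k x_kA\subseteq\sum_k x_kM_{\Lambda_{\emb}}$, so $1_Ax1_A\in M_{\Lambda_{\emb}}$; since $\qNor_{M_\Gamma}(A)$ is a unital $\ast$-algebra containing $1_A$, the set $1_A\qNor_{M_\Gamma}(A)1_A$ is a $\ast$-algebra sitting inside $M_{\Lambda_{\emb}}$, and passing to weak closures gives $1_A\qNor_{M_\Gamma}(A)''1_A\subseteq M_{\Lambda_{\emb}}$. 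For (1)(c): $u^*Au\subseteq M_{\Lambda_{\emb}}$ rewrites as $Au\subseteq uM_{\Lambda_{\emb}}$, so \cref{lemma:sufficient-condition-quasi-normalizer} with $n=1$ and $x_0=x_1=u$ gives $1_Au1_A\in M_{\Lambda_{\emb}}$. The final remark is an easy set computation: taking $\Lambda_j=\Lambda\setminus\{v_0\}$ for each $v_0\in\Lambda$ shows that a vertex lying in $\bigcap_j\bigcup_{v\in\Lambda\setminus\Lambda_j}\Link_\Gamma(v)$ is adjacent to every $v_0\in\Lambda$, while conversely $\Link_\Gamma(\Lambda)$ lies in each $\bigcup_{v\in\Lambda\setminus\Lambda_j}\Link_\Gamma(v)$ because $\Lambda\setminus\Lambda_j\ne\emptyset$; hence that intersection equals $\Link_\Gamma(\Lambda)$ and $\Lambda_{\emb}=\Lambda\cup\Link_\Gamma(\Lambda)$.

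For (2): replacing each $\Lambda_j$ by $\Lambda_j\cap\Lambda$ changes neither $\Lambda_{\emb}$ (as $\Lambda\setminus\Lambda_j=\Lambda\setminus(\Lambda_j\cap\Lambda)$) nor the hypothesis $rA\not\prec_{M_\Gamma}M_{\Lambda_j}$ (as $M_{\Lambda_j\cap\Lambda}\subseteq M_{\Lambda_j}$), so we may assume $\Lambda_j\subseteq\Lambda$. Since $rA\prec_{M_\Gamma}M_\Lambda$ but $rA\not\prec_{M_\Gamma}M_{\Lambda_j}$, \cref{Lem=TransitiveEmbedding} provides projections $p\in rA$ (majorized by the support of $\EE_{rA}(v^*v)$), $q\in M_\Lambda$, a nonzero partial isometry $v\in qM_\Gamma p$ and a normal $\ast$-homomorphism $\theta\colon p(rA)p\to qM_\Lambda q$ with $\theta(x)v=vx$ for $x\in p(rA)p$ and $A_0:=\theta(p(rA)p)\not\prec_{M_\Lambda}M_{\Lambda_j}$ for all $j$. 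One then checks — either by the standard invariance of intertwining under such a conjugation, or by rerunning the computation of (1)(a) via \cref{lem:Expectation-for-n=2} — that in fact $A_0\not\prec_{M_\Gamma}M_{\Lambda_j}$, so part (1)(b) applies to $A_0\subseteq M_\Lambda$ and gives $q\,\qNor_{M_\Gamma}(A_0)''\,q\subseteq M_{\Lambda_{\emb}}$. Since $r\in\calZ(P)$, every $u\in\Nor_{M_\Gamma}(A)$ normalizes $rA$; the standard normalizer-transfer argument through the intertwiner $v$ (using $\theta(x)v=vx$ and $v^*\theta(x)=xv^*$, so that $vv^*$ commutes with $A_0$, together with the support condition on $v$, exactly as in \cite{borstClassificationRightangledCoxeter2023}) then transports a corner of $P$ into $q\,\qNor_{M_\Gamma}(A_0)''\,q\subseteq M_{\Lambda_{\emb}}$ and yields $rP\prec_{M_\Gamma}M_{\Lambda_{\emb}}$. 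I expect this last transfer to be the main obstacle: $\Ad_{v^*}$ is multiplicative only on $v^*v\,M_\Gamma\,v^*v$, so it has to be applied on corners and combined with the support property of $v$; by contrast, (1)(a) is essentially bookkeeping once \cref{lem:Expectation-for-n=2} and the combinatorial description of $\Lambda_{\emb}$ are in hand.
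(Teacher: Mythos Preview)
Your proposal is correct and follows essentially the same route as the paper. For (1)(a) both you and the paper invoke \cref{lemma:non-embedding-single-netNEW} for the single net, reduce by density to $a\in\mathring M_\uu$, $b\in\mathring M_\vv$ with $\uu,\vv\notin\calW_{\Lambda_{\emb}}$, apply \cref{lem:Expectation-for-n=2} with $(\Gamma_1,\Gamma_2)=(\Lambda,\Lambda_{\emb})$, locate a letter of the core word outside $\Lambda_{\emb}$, and use the combinatorial unpacking of \eqref{eq:embedding-graph} to land inside some $M_{\Lambda_j}$; (1)(b)--(1)(c) are then deduced from \cref{lemma:sufficient-condition-quasi-normalizer} exactly as you indicate; and for (2) both arguments reduce to $\Lambda_j\subseteq\Lambda$, apply \cref{Lem=TransitiveEmbedding} to obtain $\theta(pAp)\not\prec M_{\Lambda_j}$ (the paper states this as $\not\prec_{M_\Lambda}$, which is equivalent to $\not\prec_{M_\Gamma}$ here since $\Lambda_j\subseteq\Lambda$), invoke (1)(b) for $\theta(pAp)$, and run the normalizer-transfer through $v$ that the paper spells out and you cite.
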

    \begin{proof}
	\eqref{Item=Embed1a}
  By \cref{lemma:non-embedding-single-netNEW} we can build a net of unitaries $(u_i)_i$ in $A$ such that for any $a,b\in M_{\Gamma}$ and any $j\in \calJ$ we have $\|\EE_{M_{\Lambda_j}}(a u_i b)\|_2\to 0$  when $i\to \infty$.  
		We show the net $(u_i)_i$ satisfies the properties of \eqref{Item=Embed1a}.			
	Let $b\in \mathring{M}_{\vv}$ and $c\in \mathring{M}_{\ww}$ for some $\vv,\ww \in \calW_{\Gamma}\setminus \calW_{\Lambda_{\emb}}$. Write $\vv = \vv_l\vv_c\vv_r$ and $\ww = \ww_l\ww_c\ww_r$ with $\vv_l,\ww_l\in \calW_{\Lambda_{\emb}}$, $\vv_r,\ww_r\in \calW_{\Lambda}$ and
		such that $\vv_c$ and $\ww_c$ do not start with letter from $\Lambda_{\emb}$ nor do they end with letters from $\Lambda$.
		Now write $b = b_l b_c b_r$ and 
		$c = c_l c_c c_r$ with $b_l \in \mathring{M}_{\vv_l}, c_l \in \mathring{M}_{\ww_l}$, $b_c \in \mathring{M}_{\vv_c},  c_c \in \mathring{M}_{\ww_c}$ 
		and $b_r\in \mathring{M}_{\vv_r}, c_r \in \mathring{M}_{\ww_r}$.
            Then as $\vv\not\in \calW_{\Lambda_{\emb}}$ and $\vv_l\in \calW_{\Lambda_{\emb}}$ and $\vv_r\in \calW_{\Lambda}\subseteq \calW_{\Lambda_{\emb}}$, we have $\vv_{c}\not\in \calW_{\Lambda_{\emb}}$ and hence there is a letter $v$ of $\vv_c$ such that $v\not\in \Lambda_{\emb}$. Thus, there is an index $j\in \calJ$ such that $v\not\in \bigcup_{w\in \Lambda\setminus \Lambda_j}\Link_{\Gamma}(w)$.
            Hence $\Link(v)\subseteq \Gamma\setminus (\Lambda\setminus \Lambda_j) = \Lambda_j \cup (\Gamma\setminus \Lambda)$ and thus $\Lambda \cap \Link(\vv_c)\subseteq \Lambda_j$.
             Using \cref{lem:Expectation-for-n=2} we get,  
		\[
  \begin{split}
			\|\EE_{M_{\Lambda_{\emb}}}(b^*u_ic)\|_2 =& \|\EE_{M_{\Lambda_{\emb}}}(b^*\EE_{M_{\Lambda}}(u_i)c)\|_{2} \\
		=& \|\tau(b_c^*c_c)b_r^*\EE_{M_{\Lambda\cap \Link(\vv_c)}}(b_l^*u_ic_{l})c_{r}\|_{2}\\
	  = & \|\tau(b_c^*c_c)b_r^*\EE_{M_{\Lambda\cap \Link(\vv_c)}}(   \EE_{M_{\Lambda_j}}(   b_l^* u_i c_{l} ) )c_{r}\|_{2} \\
   \leq & \Vert b_c \Vert_2 \Vert c_c\Vert_2  \Vert b_r \Vert \Vert c_r \Vert \Vert \EE_{M_{\Lambda_j}}(   b_l^* u_i c_{l} )  \|_{2}.
	\end{split}
 \]  
    We see that this expression converges to 0 when $i\to\infty$. 
	Thus, more generally, for $b,c\in M_{\Gamma}$ with $\EE_{M_{\Lambda_{\emb}}}(b)=\EE_{M_{\Lambda_{\emb}}}(c) =0$, we obtain $\|\EE_{M_{\Lambda_{\emb}}}(b^*u_ic)\|_2\to 0$ when $i\to \infty$, which shows \eqref{Item=Embed1a}.

    \eqref{Item=Embed1b}  Observe that if $x\in \qNor_{M_{\Gamma}}(A)$ then for some $n\geq 1$ and 
 $x_1,\ldots,x_n\in M_{\Gamma}$ we have $Ax\subseteq \sum_{k=1}^n x_k A \subseteq \sum_{k=1}^n x_k M_{\Lambda_{\emb}}$. Therefore by the existence of the net $(u_i)_i$ shown by \eqref{Item=Embed1a} and by \cref{lemma:sufficient-condition-quasi-normalizer}, we have 
 that $1_{A}x1_{A}\in M_{\Lambda_{\emb}}$.  This shows $1_{A}\qNor_{M_{\Gamma}}(A)1_{A}\subseteq M_{\Lambda_{\emb}}$ and thus proves \eqref{Item=Embed1b}. 

 \eqref{Item=Embed1c} Let $u\in M_{\Gamma}$ be a unitary for which $u^*Au\subseteq M_{\Lambda_{\emb}}$. Then $Au\subseteq uM_{\Lambda_{\emb}}$ so again by the existence of the net $(u_i)_i$ shown by \eqref{Item=Embed1a} and by \cref{lemma:sufficient-condition-quasi-normalizer}, we obtain that $1_{A}u1_{A}\in M_{\Lambda_{\emb}}$.

\eqref{Item=Embed2} By replacing $\{\Lambda_j\}_{j\in \calJ}$  with $\{\Lambda_j \cap \Lambda\}_{j\in \calJ}$ we may assume that $\Lambda_j\subseteq \Lambda$ for $j\in \calJ$. We observe that $r$ is central in $A$, which we will use a number of times in the proof. By Lemma \ref{Lem=TransitiveEmbedding} the assumptions imply that there exist projections $p \in r A, q \in M_{\Lambda}$ a non-zero partial isometry $v \in q  M_{\Gamma} p$ and a normal $\ast$-homomorphism $\theta: p  A p \rightarrow q M_{\Lambda} q$ such that $\theta(x) v = v x$ for all $x \in p  A p$ and such that moreover  $\theta(p  A p) \not \prec_{M_{\Lambda}} M_{\Lambda_j}$ for $j\in \calJ$. From  \eqref{Item=Embed1} we see that $\theta(p)\qNor_{M_{\Gamma}}(\theta(p A p)) \theta(p) \subseteq M_{\Lambda_{\emb}}$.

	Now take $u \in \Nor_{M_{\Gamma}}(A)$. We follow the proof of \cite[Lemma 3.5]{popaStrongRigidityII12006} or \cite[Lemma 9.4]{ioanaCartanSubalgebrasAmalgamated2015}. Take $z \in A$ a central projection such that $z = \sum_{j=1}^n v_j v_j^\ast$ with $v_j \in A$ partial isometries such that $v_j^\ast v_j \leq p$. Then 
	\[
	pz u pz (p Ap) \subseteq pzuA = pz A u = p A zu \subseteq \sum_{j=1}^n (p A v_j) v_j^\ast u \subseteq \sum_{j=1}^n (p A p) v_j^\ast u,
	\]
	and similarly $(pAp) pz u pz \subseteq \sum_{j=1}^n u v_j (p A p)$. We conclude that $pz u pz \in \qNor_{pMp}(pAp) $.
	
	Now if $x \in \qNor_{pM_{\Gamma}p}(p A p)$ then by direct verification we see that we have that  $v x v^\ast \in \theta(p)\qNor_{q M_{\Gamma} q}(\theta(p A p)) \theta(p) $. It follows that $v p z u pz v^\ast$, with $u \in \Nor_{M_{\Gamma}}(A)$ as before, is contained in  $\theta(p)\qNor_{q M_{\Gamma} q}(\theta(pAp)) \theta(p)$ which was contained in  $M_{\Lambda_{\emb}}$. We may take the projections $z$ to approximate the central support of $p$ and therefore  $v u v^\ast = v p   u p  v^\ast \in M_{\Lambda_{\emb}}$.  Hence $v  \Nor_{M_{\Gamma}}( A)'' v^\ast \subseteq M_{\Lambda_{\emb}}$. 
	Set $p_1 = v^\ast v \in pA'p$.  Note that $p_1 \leq p \leq r$. As both $A$ and $A'$ are contained in  $\Nor_{M_{\Gamma}}( A)''$ we find that $p_1 \in \Nor_{M_{\Gamma}}( A)''$ (as $p \in A$). 
	So we have the
	$\ast$-homomorphism $\rho: p_1  \Nor_{M_{\Gamma}}( A)'' p_1  =  p_1  r \Nor_{M_{\Gamma}}( A)'' p_1   \rightarrow M_{\Lambda_{\emb}}: x \mapsto v x v^\ast$ with  
	$v \in q M_{\Gamma} p_1$ and clearly $\rho(x) v = v x$. We conclude that    $r \Nor_{M_{\Gamma}}( A)'' \prec_{M_{\Gamma}}  M_{\Lambda_{\emb}}$.
	\end{proof}
 
We prove the following result concerning embeddings in graph products.

\begin{proposition}\label{Lem=Selfembedding}
Let $\Gamma$ be a simple graph and for $v\in \Gamma$, and let $(M_{v},\tau_v)$  be a tracial von Neumann algebra. Fix $v\in \Gamma$ and let $N\subseteq M_v$ be diffuse. If $N \prec_{M_{\Gamma}} M_\Lambda$ for some subgraph $\Lambda \subseteq \Gamma$, then $v \in \Lambda$. In particular if $\Lambda = \{w\}$, a singleton set, then $v = w$.
\begin{proof}  
Let $\Lambda\subseteq \Gamma$ be a subgraph with $v\not\in \Lambda$. We show that $N\not\prec_{M_{\Gamma}}M_{\Lambda}$. Since $N$ is diffuse, we can choose a net $(u_k)_{k}$ of unitaries in $N$ such that $\tau(u_k)=0$ and $u_k \rightarrow  0$ $\sigma$-weakly. Since $\lambda(\boldM_{\Gamma})$ is a dense subspace of $M_{\Gamma}$, it is sufficient to show for any reduced operators $x=x_1x_2\dots x_m,y=y_1y_2\dots y_n$, s.t. $x_i\in \mathring{M}_{v_i}$, $y_i\in \mathring{M}_{w_i}$, we have $\|\EE_{M_{\Lambda}}(xu_ky)\|_2\to0$. Indeed, writing $x=x' a$, $y=by'$, where $a,b\in M_{v}$ and where $x'$ respectively $y'$ is a reduced operator without letter $v$ at the end respectively start. Then $$xu_ky=x' au_k by'=x'\tau(au_kb)y'+x'(au_kb-\tau(au_kb))y'.$$ On the one hand, $\mathbb{E}_{M_{\Lambda}}(x'\tau(au_kb)y')=\tau(au_kb)\mathbb{E}_{M_{\Lambda}}(x'y')=\langle u_kb,a^*\rangle\mathbb{E}_{M_{\Lambda}}(x'y')\to0$. On the other hand, we write $x'=x''d$, $y'=ey''$, where $d,e\in M_{\Link(v)}$ and where $x''$ respectively $y''$ has no letter from $\Star(v)$ at the end respectively at the start. Then we have 
\[
\begin{split}
    x'(au_kb-\tau(au_kb))y'&=x''d(au_kb-\tau(au_kb))ey''\\
    &=x''de(au_kb-\tau(au_kb))y''\\
    &=\sum_i x''f_i(au_kb-\tau(au_kb))y'',
\end{split}
\]
where we write $de=\sum_if_i$ and $f_i\in M_{Link(v)}$ reduced. Since $x''f_i(au_kb-\tau(au_kb))y''$ is reduced and $v\not\in \Lambda$ we obtain that $\mathbb{E}_{M_{\Lambda}}(x''f_i(au_kb-\tau(au_kb))y'')=0$   by \cite[Remark 2.4, final remark]{caspersGraphProductsOperator2017a}. Thus $\|\EE_{M_{\Lambda}}(xu_ky)\|_{2}\to 0$, which completes the proof.
\end{proof}
\end{proposition}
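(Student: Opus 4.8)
The plan is to prove the contrapositive: supposing $v\notin\Lambda$, I will produce a single net of unitaries in $N$ witnessing $N\not\prec_{M_\Gamma}M_\Lambda$ via the net characterization of intertwining in \cref{Dfn=Intertwine}. Since $N\subseteq M_v$ is diffuse, fix a net $(u_k)_k$ of unitaries in $N$ with $\tau(u_k)=0$ and $u_k\to 0$ $\sigma$-weakly. Because $\lambda(\boldM_\Gamma)$ is $\sigma$-strongly dense in $M_\Gamma$, the $u_k$ are uniformly bounded, and right (resp.\ left) multiplication by a unitary is a $\|\cdot\|_2$-isometry, a routine density argument (together with Kaplansky to keep operator norms controlled) reduces the task to showing $\|\EE_{M_\Lambda}(x u_k y)\|_2\to 0$ for arbitrary reduced operators $x,y$ (note that adjoining turns a reduced operator into a reduced operator, so the $x^\ast$ appearing in \cref{Dfn=Intertwine} causes no loss).

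So fix reduced operators $x,y$. First I split off the parts of $x$ and $y$ lying in $M_v$ that sit adjacent to $u_k$: write $x=x'a$ and $y=by'$ with $a,b\in M_v$ and with $x'$, $y'$ reduced operators having no letter $v$ at the end, respectively start (take $a=1$ or $b=1$ if no such letter is present). Put $z_k:=au_kb-\tau(au_kb)1\in\mathring M_v$, so that
\[
x u_k y \;=\; \tau(au_kb)\,x'y'\;+\;x'z_ky'.
\]
For the first term, $\EE_{M_\Lambda}(\tau(au_kb)\,x'y')=\tau(au_kb)\,\EE_{M_\Lambda}(x'y')$, and $\tau(au_kb)=\tau(u_kba)\to 0$ since $ba\in M_v$ and $u_k\to 0$ $\sigma$-weakly; hence its $\|\cdot\|_2$-norm tends to $0$.

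For the second term I claim $\EE_{M_\Lambda}(x'z_ky')=0$ exactly. Split off the $M_{\Link(v)}$-parts of $x'$ and $y'$: write $x'=x''d$ and $y'=ey''$ with $d,e\in M_{\Link(v)}$ and with $x''$, $y''$ reduced operators having no letter from $\Star(v)$ at the end, respectively start. As every vertex of $\Link(v)$ is joined to $v$, the element $z_k\in M_v$ commutes with $d$ and $e$, so $x'z_ky'=x''d z_k e y''=x''(de)z_ky''$; expanding $de=\sum_{\omega\in\calW_{\Link(v)}}f_\omega$ into its reduced components $f_\omega\in\mathring M_\omega$, each $x''f_\omega z_ky''$ is a scalar multiple of the reduced operator supported on the word obtained by concatenating the word of $x''$, then $\omega$, then $v$, then the word of $y''$. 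In particular this word contains the letter $v\notin\Lambda$, so by the defining formula for $\EE_{M_\Lambda}$ on reduced operators, $\EE_{M_\Lambda}(x''f_\omega z_ky'')=0$ for every $\omega$; summing over $\omega$ and using the $\|\cdot\|_2$-continuity of $\EE_{M_\Lambda}$ gives $\EE_{M_\Lambda}(x'z_ky')=0$. Combining the two terms yields $\|\EE_{M_\Lambda}(xu_ky)\|_2\to 0$, hence $N\not\prec_{M_\Gamma}M_\Lambda$; the last assertion is the case $\Lambda=\{w\}$.

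The step carrying the real content is the combinatorial claim in the last paragraph: that the concatenation of the word of $x''$, $\omega$, $v$ and the word of $y''$ is reduced (and visibly contains $v$). This boils down to the fact that the inserted $v$ acts as a barrier — no leading letter of $y''$ commutes with $v$ (by construction such letters lie outside $\Star(v)$), and no $\Star(v)$-letter of $x''$ can be shuffled past the end of $x''$ into the $\omega v$ block (again by construction), while $v\notin\Link(v)$ means $v$ does not meet $\omega$. This is exactly the normal-form bookkeeping recorded in the comments following \cite[Eq.\ (25)]{borstCCAPGraphProducts2024}, but it must be spelled out with some care; everything else — the diffuse net, the two factorizations of $x$ and $y$ into reduced pieces, and the reduction to reduced operators — is standard. (Alternatively, one could deduce the statement from the amalgamated free product decomposition $M_\Gamma=M_{\Star(v)}\ast_{M_{\Link(v)}}M_{\Gamma\setminus\{v\}}$ of \eqref{Eqn=Amalgam} together with standard control of embeddings into one side of an amalgamated free product, using that $N$ is diffuse and $N\cap M_{\Link(v)}=\CC1$ inside $M_{\Star(v)}=M_v\,\overline{\otimes}\,M_{\Link(v)}$; the self-contained computation above avoids that machinery.)
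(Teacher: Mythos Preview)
Your proof is correct and follows essentially the same approach as the paper: the same net of trace-zero unitaries converging $\sigma$-weakly to $0$, the same reduction to reduced operators, the same two-step factorization $x=x'a=x''da$ and $y=by'=bey''$ to isolate the $M_v$- and $M_{\Link(v)}$-tails, and the same splitting into a scalar term (killed by $\sigma$-weak convergence) and a reduced term containing the letter $v$ (annihilated by $\EE_{M_\Lambda}$). Your additional commentary on the density reduction and on why the concatenated word is reduced, as well as the alternative amalgamated-free-product remark, go slightly beyond what the paper spells out but do not change the argument.
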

 
\begin{remark}
    We remark in particular for any graph $\Gamma$, II$_1$-factors $\{M_{v}\}_{v\in \Gamma}$ and a finite subgraph $\Lambda\subseteq \Gamma$ that $\qNor_{M_{\Gamma}}(M_{\Lambda})'' = \Nor_{M_{\Gamma}}(M_{\Lambda})'' = M_{\Lambda\cup \Link(\Lambda)}$. Indeed, clearly $M_{\Lambda},M_{\Link(\Lambda)}\subseteq \Nor_{M_{\Gamma}}(M_{\Lambda})''$ (as $M_{\Link(\Lambda)} = M_{\Lambda}'\cap M_{\Gamma}$) so that $M_{\Lambda\cup \Link(\Lambda)}\subseteq \Nor_{M_{\Gamma}}(M_{\Lambda})''\subseteq \qNor_{M_{\Gamma}}(M_{\Lambda})''$.
    Furthermore, by \cref{Lem=Selfembedding} we have $M_{\Lambda}\not\prec_{M_{\Gamma}} M_{\widetilde{\Lambda}}$
    for any strict subgraph $\widetilde{\Lambda}\subsetneq \Lambda$ so that by \cref{prop:embed} we obtain $\qNor_{M_{\Gamma}}(M_{\Lambda})''\subseteq M_{\Lambda\cup \Link(\Lambda)}$.
\end{remark}

\subsection{Unitary conjugacy in graph products}
\label{subsection:unitary-conjugacy-in-graph-products}

We prove \cref{lemma:unitary-inclusion-in-star-for-vNa} which gives sufficient conditions for a subalgebra $Q\subseteq M_{\Gamma}$ to unitarily embed in a subalgebra $M_{\Lambda_{\emb}}$. This can be seen as a generalization of \cite[Theorem 3.3]{OzawaKurosh} where a unitary embedding is proven for free products.
The proof of \cref{lemma:unitary-inclusion-in-star-for-vNa} combines (the second half of) the proof of \cite[Theorem 3.3]{OzawaKurosh} with results of \cref{sub:embedding-quasi-normalizers} concerning embeddings in graph products. 
 
\begin{theorem}
\label{lemma:unitary-inclusion-in-star-for-vNa}
Let $\Gamma$ be a simple graph and for $v\in \Gamma$ let $(M_v,\tau_v)$ be a II$_1$-factor with normal faithful trace $\tau_v$. Let $Q\subseteq M_{\Gamma}$ be a subfactor whose relative commutant $Q' \cap M_{\Gamma}$ is also a factor. Let $\Lambda\subseteq \Gamma$ be a subgraph and let $\{\Lambda_j\}_{j\in \calJ}$ be a non-empty collection of subgraphs of $\Lambda$. Suppose $Q\prec_{M_{\Gamma}} M_{\Lambda}$ and $Q\not\prec_{M_{\Gamma}} M_{\Lambda_j}$ for $j\in \calJ$. Then there is a unitary $u\in M_{\Gamma}$ such that $u^*Qu\subseteq M_{\Lambda_{\emb}}$,  where $\Lambda_{\emb}$ is defined as  in \eqref{eq:embedding-graph}. 
\begin{proof}
Since $Q\prec_{M_{\Gamma}} M_{\Lambda}$ and $Q \not\prec_{M_{\Gamma}} M_{\Lambda_j}$ for $j\in \calJ$ we have by \cref{Lem=TransitiveEmbedding} that there are projections $q\in Q$, $e\in M_{\Lambda}$, a normal $*$-homomorphism $\theta:qQq\to eM_{\Lambda}e$ and a non-zero partial isometry $v\in eM_{\Gamma}q$ such that $\theta(x)v=vx$ for $x\in qQq$ and such that moreover $\theta(qQq)\not\prec_{M_{\Gamma}} M_{\Lambda_j}$ for $j\in \calJ$. 
We may moreover assume that $q$ is majorized by the  support of $\EE_Q(v^\ast v)$.
Let $q_0 \in Q$ be a  non-zero projection with $q_0 \leq q$ and trace $\tau(q_0)=\frac{1}{m}$ for some $m\geq 1$. Put $v_0 := vq_0$. Note that  $v^\ast v \in (q Q q)' \cap qM_\Gamma q$.  Then $\EE_{Q}(v_0^\ast v_0) = \EE_Q(q_0 v^\ast v q_0) =  \EE_Q( q_0 v^\ast v)  = q_0\EE_Q(v^\ast v)$ and the latter expression is non-zero by the assumption that the support of   $\EE_Q(v^\ast v)$  majorizes  $q$. As $\EE_Q$ is faithful $v_0 \not = 0$. 
Define $\theta_0: q_0Qq_0\to eM_{\Lambda}e$ as $\theta_0:=\theta|_{q_0Qq_0}$. Then for $x\in q_0 Q q_0$ we have $\theta_0(x)v_0 = \theta(x)vq_0 = vxq_0 = v_0x$. Automatically this implies $v_0^\ast v_0\in (q_0Qq_0)' \cap q_0M_{\Gamma}q_0$. Furthermore for $j\in \calJ$, the corner $\theta_0(q_0Qq_0) = \theta_0(q_0)\theta(qQq)\theta(q_0)$ does not embed in $M_{\Lambda_j}$ inside $M_{\Gamma}$ since $\theta(qQq)$ does not embed in $M_{\Lambda_j}$ inside $M_{\Gamma}$. Hence, by \cref{prop:embed}\eqref{Item=Embed1b} we obtain $\theta(q_0)\Nor_{M_{\Gamma}}(\theta_0(q_0Qq_0))''\theta(q_0)\subseteq M_{\Lambda_{\emb}}$.

Since $Q$ is a factor and $\tau(q_0)=\frac{1}{m}$ we can for $j=1,\ldots, m$ choose a partial isometry $u_j$ in $Q$ such that $u_j^*u_j = q_0$ and $\sum_{j=1}^m u_ju_j^* =1_{M_{\Gamma}}$. We may moreover assume that $u_1 = q_0$. 
We define a projection $q' := \sum_{j=1}^m u_jv_{0}^*v_{0}u_j^*\in M_{\Gamma}$. We show that $q'\in Q'\cap M_{\Gamma}$. 
Indeed, let $y\in Q$. Then using that $v_0^\ast v_0\in (q_0Qq_0)'$ and $u_j^*yu_j\in q_0Qq_0$ for $j=1,\ldots, n$ we get 
       \[
       \begin{split}
            q'y &= \sum_{j=1}^m u_j(v_0^\ast v_0)u_j^*y 
            = \sum_{j=1}^m\sum_{i=1}^m u_j(v_0^\ast v_0)(u_j^*yu_i)u_i^* \\
            &= \sum_{j=1}^m\sum_{i=1}^m u_j(u_j^*yu_i)(v_0^\ast v_0)u_i^* 
            = \sum_{i=1}^m yu_i(v_0^\ast v_0)u_i^* = yq'.
       \end{split}
       \]
       and thus $q'\in Q'\cap M_{\Gamma}$. We observe that $v_0^\ast v_0 = q_0q'q_0 = q_0q'$  which shows in particular that $q'$ is non-zero (since $v_0\not=0$). Since $Q'\cap M$ is a (finite) factor and $q'$ is a non-zero projection, we can choose a projection $q_0'\in Q'\cap M$ with $q_0'\leq q'$ and $\tau(q_0') = \frac{1}{n}$ for some $n\geq 1$.  Since $Q'\cap M_\Gamma$ is a factor and since $\tau(q_0')=\frac{1}{n}$ we can find partial isometries $u_1',\ldots, u_n'\in Q'\cap M_{\Gamma}$ with  $(u_k')^*u_k' = q_0'$ for $k=1,\ldots, n$ and such that $\sum_{k=1}^n u_k'(u_k')^* = 1_{M_{\Gamma}}$.
       
        We then put $v_{00} := v_0q_0' = vq_0q_0'\in eM_{\Gamma}q_0$. 
        Observe that $v_{00}^\ast v_{00} = q_0'v_0^\ast v_0 q_0' = q_0'q_0$ has trace $\tau(v_{00}^*v_{00}) = \tau(q_0')\tau(q_0) =\frac{1}{nm}$ so in particular $v_{00}$ is non-zero.
        Then for $x\in q_0Qq_0$ we have $\theta_0(x)v_{00} = \theta_0(x)v_0q_0' = v_0xq_0' = v_{00}x$. 
        Therefore, we obtain $v_{00}v_{00}^\ast \in \theta_0(q_0Qq_0)'\cap M_{\Gamma}\subseteq \Nor_{M_{\Gamma}}(\theta_0(q_0Qq_0))''$.
        As $v_{00}v_{00}^\ast \leq \theta(q_0)$ we obtain   $v_{00}v_{00}^\ast\in \theta(q_0)\Nor_{M_{\Gamma}}(\theta_0(q_0Qq_0))''\theta(q_0)\subseteq M_{\Lambda_{\emb}}$ using the first paragraph.

      Since $M_{\Lambda_{\emb}}$ is a factor (as it is a graph product of II$_1$-factors), and since $\tau(v_{00}v_{00}^\ast) = \frac{1}{nm}$ there exist for $j=1,\ldots, m$, $k=1,\ldots, n$ partial isometries $w_{j,k}\in M_{\Lambda_{\emb}}$ with $w_{j,k}w_{j,k}^* = v_{00}v_{00}^\ast$ and $\sum_{j=1}^m\sum_{k=1}^n w_{j,k}^*w_{j,k} = 1_{M_{\Gamma}}$.        
         Finally, we define the unitary $u := \sum_{j=1}^m\sum_{k=1}^n u_ju_k'v_{00}^\ast w_{j,k}\in M_{\Gamma}$. Then for $x\in Q$ we have 
         \begin{align*}
        u^*xu &= \sum_{j_1=1}^m\sum_{k_1=1}^n\sum_{j_2=1}^m\sum_{k_2=1}^n w_{j_1,k_1}^*v_{00}(u_{k_1}')^*(u_{j_1}^*xu_{j_2})u_{k_2}'v_{00}^*w_{j_1,k_2}\\    
        &=\sum_{j_1=1}^m\sum_{k=1}^n\sum_{j_2=1}^m w_{j_1,k}^*v_{00}(u_{j_1}^*xu_{j_2})q_0'v_{00}^\ast w_{j_1,k}\\  
        &=\sum_{j_1=1}^m\sum_{k=1}^n\sum_{j_2=1}^m w_{j_1,k}^*\theta_0(u_{j_1}^*xu_{j_2})v_{00}v_{00}^\ast w_{j_1,k}\\ 
        &=\sum_{j_1=1}^m\sum_{k=1}^n\sum_{j_2=1}^m w_{j_1,k}^*\theta_0(u_{j_1}^*xu_{j_2})w_{j_1,k} \in M_{\Lambda_{\emb}}.
         \end{align*}
        Hence $u^*Qu\subseteq M_{\Lambda_{\emb}}$.
    \end{proof}
\end{theorem}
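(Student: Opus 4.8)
The plan is to adapt the second half of Ozawa's proof of the Kurosh-type theorem \cite[Theorem 3.3]{ozawaKuroshtypeTheoremType2006a} to the graph product setting, using \cref{prop:embed} to control normalizers in place of the free-product-specific arguments there. \emph{First I would produce a partial intertwiner that avoids the $M_{\Lambda_j}$.} Since $Q\prec_{M_\Gamma}M_\Lambda$ but $Q\not\prec_{M_\Gamma}M_{\Lambda_j}$ for every $j\in\calJ$, \cref{Lem=TransitiveEmbedding} supplies projections $q\in Q$, $e\in M_\Lambda$, a normal $\ast$-homomorphism $\theta\colon qQq\to eM_\Lambda e$ and a non-zero partial isometry $v\in eM_\Gamma q$ with $\theta(x)v=vx$ for $x\in qQq$, and moreover with $\theta(qQq)\not\prec_{M_\Gamma}M_{\Lambda_j}$ for all $j$ and with $q$ majorized by the support of $\EE_Q(v^\ast v)$. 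Since $\theta(qQq)$ sits inside $eM_\Lambda e$ and embeds into none of the $M_{\Lambda_j}$, \cref{prop:embed}\eqref{Item=Embed1b} then gives $\theta(q)\,\qNor_{M_\Gamma}(\theta(qQq))''\,\theta(q)\subseteq M_{\Lambda_{\emb}}$. I would also record that $\Lambda$ is non-empty (a diffuse $Q$ cannot embed into $\CC$), so $M_{\Lambda_{\emb}}$ is a graph product of II$_1$-factors over a non-empty graph and hence itself a II$_1$-factor; this factoriality is needed below.

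\emph{Next I would spread the partial isometry to a unitary}, the classical amplification argument, invoking the three factoriality hypotheses in turn. Shrink $q$ to a subprojection $q_0\in Q$ of trace $1/m$, put $v_0:=vq_0$ (non-zero since $\EE_Q(v_0^\ast v_0)=q_0\EE_Q(v^\ast v)\neq0$) and $\theta_0:=\theta|_{q_0Qq_0}$; using factoriality of $Q$ pick partial isometries $u_1,\dots,u_m\in Q$ with $u_j^\ast u_j=q_0$, $\sum_j u_ju_j^\ast=1$, so that $q':=\sum_j u_jv_0^\ast v_0u_j^\ast\in Q'\cap M_\Gamma$ is non-zero. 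Using factoriality of $Q'\cap M_\Gamma$, shrink $q'$ to a subprojection $q_0'$ of trace $1/n$ with partial isometries $u_1',\dots,u_n'\in Q'\cap M_\Gamma$, $(u_k')^\ast u_k'=q_0'$, $\sum_k u_k'(u_k')^\ast=1$, and set $v_{00}:=v_0q_0'$. Then $v_{00}$ is non-zero of trace $1/(mn)$ and still satisfies $\theta_0(x)v_{00}=v_{00}x$, so $v_{00}v_{00}^\ast\in\theta_0(q_0Qq_0)'\cap M_\Gamma\subseteq\qNor_{M_\Gamma}(\theta_0(q_0Qq_0))''$; since $v_{00}v_{00}^\ast\leq\theta(q_0)$, the first step gives $v_{00}v_{00}^\ast\in M_{\Lambda_{\emb}}$. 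Finally, using factoriality of $M_{\Lambda_{\emb}}$, pick partial isometries $w_{j,k}\in M_{\Lambda_{\emb}}$ with $w_{j,k}w_{j,k}^\ast=v_{00}v_{00}^\ast$, $\sum_{j,k}w_{j,k}^\ast w_{j,k}=1$, and put $u:=\sum_{j,k}u_ju_k'v_{00}^\ast w_{j,k}$. A direct check shows $u$ is a unitary; and for $x\in Q$, expanding $u^\ast xu$ as a double sum and repeatedly using that $(u_k')^\ast$ commutes with $Q$, that $u_{j_1}^\ast xu_{j_2}\in q_0Qq_0$ intertwines via $v_{00}(u_{j_1}^\ast xu_{j_2})=\theta_0(u_{j_1}^\ast xu_{j_2})v_{00}$, and the matrix-unit relations for the $u_k'$ and the $w_{j,k}$, collapses $u^\ast xu$ to a finite sum of terms of the form $w^\ast\,\theta_0(u_j^\ast xu_{j'})\,w$ with $w\in M_{\Lambda_{\emb}}$ and $\theta_0(\cdot)\in M_\Lambda\subseteq M_{\Lambda_{\emb}}$; hence $u^\ast xu\in M_{\Lambda_{\emb}}$ and $u^\ast Qu\subseteq M_{\Lambda_{\emb}}$.

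The only place the graph-product structure genuinely enters is the first step, via \cref{prop:embed}\eqref{Item=Embed1b} together with the fact (\cref{Lem=TransitiveEmbedding}) that the partial intertwiner can be chosen to avoid all $M_{\Lambda_j}$ simultaneously; everything in the second step is the standard device for promoting a partial intertwiner to an honest unitary. I expect the main difficulty to be purely bookkeeping: arranging the traces to be commensurable (the $1/m$, $1/n$, $1/(mn)$ choices) so that the three systems of partial isometries — in $Q$, in $Q'\cap M_\Gamma$, and in $M_{\Lambda_{\emb}}$ — exist simultaneously, and keeping track of which of the three factoriality hypotheses is used where. One should also check at the outset that embedding of a subalgebra of $M_\Lambda$ into $M_{\Lambda_j}\subseteq M_\Lambda$ is insensitive to whether the ambient algebra is taken to be $M_\Lambda$ or $M_\Gamma$, so that \cref{Lem=TransitiveEmbedding} and \cref{prop:embed} chain correctly.
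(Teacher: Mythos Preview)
Your proposal is correct and follows essentially the same route as the paper's proof: invoke \cref{Lem=TransitiveEmbedding} to get an intertwiner with $\theta(qQq)\not\prec M_{\Lambda_j}$, apply \cref{prop:embed}\eqref{Item=Embed1b} to trap the (quasi-)normalizer of the image in $M_{\Lambda_{\emb}}$, then use the three factoriality hypotheses ($Q$, $Q'\cap M_\Gamma$, $M_{\Lambda_{\emb}}$) in turn to build matrix units and promote $v_{00}$ to a unitary. One small slip to watch: you apply \cref{prop:embed}\eqref{Item=Embed1b} to $\theta(qQq)$ in the first step, but at the end you need the containment for $\qNor_{M_\Gamma}(\theta_0(q_0Qq_0))''$; the paper handles this by applying \cref{prop:embed} \emph{after} shrinking to $q_0$, having noted that the corner $\theta_0(q_0Qq_0)=\theta(q_0)\theta(qQq)\theta(q_0)$ still fails to embed into any $M_{\Lambda_j}$.
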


\section{Graph product rigidity}\label{Sect=GraphProductRigidity}
The aim of this section is to prove \cref{thm:rigid-graph-decomposition}. This provides a rather general class of graphs and von Neumann algebras such that the graph product completely remembers the graph and the vertex von Neumann algebra up to stable isomorphism. Note that we cannot expect to cover all graphs as this would imply the free factor problem and which is beyond reach of our methods. The class of rigid graphs as presented in Section \ref{Sect=RigidGraph} is therefore natural.

\subsection{Vertex von Neumann algebras}
We define classes of von Neumann algebras for which we first recall a version of the Akemann-Ostrand property \cite{houdayerUniquePrimeFactorization2017}. 

\begin{definition}
    Let  $M$ be a von Neumann algebra with standard form $(M, L^2(M), J, L^2(M)^+)$. We say that $M$  has  {\it strong property (AO) }  if there exist   unital C$^\ast$-subalgebras $A \subseteq M$ and $C \subseteq \bound(L^2(M))$ such that:
    \begin{itemize}
\item $A$ is $\sigma$-weakly dense in $M$, 
\item $C$ is nuclear and contains $A$, 
\item The commutators $[C, JAJ] = \{  [c, JaJ] \mid c \in C, a \in A \}$ are contained in the space of compact operators $\Kompact(L^2(M))$. 
\end{itemize}
\end{definition}

We recall that a wide class of examples of von Neumann algebras with property strong (AO) comes from hyperbolic groups. 
 
\begin{theorem}[See Lemma 3.1.4 of  \cite{isonoBiexactnessDiscreteQuantum2015} and remarks before] \label{Thm=NuclearInt}
Let $G$ be a discrete hyperbolic group.    Consider the anti-linear isometry $J$ determined by  
\[
J: \ell^ 2(G) \rightarrow \ell^2(G): \delta_s \mapsto \delta_{s^{-1}}, \qquad s \in G.
\] 
Then there is a  nuclear  C$^\ast$-algebra $C$ such that: 
\begin{enumerate}
    \item $C_r^\ast(G) \subseteq C \subseteq \bound(\ell^2(G))$.
    \item $C$ contains all compact operators. 
    \item The commutator $[C, J C_r^\ast(G)  J]$ is contained in the space of compact operators. 
\end{enumerate}
\end{theorem}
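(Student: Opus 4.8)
The plan is to realise $C$ as a quotient of the crossed product of $C(\bar G)$ by the amenable action of $G$ on its Gromov compactification $\bar G = G \sqcup \partial G$. Recall that $G$ is a dense open subset of the compact metrizable space $\bar G$, and $G$ acts on $\bar G$ by homeomorphisms extending left translation. Restriction of functions to the dense subset $G$ yields an injective, $G$-equivariant, unital $\ast$-homomorphism $C(\bar G) \hookrightarrow \ell^\infty(G)$; composing it with the multiplication representation $\ell^\infty(G) \to \bound(\ell^2(G))$, $f \mapsto m_f$, one obtains a faithful representation $\pi$ of $C(\bar G)$ on $\ell^2(G)$ satisfying $\lambda_g\, \pi(f)\, \lambda_g^\ast = \pi(g\cdot f)$ for $g \in G$, where $\lambda$ denotes the left regular representation and $(g\cdot f)(x) = f(g^{-1}x)$. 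Thus $(\pi,\lambda)$ is a covariant representation of $(C(\bar G),G)$, and I would set
\[
C := C^\ast\big(\{\lambda_g : g \in G\}\cup\{\pi(f) : f \in C(\bar G)\}\big)\subseteq \bound(\ell^2(G)),
\]
which is the image of $C(\bar G)\rtimes G$ under the $\ast$-homomorphism integrating $(\pi,\lambda)$. Then $C_r^\ast(G) = C^\ast(\lambda(G)) \subseteq C \subseteq \bound(\ell^2(G))$, which is item (1).

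For item (2), observe that $c_0(G)\subseteq C(\bar G)$ (a function on $G$ vanishing at infinity extends continuously by $0$ on $\partial G$), so the rank-one projection $m_{\delta_e} = \pi(\delta_e)$ lies in $C$; consequently every matrix unit $\lambda_g m_{\delta_e}\lambda_h^\ast = e_{g,h}$ lies in $C$, and these span a dense subalgebra of $\compact(\ell^2(G))$, giving $\compact(\ell^2(G))\subseteq C$. For nuclearity I would invoke the theorem of Adams (which also underlies the references in the statement above) that a discrete hyperbolic group acts topologically amenably on $\bar G$; hence $G$ is exact, the full and reduced crossed products $C(\bar G)\rtimes G$ coincide, and this C$^\ast$-algebra is nuclear by the work of Anantharaman--Delaroche on amenable actions. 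Since $C$ is a quotient of $C(\bar G)\rtimes G$ and nuclearity passes to quotients, $C$ is nuclear.

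For item (3), note first that $J\lambda_g J = \rho_g$, the right translation operator $\delta_t \mapsto \delta_{tg^{-1}}$, so $J C_r^\ast(G) J = C^\ast(\rho(G))$. Since the relative commutant of $C^\ast(\rho(G))$ in the Calkin algebra $\bound(\ell^2(G))/\compact(\ell^2(G))$ is a C$^\ast$-subalgebra, it suffices to check that each generator of $C$ commutes with each $\rho_g$ modulo $\compact(\ell^2(G))$. For $\lambda_h$ this is exact, as the left and right regular representations commute. For $\pi(f)$ with $f\in C(\bar G)$, a direct computation gives $\rho_g\,\pi(f)\,\rho_g^\ast = \pi(R_g f)$ with $(R_g f)(x) = f(xg)$, hence $[\pi(f),\rho_g] = \pi(f - R_g f)\,\rho_g$; and $f - R_g f \in c_0(G)$ because $\bar G$ is \emph{small at infinity}: since $d(x,xg) = |g|$ is bounded, the sequences $x$ and $xg$ have the same limit in $\partial G$, so the function $x \mapsto f(xg) - f(x)$ extends continuously to $\bar G$ and vanishes on $\partial G$. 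Therefore $\pi(f-R_g f)\in\compact(\ell^2(G))$ and $[\pi(f),\rho_g]\in\compact(\ell^2(G))$, which completes the argument. The only substantial external input is Adams' amenability theorem for the boundary action of a hyperbolic group; the remainder is routine bookkeeping with the covariance relations for $\lambda$, $\rho$, and the multiplication operators, together with the elementary observation that right translation by a fixed group element is a bounded perturbation and hence acts trivially on the Gromov boundary.
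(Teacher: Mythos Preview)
The paper does not supply its own proof of this theorem; it is quoted from the literature (Isono, building on Higson--Guentner and Ozawa). Your argument is correct and is precisely the classical construction underlying those references: realise $C$ as the image of $C(\bar G)\rtimes G$ in $\bound(\ell^2(G))$, use Adams' theorem that the action on the Gromov compactification is amenable to get nuclearity, and use the small-at-infinity property of $\bar G$ to see that right translations commute with $C$ modulo compacts. All the computations (the matrix-unit argument for $\compact\subseteq C$, the identity $J\lambda_g J=\rho_g$, and the commutator formula $[\pi(f),\rho_g]=\pi(f-R_gf)\rho_g$ with $f-R_gf\in c_0(G)$) are correct.
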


\begin{remark}
In view of Section \ref{Sect=NuclearGraphProduct} it is worth to note that we may always assume without loss of generality that $C$ contains the space of compact operators by replacing $C$ by $C + \Kompact(L^2(M))$ if necessary, see \cite[Remark 2.7]{houdayerUniquePrimeFactorization2017}. This fact also underlies \cref{Thm=NuclearInt}. 
\end{remark}

\begin{definition}\label{def:class-vertex-class-rigid}
We define the following classes of von Neumann algebras:
\begin{itemize}
    \item  Let $\Cvert$ be the class of II$_1$-factors $M$ with separable predual $M_\ast$ that satisfy condition strong (AO)  and which are non-amenable;
\item  Let $\Ccomplete$ be the class of all von Neumann algebraic graph products $(M_{\Gamma},\tau) = *_{v,\Gamma}(M_v,\tau_v)$ 
of tracial von Neumann algebras $(M_v,\tau_v)$ in $\Cvert$ taken over non-empty, finite,  complete graphs $\Gamma$;
\item  Let $\Crigid$ be the class of all von Neumann algebraic graph products $(M_{\Gamma},\tau) = *_{v,\Gamma}(M_v,\tau_v)$ 
of tracial von Neumann algebras $(M_v,\tau_v)$ in $\Cvert$ taken over non-empty, rigid graphs $\Gamma$.
\item  Let $\Crigid^f$ be defined in the same way as $\Crigid$ with the additional assumption that $\Gamma$ is finite. 
\end{itemize}
\end{definition}

\begin{remark}\label{remark:classes-graph-products}
    We remark that $\Cvert\subseteq \Ccomplete\subseteq \Crigid^f \subseteq \Crigid$. Furthermore,
    \begin{enumerate}
        \item \label{Item=class-Cvert} The class $\Cvert$ is closed under  taking (finitely many) free products (see \cite[Example 2.8(5)]{houdayerUniquePrimeFactorization2017}). Furthermore, all von Neumann algebras $M\in \Cvert$ are solid and prime, see \cite{ozawaSolidNeumannAlgebras2004};
        \item \label{Item=class-Ccomplete} The class $\Ccomplete$ is closed under taking tensor products. Moreover, we observe that $\Ccomplete$ coincides with the class of tensor products of factors from $\Cvert$;
        \item \label{Item=class-Crigid} The class $\Crigid$ is closed under taking  graph products over non-empty, \textit{rigid} graphs by \cref{remark:graph-product-of-graphs-consistency} and \cref{lemma:graph-product-of-rigid-graphs}. In particular, it is closed under tensor products;
        \item \label{Item=class-Crigid-minus-Cvert} The class $\Crigid\setminus \Cvert$ is closed under taking graph products over \textit{arbitrary} non-empty, graphs by \cref{remark:graph-product-of-graphs-consistency} and \cref{lemma:graph-product-of-rigid-graphs}. In particular, it is closed under tensor products and under free products.
    \end{enumerate}
\end{remark}

\begin{remark}\label{remark:need-for-rigid-graphs}
 We show  that it may happen that a graph product over a rigid graph is isomorphic to a graph product over a non-rigid graph; even if all vertex von Neumann algebras come from the class $\Cvert$. Consider the graph $\mathbb{Z}_4$ defined in \cref{example:rigid-graphs}\eqref{Item=rigid-graphsZn}. The graph $\ZZ_4$ is not rigid. For $v \in \mathbb{Z}_4$ let $G_v$ be a countable discrete group. Let $H_v = G_v \ast G_{v+2}$. We have for the graph products of groups that
\[
\ast_{v, \mathbb{Z}_4} G_v = (G_0 \ast G_2) \times (G_1 \ast G_3) = \ast_{v, \mathbb{Z}_2} H_v.
\]
We now set $G_v = \mathbb{F}_2$ and $H_v = \mathbb{F}_4$ to be free groups with 2 and 4 generators respectively. 
Set $M_v = \mathcal{L}(\mathbb{F}_2), v \in \mathbb{Z}_4$ and $N_v = \mathcal{L}(\mathbb{F}_4), v \in \mathbb{Z}_2$ equipped with their tracial Plancherel states $\tau_v$. Then $M_v$ and $N_v$ are in class $\Cvert$ and  $\ast_{v, \mathbb{Z}_4} (M_v, \tau_v) = \ast_{v, \mathbb{Z}_2} (N_v, \tau_v)$. We have thus given an example of a rigid and non-rigid graph that give isomorphic graph products.
 \end{remark}

\begin{remark} 
 We show  that it may happen that a graph product over a rigid graph with vertex algebras in $\Cvert$ is isomorphic to a graph product over a different rigid graph with vertex algebras that are not in $\Cvert$.   Let $\Gamma$ be a rigid graph and for $v \in \Gamma$ let $\Lambda_v$ be a rigid graph; assume all these graphs are non-empty. The graph product of graphs $\Lambda_\Gamma$ is rigid by Lemma \ref{lemma:graph-product-of-rigid-graphs}.   
 Then for any $v \in \Gamma, w \in \Lambda_v$ let $G_w = \mathbb{F}_2$. Then 
 \[
 \ast_{v, \Gamma} (\ast_{w, \Lambda_v}  \calL(G_w)) = \ast_{w, \Lambda_\Gamma} \calL(G_w).
 \]
 The right hand side is a graph product over $\Lambda_\Gamma$ of von Neumann algebras in  $\Cvert$ and hence is contained in $\Crigid$. The left hand side is a graph product over $\Gamma$ of von Neumann algebras $\ast_{w, \Lambda_v}  \calL(G_w)$. The latter von Neumann algebras are not in $\Cvert$ for the fact that otherwise  they would be solid \cite{ozawaSolidNeumannAlgebras2004}. However,  $\Lambda_v$ being rigid implies that it contains at least two points that share an edge and hence $\ast_{w, \Lambda_v}  \calL(G_w)$ contains $\calL(\mathbb{F}_2 \times \mathbb{F}_2)$ which is an obstruction to solidity.   
 \end{remark}

\subsection{Key result for embeddings of diffuse subalgebras in graph products}  In this section we fix the following notation.  
Let $\Gamma$ be a simple graph. For $v\in \Gamma$ let $(M_{v},\tau_v)$ be a tracial von Neumann algebra ($M_v\not=\CC$) that satisfies strong (AO) and has a separable predual. Let $(M_{\Gamma},\tau_\Gamma) = \ast_{v,\Gamma}(M_{v},\tau_v)$ be the von Neumann algebraic graph product. For $v\in \Gamma$ let $\calH_{v} = L^2(M_v,\tau_v)$ and let $\calH_{\Gamma}$ be the graph product of these Hilbert spaces, which is the standard Hilbert space of $M_{\Gamma}$ \cite{caspersGraphProductsOperator2017a}. We denote by $J:\calH_{\Gamma}\to \calH_{\Gamma}$ the modular conjugation.   Let $B_v = \bound(\calH_v)$. Let $\Omega_v = 1_{M_v}$ as a vector in $\calH_v$ and let $\omega_v(x) = \langle x \Omega_v, \Omega_v \rangle, x \in B_v$. Then $\omega_v$ is a GNS-faithful  state on $B_v$ and the GNS-space of $\omega_v$ can canonically be identified with $\calH_v$. The reduced C$^\ast$-algebraic  graph product $(B_\Gamma, \omega_\Gamma) = \astred_{v, \Gamma} (B_v,\omega_v)$ gives then by construction a C$^\ast$-subalgebra $B$ of $\bound(\calH_{\Gamma})$. We let $\lambda_v: B_v \rightarrow B$ be the canonical embedding. Furthermore we let $\rho_{v}: B_{v}^{\op}\to B^{\op}$ be the map $\rho_{v}(x^{\op}) = J\lambda_{v}(x)^*J$. As for $v\in \Gamma$ the von Neumann algebra $M_{v}$ has strong property (AO) by assumption, there are unital $\Cstar$-subalgebras $C_v\subseteq B_v$ and $A_v\subseteq M_v\cap C_v$ such that
\begin{enumerate}
    \item The C$^\ast$-algebra $A_v$  are $\sigma$-weakly dense in $M_v$,
    \item The C$^\ast$-algebra $C_v$ are nuclear,
    \item The commutators $[C_v,J_vA_vJ_v]$ are contained in $\KK(\calH_v)$.
\end{enumerate}
As in \cite[Remarks 2.7 (1)]{houdayerUniquePrimeFactorization2017} we may and will moreover assume that $\KK(\calH_v)\subseteq C_v$.
We let $(C_{\Gamma}, \omega_\Gamma) = \astred_{v,\Gamma} (C_v, \omega_v)$ and $(A_{\Gamma}, \omega_\Gamma) = \astred_{v,\Gamma} (A_v, \omega_v)$ be the reduced  graph products of the C*-algebras. 
Observe that we now have
\begin{align*}
    A_{\Gamma} \subseteq M_{\Gamma} \subseteq B_{\Gamma} \text{ and } A_{\Gamma}\subseteq C_{\Gamma}\subseteq B_{\Gamma},
\end{align*}
and the states $\omega_\Gamma$ defined through the different graph products coincide.

\begin{lemma}\label{Lem=CGammaNuclear}
$C_\Gamma$ is nuclear.
\end{lemma}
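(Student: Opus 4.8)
The plan is to apply \cref{thm:nuclearity-graph-productNEW} directly to the reduced graph product $C_\Gamma = \astred_{v,\Gamma}(C_v,\omega_v)$. That theorem requires two hypotheses on each vertex C$^\ast$-algebra: that $C_v$ is nuclear, and that the GNS-representation $\pi_v$ of $C_v$ on $L^2(C_v,\omega_v)$ contains the compact operators. The first hypothesis is immediate, since $C_v$ is nuclear by the very choice of the strong property (AO) data (condition (2) above). So the only thing that actually needs to be checked is the compacts condition, together with the bookkeeping needed to see that \cref{thm:nuclearity-graph-productNEW} applies in the present setting.

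First I would identify the GNS-space of $(C_v,\omega_v)$ with $\calH_v$. By construction $\omega_v(x) = \langle x\Omega_v,\Omega_v\rangle$ where $\Omega_v = 1_{M_v} \in \calH_v$ is cyclic for $C_v$ (indeed $A_v \subseteq C_v$ is $\sigma$-weakly dense in $M_v$, and $\Omega_v$ is cyclic for $M_v$, hence for $C_v$). Thus the GNS-representation of $\omega_v$ is canonically identified with the inclusion $C_v \subseteq \bound(\calH_v)$, i.e. $\pi_v = \id_{C_v}$ under this identification. Under this identification the requirement ``$\pi_v(C_v) \supseteq \compact(\calH_v)$'' becomes simply ``$\compact(\calH_v) \subseteq C_v$'', which is exactly the extra assumption we imposed on the strong property (AO) data following \cite[Remarks 2.7 (1)]{houdayerUniquePrimeFactorization2017}. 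One small point to verify here is that $\omega_v$ is GNS-faithful (so that the reduced graph product and the above identifications make sense), which indeed holds since $\Omega_v$ is cyclic and separating-enough: this is exactly what was recorded in the setup paragraph (``$\omega_v$ is a GNS-faithful, but not faithful, state on $B_v$'').

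With those identifications in hand, \cref{thm:nuclearity-graph-productNEW} applies verbatim: each $C_v$ is a nuclear, unital C$^\ast$-algebra, $\omega_v$ is GNS-faithful, and the image of the GNS-representation contains $\compact(\calH_v)$; therefore $C_\Gamma = \astred_{v,\Gamma}(C_v,\omega_v)$ is nuclear. I do not expect any genuine obstacle here — the content is entirely front-loaded into \cref{thm:nuclearity-graph-productNEW} (whose own proof reduces, via the amalgamated free product decomposition and \cref{Lem=ContainsCompacts}, to Ozawa's theorem on amalgamated free products of nuclear C$^\ast$-algebras) and into the fact that the strong property (AO) data was chosen so that $\compact(\calH_v) \subseteq C_v$. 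The only mild care needed is the clerical matching of the abstract hypotheses of \cref{thm:nuclearity-graph-productNEW} with the concrete objects $C_v, \omega_v, \calH_v$ fixed in this section; once the GNS-space is correctly identified with $\calH_v$, everything lines up.
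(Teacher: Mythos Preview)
Your proposal is correct and follows essentially the same approach as the paper: identify the GNS-space of $(C_v,\omega_v)$ with $\calH_v$ via cyclicity of $\Omega_v$ (using that $A_v$ is $\sigma$-weakly dense in $M_v$), observe that under this identification $\pi_v(C_v)\supseteq\compact(\calH_v)$ reduces to the standing assumption $\compact(\calH_v)\subseteq C_v$, and then apply \cref{thm:nuclearity-graph-productNEW}. The paper's proof is identical in substance, only slightly more terse.
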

\begin{proof}
The vector $\Omega_v$ is cyclic for $M_v$. Furthermore,  $A_v$ is $\sigma$-weakly dense in $M_v$ by assumption and so $\Omega_v$ is also cyclic for $A_v$. It follows that the GNS-representation $\pi_v$ of $C_v$ with respect to $\omega_v$ is unitarily equivalent with the canonical representation given by the inclusion $C_v \subseteq \bound(\calH_v)$, see \cite[Theorem VIII.5.14 (b)]{conwayCourseFunctionalAnalysis1997}.  We assumed  that  $\Kompact(\calH_v) \subseteq C_v$ and that $C_v$ is nuclear and so we may apply   \cref{thm:nuclearity-graph-productNEW} to conclude that $C_\Gamma$ is nuclear. 
\end{proof}

  We refer to Section \ref{prelim:graph-products} for the definition of $U_\Lambda'$ that is used in the following definition. 
 
\begin{definition}
For $\Lambda \subseteq \Gamma$ we define the C$^\ast$-algebra 
\[ 
    D_{\Lambda} = U_{\Lambda}'(\KK(\calH'(\Lambda)) \otimes \bound(\calH_{\Lambda}))(U'_{\Lambda})^*.  
\]
 The tensor product in the definition of $D_\Lambda$ is understood as the spatial (minimal) tensor product, which is the norm closure of the algebraic tensors acting on the tensor product Hilbert space. In particular $D_\varnothing = \Kompact(\calH_\Gamma)$.
 \end{definition}
 
\begin{lemma}\label{Lem=DIdeal}
Let $v \in \Gamma$. We have $B_\Gamma D_{\Link(v)} B_\Gamma \subseteq D_{\Link(v)}$.
\end{lemma}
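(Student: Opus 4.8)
Write $\Lambda = \Link(v)$ and work with the tensor decomposition $\calH_\Gamma = \calH'(\Lambda)\otimes\calH_\Lambda$ implemented by $U'_\Lambda$, tracking how a single ``letter'' of $B_\Gamma$ acts on the first tensor leg. If $\Lambda=\emptyset$ then $D_\Lambda=\Kompact(\calH_\Gamma)$, an ideal in $\bound(\calH_\Gamma)\supseteq B_\Gamma$, so there is nothing to prove; assume $\Lambda\neq\emptyset$. Since $D_\Lambda$ is a $\ast$-subalgebra and $B_\Gamma$ is $\ast$-invariant, it suffices to prove the one-sided statement $B_\Gamma D_\Lambda\subseteq D_\Lambda$: taking adjoints gives $D_\Lambda B_\Gamma\subseteq D_\Lambda$, and combining the two yields $B_\Gamma D_\Lambda B_\Gamma\subseteq D_\Lambda$. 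As $D_\Lambda$ is norm-closed and $B_\Gamma$ is the norm-closure of the span of reduced operators $\lambda_{w_1}(x_1)\cdots\lambda_{w_n}(x_n)$, and $\lambda_{w_1}(x_1)\cdots\lambda_{w_n}(x_n)D_\Lambda\subseteq\lambda_{w_1}(x_1)\cdots\lambda_{w_{n-1}}(x_{n-1})D_\Lambda\subseteq\cdots$, it suffices to show $\lambda_w(x)D_\Lambda\subseteq D_\Lambda$ for every $w\in\Gamma$, $x\in B_w$.

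\textbf{A description of $D_\Lambda$.} For $g\in\calH'(\Lambda)$ set $L_g\colon\calH_\Lambda\to\calH_\Gamma$, $L_g\eta=U'_\Lambda(g\otimes\eta)$; then $L_g^\ast=(\langle g\mid\otimes\,1)(U'_\Lambda)^\ast$ and $U'_\Lambda(|g\rangle\langle g'|\otimes R)(U'_\Lambda)^\ast=L_g R L_{g'}^\ast$ for $g,g'\in\calH'(\Lambda)$, $R\in\bound(\calH_\Lambda)$. Since rank-one operators span a dense subspace of $\Kompact(\calH'(\Lambda))$ and the finitely supported vectors in $\calH'(\Lambda)=\bigoplus_{\uu\in\calW'(\Lambda)}\mathring{\calH}_\uu$ are dense, $D_\Lambda$ is the norm-closed span of the operators $L_g R L_{g'}^\ast$ with $g,g'$ finitely supported and $R\in\bound(\calH_\Lambda)$. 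Hence it is enough to prove the

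\emph{Key claim.} For $w\in\Gamma$, $x\in B_w$ and $g\in\mathring{\calH}_\uu$ with $\uu\in\calW'(\Lambda)$, the operator $\lambda_w(x)L_g$ is a \emph{finite} sum $\sum_{i=1}^N L_{\alpha_i}S_i$ with $\alpha_i\in\mathring{\calH}_{\uu_i}$ for words $\uu_i\in\calW'(\Lambda)$ and $S_i\in\bound(\calH_\Lambda)$.

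Granting this, for finitely supported $g,g'$ one gets $\lambda_w(x)L_g R L_{g'}^\ast=\sum_i L_{\alpha_i}(S_iR)L_{g'}^\ast\in D_\Lambda$, whence $\lambda_w(x)D_\Lambda\subseteq D_\Lambda$ by density and closedness; the $\alpha_i$ being again supported on single words, the same argument handles $\lambda_{w_1}(x_1)\cdots\lambda_{w_n}(x_n)L_g$ by induction, which is in any case already subsumed by the reduction above.

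\textbf{Proof of the Key claim (sketch).} I would use the creation/diagonal/annihilation splitting $\lambda_w(x)=\lambda_{(w,e,e)}(x)+\lambda_{(e,w,e)}(x)+\lambda_{(e,e,w)}(x)$ (\cref{prelim:lemma-actions-graph-products}), \cref{lemma:word_action_non_zero}, and the fact that $\calW_\Gamma=\calW'(\Lambda)\cdot\calW_\Lambda$ is a reduced factorisation. Taking $g=\lambda(a_\uu)\Omega$ with $a_\uu\in\mathring{\boldM}_\uu$ a reduced operator (these are dense in $\mathring{\calH}_\uu$), one has $L_g\eta=\lambda(a_\uu)\eta$ for $\eta\in\calH_\Lambda$ by the very definition of $U'_\Lambda$, since left multiplication by a reduced operator of a $\calW'(\Lambda)$-word sends a $\calW_\Lambda$-vector to the concatenated vector. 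Now rewrite the product of reduced operators $\lambda_w(x)\lambda(a_\uu)$: letting $j$ be maximal with $w$ commuting with the first $j$ letters of $\uu$, commute $\lambda_w(x)$ past those letters. Then either (i) it gets stuck strictly inside $\uu$, giving a reduced operator of a word $\uu'$ obtained by inserting a $w$-letter into $\uu$ — one checks $\uu'\in\calW'(\Lambda)$ — i.e.\ a term $L_\alpha$ with $\alpha\in\mathring{\calH}_{\uu'}$; or (ii) it meets a $w$-letter $a_i$ of $\uu$, and $\lambda_w(x)\lambda_w(a_i)=\omega_w(xa_i)1+\lambda_w(\mathring{xa_i})$ produces a reduced operator of word $\uu$ plus a scalar multiple of one of word $\uu\setminus\{w\}$; or (iii) $w$ commutes with \emph{all} of $\uu$, so $\lambda_w(x)\lambda(a_\uu)=\lambda(a_\uu)\lambda_w(x)$ and $\lambda_w(x)$ acts on the $\calH_\Lambda$-leg: if $w\in\Lambda$ then $\lambda_w(x)|_{\calH_\Lambda}=\lambda^\Lambda_w(x)\in\bound(\calH_\Lambda)$, giving the single term $L_g\lambda^\Lambda_w(x)$; if $w\notin\Lambda$ and $w$ does not occur in $\uu$, then $\uu w\in\calW'(\Lambda)$ and one gets $L_\alpha$ with $\alpha\in\mathring{\calH}_{\uu w}$. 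In every branch only finitely many terms arise (plus a multiple of $L_g$ when $\omega_w(x)\neq0$), each of the form $L_\alpha S$ with $\alpha$ on a single word of $\calW'(\Lambda)$ and $S\in\{1\}\cup\{\lambda^\Lambda_w(x)\}\cup\{\text{left multiplications in }M_\Lambda\text{ by vectors}\}$; the last option is needed only when deleting a letter from $\uu$ pushes $\uu\setminus\{w\}$ out of $\calW'(\Lambda)$, forcing a $\calW'(\Lambda)\cdot\calW_\Lambda$ re-decomposition that transfers one $\Lambda$-letter onto the $\calH_\Lambda$-leg. Passing from $g=\lambda(a_\uu)\Omega$ to a general $g\in\mathring{\calH}_\uu$ is done by norm-continuity (the data $N,\uu_i$ are independent of $g$, and $\alpha_i,S_i$ depend boundedly on $g$).

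\textbf{Main obstacle.} The only genuinely delicate point is the finiteness in case (iii) when $w\notin\Lambda$: a created $w$-letter could a priori interleave with the letters of the $\calW_\Lambda$-word of a vector $\eta\in\calH_\Lambda$, making the $\calW'(\Lambda)$-part of the output depend on $\eta$ and thus producing infinitely many ``left legs''. This is resolved by observing that, $w$ commuting with every letter of $\uu$, the new $w$-letter can be pulled entirely to the left past $\uu$ to position $|\uu|+1$ before it ever reaches the $\calW_\Lambda$-word, so the output word is $\uu w\,\ww$ with $\uu w\in\calW'(\Lambda)$ and $\ww\in\calW_\Lambda$, independently of $\eta$; the analogous subtlety for deleted letters is handled by the explicit $\calW'(\Lambda)\cdot\calW_\Lambda$ re-decomposition noted above. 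All remaining ingredients — self-adjointness, density, the description of $D_\Lambda$ via the $L_g$, and the product rules for reduced operators in a graph product — are routine.
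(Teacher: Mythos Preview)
Your reduction to showing $\lambda_w(x)\,D_\Lambda\subseteq D_\Lambda$ for each $w\in\Gamma$ is exactly the paper's first step, and your overall argument is correct. However, the route you take for that one-letter check is far more laborious than necessary. The paper avoids all of the word-by-word combinatorics of your ``Key claim'' by computing the tensor form of $\lambda_w(x)$ directly: writing $\Lambda=\Link(v)$, one has
\[
\lambda_w(x)=
\begin{cases}
U'_\Lambda\big(x\otimes 1\big)(U'_\Lambda)^\ast, & w\notin\Lambda,\\[2pt]
U'_\Lambda\big(xP^\perp\otimes 1\;+\;P\otimes x\big)(U'_\Lambda)^\ast, & w\in\Lambda,
\end{cases}
\]
where in the second line $P$ is the projection of $\calH'(\Lambda)$ onto $\calH'(\Lambda)\cap\calH_{\Link(w)}$ and in both lines the left-hand $x$ acts on $\calH'(\Lambda)$ through the obvious restriction. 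Since $\Kompact(\calH'(\Lambda))$ is a two-sided ideal in $\bound(\calH'(\Lambda))$, multiplying either expression against $U'_\Lambda(\Kompact\otimes\bound)(U'_\Lambda)^\ast$ lands back in $D_\Lambda$ in one line, with no case analysis on the position of $w$ inside $\uu$, no re-decompositions, and no density argument in $g$.

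Two minor remarks on your sketch: your worry about case~(ii) forcing a $\calW'(\Lambda)\cdot\calW_\Lambda$ re-decomposition is unfounded --- if $\uu\in\calW'(\Lambda)$ and one deletes the $w$-letter at position $j{+}1$ (with $w$ commuting with $u_1,\dots,u_j$), the resulting word is always still in $\calW'(\Lambda)$, so no $\Lambda$-letter ever needs to migrate to the second leg. And what your combinatorics in cases (i)--(iii) is really verifying is precisely the two displayed tensor identities above; once you see that, the detour through $L_g$'s and finite sums disappears.
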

\begin{proof}
We note that the proof we give here in particular also works if $\Link(v)$ is empty; though in that case the statement trivially follows from the fact that $D_\varnothing = \Kompact(\calH_\Gamma)$ is an ideal in $\bound(\calH_\Gamma)$.  Take $x \in \bound(\calH_w)$. Then if $w \not \in \Link(v)$ we have that $\calH'(\Link(v))$ is an invariant subspace of $x$ and 
\begin{equation}\label{Eqn=Ideal1}
x = U_{\Link(v)}' ( x  \otimes 1 )  U_{\Link(v)}'^\ast.
\end{equation}
Now suppose that $w  \in \Link(v)$.  
Let $P$ be the orthogonal projection of $\calH'(\Link(v))$ onto $\calH'(\Link(v)) \cap \calH_{ \Link(w)}$.   Then 
\begin{equation}\label{Eqn=Ideal2}
x = U_{\Link(v)}' ( x P^\perp \otimes 1 )  U_{\Link(v)}'^\ast + U_{\Link(v)}' (   P \otimes x )  U_{\Link(v)}'^\ast.
\end{equation}
From the decompositions \eqref{Eqn=Ideal1}, \eqref{Eqn=Ideal2} we see that $x D_{\Link(v)}, D_{\Link(v) } x \subseteq D_{\Link(v)}$. As $B_\Gamma$ is the closed linear span of products of elements in $\bound(\calH_w), w \in \Gamma$ the proof follows. 
\end{proof}

 Denote $P_{\Omega}$ for the orthogonal projection onto $\CC\Omega$.
 
\begin{lemma}\label{lemma:commutator}
    Let $v,w\in \Gamma$. Let $a\in B_{v}$, $b\in B_w$.
    Then 
    \begin{align}
        [a,JbJ] = \begin{cases}
            U_{\Star(v)}'(P_{\Omega}\otimes [a,JbJ])(U_{\Star(v)}')^*, & v=w;\\
             0, & v\not=w. 
        \end{cases}
    \end{align}
    \begin{proof}
        If $v\not=w$ then the result follows from \cite[Proposition 3.3]{caspersGraphProductsOperator2017a}. Suppose $v=w$. Let $\vv_1\in \calW_{\Gamma}'(\Star(v))$ and $\vv_2\in \calW_{\Star(v)}$, and put $\vv=\vv_1\vv_2$. Let $\eta_1\in \mathring{\calH}_{\vv_1}$, $\eta_2\in \mathring{\calH}_{\vv_2}$ be pure tensors and denote $\eta := U_{\Star(v)}'(\eta_1\otimes \eta_{2})\in \mathring{\calH}_{\vv}$. We claim
        \begin{align}\label{eq:action-on-word-star}
            a\eta = \begin{cases}
                U_{\Star(v)}'(\eta_1\otimes (a\eta_2)), & \text{ if } \vv_1 = e;\\
                U_{\Star(v)}'((a\eta_1)\otimes \eta_2), & \text{ if } \vv_1 \not= e.
            \end{cases}
        \end{align}
        Indeed, if $\vv_1=e$ then $\eta_1 =\Omega$ and $\eta = \eta_2$ up to scalar multiplication, so that $U_{\Star(v)}'(\eta_1 \otimes (a\eta_2)) = a\eta_2 = a\eta$. Thus suppose $\vv_1\not=e$. Then it follows that $v\vv_1\in \calW'(\Star(v))$ since $\vv_1\in \calW'(\Star(v))$.
        
        Suppose $v\vv$ is reduced. Then also $v\vv_1$ is reduced, and we have $a\eta = \lambda_{(v,e,e)}(a)\eta$ and $a\eta_1 = \lambda_{(v,e,e)}(a)\eta_1$.  It follows from \cite[Lemma 2.3(iii)]{borstCCAPGraphProducts2024} that
        \begin{align*}
            a\eta &= \lambda_{(v,e,e)}(a)\eta \\
            &= \calQ_{(v\vv_1,\vv_2)}((\lambda_{(v,e,e)}(a)\eta_1)\otimes \eta_2)\\
            &=\calQ_{(v\vv_1,\vv_2)}((a\eta_1)\otimes \eta_2),
        \end{align*}
        and therefore, as $v\vv_1\in \calW'(\Star(v))$ and $\vv_2\in \calW_{\Star(v)}$, we obtain $a\eta = U_{\Star(v)}'((a\eta_1)\otimes \eta_2)$.
        
        Now, suppose $v\vv$ is not reduced. Then also $v\vv_1$ is not reduced as $\vv_1\in \calW'(\Star(v))$ and $\vv_1\not=e$. We have $a\eta = \lambda_{(e,v,e)}(a)\eta + \lambda_{(e,e,v)}(a)\eta$ and $a\eta_1 = \lambda_{(e,v,e)}(a)\eta_1 + \lambda_{(e,e,v)}(a)\eta_1$. Again, using
        \cite[Lemma 2.3(iii)]{borstCCAPGraphProducts2024} we obtain 
        \begin{align*}
            a\eta &= \lambda_{(e,v,e)}(a)\eta + \lambda_{(e,e,v)}(a)\eta \\
            &= \calQ_{(\vv_1,\vv_2)}((\lambda_{(e,v,e)}(a)\eta_1)\otimes \eta_2) + \calQ_{(v\vv_1,\vv_2)}((\lambda_{(e,e,v)}(a)\eta_1)\otimes \eta_2).
        \end{align*} And thus 
        \[
        a\eta = U_{\Star(v)}'(\lambda_{(e,v,e)}(a)\eta_1)\otimes \eta_2) + U_{\Star(v)}'(\lambda_{(e,e,v)}(a)\eta_1)\otimes \eta_2) = U_{\Star(v)}'((a\eta_1)\otimes \eta_2)).
        \]
        This shows \eqref{eq:action-on-word-star}.
        
        We now claim that 
        \begin{align}\label{eq:action-on-word-star-right}
            JbJ\eta = U_{\Star(v)}'(\eta_1\otimes JbJ\eta_2).
        \end{align}
        First, by \cite[Proposition 2.20]{caspersGraphProductKhintchine2021}we observe that $J\eta_1 \in \mathring{\calH}_{\vv_1^{-1}}$, $J\eta_2\in \mathring{\calH}_{\vv_2^{-1}}$  and $J\eta = J\calQ_{(\vv_1,\vv_2)}(\eta_1\otimes \eta_2) = \calQ_{(\vv_2^{-1},\vv_1^{-1})}(J\eta_2\otimes J\eta_1)\in \mathring{\calH}_{\vv^{-1}}$. Furthermore, note that $v\vv_2^{-1} = \vv_2^{-1}v$ and $v\vv_2\in W_{\Star(v)}$.
        
        Suppose that $v\vv^{-1}$ is reduced. Then $v\vv_2^{-1}$ is also reduced. Hence, similar as before we obtain $bJ\eta = \calQ_{(v\vv_2^{-1},\vv_1^{-1})}((bJ\eta_2)\otimes J\eta_1)$. Hence \[
        JbJ\eta = \calQ_{(\vv_1,v\vv_2)}(\eta_1\otimes (JbJ\eta_2)) = U_{\Star(v)}'(\eta_1\otimes (JbJ\eta_2)).
        \]
        
        Now, suppose that $v\vv^{-1}$ is not reduced. Then $v\vv_2^{-1}$ is not reduced. Similar as before we obtain
        \begin{align*}
            bJ\eta = \calQ_{(\vv_2^{-1},\vv_1^{-1})}((\lambda_{(e,v,e)}(b)J\eta_2)\otimes J\eta_1) + \calQ_{(v\vv_2^{-1},\vv_1^{-1})}((\lambda_{(e,e,v)}(b)J\eta_2)\otimes J\eta_1).
        \end{align*}
        Hence 
         \begin{align*}
            JbJ\eta &= \calQ_{(\vv_1,\vv_2)}(\eta_1\otimes (\lambda_{(e,v,e)}(b)J\eta_2)) + \calQ_{(\vv_1,v\vv_2)}(\eta_1\otimes (\lambda_{(e,e,v)}(b)J\eta_2))\\
            &= U_{\Star(v)}'(\eta_1\otimes (J\lambda_{(e,v,e)}(b)J\eta_2))
            + U_{\Star(v)}'(\eta_1\otimes (J\lambda_{(e,e,v)}(b)J\eta_2))\\
            &= U_{\Star(v)}'(\eta_1\otimes (JbJ\eta_2)),
        \end{align*}
        which shows \eqref{eq:action-on-word-star-right}. Now, combining 
        \eqref{eq:action-on-word-star} with \eqref{eq:action-on-word-star-right} we obtain
        \begin{align*}
            [a,JbJ]\eta = \begin{cases}
                U_{\Star(v)}'(\eta_1\otimes ([a,JbJ]\eta_2)),
                & \text{ if } \vv_1 = e;\\
                0, & \text{ if } \vv_1 \not=e
            \end{cases}
        \end{align*}
        and the statement follows.
    \end{proof}
\end{lemma}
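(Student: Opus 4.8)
The plan is to compute the left action of $a\in B_v$ and the right action $JbJ$ of $b\in B_w$ separately on each summand $\mathring{\calH}_{\vv}$ of $\calH_{\Gamma}=\bigoplus_{\vv}\mathring{\calH}_{\vv}$, using the factorisation $\calH_{\Gamma}\cong\calH'(\Star(v))\otimes\calH_{\Star(v)}$ implemented by the unitary $U_{\Star(v)}'$. When $v\neq w$ the images $\lambda_v(B_v)$ and $\{J\lambda_w(x)^\ast J:x\in B_w\}$ lie in commuting subalgebras of $\bound(\calH_\Gamma)$, which is precisely \cite[Proposition 3.3]{caspersGraphProductsOperator2017a}, so the commutator vanishes; the whole content is the case $v=w$.

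For $v=w$, fix a pure tensor $\eta=U_{\Star(v)}'(\eta_1\otimes\eta_2)\in\mathring{\calH}_{\vv}$ corresponding to a reduced factorisation $\vv=\vv_1\vv_2$ with $\vv_1\in\calW'(\Star(v))$ and $\vv_2\in\calW_{\Star(v)}$. The first key step is to show that $a$ acts only on the second tensor leg when $\vv_1=e$ (then $\eta_1=\Omega$ and $a\eta=U_{\Star(v)}'(\eta_1\otimes a\eta_2)$), and only on the first leg when $\vv_1\neq e$, in which case one first checks $v\vv_1\in\calW'(\Star(v))$ and then distinguishes whether $v\vv$ is reduced: if it is, $a$ acts purely by creation $\lambda_{(v,e,e)}(a)$ and the identity is transported through $\calQ_{(v\vv_1,\vv_2)}$; if not, $a=\lambda_{(e,v,e)}(a)+\lambda_{(e,e,v)}(a)$ splits into its diagonal and annihilation parts and one argues termwise. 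In both subcases the compatibility of $\calQ$ with these partial vertex actions is \cite[Lemma 2.3(iii)]{borstCCAPGraphProducts2024}.

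The second key step is the symmetric statement that $JbJ$ \emph{always} acts on the second leg: since $J$ reverses words one has $J\eta=\calQ_{(\vv_2^{-1},\vv_1^{-1})}(J\eta_2\otimes J\eta_1)$ with $J\eta_i\in\mathring{\calH}_{\vv_i^{-1}}$ by \cite[Proposition 2.20]{caspersGraphProductKhintchine2021}, and as the relevant words ($v\vv_2^{-1}$, $v\vv_2$) lie in $\calW_{\Star(v)}$, the operator $b$ acts by creation/diagonal/annihilation on the $J\eta_2$-component alone; applying $J$ once more and using \cite[Lemma 2.3(iii)]{borstCCAPGraphProducts2024} again (with the same reduced/non-reduced split) gives $JbJ\eta=U_{\Star(v)}'(\eta_1\otimes JbJ\eta_2)$.

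Finally I combine the two computations. If $\vv_1\neq e$ then $a$ and $JbJ$ act on different legs of $U_{\Star(v)}'(\eta_1\otimes\eta_2)$, hence commute and $[a,JbJ]\eta=0$; if $\vv_1=e$ then both act on the second leg and $[a,JbJ]\eta=U_{\Star(v)}'(\eta_1\otimes[a,JbJ]\eta_2)$. Running over all words $\vv$, the condition $\vv_1=e$ is exactly $\eta_1\in\CC\Omega$, i.e. the projection $P_{\Omega}$ on the first leg, which yields $[a,JbJ]=U_{\Star(v)}'(P_{\Omega}\otimes[a,JbJ])(U_{\Star(v)}')^\ast$. I expect the main obstacle to be the bookkeeping in the two key steps — verifying $v\vv_1\in\calW'(\Star(v))$ for $\vv_1\neq e$, and keeping the shuffle identifications together with the three-part (creation/diagonal/annihilation) decomposition of the vertex action straight through the $\calQ$-maps.
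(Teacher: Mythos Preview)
Your proposal is correct and follows essentially the same argument as the paper's proof: the same decomposition $\eta=U_{\Star(v)}'(\eta_1\otimes\eta_2)$, the same two key claims (that $a$ acts on the first or second leg according to whether $\vv_1\neq e$ or $\vv_1=e$, and that $JbJ$ always acts on the second leg), the same case splits on reducibility of $v\vv$ and $v\vv^{-1}$, and the same cited inputs \cite[Proposition 3.3]{caspersGraphProductsOperator2017a}, \cite[Lemma 2.3(iii)]{borstCCAPGraphProducts2024}, and \cite[Proposition 2.20]{caspersGraphProductKhintchine2021}. The bookkeeping you flag as the main obstacle is exactly what the paper works through explicitly.
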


\begin{lemma}\label{Lem=CommKompact}
For  $v,w \in \Gamma, c \in C_v, a \in A_w$ we have  $[c, J a J ] \in D_{\Link(v)}$.
\end{lemma}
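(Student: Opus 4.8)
The plan is to argue by the two cases $v\neq w$ and $v=w$, reducing everything (via \cref{lemma:commutator}) either to a trivial vanishing or to the single-vertex strong (AO) estimate transported into $D_{\Link(v)}$. For $v\neq w$, I would simply apply \cref{lemma:commutator} with its ``$a$'' taken to be $c\in C_v\subseteq B_v$ and its ``$b$'' taken to be $a\in A_w\subseteq B_w$: it gives $[c,JaJ]=0$, which lies in $D_{\Link(v)}$ trivially.

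For $v=w$ I would again invoke \cref{lemma:commutator}, which gives
\[
[c,JaJ]=U_{\Star(v)}'\bigl(P_{\Omega}\otimes[c,JaJ]\bigr)(U_{\Star(v)}')^{*},
\]
where the inner commutator is understood as an operator on $\calH_{\Star(v)}$. Since $v$ is adjacent to every vertex of $\Link(v)$, we have $M_{\Star(v)}=M_v\overline{\otimes}M_{\Link(v)}$, so $\calH_{\Star(v)}=\calH_v\otimes\calH_{\Link(v)}$ and the modular conjugation of $M_{\Star(v)}$ is $J_v\otimes J_{\Link(v)}$; under this identification $c$ corresponds to $c\otimes 1$ and $JaJ$ to $(J_vaJ_v)\otimes 1$, so the inner commutator equals $k\otimes 1_{\calH_{\Link(v)}}$ with $k:=[c,J_vaJ_v]$. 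By the choice of $C_v$ and $A_v$ one has $[C_v,J_vA_vJ_v]\subseteq\KK(\calH_v)$, hence $k\in\KK(\calH_v)$. Thus $[c,JaJ]=U_{\Star(v)}'\bigl(P_{\Omega}\otimes(k\otimes 1_{\calH_{\Link(v)}})\bigr)(U_{\Star(v)}')^{*}$ with $k$ compact. (If $\Link(v)=\emptyset$ this already shows $[c,JaJ]\in\KK(\calH_\Gamma)=D_{\emptyset}$, so we may assume $\Link(v)\neq\emptyset$.)

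It then remains to move this operator into $D_{\Link(v)}$. I would use the reduced coset decompositions $\calW_{\Star(v)}=\{e,v\}\cdot\calW_{\Link(v)}$ and $\calW'(\Link(v))=\calW'(\Star(v))\cdot\{e,v\}$, together with the associativity of the maps $\calQ$ from \cite{borstCCAPGraphProducts2024}, to produce a unitary $W\colon\calH'(\Star(v))\otimes\calH_v\to\calH'(\Link(v))$ such that, after reassociating $\calH_{\Star(v)}=\calH_v\otimes\calH_{\Link(v)}$, one has $U_{\Star(v)}'=U_{\Link(v)}'\circ(W\otimes\id_{\calH_{\Link(v)}})$. Conjugating the displayed expression by $W\otimes\id$ yields $[c,JaJ]=U_{\Link(v)}'\bigl((W(P_{\Omega}\otimes k)W^{*})\otimes 1_{\calH_{\Link(v)}}\bigr)(U_{\Link(v)}')^{*}$. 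Since $P_{\Omega}$ has rank one and $k$ is compact, $P_{\Omega}\otimes k$ is a compact operator on $\calH'(\Star(v))\otimes\calH_v$, so $W(P_{\Omega}\otimes k)W^{*}\in\KK(\calH'(\Link(v)))$, and therefore $[c,JaJ]\in U_{\Link(v)}'(\KK(\calH'(\Link(v)))\otimes\bound(\calH_{\Link(v)}))(U_{\Link(v)}')^{*}=D_{\Link(v)}$.

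The hard part will be the compatibility $U_{\Star(v)}'=U_{\Link(v)}'\circ(W\otimes\id)$: this is a cocycle-type identity for the graph-product Hilbert-space unitaries encoding the splitting $\Star(v)=\{v\}\sqcup\Link(v)$, and making it precise amounts to carefully tracking which reduced words index the two sides and checking it follows from the known associativity of the $\calQ$-maps. Everything else reduces to \cref{lemma:commutator}, the tensor decomposition of $M_{\Star(v)}$, and the strong (AO) estimate for $M_v$.
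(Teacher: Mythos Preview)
Your proposal is correct and follows essentially the same route as the paper's proof: split into $v\neq w$ (where the commutator vanishes by \cref{lemma:commutator}) and $v=w$ (where \cref{lemma:commutator} localizes the commutator to $\calH_{\Star(v)}$, the tensor splitting $\calH_{\Star(v)}\cong\calH_v\otimes\calH_{\Link(v)}$ reduces it to $[c,J_vaJ_v]\otimes 1$, and strong (AO) gives compactness on $\calH_v$). The paper packages your unitary $W$ as $U''_v:=U'_{\Star(v)}(1\otimes U''_{\Link(v)})$ and then simply asserts the identity $U''_v(\KK(\calH'(\Star(v)))\otimes\KK(\calH_v)\otimes 1)(U''_v)^*=U'_{\Link(v)}(\KK(\calH'(\Link(v)))\otimes 1)(U'_{\Link(v)})^*$, which is exactly your compatibility $U'_{\Star(v)}=U'_{\Link(v)}\circ(W\otimes\id)$; you have correctly isolated this as the one book-keeping step requiring care, and your justification via the coset decomposition $\calW'(\Link(v))=\calW'(\Star(v))\cdot\{e,v\}$ and associativity of the $\calQ$-maps is the right one.
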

\begin{proof} 
If $v \not = w$ it actually holds since  by \cite[Proposition 2.3]{caspersGraphProductsOperator2017a} $[c, JaJ] = 0$. So assume $v = w$.   \cref{lemma:commutator} gives that 
\begin{equation}\label{Eqn=Unravel1}
[c, J a J] = U_{\Star(v)}'(P_{\Omega}\otimes [c, J a J])(U_{\Star(v)}')^*.
\end{equation} 
In what follows we will use the decomposition of Section \ref{prelim:graph-products} applied to $\Link(v)$ as a subgraph of $\Star(v)$, opposed to $\Link(v)$ as a subgraph of $\Gamma$, and correspondingly define the Hilbert space $\calH'(\Link(v))$ with respect to this inclusion.  
We thus have a natural unitary 
  \[
  U_{\Link(v)}'': \calH'(\Link(v)) \otimes \calH_{\Link(v)} \rightarrow \calH_{\Star(v)}.
  \]
  Further as $v$ commutes with all vertices in $\Link(v)$ it follows that with respect to this decomposition we have $\calH'(\Link(v)) = \calH_v$. So  
\[ 
U_{\Link(v)}'':  \calH_v \otimes \calH_{\Link(v)} \rightarrow \calH_{\Star(v)}.
\]
For $x \in \bound(\calH_v)$  we get that
\begin{equation}\label{Eqn=Unravel2}
x = U_{\Link(v)}''( x \otimes {\rm 1}  )(U_{\Link(v)}'')^*.  
\end{equation}
 Set the unitary
\[
U_{v}'' := U_{\Star(v)}' (1  \otimes U_{\Link(v)}''): \calH'(\Star(v)) \otimes \calH_v \otimes \calH_{\Link(v)} \rightarrow \calH_\Gamma. 
\]
Combining \eqref{Eqn=Unravel1} and \eqref{Eqn=Unravel2} we have
\[
[c,JaJ] = 
U_{v}''  (P_\Omega \otimes  [c,JaJ]  \otimes 1)  U_{v}''^\ast,
\]
where $[c,JaJ]$ on the left hand side acts on $\calH_\Gamma$ and on the right hand side on $\calH_v$. As we assumed $[c, JaJ ] \in \Kompact(\calH_v)$ it follows that $[c,JaJ] $ is contained in 
\[
  U_{v}''  (\Kompact(\calH'(\Star(v))) \otimes  \Kompact(\calH_v)  \otimes 1)  U_{v}''^\ast = U_{\Link(v)}'  (\Kompact(\calH'(\Link(v))) \otimes 1 )  U_{\Link(v)}'^\ast \subseteq D_{\Link(v)},
\]
and thus the lemma is proved.  \end{proof}

Let $Q \subseteq M_\Gamma$ be an amenable von Neumann subalgebra. As explained in \cite[p. 228]{ozawaPrimeFactorizationResults2004} there exists a conditional expectation $\Psi_Q: \bound( \calH_\Gamma ) \rightarrow Q'$ that is {\it  proper} in the sense that for any $a \in \bound( \calH_\Gamma )$ we have that $\Psi_Q(a)$ is in the $\sigma$-weak closure of
\[
\Conv \left\{   u a u^\ast \mid u \in \mathcal{U}(Q) \right\},
\]
where $\Conv$ denotes the convex hull.

\begin{lemma}\label{lemma:kernel-proper-CE}
   Let $Q \subseteq M_\Gamma$ be an amenable von Neumann subalgebra.  If there is $\Lambda\subseteq \Gamma$ such that $Q\not\prec_{M_{\Gamma}}M_{\Lambda}$, then   $D_\Lambda$ is contained in $\ker \Psi_{Q}$.
\end{lemma}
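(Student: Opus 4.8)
The plan is to reduce everything to a single weak-convergence statement and then invoke properness of $\Psi_Q$. Recall that properness of a conditional expectation $\Psi_Q$ onto $Q'$ (obtained from amenability of $Q$ as in \cite[p.~228]{ozawaPrimeFactorizationResults2004}) implies the now-standard fact that $\Psi_Q$ annihilates every $d\in\bound(\calH_\Gamma)$ for which there exists a net $(u_i)_i$ in $\calU(Q)$ with $u_i d u_i^*\to 0$ $\sigma$-weakly; indeed $\Psi_Q$ may be realized as a point-weak-$\ast$ limit of convex conjugation-averages by unitaries of $Q$ that are asymptotically invariant under left translation by $\calU(Q)$, so that testing against a normal functional, $\phi_j(d)$ differs from $\phi_j(u_i d u_i^*)$ by an amount $o_j(1)$ uniform in $i$, while for each fixed $j$ the finite average $\phi_j(u_i d u_i^*)\to 0$ as $i\to\infty$ since conjugation by a fixed unitary is weakly continuous (cf.\ \cite{ozawaPrimeFactorizationResults2004,ozawaClassIIFactors2010c}). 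So it suffices to produce one such net that works simultaneously for all $d\in D_\Lambda$.

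Since the inclusion $Q\subseteq M_\Gamma$ is unital and $Q\not\prec_{M_\Gamma}M_\Lambda$, \cref{Dfn=Intertwine}\eqref{Item=Intertwine2} gives a net $(u_i)_i$ in $\calU(Q)$ with $\|\EE_{M_\Lambda}(x^*u_i y)\|_2\to 0$ for all $x,y\in M_\Gamma$, and, taking adjoints, also $\|\EE_{M_\Lambda}(x^*u_i^* y)\|_2\to 0$. Because the operators $u_i d u_i^*$ are uniformly bounded by $\|d\|$ and $\Psi_Q$ is contractive, a $\limsup$ estimate shows it is enough to verify $\langle u_i d u_i^* a\Omega, b\Omega\rangle\to 0$ for $a,b\in M_\Gamma$ (whose image is dense in $\calH_\Gamma$) and for $d$ ranging over a norm-total subset of $D_\Lambda=U_\Lambda'\big(\Kompact(\calH'(\Lambda))\otimes\bound(\calH_\Lambda)\big)(U_\Lambda')^*$. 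Such a total subset is furnished by the operators $d=U_\Lambda'(k\otimes T)(U_\Lambda')^*$ with $T\in\bound(\calH_\Lambda)$ and $k$ of rank one, $k\xi=(y\Omega)\langle\xi,z\Omega\rangle$, where $y,z$ run over $\mathring{\boldM}_{\uu}$, $\uu\in\calW'(\Lambda)$ (so that the vectors $y\Omega,z\Omega$ span a dense subspace of $\calH'(\Lambda)$).

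For such a generator, unravelling the $U_\Lambda'$-tensor decomposition and using $u_i^*(a\Omega)=(u_i^*a)\Omega$ gives
\[
\langle u_i d u_i^* a\Omega, b\Omega\rangle=\big\langle T\,\Xi_z\big((u_i^*a)\Omega\big),\ \Xi_y\big((u_i^*b)\Omega\big)\big\rangle,
\]
where $\Xi_z\colon\calH_\Gamma\to\calH_\Lambda$ is the bounded operator that pairs the $\calH'(\Lambda)$-leg against $z\Omega$. The conditional-expectation computations of \cref{subsection:calculating-conditional-expectation} (together with the word-action formulas, cf.\ \cite{borstCCAPGraphProducts2024}) identify $\Xi_z(c\Omega)$ with $\EE_{M_\Lambda}(z^*c)\Omega$. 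Hence
\[
|\langle u_i d u_i^* a\Omega, b\Omega\rangle|\leq\|T\|\,\|\EE_{M_\Lambda}(z^*u_i^*a)\|_2\,\|\EE_{M_\Lambda}(y^*u_i^*b)\|_2\xrightarrow[i]{}0,
\]
and summing the finitely many such terms appearing in a norm approximant of a general $d\in D_\Lambda$ yields $u_i d u_i^*\to 0$ weakly for every $d\in D_\Lambda$. By the first paragraph, $D_\Lambda\subseteq\ker\Psi_Q$.

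The main obstacle is the identity $\Xi_z(c\Omega)=\EE_{M_\Lambda}(z^*c)\Omega$, i.e.\ keeping precise track of how the graph-product tensor factorization $U_\Lambda'\colon\calH'(\Lambda)\otimes\calH_\Lambda\to\calH_\Gamma$ interacts with the conditional expectation onto $M_\Lambda$; this is exactly the bookkeeping of \cref{subsection:calculating-conditional-expectation} (and \cite{caspersGraphProductKhintchine2021,borstCCAPGraphProducts2024}), so it is available, but it is where the technical weight lies. Everything else — the reduction to weak convergence, the density arguments, and the passage from weak convergence along the net to $\Psi_Q=0$ — is formal, given the standard behaviour of proper conditional expectations recalled at the outset.
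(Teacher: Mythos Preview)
Your weak-convergence computation is correct and quite clean: the identity $\Xi_z(c\Omega)=\EE_{M_\Lambda}(z^*c)\Omega$ holds simply because $U_\Lambda'(z\Omega\otimes w\Omega)=(zw)\Omega$ whenever $z\in\mathring{M}_\uu$ with $\uu\in\calW'(\Lambda)$ and $w\in M_\Lambda$, so pairing both sides against $w\Omega$ gives $\tau(w^*z^*c)$ in each case. From this it follows that $u_i d u_i^*\to 0$ $\sigma$-weakly for every $d\in D_\Lambda$.

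The gap is entirely in your first paragraph. Properness does \emph{not} imply that $\Psi_Q$ annihilates every $d$ with $u_i d u_i^*\to 0$ $\sigma$-weakly, and the justification you sketch does not work. You need the approximants $\phi_j$ to be asymptotically invariant under conjugation by the $u_i$ \emph{uniformly in $i$}, i.e.\ $\sup_i\|\mu_j-\mu_j\cdot u_i\|\to 0$. But amenability of $Q$ as a von Neumann algebra gives no such thing: the discrete group generated by the $u_i$ inside $\calU(Q)$ need not be amenable (for instance $\calU(R)$ contains free groups), so no F{\o}lner-type net of finitely supported measures with this uniform almost-invariance is available. Without uniformity the two limits ($j\to\infty$ and $i\to\infty$) cannot be interchanged. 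Equivalently: $\Psi_Q$ is $Q$-central, so $\Psi_Q(d)=\Psi_Q(u_i d u_i^*)$ for every $i$, but $\Psi_Q$ is not $\sigma$-weakly continuous, so one cannot pass to the limit. And Mazur-type arguments fail because the $\sigma$-weak closure of a bounded convex set in $\bound(\calH_\Gamma)$ is strictly larger than its norm closure.

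The paper's proof avoids this issue completely by using the bimodule characterization of $\prec$ rather than the unitary one. It takes $d=U_\Lambda'(p\otimes 1)(U_\Lambda')^*$ with $p$ a finite-rank projection, observes via properness that $\Psi_Q(d)$ commutes with $JM_\Lambda J$, hence lies in $U_\Lambda'(\bound(\calH'(\Lambda))\bar\otimes M_\Lambda)(U_\Lambda')^*$, and then uses lower semicontinuity of $\Tr\otimes(\tau\circ\Phi_\Lambda)$ to bound its trace by that of $p\otimes 1$. If $\Psi_Q(d)\neq 0$, a spectral projection of $\Psi_Q(d)$ then produces a nonzero $Q$-$M_\Lambda$ sub-bimodule of $L^2(M_\Gamma)$ of finite right $M_\Lambda$-dimension, contradicting $Q\not\prec_{M_\Gamma}M_\Lambda$. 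The extension from $\Kompact(\calH'(\Lambda))\otimes 1$ to all of $D_\Lambda$ is then immediate from the multiplicative domain of $\Psi_Q$.
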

    \begin{proof}
Let $p \in  \KK(\calH'(\Lambda))$ be a finite rank projection. We first claim that 
\[
U_{\Lambda}'( p \otimes 1)U'^*_{\Lambda} \in  \ker \Psi_{Q}.
\]
We prove this claim by contradiction so suppose that $d := \Psi_Q( U_{\Lambda}'( p \otimes 1)U'^*_{\Lambda} ) \not = 0$. First observe that for $a \in M_\Lambda$ we have
\[
  J a J   =  U_{\Lambda}'  (1 \otimes  J_\Lambda a J_\Lambda)  U_{\Lambda}'^\ast,     
\]
where $J_\Lambda$ is the modular conjugation operator of $M_\Lambda$ acting on $\mathcal{H}_\Lambda$. It follows in particular that 
\[
 (J M_\Lambda J)' =     U_{\Lambda}'(\bound(\calH'(\Lambda)) \bar{\otimes} M_\Lambda)U_{\Lambda}'^*. 
\]
Any $u \in \mathcal{U}(Q)$ commutes with $M_\Gamma' = J M_\Gamma J$ and so certainly it commutes with $J M_\Lambda J$. As $\Psi_Q$ is proper we find that $d$ as defined above thus commutes with $J M_\Lambda J$. Thus $d \in  U_{\Lambda}'(\bound(\calH'(\Lambda)) \bar{\otimes} M_\Lambda)U'^*_{\Lambda}$.
Let ${\rm Tr}$ the trace on  $\bound(\calH'(\Lambda))$ and let $\Phi_\Lambda$ be the center valued trace of $M_\Lambda$ onto $\mathcal{Z}(M_\Lambda) = M_\Lambda \cap M_\Lambda'$.
Using again that $\Psi_Q$ is proper we find by lower semi-continuity \cite[Theorem VII.11.1]{takesakiTheoryOperatorAlgebras2002} that for any normal (necessarily tracial) state $\tau$ on the center $\mathcal{Z}(M_\Lambda)$ we have
\[
({\rm Tr} \otimes (\tau \circ \Phi_{\Lambda}))  (  U_{\Lambda}'^\ast d  U_{\Lambda}') \leq ({\rm Tr} \otimes (\tau \circ \Phi_{\Lambda}))(p\otimes 1) < \infty. 
\]
Let $e$ be a spectral projection of $d$ corresponding to the interval $[\Vert d \Vert/2, \Vert d \Vert ]$. Then 
\[
({\rm Tr} \otimes (\tau \circ  \Phi_{\Lambda}))  (  U_{\Lambda}'^\ast e  U_{\Lambda}')  \leq 2 ({\rm Tr} \otimes (\tau \circ \Phi_{\Lambda}))  (  U_{\Lambda}'^\ast d  U_{\Lambda}') < \infty. 
\]
Thus it follows that $({\rm Tr} \otimes \Phi_{\Lambda})  (  U_{\Lambda}'^\ast e  U_{\Lambda}') < \infty$. 
Then $\calK := e \calH_\Gamma$ is a $Q$-$M_\Lambda$ sub-bimodule of $\calH_\Gamma$ with $\dim_{M_\Lambda}(\calK) < \infty$ and $\calH_\Gamma$ is the standard representation Hilbert space of $M_\Gamma$. It thus follows from \cref{Dfn=Intertwine} \eqref{Item=Intertwine3} that $Q \prec_{M_\Gamma} M_\Lambda$. This contradicts the assumptions and the claim is proved. 

Taking linear spans and closures it thus follows from the previous paragraph that 
\[
U_{\Lambda}'( \KK(\calH'(\Lambda)) \otimes 1)U'^*_{\Lambda} \subseteq  \ker \Psi_{Q}.
\]
Using the multiplicative domain of $\Psi_Q$ it follows then that 
 \[
U_{\Lambda}'( \KK(\calH'(\Lambda)) \otimes \bound(\calH_\Lambda))U'^*_{\Lambda} \subseteq \ker \Psi_{Q}.
\]
    This concludes the proof.

\end{proof}

\begin{lemma}\label{Lem=ComKernel}
   Let $Q \subseteq M_\Gamma$ be an amenable von Neumann subalgebra. Assume that for every $v \in \Gamma$ we have $Q\not\prec_{M_{\Gamma}}M_{\Link(v)}$. Then we have  $[C_\Gamma, JA_\Gamma J] \subseteq  \ker \Psi_Q$. 
\end{lemma}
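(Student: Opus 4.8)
The plan is to reduce the statement to the local computation already performed in \cref{Lem=CommKompact} and then use \cref{lemma:kernel-proper-CE}. First I would recall that $C_\Gamma$ is the norm-closed linear span of products $\lambda_{v_1}(c_1)\cdots \lambda_{v_n}(c_n)$ with $c_i\in C_{v_i}$ and $v_1\cdots v_n$ reduced, and similarly $J A_\Gamma J$ is spanned by products $J\lambda_{w_1}(a_1)J\cdots J\lambda_{w_m}(a_m)J$. Since $\ker(\Psi_Q)$ is a closed subspace, and since a commutator of two products can be expanded via the Leibniz rule
\[
[x_1\cdots x_n, y_1\cdots y_m] = \sum_{i,j} x_1\cdots x_{i-1}\, y_1\cdots y_{j-1}\, [x_i,y_j]\, y_{j+1}\cdots y_m\, x_{i+1}\cdots x_n,
\]
(here using that $C_\Gamma\subseteq B_\Gamma$ commutes with $JC_\Gamma J\supseteq J A_\Gamma J$ only when vertices differ — more carefully, that $x_i$ commutes with $y_j$ whenever their vertices differ, by \cite[Proposition 2.3]{caspersGraphProductsOperator2017a}), it suffices to show each term $z\,[c,JaJ]\,z'$ lies in $\ker(\Psi_Q)$ for $z,z'\in B_\Gamma$, $c\in C_v$, $a\in A_w$.

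The key input is \cref{Lem=CommKompact}, which gives $[c,JaJ]\in D_{\Link(v)}$ when $v=w$ (and $[c,JaJ]=0$ when $v\neq w$, so those terms vanish outright). Then by \cref{Lem=DIdeal} the space $D_{\Link(v)}$ is a two-sided $B_\Gamma$-submodule, so $z[c,JaJ]z'\in B_\Gamma D_{\Link(v)} B_\Gamma\subseteq D_{\Link(v)}$. Finally, the hypothesis $Q\not\prec_{M_\Gamma} M_{\Link(v)}$ for every $v\in\Gamma$ lets me invoke \cref{lemma:kernel-proper-CE} with $\Lambda=\Link(v)$ to conclude $D_{\Link(v)}\subseteq\ker(\Psi_Q)$. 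Hence every term lies in $\ker(\Psi_Q)$, and by linearity and norm-closedness $[C_\Gamma, JA_\Gamma J]\subseteq\ker(\Psi_Q)$.

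I do not anticipate a serious obstacle here; the lemma is essentially a bookkeeping assembly of the three preceding lemmas. The one point requiring a little care is the density/expansion argument: one must first establish the inclusion for $c,a$ ranging over the generating vertex algebras $C_v, A_w$ and for $x_i\in B_{v_i}$, then pass to products, and only at the end take norm limits to cover all of $C_\Gamma$ and $JA_\Gamma J$. A clean way to phrase this is to note that $[\,\cdot\,,JA_\Gamma J]$ is norm-continuous in the first variable and vice versa, that $\ker(\Psi_Q)$ is norm-closed, and that it suffices to verify membership on the dense $\ast$-subalgebras $\lambda(\boldsymbol{A}_\Gamma)$-type spans generated by the vertices; the Leibniz expansion then reduces everything to the single-commutator case handled above. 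I would write this up in two short paragraphs: one setting up the reduction to single commutators $z[c,JaJ]z'$, and one invoking \cref{Lem=CommKompact}, \cref{Lem=DIdeal}, and \cref{lemma:kernel-proper-CE} in sequence.
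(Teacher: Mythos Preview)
Your overall plan is right—assemble \cref{Lem=CommKompact}, \cref{Lem=DIdeal}, and \cref{lemma:kernel-proper-CE}—but there is a genuine gap in the bookkeeping. In your Leibniz expansion the flanking factors are \emph{not} all in $B_\Gamma$: each summand has the shape
\[
\underbrace{x_1\cdots x_{i-1}}_{\in\, C_\Gamma\subseteq B_\Gamma}\;\underbrace{y_1\cdots y_{j-1}}_{\in\, JA_\Gamma J}\;[x_i,y_j]\;\underbrace{y_{j+1}\cdots y_m}_{\in\, JA_\Gamma J}\;\underbrace{x_{i+1}\cdots x_n}_{\in\, C_\Gamma\subseteq B_\Gamma},
\]
so your claim ``$z,z'\in B_\Gamma$'' is false, and \cref{Lem=DIdeal} alone (which only gives $B_\Gamma D_{\Link(v)}B_\Gamma\subseteq D_{\Link(v)}$) does not absorb the $JA_\Gamma J$ factors. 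There is no reason to expect $JA_\Gamma J\cdot D_{\Link(v)}\cdot JA_\Gamma J\subseteq D_{\Link(v)}$.

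The missing ingredient is that $\Psi_Q$ is a conditional expectation onto $Q'$ and hence a $Q'$-bimodule map, while $JA_\Gamma J\subseteq M_\Gamma'\subseteq Q'$; this is what takes care of the $y$-factors. The paper makes this work by doing the two Leibniz expansions in the opposite order from yours: first expand in the $JA_\Gamma J$ variable so those factors sit on the \emph{outside}, then expand $[C_\Gamma,JA_vJ]$ in the $C_\Gamma$ variable. One gets terms $(JA_\Gamma J)\,C_\Gamma[C_w,JA_vJ]C_\Gamma\,(JA_\Gamma J)$; the inner part lands in $D_{\Link(v)}\subseteq\ker\Psi_Q$ by \cref{Lem=CommKompact}, \cref{Lem=DIdeal}, \cref{lemma:kernel-proper-CE}, and the outer $JA_\Gamma J$ factors pass through $\Psi_Q$ by the $Q'$-bimodule property. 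Your write-up should either reverse the expansion order or explicitly invoke the $Q'$-bimodule property at the right place.
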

\begin{proof} 
 The commutator  $[C_\Gamma, J A_v J]$  is contained in the closed linear span of the sets 
 \[
 C_\Gamma [C_w, J A_v J] C_\Gamma, \qquad v,w  \in \Gamma.
 \]
 We have, as $C_\Gamma \subseteq B_\Gamma$,  by \cref{Lem=CommKompact} and \cref{Lem=DIdeal} that 
\[
C_\Gamma [C_w, J A_v J] C_\Gamma \subseteq B_\Gamma  D_{\Link(v)} B_\Gamma  \subseteq D_{\Link(v)}.
\]
By \cref{lemma:kernel-proper-CE} we see that $D_{\Link(v)}, v \in \Gamma$ is contained in the kernel of $\Psi_Q$.   We thus conclude that $[C_\Gamma, J A_v J]$  is contained in  $\ker\Psi_Q$.

Now  $[C_\Gamma, J A_\Gamma J]$ is contained in the closed linear span of the sets 
\[
J A_\Gamma J [C_\Gamma, J A_v J] J A_\Gamma J, \qquad v \in \Gamma.
\]
Note $J A_\Gamma J$ is contained in   $M_\Gamma'$ so certainly in $Q'$. As $\Psi_Q$ is a $Q'$-bimodule map it follows that $J A_\Gamma J [C_\Gamma, J A_v J]  J A_\Gamma J$ is contained in $\ker\Psi_Q$. This finishes the proof.  
\end{proof}

\begin{lemma}\label{Lem=MinBoundAmenable}
   Let $Q \subseteq M_\Gamma$ be an amenable von Neumann subalgebra. Assume that for every $v \in \Gamma$ we have $Q\not\prec_{M_{\Gamma}}M_{\Link(v)}$. 
    The map 
    \begin{equation}\label{Eqn=ThetaMinBound} 
    \begin{split}
        \Theta: A_{\Gamma} \otimes J A_{\Gamma} J  &\to \bound(\calH_\Gamma) \\
         a\otimes J  b J  &\mapsto \Psi_Q(a J bJ).
\end{split}
    \end{equation}
    is  continuous with respect to the minimal tensor norm. 
    \begin{proof} 
     Observe that $\Psi_Q$ is a $Q'$-bimodule map and we have $J A_\Gamma J \subseteq M_\Gamma'  \subseteq  Q'$. It thus follows from  \cref{Lem=ComKernel} that for $x \in C_\Gamma$ and $y \in JA_\Gamma J$ we have 
    \[
      \Psi_Q(x) y = \Psi_Q(xy) = \Psi_Q(yx + [x, y]) = \Psi_Q(yx) = y \Psi_Q(x).   
    \]
     So $\Psi_Q(   C_{\Gamma}   ) \subseteq (JA_\Gamma J)' =  M_\Gamma$.  Now consider the composition of maps, see \cite[Theorem 3.3.7 and 3.5.3]{brownAlgebrasFiniteDimensionalApproximations2008},  
\[
\widetilde{\Theta}:  C_{\Gamma} \otimes_{{\rm max}} J A_{\Gamma} J \rightarrow^{\Psi_Q \otimes {\rm Id}}  M_\Gamma  \otimes_{{\rm max}} J A_{\Gamma} J \rightarrow^m \bound(\calH), 
\]
where $m$ is the multiplication map. Note that $C_\Gamma$ is nuclear by \cref{Lem=CGammaNuclear}. Thus $ C_{\Gamma} \otimes_{{\rm max}} J A_{\Gamma} J  =  C_{\Gamma} \otimes_{{\rm min}} J A_{\Gamma} J $. Then the restriction of $\widetilde{\Theta}$ to $A_{\Gamma} \otimes_{{\rm min}} J A_{\Gamma} J $  gives the map $\Theta$.

    \end{proof}
\end{lemma}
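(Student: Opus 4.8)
The plan is to exploit the proper conditional expectation $\Psi_Q$ together with the compactness of commutators established in \cref{Lem=ComKernel} and the nuclearity of $C_\Gamma$ from \cref{Lem=CGammaNuclear}. First I would record the elementary computation that $\Psi_Q$, being a $Q'$-bimodule map and $JA_\Gamma J \subseteq M_\Gamma' \subseteq Q'$, together with \cref{Lem=ComKernel} forces $\Psi_Q(x)$ to commute with $JA_\Gamma J$ for every $x \in C_\Gamma$: indeed for $y \in JA_\Gamma J$ one has $\Psi_Q(x)y = \Psi_Q(xy) = \Psi_Q(yx + [x,y]) = \Psi_Q(yx) = y\Psi_Q(x)$, using that $[x,y] \in \ker\Psi_Q$. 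Since $(JA_\Gamma J)' = (J M_\Gamma J)'' \cap \cdots$, and $A_\Gamma$ is $\sigma$-weakly dense in $M_\Gamma$, we get $(JA_\Gamma J)' = M_\Gamma$, hence $\Psi_Q(C_\Gamma) \subseteq M_\Gamma$.

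Next I would set up the composition of two maps. The restriction $\Psi_Q|_{C_\Gamma}: C_\Gamma \to M_\Gamma$ is unital completely positive, so it induces a u.c.p.\ map $\Psi_Q|_{C_\Gamma} \otimes \Id : C_\Gamma \otimes_{\max} JA_\Gamma J \to M_\Gamma \otimes_{\max} JA_\Gamma J$ by \cite[Theorem 3.5.3]{brownAlgebrasFiniteDimensionalApproximations2008}. Composing with the multiplication map $m: M_\Gamma \otimes_{\max} JA_\Gamma J \to \bound(\calH_\Gamma)$, which is well-defined and contractive because $M_\Gamma$ and $JA_\Gamma J$ are commuting subalgebras of $\bound(\calH_\Gamma)$ (this is the standard fact, \cite[Theorem 3.3.7]{brownAlgebrasFiniteDimensionalApproximations2008}, that commuting representations give a representation of the maximal tensor product), one obtains a contractive $\ast$-homomorphism-composed-with-u.c.p.\ map $\widetilde\Theta: C_\Gamma \otimes_{\max} JA_\Gamma J \to \bound(\calH_\Gamma)$ sending $a \otimes JbJ \mapsto \Psi_Q(a)JbJ = \Psi_Q(aJbJ)$, where the last equality again uses that $JbJ \in Q'$ and $\Psi_Q$ is a $Q'$-bimodule map.

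Finally, since $C_\Gamma$ is nuclear by \cref{Lem=CGammaNuclear}, the canonical quotient $C_\Gamma \otimes_{\max} JA_\Gamma J \to C_\Gamma \otimes_{\min} JA_\Gamma J$ is an isomorphism, so $\widetilde\Theta$ is continuous with respect to the minimal tensor norm. Restricting $\widetilde\Theta$ along the inclusion $A_\Gamma \otimes_{\min} JA_\Gamma J \hookrightarrow C_\Gamma \otimes_{\min} JA_\Gamma J$ (which is isometric, as the minimal tensor norm is injective) yields exactly the map $\Theta$ of \eqref{Eqn=ThetaMinBound}, which is therefore min-continuous. I do not expect a serious obstacle here; the only point requiring a little care is verifying that $\Psi_Q(aJbJ) = \Psi_Q(a)JbJ$ so that $\widetilde\Theta$ genuinely restricts to $\Theta$, and that the multiplication map on the maximal tensor product of commuting C$^\ast$-algebras is bounded — but both are standard once the bimodule property of $\Psi_Q$ and \cref{Lem=ComKernel} are in hand.
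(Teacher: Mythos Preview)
Your proposal is correct and follows essentially the same approach as the paper's proof: both use \cref{Lem=ComKernel} together with the $Q'$-bimodule property of $\Psi_Q$ to show $\Psi_Q(C_\Gamma) \subseteq M_\Gamma$, then factor $\Theta$ through the composition $C_\Gamma \otimes_{\max} JA_\Gamma J \xrightarrow{\Psi_Q \otimes \Id} M_\Gamma \otimes_{\max} JA_\Gamma J \xrightarrow{m} \bound(\calH_\Gamma)$ and invoke nuclearity of $C_\Gamma$ from \cref{Lem=CGammaNuclear} to pass to the minimal tensor norm. Your additional remarks about why $\widetilde\Theta$ restricts to $\Theta$ and why the multiplication map is bounded are correct and make the argument slightly more explicit than the paper's version.
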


The following is one of the core theorems of this paper. The result has been proved in the tensor product case in \cite[Theorem 5.1]{houdayerUniquePrimeFactorization2017}.

\begin{theorem}\label{Thm=KeyAlternatives}
 Let $\Gamma$ be a simple graph.  Let $(M_\Gamma, \tau) = \ast_{v, \Gamma} (M_v, \tau_v)$ be a graph product of finite von Neumann algebras $M_v$ ($\not=\CC$) that satisfy condition strong (AO) and have separable preduals.  Let $Q \subseteq M_\Gamma$ be a diffuse von Neumann subalgebra. At least one of the following holds:
    \begin{enumerate}
    \item \label{Item=MainOne}The relative commutant $Q' \cap M_\Gamma$ is amenable;
    \item \label{Item=MainTwo} There exists a non-empty $\Gamma_0  \subseteq \Gamma$ such that $\Link(\Gamma_0) \not = \varnothing$  and  $Q \prec_{M_\Gamma} M_{\Gamma_0}$.
    \end{enumerate}
    \begin{proof}
    We first show we can reduce it to the case that $Q$ is amenable.
    Indeed, suppose we have proven that every amenable diffuse subalgebra $Q_0\subseteq M_{\Gamma}$ satisfies (1) or (2). Let $Q\subseteq M_{\Gamma}$ be an arbitrary diffuse subalgebra. Then by \cite[Corollary 4.7]{houdayerUniquePrimeFactorization2017} there is an amenable diffuse von Neumann subalgebra $Q_0\subseteq Q$ such that for subgraphs $\Lambda\subseteq \Gamma$ we have $Q_0\not\prec_{M_{\Gamma}}M_{\Lambda}$ whenever $Q\not\prec_{M_{\Gamma}}M_{\Lambda}$. If $Q$ does not satisy (2), then neither does $Q_0$. Hence $Q_0$ satisfies (1), so $Q_0'\cap M_{\Gamma}$ is amenable. Hence also the subalgebra $Q' \cap M_{\Gamma}\subseteq Q_0'\cap M_{\Gamma}$ is amenable, i.e. $Q$ satisfies (1), which shows the reduction.

    We now prove the statement with the notation introduced in this section. Assume \eqref{Item=MainTwo} does not hold and we shall prove  \eqref{Item=MainOne}.       
    By assumption    for $\Lambda\subseteq \Gamma$ with $\Link(\Lambda)\not=\varnothing$ we have $Q\not\prec_{M_{\Gamma}}M_{\Lambda}$.
    In particular we have for all $v \in \Gamma$ with $\Link(v)$ non-empty that $v$ is contained in $\Link(\Link(v))$ and so $Q\not\prec_{M_{\Gamma}} M_{\Link(v)}$. If $\Link(v)$ is empty then  $M_{\Link(v)} = \mathbb{C}$ and so $Q\not\prec_{M_{\Gamma}} M_{\Link(v)}$ as $Q$ is diffuse. 
    It follows now from \cref{Lem=MinBoundAmenable} that $\Theta$ defined in \eqref{Eqn=ThetaMinBound} is bounded for the minimal tensor norm. 

     Each $A_v$ is exact being included in the nuclear C$^\ast$-algebra $C_v$. Therefore the C$^\ast$-algebra $A_{\Gamma}$ is exact by \cite[ Corollary 3.17]{caspersGraphProductsOperator2017a}. Furthermore, the inclusions $A_{\Gamma}\subseteq M_{\Gamma}$  and $JA_{\Gamma}J\subseteq M_{\Gamma}'$ are $\sigma$-weakly dense.   

     The conclusions of the previous two paragraphs show that the assumptions of  \cite[Lemma 2.1]{OzawaKurosh} are satisfied and this lemma concludes that  $Q'\cap M_{\Gamma}$ is amenable.
  
    \end{proof}
\end{theorem}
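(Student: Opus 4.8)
The plan is to follow the strategy of Ozawa--Popa and Houdayer--Isono, transported to the graph product setting via the combinatorial and C$^\ast$-algebraic apparatus built up in this section. First I would make the reduction to the case that $Q$ is amenable: if every amenable diffuse subalgebra of $M_\Gamma$ satisfies \eqref{Item=MainOne} or \eqref{Item=MainTwo}, then given an arbitrary diffuse $Q$ one invokes \cite[Corollary 4.7]{houdayerUniquePrimeFactorization2017} to produce an amenable diffuse $Q_0 \subseteq Q$ which does not embed into any $M_\Lambda$ that $Q$ does not embed into. If $Q$ fails \eqref{Item=MainTwo} then so does $Q_0$, hence $Q_0$ satisfies \eqref{Item=MainOne}, and since $Q' \cap M_\Gamma \subseteq Q_0' \cap M_\Gamma$, amenability passes to $Q' \cap M_\Gamma$. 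So from now on $Q$ is amenable and diffuse, and I assume \eqref{Item=MainTwo} fails, aiming to prove \eqref{Item=MainOne}.

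Next I would assemble the ingredients from the preceding lemmas. Because \eqref{Item=MainTwo} fails, for any $v \in \Gamma$ one has $Q \not\prec_{M_\Gamma} M_{\Link(v)}$: when $\Link(v) \neq \emptyset$ this is because $v \in \Link(\Link(v))$, so $\Link(\Link(v)) \neq \emptyset$ and the failure of \eqref{Item=MainTwo} applied to $\Gamma_0 = \Link(v)$ gives it; when $\Link(v) = \emptyset$, $M_{\Link(v)} = \CC$ and $Q$ diffuse forces $Q \not\prec_{M_\Gamma} \CC$. With this hypothesis in force, \cref{Lem=MinBoundAmenable} tells us that the map $\Theta : A_\Gamma \otimes J A_\Gamma J \to \bound(\calH_\Gamma)$, $a \otimes JbJ \mapsto \Psi_Q(aJbJ)$, is continuous for the minimal tensor norm. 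The chain of lemmas behind this is: \cref{lemma:commutator} and \cref{Lem=CommKompact} place the commutators $[c, JaJ]$, $c \in C_v$, $a \in A_w$, inside $D_{\Link(v)}$; \cref{Lem=DIdeal} shows $D_{\Link(v)}$ is a $B_\Gamma$-bimodule, so $[C_\Gamma, JA_\Gamma J]$ lands in $\sum_v D_{\Link(v)}$; \cref{lemma:kernel-proper-CE} shows each $D_{\Link(v)} \subseteq \ker \Psi_Q$ (this is where $Q \not\prec_{M_\Gamma} M_{\Link(v)}$ is used, together with properness of $\Psi_Q$); and \cref{Lem=CGammaNuclear} (via \cref{thm:nuclearity-graph-productNEW}) gives nuclearity of $C_\Gamma$, which is what lets one replace $\otimes_{\max}$ by $\otimes_{\min}$ in the factorization $\widetilde\Theta = m \circ (\Psi_Q \otimes \id)$ on $C_\Gamma \otimes J A_\Gamma J$.

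Finally I would invoke the abstract criterion. Each $A_v$ is exact, being a unital C$^\ast$-subalgebra of the nuclear $C_v$, and hence $A_\Gamma$ is exact by \cite[Corollary 3.17]{caspersGraphProductsOperator2017a}. The inclusions $A_\Gamma \subseteq M_\Gamma$ and $J A_\Gamma J \subseteq M_\Gamma'$ are $\sigma$-weakly dense by construction. These facts together with the min-boundedness of $\Theta$ are exactly the hypotheses of \cite[Lemma 2.1]{ozawaKuroshtypeTheoremType2006a}, whose conclusion is that $Q' \cap M_\Gamma$ is amenable, giving \eqref{Item=MainOne}. The main obstacle in this argument is the one already dispatched by the lemmas in this section, namely controlling the commutators $[C_\Gamma, JA_\Gamma J]$ and showing they lie in the kernel of the proper conditional expectation $\Psi_Q$; the subtlety there is that a commutator $[c, JaJ]$ with $c, a$ supported at a single vertex $v$ is not itself compact on $\calH_\Gamma$ but only "compact in the $v$-direction", which is precisely what the ideals $D_{\Link(v)}$ and the unraveling identities \eqref{Eqn=Unravel1}--\eqref{Eqn=Unravel2} are designed to track, and in turn why the nuclearity result of \cref{Sect=NuclearGraphProduct} and the assumption $\Kompact(\calH_v) \subseteq C_v$ are needed. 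Assembling the pieces in the right order, the proof itself is then short.
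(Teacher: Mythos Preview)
Your proposal is correct and follows essentially the same approach as the paper: the reduction to amenable $Q$ via \cite[Corollary 4.7]{houdayerUniquePrimeFactorization2017}, the verification that $Q \not\prec_{M_\Gamma} M_{\Link(v)}$ for every $v$, the appeal to \cref{Lem=MinBoundAmenable}, the exactness of $A_\Gamma$, and the final application of \cite[Lemma 2.1]{ozawaKuroshtypeTheoremType2006a} are exactly the steps the paper carries out. Your additional paragraph tracing the chain of lemmas behind \cref{Lem=MinBoundAmenable} is accurate commentary rather than a departure from the paper's argument.
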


We recall the following lemma about relative commutants which we shall use without further reference. 

\begin{lemma}[Lemma 3.5 of \cite{vaesExplicitComputationsAll2008}]\label{Lem=RelativeCom}
If $A  \subseteq  1_A M1_A , B \subseteq 1_B M 1_B$ are von Neumann subalgebras and $A \prec_M B$, then $B' \cap 1_B M 1_B \prec_M A' \cap 1_A M 1_A$.
\end{lemma}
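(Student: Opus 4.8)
The plan is to deduce the statement from Popa's basic–construction formulation of intertwining together with the explicit intertwiner supplied by \cref{Dfn=Intertwine}. Write $A_{1}=A'\cap 1_{A}M1_{A}$ and $B_{1}=B'\cap 1_{B}M1_{B}$. After compressing by the units $1_{A},1_{B}$ (which affects neither the relation $\prec_{M}$ nor the relative commutants, since both localise to these corners) I may assume that $A,B\subseteq M$ are unital. Using \cref{Dfn=Intertwine} I fix projections $p\in A$, $q\in B$, a normal $*$-homomorphism $\theta\colon pAp\to qBq$ with $\theta(p)=q$, and a non-zero partial isometry $v\in qMp$ with $\theta(a)v=va$ for all $a\in pAp$; set $g:=v^{*}v$ and $f:=vv^{*}$.

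The first step is to record the two clean algebraic facts on which everything rests. Taking adjoints in $\theta(a)v=va$ gives $v^{*}\theta(a)=av^{*}$, from which a direct computation shows $g\in (pAp)'\cap pMp$ and $f\in\theta(pAp)'\cap qMq$. Invoking the standard reduction identity $(pAp)'\cap pMp=p\,A_{1}\,p$ (valid because $p\in A$) I obtain $g\in A_{1}$. Moreover, for every $y\in B_{1}$ the element $y$ commutes with $qBq\supseteq\theta(pAp)$, and the same computation yields $v^{*}yv\in (pAp)'\cap pMp=pA_{1}p\subseteq A_{1}$. Thus conjugation by $v$ sends $B_{1}$ into $A_{1}$, and $\mathrm{Ad}(v)$ is a genuine $*$-isomorphism of the corners $gMg\cong fMf$ matching the cut–downs of $A$ with those of $\theta(pAp)\subseteq B$.

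The second step produces the required $B_{1}$–$A_{1}$ correspondence. I consider the closed subspace $\calK:=\overline{B_{1}\,v\,A_{1}}^{\,\|\cdot\|_{2}}\subseteq L^{2}(M)$, which is by construction a Hilbert $B_{1}$–$A_{1}$ bimodule. To conclude $B_{1}\prec_{M}A_{1}$ via the bimodule characterisation in \cref{Dfn=Intertwine} it remains to check $\dim(\calK_{A_{1}})<\infty$. Here I use the basic construction $\langle M,e_{A_{1}}\rangle$ with its canonical semifinite trace: the finite right $A_{1}$–dimension of $\calK$ is the $\Tr_{A_{1}}$–mass of the projection onto $\calK$, and I bound this mass by transporting it, through the corner isomorphism $\mathrm{Ad}(v)$ of the previous step, to the finite quantity $\tau(f)$ coming from the single generator $v$. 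The correction terms $(1-f)yv$ arising when one makes $\overline{vA_{1}}$ invariant under the left $B_{1}$–action are absorbed using that $f=vv^{*}$ together with $v^{*}yv\in A_{1}$, so that only finitely much right $A_{1}$–mass is added.

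The main obstacle is precisely this finiteness estimate $\dim(\calK_{A_{1}})<\infty$. The naive compression $y\mapsto v^{*}yv$ is only unital completely positive — not multiplicative — because the support projection $f=vv^{*}$ need not commute with $B_{1}$; consequently one cannot directly read off an intertwining $*$-homomorphism and must instead argue at the level of the basic–construction traces. The careful point is to compare the two canonical traces $\Tr_{B}$ and $\Tr_{A_{1}}$ on $\langle M,e_{B}\rangle$ and $\langle M,e_{A_{1}}\rangle$ and to keep track of the support projections $f,g$, which live in the respective relative commutants; this bookkeeping is handled by passing to central supports and, where a stable form of the conclusion is wanted, by the stabilisation principle of \cref{Lem=StableEmbedding}.
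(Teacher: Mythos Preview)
The paper does not prove this lemma; it is quoted from \cite{vaesExplicitComputationsAll2008} without argument, so there is no in-paper proof to compare against. The question is whether your argument stands on its own.

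Your first two steps are correct and standard: from the intertwining data $(p,q,v,\theta)$ one indeed has $g=v^{*}v\in pA_{1}p$ and, for every $y\in B_{1}$, $v^{*}yv\in pA_{1}p$, because $y$ commutes with $\theta(pAp)\subseteq qBq$. You also correctly isolate the obstacle: the map $y\mapsto v^{*}yv$ is only completely positive, not multiplicative, since $f=vv^{*}$ need not commute with $B_{1}$.

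The genuine gap is in your resolution of this obstacle. You assert that $\mathcal{K}=\overline{B_{1}vA_{1}}$ has finite right $A_{1}$-dimension, but nothing you write establishes it. The ``corner isomorphism $\Ad(v)$'' only identifies $gMg$ with $fMf$, and $B_{1}$ does not sit inside $fMf$. The projections $u(ve_{A_{1}}v^{*})u^{*}$ for $u\in\mathcal{U}(B_{1})$ all have trace $\tau(f)$, but the projection onto $\mathcal{K}$ is their \emph{supremum}, which can a priori have infinite trace; the phrase ``only finitely much right $A_{1}$-mass is added'' is precisely the claim to be proved, not a step toward it. Your invocation of \cref{Lem=StableEmbedding} is a non sequitur: that lemma concerns transitivity of stable intertwining and provides no comparison between $\Tr_{B}$ and $\Tr_{A_{1}}$, nor any bound on $\dim_{A_{1}}\mathcal{K}$.

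What is actually needed is an argument inside the finite von Neumann algebra $N:=\theta(pAp)'\cap qMq$, which contains both $f$ and $qB_{1}q$. On the corner $fNf$ the map $\Ad(v^{*})$ \emph{is} a $\ast$-isomorphism onto $gA_{1}g$, so the problem reduces to manufacturing, via comparison of projections in $N$, a partial isometry that carries a nonzero corner of $qB_{1}q$ under $f$; composing with $v^{*}$ then yields an honest intertwiner witnessing $B_{1}\prec_{M}A_{1}$. Your text alludes to ``central supports'' but never executes any such step, and without it the proof is incomplete at exactly the point where the content of the lemma lies.
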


\subsection{Unique rigid graph product decomposition}
We will prove our main result \cref{thm:rigid-graph-decomposition} which asserts for a graph product $M_{\Gamma} = *_{v,\Gamma}(M_{v},\tau_v)\in\Crigid$ with $M_v\in \Cvert$ that we can retrieve the rigid graph $\Gamma$ and retrieve the vertex von Neumann algebras $M_v$ up to stable isomorphism. To prove the result we need the following lemma.
\begin{lemma}\label{lemma:vNa-isomorphism-graph-products}
Let $\Gamma$ be a simple graph. For $v\in \Gamma$, let $M_v,N_v$ be II$_1$-factors and put $M_{\Gamma} = *_{v,\Gamma}(M_v,\tau_v)$ and $N_{\Gamma} = *_{v,\Gamma}(N_v,\widetilde{\tau}_v)$. 
Suppose $\iota:N_{\Gamma}\to M_{\Gamma}$ is a $*$-isomorphism and for $v\in \Gamma$ we have
\begin{align*}
    \iota(N_{v})\prec_{M_{\Gamma}}M_{v}& &and&
    &M_{v}\prec_{M_{\Gamma}}\iota(N_{v}).
\end{align*} 
Then the following holds true:
\begin{enumerate}
    \item \label{Item=vNa-isomorphism-graph-products:Star} For $v\in \Gamma$ there is a unitary $u_v\in M_{\Gamma}$ such that $u_v^*\iota(N_{\Star(v)})u_v = M_{\Star(v)}$.
    \item \label{Item=vNa-isomorphism-graph-products:union}  Let $\Lambda_0\subseteq \Lambda\subseteq \Gamma$ be subgraphs such that $\iota(N_{\Lambda}) = M_{\Lambda}$. Then $\iota(N_{\Lambda\cup \Link_{\Gamma}(\Lambda_0)}) = M_{\Lambda\cup \Link_{\Gamma}(\Lambda_0)}$.
    \item \label{Item=vNa-isomorphism-graph-products:path} Let $P = (v_1,\ldots, v_n)$ be a path in $\Gamma$ and denote $\Gamma_0:= \bigcup_{i=1}^n \Star(v_i)$. If there exist $1\leq j\leq n$ and a subgraph $\Lambda\subseteq \Gamma_0$ such that $v_j\in \Lambda$ and $\iota(N_{\Lambda}) = M_{\Lambda}$, then $\iota(N_{\Gamma_0}) = M_{\Gamma_0}$.
    \item \label{Item=vNa-isomorphism-graph-products:connected-component} Let $\Gamma_0$ be a connected component of $\Gamma$. If there is a non-empty subgraph $\Lambda\subseteq \Gamma_0$ with $\iota(N_{\Lambda})=M_{\Lambda}$ then $\iota(N_{\Gamma_0}) = M_{\Gamma_0}$.
\end{enumerate}
\begin{proof}  
\eqref{Item=vNa-isomorphism-graph-products:Star} As $\iota(N_{v})\prec_{M_{\Gamma}} M_{v}$ and $\iota(N_{v})\not\prec_{M_{\Gamma}} M_{\varnothing}$ (since $N_{v}$ diffuse), and since $\iota(N_{v})$ and $\iota(N_{v})' \cap M_{\Gamma}$ ($=\iota(N_{\Link(v)})$) are factors, we obtain by \cref{lemma:unitary-inclusion-in-star-for-vNa} a unitary $u_v\in M_{\Gamma}$ such that $u_v^*\iota(N_{v})u_v\subseteq M_{\Star(v)}$.
    By assumption $M_{v}\prec_{M_{\Gamma}} \iota(N_{v})$ so that $M_{v}\prec_{M_{\Gamma}} u_v^*\iota(N_{v})u_v$. If $u_v^*\iota(N_{v})u_v\prec_{M_{\Gamma}} M_{\Link(v)}$ then  $u_v^*\iota(N_{v})u_v\prec_{M_{\Gamma}}^s M_{\Link(v)}$ by \cref{Lem=StableEmbedding} \eqref{Item=StableEmbedding:condition}, since $M_v'\cap M_{\Gamma}$ is a factor. Consequently, by \cref{Lem=StableEmbedding} \eqref{Item=StableEmbedding:transative} we obtain $M_v\prec_{M_{\Gamma}} M_{\Link(v)}$, which gives a contradiction by \cref{Lem=Selfembedding}.  We conclude that $u_v^*\iota(N_{v})u_v\not\prec_{M_{\Gamma}} M_{\Link(v)}$.     Now, we are in the situation that  $u_v^*\iota(N_{v})u_v\subseteq M_{\Star(v)}$ and $u_v^*\iota(N_{v})u_v\not\prec_{M_{\Gamma}} M_{\Link(v)}$. We apply \cref{prop:embed}\eqref{Item=Embed1b}  to $\Lambda = \Star(v)$ and $\{ \Lambda_j \}_{j \in \mathcal{J}} = \{ \Link(v) \}$ so there is only one index in $\mathcal{J}$. In this case $\Lambda_{{\rm emb}} = \Star(v)$.  So  \cref{prop:embed}\eqref{Item=Embed1b} yields that $\Nor_{M_{\Gamma}}(u_v^*\iota(N_{v})u_v)\subseteq M_{\Star(v)}$, hence $u_v^*\iota(N_{\Star(v)})u_v \subseteq M_{\Star(v)}$.

By symmetry there is also a unitary $\widetilde{u}_v\in M_{\Gamma}$ such that $\widetilde{u}_v^*M_{\Star(v)}\widetilde{u}_v\subseteq \iota(N_{\Star(v)})$.
Hence 
\begin{align}\label{eq:unitary-inclusions-star}
u_v^*\widetilde{u}_v^*M_{\Star(v)}\widetilde{u}_vu_v\subseteq u_v^*\iota(N_{\Star(v)})u_v \subseteq M_{\Star(v)}.
\end{align} 
Hence, since $M_{\Star(v)}\not\prec_{M_{\Gamma}} M_{\widetilde{\Lambda}}$ for any strict subgraph $\widetilde{\Lambda}\subsetneq \Star(v)$ we obtain by \cref{prop:embed}\eqref{Item=Embed1c},   and the final remark of \cref{prop:embed} applied to $\Lambda = \Star(v)$ and  $\Lambda_{{\rm emb}} = \Star(v)$,   that $\widetilde{u}_vu_v\in M_{\Star(v)}$. From this we conclude that the inclusions in \eqref{eq:unitary-inclusions-star} are in fact equalities so $u_v^*\iota(N_{\Star(v)})u_v = M_{\Star(v)}$.
    
\eqref{Item=vNa-isomorphism-graph-products:union} Let $\Lambda_0\subseteq \Lambda$ be a subgraph. Then $\iota(N_{\Lambda_0})\subseteq \iota(N_{\Lambda})=M_{\Lambda}$ and by the assumptions $\iota(N_{\Lambda_0})\not\prec_{M_{\Gamma}}M_{\widetilde{\Lambda}}$ for any strict subgraph $\widetilde{\Lambda}\subsetneq \Lambda_0$.
    Hence, by \cref{prop:embed}\eqref{Item=Embed1b},   and the final remark of \cref{prop:embed},  we obtain that $\iota(N_{\Link(\Lambda_0)})\subseteq \Nor_{M_{\Gamma}}(\iota(N_{\Lambda_0}))'' \subseteq M_{\Lambda\cup \Link(\Lambda_0)}$.
    Thus $\iota(N_{\Lambda\cup \Link(\Lambda_0)})\subseteq M_{\Lambda\cup \Link(\Lambda_0)}$. By symmetry we also obtain that $M_{\Lambda\cup \Link(\Lambda_0)}\subseteq \iota(N_{\Lambda\cup \Link(\Lambda_0)})$ so we get the equality.

\eqref{Item=vNa-isomorphism-graph-products:path} As $v_{j}\in \Lambda$ and $\iota(N_{\Lambda}) = M_{\Lambda}$, using \eqref{Item=vNa-isomorphism-graph-products:union}  we obtain that $\iota(N_{\Lambda\cup \Star(v_j)}) = \iota(N_{\Lambda\cup \Link(v_j)}) = M_{\Lambda\cup \Link(v_j)} = M_{\Lambda\cup \Star(v_j)}$. Now for $1\leq i\leq n$ with $|i-j|=1$ we have $v_i \in \Lambda\cup \Star(v_j)$. Hence, applying \eqref{Item=vNa-isomorphism-graph-products:union} again we obtain 
    $\iota(N_{\Lambda\cup \Star(v_j)\cup \Star(v_i)}) =  M_{\Lambda\cup \Star(v_j)\cup \Star(v_i)}$. Repeating the same argument at most $n$ times we obtain $\iota(N_{\Gamma_0}) = M_{\Gamma_0}$.
        
\eqref{Item=vNa-isomorphism-graph-products:connected-component} Let $P = (v_1,\ldots,v_n)$ be a path in $\Gamma$ traversing all vertices in $\Gamma_0$.  Then $\Gamma_0$ is equal to $\bigcup_{i=1}^n \Star(v_i)$. Now since $\Lambda\subseteq \Gamma_0$ is non-empty, we can choose $1\leq j\leq n$ s.t. $v_j\in \Lambda$. Now by \eqref{Item=vNa-isomorphism-graph-products:path} we obtain $\iota(N_{\Gamma_0}) = M_{\Gamma_0}$.
\end{proof}
\end{lemma}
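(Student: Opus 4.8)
I would prove the four assertions in the stated order, since (3) is an iteration of (2), (4) is a special case of (3), and (2) is a short ``absorption'' step, so essentially all the difficulty sits in (1). Before starting I would record two structural facts, valid because every $M_v,N_v$ is a II$_1$-factor: graph products over subgraphs are factors, and $\iota(N_\Xi)'\cap M_\Gamma=\iota(N_\Xi'\cap N_\Gamma)=\iota(N_{\Link(\Xi)})$ is a factor for every subgraph $\Xi$. This supplies, for free, all the factoriality hypotheses of \cref{lemma:unitary-inclusion-in-star-for-vNa,prop:embed,Lem=StableEmbedding}. I would also note the elementary facts $B\subseteq C\Rightarrow B\prec_M C$ and $(A\prec_M B,\ B\subseteq C)\Rightarrow A\prec_M C$, together with the key self-embedding obstruction: for every subgraph $\Xi$ and strict subgraph $\widetilde{\Xi}\subsetneq\Xi$ one has $\iota(N_\Xi)\not\prec_{M_\Gamma}M_{\widetilde{\Xi}}$. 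The last follows by picking $v\in\Xi\setminus\widetilde{\Xi}$: then $M_v\prec_{M_\Gamma}\iota(N_v)\subseteq\iota(N_\Xi)$, so $M_v\prec_{M_\Gamma}\iota(N_\Xi)$; if additionally $\iota(N_\Xi)\prec_{M_\Gamma}M_{\widetilde{\Xi}}$, then \cref{Lem=StableEmbedding} (using that $\iota(N_\Xi)'\cap M_\Gamma$ is a factor) gives $M_v\prec_{M_\Gamma}M_{\widetilde{\Xi}}$, contradicting \cref{Lem=Selfembedding}.

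\textbf{Part (1).} I would fix $v$ and note that $\iota(N_v)$ is diffuse (so $\iota(N_v)\not\prec_{M_\Gamma}M_\emptyset$), $\iota(N_v)\prec_{M_\Gamma}M_v$, and $\iota(N_v)'\cap M_\Gamma$ is a factor; hence \cref{lemma:unitary-inclusion-in-star-for-vNa} with $\Lambda=\{v\}$ and $\{\Lambda_j\}=\{\emptyset\}$ (for which $\Lambda_{\emb}=\Star(v)$) produces a unitary $u_v$ with $Q:=u_v^*\iota(N_v)u_v\subseteq M_{\Star(v)}$. The hypothesis $M_v\prec_{M_\Gamma}\iota(N_v)$ gives $M_v\prec_{M_\Gamma}Q$, and the self-embedding obstruction plus \cref{Lem=StableEmbedding} force $Q\not\prec_{M_\Gamma}M_{\Link(v)}$; then the conclusion on quasi-normalizers in \cref{prop:embed}, applied with $\Lambda=\Star(v)$ and $\{\Lambda_j\}=\{\Link(v)\}$ (again $\Lambda_{\emb}=\Star(v)$), yields $\Nor_{M_\Gamma}(Q)''\subseteq M_{\Star(v)}$, i.e.\ $u_v^*\iota(N_{\Star(v)})u_v\subseteq M_{\Star(v)}$ (using $\Nor_{N_\Gamma}(N_v)''=N_{\Star(v)}$). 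Running the same argument for $\iota^{-1}$ (whose vertex-wise hypotheses hold by transport of intertwining through an isomorphism) gives a unitary $\widetilde{u}_v\in M_\Gamma$ with $\widetilde{u}_v^*M_{\Star(v)}\widetilde{u}_v\subseteq\iota(N_{\Star(v)})$. Then $g:=\widetilde{u}_v u_v$ satisfies $g^*M_{\Star(v)}g\subseteq u_v^*\iota(N_{\Star(v)})u_v\subseteq M_{\Star(v)}$; since $M_{\Star(v)}\not\prec_{M_\Gamma}M_{\widetilde{\Lambda}}$ for every strict subgraph $\widetilde{\Lambda}\subsetneq\Star(v)$, the conclusion on unitaries in \cref{prop:embed} (with $\Lambda_{\emb}=\Star(v)$) forces $g\in M_{\Star(v)}$, so $g^*M_{\Star(v)}g=M_{\Star(v)}$ and the displayed chain collapses to $u_v^*\iota(N_{\Star(v)})u_v=M_{\Star(v)}$.

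\textbf{Parts (2)--(4).} For (2), I would apply the quasi-normalizer conclusion of \cref{prop:embed} to $\iota(N_{\Lambda_0})\subseteq M_\Lambda$, taking $\{\Lambda_j\}$ to be the strict subgraphs of $\Lambda$ that do not contain $\Lambda_0$; a short computation shows this choice gives $\Lambda_{\emb}=\Lambda\cup\Link(\Lambda_0)$, and $\iota(N_{\Lambda_0})\not\prec_{M_\Gamma}M_{\Lambda_j}$ holds by the self-embedding obstruction (choose $v\in\Lambda_0\setminus\Lambda_j$). This yields $\iota(N_{\Link(\Lambda_0)})=\iota(N_{\Lambda_0})'\cap M_\Gamma\subseteq\Nor_{M_\Gamma}(\iota(N_{\Lambda_0}))''\subseteq M_{\Lambda\cup\Link(\Lambda_0)}$, hence $\iota(N_{\Lambda\cup\Link(\Lambda_0)})=M_\Lambda\vee\iota(N_{\Link(\Lambda_0)})\subseteq M_{\Lambda\cup\Link(\Lambda_0)}$; the reverse inclusion comes from the same argument applied to $\iota^{-1}$ (using $\iota^{-1}(M_\Lambda)=N_\Lambda$). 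For (3), I would start from $\iota(N_\Lambda)=M_\Lambda$ with $v_j\in\Lambda$, use (2) with $\Lambda_0=\{v_j\}$ to absorb $\Star(v_j)$ into $\Lambda$, observe that along the path each neighbour $v_{j\pm1}$ then lies in the enlarged graph so (2) applies again, and iterate finitely often to absorb $\bigcup_i\Star(v_i)=\Gamma_0$ (recall $\Lambda\subseteq\Gamma_0$). For (4), a connected component $\Gamma_0$ admits a path (walk) visiting every vertex, and $\bigcup_i\Star_\Gamma(v_i)=\Gamma_0$ since $\Gamma_0$ is a component; as $\Lambda\neq\emptyset$ some $v_j$ lies in $\Lambda$, so (3) applies and gives $\iota(N_{\Gamma_0})=M_{\Gamma_0}$.

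\textbf{Expected main obstacle.} The hard part is Part (1): one needs \cref{lemma:unitary-inclusion-in-star-for-vNa} on both sides (via $\iota$ and $\iota^{-1}$) and a mechanism to upgrade the two one-sided inclusions to an equality --- this is precisely the unitary-conjugation conclusion of \cref{prop:embed}, once one verifies the composite unitary $g$ conjugates $M_{\Star(v)}$ into itself. The pervasive secondary difficulty is combinatorial: at each application of \cref{prop:embed} one must choose $\{\Lambda_j\}$ so that the graph $\Lambda_{\emb}$ of \eqref{eq:embedding-graph} is exactly the target ($\Star(v)$, resp.\ $\Lambda\cup\Link(\Lambda_0)$) and then check the non-embedding side conditions; after the preliminary self-embedding fact these are routine.
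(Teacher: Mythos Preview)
Your proposal is correct and follows essentially the same route as the paper's proof: Part (1) via \cref{lemma:unitary-inclusion-in-star-for-vNa} on both sides followed by \cref{prop:embed}\eqref{Item=Embed1c} to collapse the chain of inclusions, Part (2) via \cref{prop:embed}\eqref{Item=Embed1b} and symmetry, and Parts (3)--(4) by iterating (2) along a walk. The one place you are in fact \emph{more} careful than the paper is the choice of $\{\Lambda_j\}$ in Part (2): taking all strict subgraphs of $\Lambda$ not containing $\Lambda_0$ makes the computation $\Lambda_{\emb}=\Lambda\cup\Link(\Lambda_0)$ immediate (the maximal such $\Lambda_j$ are $\Lambda\setminus\{w\}$, $w\in\Lambda_0$), whereas the paper's phrasing (``strict subgraphs $\widetilde\Lambda\subsetneq\Lambda_0$'') needs a moment's thought to see it yields the same conclusion.
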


\begin{theorem}\label{thm:rigid-graph-decomposition}
 Let $\Gamma$ be a rigid graph. For $v \in \Gamma$, let $M_v$ be  von Neumann algebras in class $\Cvert$.
Let $M_\Gamma = \ast_{v, \Gamma} (M_{v},\tau_v)$. Suppose $M_{\Gamma}= \ast_{w, \Lambda} (N_{w},\tau_w)$ for another rigid graph $\Lambda$ and other von Neumann algebras $N_w\in\Cvert$ for $w\in \Lambda$. 
Then there is a graph isomorphism $\alpha: \Gamma \rightarrow \Lambda$, and for each $v\in \Gamma$ there is a unitary $u_v\in M_{\Gamma}$ and a real number $0<t_v<\infty$ such that
\begin{align}
    M_{\Star(v)}= u_v^*N_{\Star(\alpha(v))}u_v& &and&   &M_v \simeq N_{\alpha(v)}^{t_v}.
\end{align}
Furthermore, for each vertex $v\in \Gamma$ its connected component $\Gamma_v\subseteq \Gamma$ satisfies $M_{\Gamma_v} = u_v^*N_{\alpha(\Gamma_v)}u_v$.
\begin{proof} 
First we construct the graph isomorphism $\alpha$. Take $v \in \Gamma$. As the vertex von Neumann algebras are factors we have  by \cite[Corollary 2.28]{caspersGraphProductsOperator2017a},
\[
M_{\Link(v)} ' \cap M = M_{\Link(\Link(v) )} = M_{v}.
\]
In particular $M_{\Link(v)} ' \cap M$ is non-amenable. Therefore \cref{Thm=KeyAlternatives} implies that there exists $\Lambda_0  \subseteq \Lambda$ such that $M_{\Link(v)} \prec_{N_{\Gamma}} N_{\Lambda_0}$ and $\Link(\Lambda_0) \not = \varnothing$. Thus taking relative commutants  (\cref{Lem=RelativeCom}) we find that  $N_{\Link(\Lambda_0)} \prec_{M_{\Gamma}} M_{v}$.  

So we have shown that for every $v \in \Gamma$ there exists a subgraph $\alpha(v) \subseteq \Lambda$ that occurs as the link of a set such that $N_{\alpha(v)} \prec_{M_{\Gamma}} M_{v}$.  Conversely, by symmetry, for every $w \in \Lambda$ there exists $\beta(w) \subseteq \Gamma$ that occurs as the link of a set such that $M_{\beta(w)} \prec_{M_{\Gamma}} N_{w}$.

Let again  $v \in \Gamma$. Then for any $w \in \alpha(v)$ we have $N_w \prec_{M_{\Gamma}}  M_{v}$ and consequently as $N_w'\cap M_{\Gamma}$ is a factor  $N_w \prec_{M_{\Gamma}}^s  M_{v}$, see \cref{Lem=StableEmbedding}\eqref{Item=StableEmbedding:condition}. 
Therefore, by transitivity of stable embeddings, i.e. \cref{Lem=StableEmbedding}\eqref{Item=StableEmbedding:transative}, we find $M_{\beta(w)}  \prec_{M_{\Gamma}}  M_{v}$. Hence for any $v' \in \beta(w)$ we have $M_{v'}  \prec_{M_{\Gamma}}  M_{v}$.  But then by \cref{Lem=Selfembedding} we see that $v' = v$. Hence $\beta(w) = v$ for any $w \in \alpha(v)$ and in particular is a singleton set. So we have proved that for $v \in \Gamma$ we have $\beta(\alpha(v) ) :=  \bigcup_{w \in \alpha(v)} \beta(w)     = v$ and by symmetry for $w \in \Lambda$ we have $\alpha(\beta(w)) = w$. But this can only happen if  the values of $\alpha$   and $\beta$ are singletons and $\alpha$ and $\beta$ are inverses of each other. 

If $v \in \Gamma$ then we know that $N_{\alpha(v)}\prec_{M_{\Gamma}} M_v$ and  $M_v \prec_{M_{\Gamma}} N_{\alpha(v)}$. Taking relative commutants, using again factoriality of the vertex von Neumann algebras,  we find
\begin{align*}
  M_{\Link(v)}   \prec_{M_{\Gamma}}   N_{\Link(\alpha(v))}, \qquad   N_{\Link(\alpha(v))}  \prec_{M_{\Gamma}}    M_{\Link(v)}.
\end{align*}
Now take $v' \in \Link(v)$ so that the first of these embeddings gives  $M_{v'}  \prec_{M_{\Gamma}}   N_{\Link(\alpha(v))}$, hence $M_{v'}  \prec_{M_{\Gamma}}^s  N_{\Link(\alpha(v))}$ by \cref{Lem=StableEmbedding}\eqref{Item=StableEmbedding:condition}. Then again by \cref{Lem=StableEmbedding}\eqref{Item=StableEmbedding:transative} we obtain
$N_{\alpha(v')}  \prec_{N_{\Gamma}}   N_{\Link(\alpha(v))}$.  This then implies by   \cref{Lem=Selfembedding} that
$\alpha(v')  \in    \Link(\alpha(v))$. So we conclude that $\alpha$ preserves edges. Similarly $\beta$ preserves edges, and it follows that $\alpha:\Gamma\to \Lambda$ is a graph isomorphism.\\

Since $\Gamma\simeq \Lambda$ we obtain by \cref{lemma:vNa-isomorphism-graph-products}\eqref{Item=vNa-isomorphism-graph-products:Star} that for each $v\in \Gamma$ there is a unitary $u_v\in M_{\Gamma}$ such that $u_{v}^*N_{\Star(\alpha(v))}u_v= M_{\Star(v)}$.
Consider the $*$-isomorphism $\iota_{v} := \Ad_{u_v^*}: N_{\Gamma}\to M_{\Gamma}$ which satisfies $\iota_v(N_{\Star(\alpha(v))}) = M_{\Star(v)}$. Then by \cref{lemma:vNa-isomorphism-graph-products}\eqref{Item=vNa-isomorphism-graph-products:connected-component} we obtain for the connected component $\Gamma_v\subseteq \Gamma$ of $v$ that $u_{v}^*N_{\Gamma_v}u_v = \iota_{v}(N_{\Gamma_v}) = M_{\Gamma_v}$.

We show the isomorphism of vertex von Neumann algebras up to amplification. Let $w\in \Gamma$.
Since $\iota_w(N_{\Star(\alpha(w))}) = M_{\Star(w)}$ and since $\iota_w(N_{\Link(\alpha(w))})'\cap M_{\Star(w)} = \iota_w(N_{\alpha(w)})$ is non-amenable, we obtain by \cref{Thm=KeyAlternatives} that $\iota_w(N_{\Link(\alpha(w))})\prec_{M_{\Star(w)}} M_{\Gamma_1}$ for some subgraph $\Gamma_1\subseteq \Star(w)$ with $\Link_{\Star(w)}(\Gamma_1)\not=\varnothing$. Thus, by \cref{Lem=RelativeCom} we obtain $M_{\Link(\Gamma_1)}\prec_{M_{\Star(w)}} \iota_w(N_{\alpha(w)})$.
Let $v \in \Link(\Gamma_1)$ (which is non-empty). Then $M_{v}\prec_{M_{\Star(w)}} \iota_w(N_{\alpha(w)})$ and, as before, $\iota_w(N_{\alpha(w)})\prec_{M_{\Gamma}}^s M_w$. Hence $M_{v}\prec_{M_{\Gamma}} M_{w}$ and so $v=w$ by \cref{Lem=Selfembedding}. Therefore $M_{w}\prec_{M_{\Star(w)}} \iota_w(N_{\alpha(w)})$. Analogously, we obtain $\iota_w(N_{\alpha(w)})\prec_{M_{\Star(w)}} M_{w}$.

Since we are dealing with II$_1$-factors these embeddings are also with expectation, i.e. $\iota_w(N_{\alpha(w)})\\ \preceq_{M_{\Star(w)}} M_{w}$ as in \cite[Definition 4.1]{houdayerUniquePrimeFactorization2017}.
Thus, since $M_{\Star(w)} = M_{w}\overline{\otimes} M_{\Link(w)}$ we obtain by \cite[Lemma 4.13]{houdayerUniquePrimeFactorization2017} 
non-zero projections $p_w,q_w\in M_{\Star(w)}$ and a partial isometry $v_w\in M_{\Star(v)}$ with $v_w^*v_w =p_w$ and $v_wv_w^* =q_w$ and a subfactor $P_w\subseteq q_w\iota_w(N_{\alpha(w)})q_w$ so that
\begin{align*}
    q_w\iota_w(N_{\alpha(w)})q_w = v_wM_wv_w^* \overline{\otimes} P_w, &\quad& v_wM_{\Link(w)}v_w^* = P_w\overline{\otimes} q_w\iota_w(N_{\Link(\alpha(w))})q_w.
\end{align*}
Since $N_{w}$ is prime, so is $q_w\iota_w(N_{\alpha(w)})q_w$. Hence, as $v_wM_{w}v_w^*$ is a II$_1$-factor, we obtain that $P_w$ is a factor of type I$_n$ for some $n\in\NN$. We conclude that $N_{\alpha(w)}$ is isomorphic to some amplification of $M_{w}$. \\
\end{proof}
\end{theorem}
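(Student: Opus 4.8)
The plan is to prove \cref{thm:rigid-graph-decomposition} in three stages: build the graph isomorphism $\alpha$, lift it to unitary conjugacy of stars and connected components, and finally extract the stable isomorphism of vertex algebras. For the first stage I would exploit the fact that the vertex von Neumann algebras $M_v$ (and $N_w$) are II$_1$-factors, so that for any vertex $v$ one has $M_{\Link(v)}' \cap M_\Gamma = M_{\Link(\Link(v))} = M_v$ by rigidity of $\Gamma$, which is non-amenable. Applying \cref{Thm=KeyAlternatives} to the diffuse subalgebra $M_{\Link(v)} \subseteq M_\Gamma$ (viewed inside the $N$-decomposition), the amenable alternative is excluded, so there is a subgraph $\Lambda_0 \subseteq \Lambda$ with $\Link(\Lambda_0) \neq \emptyset$ and $M_{\Link(v)} \prec_{M_\Gamma} N_{\Lambda_0}$; taking relative commutants via \cref{Lem=RelativeCom} gives $N_{\Link(\Lambda_0)} \prec_{M_\Gamma} M_v$. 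This produces a map $v \mapsto \alpha(v) := \Link(\Lambda_0)$ with $N_{\alpha(v)} \prec_{M_\Gamma} M_v$, and symmetrically a map $\beta$. The core combinatorial point is then: for $w \in \alpha(v)$, since $N_w' \cap M_\Gamma$ is a factor, $N_w \prec_{M_\Gamma}^s M_v$ by \cref{Lem=StableEmbedding}\eqref{Item=StableEmbedding:condition}, so by transitivity (\cref{Lem=StableEmbedding}\eqref{Item=StableEmbedding:transative}) any $v' \in \beta(w)$ satisfies $M_{v'} \prec_{M_\Gamma} M_v$, whence $v' = v$ by \cref{Lem=Selfembedding}. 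This forces $\beta(\alpha(v)) = \{v\}$ and $\alpha(\beta(w)) = \{w\}$, so $\alpha,\beta$ are singleton-valued mutually inverse bijections. Taking relative commutants of $N_{\alpha(v)} \prec_{M_\Gamma} M_v$ and $M_v \prec_{M_\Gamma} N_{\alpha(v)}$ and running the same $\prec^s$/transitivity/\cref{Lem=Selfembedding} argument on $\Link(v)$ shows $\alpha$ and $\beta$ preserve edges, so $\alpha \colon \Gamma \to \Lambda$ is a graph isomorphism.

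For the second stage I would invoke \cref{lemma:vNa-isomorphism-graph-products}. Since we now have $N_{\alpha(v)} \prec_{M_\Gamma} M_v$ and $M_v \prec_{M_\Gamma} N_{\alpha(v)}$ for all $v$, identifying $N_\Gamma \cong N_\Lambda$ through $\alpha$ and applying \cref{lemma:vNa-isomorphism-graph-products}\eqref{Item=vNa-isomorphism-graph-products:Star} yields for each $v$ a unitary $u_v \in M_\Gamma$ with $u_v^* N_{\Star(\alpha(v))} u_v = M_{\Star(v)}$. Setting $\iota_v := \Ad_{u_v^*} \colon N_\Gamma \to M_\Gamma$, so that $\iota_v(N_{\Star(\alpha(v))}) = M_{\Star(v)}$, \cref{lemma:vNa-isomorphism-graph-products}\eqref{Item=vNa-isomorphism-graph-products:connected-component} (with $\Lambda = \Star(v)$, which is non-empty and contained in the connected component $\Gamma_v$ of $v$) gives $u_v^* N_{\Gamma_v} u_v = M_{\Gamma_v}$. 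This settles both the $M_{\Star(v)}$ and the $M_{\Gamma_v}$ statements.

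For the third stage — the amplification $M_v \simeq N_{\alpha(v)}^{t_v}$ — I would work inside $M_{\Star(w)}$ for a fixed $w$. Here $\iota_w(N_{\Star(\alpha(w))}) = M_{\Star(w)}$, and $\iota_w(N_{\Link(\alpha(w))})' \cap M_{\Star(w)} = \iota_w(N_{\alpha(w)})$ is non-amenable, so \cref{Thm=KeyAlternatives} applied to $\iota_w(N_{\Link(\alpha(w))}) \subseteq M_{\Star(w)}$ produces $\Gamma_1 \subseteq \Star(w)$ with $\Link_{\Star(w)}(\Gamma_1) \neq \emptyset$ and $\iota_w(N_{\Link(\alpha(w))}) \prec_{M_{\Star(w)}} M_{\Gamma_1}$. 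Relative commutants give $M_{\Link(\Gamma_1)} \prec_{M_{\Star(w)}} \iota_w(N_{\alpha(w)})$; picking $v \in \Link(\Gamma_1)$ and chasing $M_v \prec_{M_{\Star(w)}} \iota_w(N_{\alpha(w)}) \prec^s_{M_\Gamma} M_w$ through \cref{Lem=Selfembedding} forces $v = w$, hence $M_w \prec_{M_{\Star(w)}} \iota_w(N_{\alpha(w)})$, and symmetrically the reverse embedding. Since everything is a II$_1$-factor these are embeddings with expectation, and $M_{\Star(w)} = M_w \,\overline{\otimes}\, M_{\Link(w)}$, so \cite[Lemma 4.13]{houdayerUniquePrimeFactorization2017} supplies projections $p_w, q_w$, a partial isometry $v_w$, and a subfactor $P_w \subseteq q_w \iota_w(N_{\alpha(w)}) q_w$ with $q_w \iota_w(N_{\alpha(w)}) q_w = v_w M_w v_w^* \,\overline{\otimes}\, P_w$. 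Primeness of $N_w$ (all algebras in $\Cvert$ are prime) forces $P_w$ to be type I$_n$, giving $N_{\alpha(w)} \simeq M_w^{t_w}$ for a suitable $t_w \in (0,\infty)$.

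The main obstacle I anticipate is the bookkeeping in the first stage: ensuring that the subgraphs produced by \cref{Thm=KeyAlternatives} are genuinely "links of sets" so that rigidity of $\Lambda$ can be used to pin $\alpha(v)$ down to a single vertex, and verifying that the $\prec^s$-transitivity chain closes up to give $\beta \circ \alpha = \id$ rather than merely a weaker containment. The repeated passage between $\prec$ and $\prec^s$ relies crucially on factoriality of the vertex algebras and their relative commutants (which holds since vertex algebras are factors and $M_{\Link(v)}$ is again a graph product of factors); one must be careful that each application of \cref{Lem=StableEmbedding}\eqref{Item=StableEmbedding:condition} is legitimate. The third-stage extraction of the amplification is comparatively routine once \cite[Lemma 4.13]{houdayerUniquePrimeFactorization2017} is in hand, modulo checking the "with expectation" hypothesis, which is automatic in the tracial II$_1$ setting.
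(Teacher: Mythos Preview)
Your proposal is correct and follows essentially the same approach as the paper's own proof: the three stages (construct $\alpha$ via \cref{Thm=KeyAlternatives}, relative commutants, and the $\prec^s$/transitivity/\cref{Lem=Selfembedding} bootstrap; lift to unitary conjugacy of stars and connected components via \cref{lemma:vNa-isomorphism-graph-products}; extract the amplification inside $M_{\Star(w)}$ using \cref{Thm=KeyAlternatives} again and \cite[Lemma 4.13]{houdayerUniquePrimeFactorization2017} together with primeness) match the paper's argument step for step, including the key lemmas invoked at each point. The obstacle you anticipate is exactly the delicate part of the paper's first stage, and it is handled there precisely as you outline.
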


We state two corollaries that follow from \cref{thm:rigid-graph-decomposition}. The following result tells us when a rigid graph product $M_{\Gamma}$ can decompose as graph product over another rigid graph $\Lambda$. 

\begin{corollary}
    Let $\Gamma, \Lambda$ be rigid graphs. Let $M_{\Gamma} = *_{v,\Gamma}(M_v,\tau_v)$ be the graph product of factors $M_v\in \Cvert$. The following are equivalent:
    \begin{enumerate}
        \item We can write $\Gamma = *_{w,\Lambda}\Gamma_{w}$ for some non-empty graphs $\Gamma_{w}, w\in \Lambda$;
        \item We can write $M_{\Gamma} = *_{w,\Lambda}(M_{w},\tau_w)$ for some factors $M_{w}\in \Crigid, w\in \Lambda$.
    \end{enumerate}
    \begin{proof}
        Suppose we can write $\Gamma = *_{w,\Lambda}\Gamma_w$ for non-empty graphs $\Gamma_w$ for $w\in \Lambda$. Note that $\Gamma_w$ is rigid by \cref{lemma:graph-product-of-rigid-graphs}.
        Now by \cref{remark:graph-product-of-graphs-consistency} we have
        $M_{\Gamma} = *_{w,\Lambda}(M_{w},\tau_w)$ where $M_w:= M_{\Gamma_w}\in \Crigid$.
        
        For the other direction, suppose $M_{\Gamma} = *_{w,\Lambda}(M_w,\tau_w)$ for some $M_w\in \Crigid$ for $w\in \Lambda$. Then, by definition of $\Crigid$,  there are non-empty, rigid graphs $\Gamma_w$ and factors $N_v\in \Cvert$ for $v\in \Gamma_w$ such that $M_w = *_{v,\Gamma_w}(N_v,\tau_v)$ for $w\in \Lambda$.
        Hence, by \cref{remark:graph-product-of-graphs-consistency} we obtain $M_{\Gamma} = *_{v,\Gamma_{\Lambda}}(N_v,\tau_v)$. Since $\Gamma_{\Lambda}$ is rigid by \cref{lemma:graph-product-of-rigid-graphs}, we obtain by \cref{thm:rigid-graph-decomposition} that $\Gamma \simeq \Gamma_{\Lambda} = *_{w,\Lambda}\Gamma_{w}$.
    \end{proof}
\end{corollary}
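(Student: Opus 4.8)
The plan is to prove the corollary as a fairly direct application of \cref{thm:rigid-graph-decomposition}, \cref{remark:graph-product-of-graphs-consistency}, and \cref{lemma:graph-product-of-rigid-graphs}, essentially just tracking how graph products of graphs correspond to iterated graph products of von Neumann algebras.

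For the implication (1) $\Rightarrow$ (2): assume $\Gamma = \ast_{w,\Lambda} \Gamma_w = \Gamma_\Lambda$ (in the notation of \cref{def:graph-product-graph}) for non-empty graphs $\Gamma_w$. Each $\Gamma_w$ sits inside $\Gamma$ as a subgraph, and since $\Gamma$ is rigid, \cref{lemma:graph-product-of-rigid-graphs} (applied with the roles of the outer graph $\Lambda$ and the vertex graphs $\Gamma_w$) forces each $\Gamma_w$ to be rigid. Set $M_w := M_{\Gamma_w} = \ast_{v,\Gamma_w}(M_v,\tau_v)$; this lies in $\Crigid$ by definition since $\Gamma_w$ is a non-empty finite rigid graph and the vertex algebras $M_v$ lie in $\Cvert$. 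By \cref{remark:graph-product-of-graphs-consistency} (the von Neumann algebraic consistency of graph products over graph products), $M_\Gamma = \ast_{v,\Gamma_\Lambda}(M_v,\tau_v) = \ast_{w,\Lambda}\bigl(\ast_{v,\Gamma_w}(M_v,\tau_v)\bigr) = \ast_{w,\Lambda}(M_w,\tau_w)$, which is the desired decomposition.

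For the converse (2) $\Rightarrow$ (1): suppose $M_\Gamma = \ast_{w,\Lambda}(M_w,\tau_w)$ with each $M_w \in \Crigid$. By the definition of $\Crigid$, each $M_w$ is itself a graph product $M_w = \ast_{v,\Gamma_w}(N_v,\tau_v)$ over a non-empty finite rigid graph $\Gamma_w$ with $N_v \in \Cvert$. Applying \cref{remark:graph-product-of-graphs-consistency} in the other direction gives $M_\Gamma = \ast_{v,\Gamma_\Lambda}(N_v,\tau_v)$, a graph product over the graph product graph $\Gamma_\Lambda := \ast_{w,\Lambda}\Gamma_w$. By \cref{lemma:graph-product-of-rigid-graphs}, $\Gamma_\Lambda$ is rigid (both $\Lambda$ and all $\Gamma_w$ are rigid, and $\Lambda$ itself is rigid so condition \eqref{item:rigid-graph-products:rigidity-at-vertex} of that lemma holds at every vertex). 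Now both $M_\Gamma = \ast_{v,\Gamma}(M_v,\tau_v)$ (with $M_v \in \Cvert$) and $M_\Gamma = \ast_{v,\Gamma_\Lambda}(N_v,\tau_v)$ (with $N_v \in \Cvert$) are graph product decompositions over finite rigid graphs, so \cref{thm:rigid-graph-decomposition} yields a graph isomorphism $\Gamma \simeq \Gamma_\Lambda = \ast_{w,\Lambda}\Gamma_w$, which is precisely statement (1).

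I do not anticipate a serious obstacle here; the only point requiring mild care is checking the hypotheses of \cref{lemma:graph-product-of-rigid-graphs} are met in each direction — in particular that rigidity of $\Gamma = \Gamma_\Lambda$ propagates down to each vertex graph $\Gamma_w$ (needed so that $M_w \in \Crigid$ in direction (1) $\Rightarrow$ (2)), and that rigidity of $\Lambda$ together with rigidity of each $\Gamma_w$ propagates up to $\Gamma_\Lambda$ (needed to invoke \cref{thm:rigid-graph-decomposition} in direction (2) $\Rightarrow$ (1)). Both of these are exactly the content of \cref{lemma:graph-product-of-rigid-graphs}, so the argument is essentially bookkeeping once that lemma and the consistency \cref{remark:graph-product-of-graphs-consistency} are in hand.
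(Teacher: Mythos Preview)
Your proposal is correct and follows essentially the same argument as the paper: both directions use \cref{remark:graph-product-of-graphs-consistency} to pass between iterated graph products, \cref{lemma:graph-product-of-rigid-graphs} to propagate rigidity (down to the $\Gamma_w$ in (1)$\Rightarrow$(2), up to $\Gamma_\Lambda$ in (2)$\Rightarrow$(1)), and then \cref{thm:rigid-graph-decomposition} to recover the graph isomorphism $\Gamma \simeq \Gamma_\Lambda$ in the nontrivial direction. Your additional remark that rigidity of $\Lambda$ is what supplies condition \eqref{item:rigid-graph-products:rigidity-at-vertex} in \cref{lemma:graph-product-of-rigid-graphs} is a nice clarification the paper leaves implicit.
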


The following corollary states a unique prime factorization for the class $\Ccomplete$. This result recovers the result of \cite{houdayerUniquePrimeFactorization2017} for a slightly smaller class. 
\begin{corollary}\label{corollary:rigidy-for-complete-graphs}
    Any von Neumann algebra $M\in \Ccomplete$ can decompose as tensor product 
    \begin{align}\label{eq:UPF-class-Ccomplete}
        M = M_1\overline{\otimes} \cdots \overline{\otimes} M_m
    \end{align}
    for some $1\leq m < \infty$ and prime factors $M_1,\ldots, M_m\in \Cvert$. 
    
    Furthermore, suppose $M\simeq N$ for 
    \begin{align}
        N = N_1\overline{\otimes} \cdots \overline{\otimes} N_n,
    \end{align}
    where $1\leq n < \infty$, and $N_1,\ldots, N_n\in \Cvert$ are other prime factors.
     Then $n=m$ and there is a permutation $\alpha$ of $\{1,\ldots, m\}$ such that $M_i$ is isomorphic to an amplification of $N_{\alpha(i)}$.
    \begin{proof}
    Since $M\in \Ccomplete$, there is a non-empty complete graph $\Gamma$ and factors $M_v\in \Cvert$ for $v\in \Gamma$ such that $M = *_{v,\Gamma}(M_{v},\tau_v)$. Hence $M = \overline{\bigotimes}_{v\in \Gamma}M_v$ since $\Gamma$ is complete. Moreover, for each $v\in \Gamma$ the factor $M_v$ is prime (see \cref{remark:classes-graph-products} \eqref{Item=class-Cvert}). This shows \eqref{eq:UPF-class-Ccomplete} with $m=|\Gamma|$.  Let $\Lambda$ be a complete graph with $n$ vertices. Then $N = \overline{\bigotimes}_{1\leq i\leq n}N_i = *_{v,\Lambda}(N_i,\tau_i)$. Since $\Gamma$ and $\Lambda$ are rigid we obtain by \cref{thm:rigid-graph-decomposition} a graph isomorphism $\alpha:\Gamma\to \Lambda$ such that $M_i$ is isomorphic to an amplification of $N_{\alpha(i)}$. In particular, $n=|\Lambda| = |\Gamma|=m$.
    \end{proof}
\end{corollary}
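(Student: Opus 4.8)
\textbf{Proof plan for Corollary~\ref{corollary:rigidy-for-complete-graphs}.} The plan is to reduce everything to \cref{thm:rigid-graph-decomposition} by realizing the tensor products in question as graph products over complete graphs, which are rigid by \cref{example:rigid-graphs}. First I would unpack the hypothesis $M \in \Ccomplete$: by \cref{def:class-vertex-class-rigid} this means $M = \ast_{v,\Gamma}(M_v,\tau_v)$ for some non-empty finite \emph{complete} graph $\Gamma$ and factors $M_v \in \Cvert$. Since $\Gamma$ is complete, the graph product coincides with the tensor product, so $M = \overline{\bigotimes}_{v \in \Gamma} M_v$; and each $M_v$ is prime by \cref{remark:classes-graph-products}\eqref{Item=class-Cvert} (every von Neumann algebra in $\Cvert$ is prime). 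Setting $m = |\Gamma|$ and enumerating the vertices, this already gives the asserted decomposition \eqref{eq:UPF-class-Ccomplete} into prime factors from $\Cvert$.

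For the uniqueness part I would start from another prime factorization $N = N_1 \overline{\otimes} \cdots \overline{\otimes} N_n$ with $N_i \in \Cvert$ and an isomorphism $M \simeq N$. The key observation is that each $N_i$, being a prime factor in $\Cvert$, is itself a graph product over the one-vertex graph, so letting $\Lambda$ be the complete graph on $n$ vertices we can write $N = \ast_{v,\Lambda}(N_v,\tau_v)$ (again graph product $=$ tensor product because $\Lambda$ is complete). Thus both $M$ and $N$ are graph products over finite \emph{rigid} graphs with vertex algebras in $\Cvert$, and $M \simeq N$. Now I would simply invoke \cref{thm:rigid-graph-decomposition}: it produces a graph isomorphism $\alpha \colon \Gamma \to \Lambda$ — forcing $m = |\Gamma| = |\Lambda| = n$ — together with, for each vertex $v$, an isomorphism $M_v \simeq N_{\alpha(v)}^{t_v}$ for some $0 < t_v < \infty$, i.e.\ $M_v$ is isomorphic to an amplification of $N_{\alpha(v)}$.

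There is essentially no obstacle here: the corollary is a direct specialization of \cref{thm:rigid-graph-decomposition} to complete graphs, and the only thing to check carefully is the bookkeeping, namely that (i) complete graphs are rigid (already recorded in \cref{example:rigid-graphs}), (ii) graph products over complete graphs are tensor products (stated in the introduction and in \cref{prelim:graph-products}), and (iii) a prime factor in $\Cvert$ is legitimately viewable as a graph product over a singleton so that the hypotheses of \cref{thm:rigid-graph-decomposition} literally apply. Once $\alpha$ is identified as a bijection $\{1,\dots,m\}\to\{1,\dots,n\}$ the equality $m=n$ and the amplification statement are immediate, completing the proof.
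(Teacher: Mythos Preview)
Your proposal is correct and follows essentially the same route as the paper: both realize the tensor products as graph products over complete (hence rigid) graphs and then invoke \cref{thm:rigid-graph-decomposition} directly. The bookkeeping checks you list are exactly the ones the paper uses, so there is nothing to add.
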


\section{Classification of strong solidity for graph products}\label{Sect=StrongSolidity}
We state the definition of strong solidity. We recall the assumption that inclusions of von Neumann algebras are understood as unital inclusions. 

\begin{definition}\label{Dfn=StronglySolid}
    A von Neumann algebra $M$ is called \textit{strongly solid} if for any diffuse, amenable, von Neumann subalgebra $A\subseteq M$, $\Nor_{M}(A)''$ is also amenable.
\end{definition}

\begin{remark}
Note that a tracial von Neumann algebra that is not diffuse must be strongly solid as it contains no diffuse unital subalgebras at all. 
\end{remark}

In \cref{sub:classification_strong_solidity} we characterize strong solidity for graph products $M_{\Gamma}$ of tracial von Neumann algebras $(M_v,\tau_v)$. In \cref{Sect=ClassStrongSolExamples} we then show that for many concrete cases this makes it possible to verify whether the graph product is strongly solid.

\subsection{Strong solidity main result}\label{sub:classification_strong_solidity}
We proof the main result \cref{Thm=MainImplication}.
The overall proof method is similar to \cite[Theorem 4.4] {borstClassificationRightangledCoxeter2023}, where strong solidity was classified for the group von Neumann algebras $\calL(\calW_{\Gamma})$ of right-angled Coxeter groups.
To characterize strong solidity we use the following result concerning amalgamated free products.
\begin{theorem}[Theorem A of \cite{vaesNormalizersAmalgamatedFree2014}]
	\label{thm:prelim:alternatives}
	Let $(N_1,\tau_1)$,$(N_2,\tau_2)$ be tracial von Neumann algebras with a common von Neumann subalgebra $B\subseteq N_i$ satisfying $\tau_1|_{B} = \tau_2|_{B}$, and denote $N:= N_1 \ast_{B} N_2$ for their amalgamated free product. Let $A\subseteq 1_AN1_A$ be a von Neumann algebra  that is amenable relative  to  $N_1$ or $N_2$ inside $N$. Put $P = \Nor_{1_{A}N1_{A}}(A)''$. Then  at least one of the following is true:
	\begin{enumerate}[(i)]
		\item $A\prec_{N} B$;
		\item $P \prec_{N} N_i$ for some $i=1,2$;
		\item $P$ is amenable relative to $B$ inside $N$. 
	\end{enumerate}
\end{theorem}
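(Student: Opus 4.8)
The plan is to prove the trichotomy by a deformation/rigidity analysis of the amalgamated free product, following the strategy underlying the Popa--Vaes and Ioana--Peterson--Popa results \cite{popaUniqueCartanDecomposition2014, ioanaAmalgamatedFreeProducts2008}. By symmetry I would assume throughout that $A$ is amenable relative to $N_1$ inside $N$ (the case of $N_2$ being identical), and I would aim to show that if neither (i) nor (ii) holds then (iii) must hold. The first step is to pass to the bimodule formulation of relative amenability (as in \cref{Dfn=RelativeAmenableFirst} and \cite{popaUniqueCartanDecomposition2014}): $A$ being amenable relative to $N_1$ is equivalent to the existence of an $A$-central positive functional $\Phi$ on $1_A \langle N, e_{N_1}\rangle 1_A$ restricting to $\tau$ on $1_A N 1_A$, and equivalently to a weak-containment statement for the $A$-$A$ bimodule ${}_A L^2(N)_A$ into the $N_1$-coarse bimodule ${}_A\big(L^2(N)\otimes_{N_1} L^2(N)\big)_A$.

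Second, I would exploit the combinatorial structure of the amalgamated free product to replace $N_1$ by $B$ in the ``moving part''. Using the reduced-word decomposition $L^2(N)\ominus L^2(B)=\bigoplus \mathcal H_{i_1}\otimes_B\cdots\otimes_B\mathcal H_{i_k}$, where $\mathcal H_i=L^2(N_i)\ominus L^2(B)$ and consecutive indices alternate, one decomposes the Jones module $L^2\langle N,e_{N_1}\rangle\ominus L^2(N)$ into pieces indexed by these words. The crucial computation is that every such piece, viewed as an $N$-$N$ bimodule, is weakly contained in the $B$-coarse bimodule $L^2(N)\otimes_B L^2(N)$, because peeling off the extreme $N_1$-syllables against $e_{N_1}$ leaves a tensor over $B$. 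Feeding this back into $\Phi$ produces an $A$-central functional witnessing that, away from its ``$B$-local'' part, $A$ behaves as if it were amenable relative to $B$.

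Third---and this is the heart of the argument---I would propagate the central functional from $A$ to its normalizer $P$. Here the hypothesis that (i) fails, i.e.\ $A\not\prec_N B$, is used to establish a weak-mixing property of $A$ relative to $B$: no nonzero finite-dimensional $A$-$A$ subbimodule of $L^2(N)\ominus L^2(B)$ survives, so that any $A$-central state on the relevant Jones algebra is automatically invariant under $\Ad_u$ for every $u\in\Nor_{1_AN1_A}(A)$, hence is $P$-central. Combined with the $B$-coarse weak containment from the previous step, this $P$-central functional either exhibits a finite-index $N_i$-subbimodule---forcing $P\prec_N N_i$ and giving alternative (ii)---or it extends to a $P$-central functional on $1_P\langle N,e_B\rangle 1_P$ restricting to the trace, which is exactly relative amenability of $P$ to $B$, i.e.\ alternative (iii).

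I expect the main obstacle to be precisely this third step: upgrading $A$-centrality to $P$-centrality requires a clean weak-mixing statement relative to $B$ extracted from $A\not\prec_N B$, together with the delicate dichotomy (finite-dimensional subbimodule versus genuine relative amenability) that separates conclusions (ii) and (iii). A secondary technical difficulty is the bimodule bookkeeping of the second step, where one must track the $B$-valued Connes-tensor structure along the tree of reduced words and verify the weak containment in $L^2(N)\otimes_B L^2(N)$ uniformly over all word-lengths; keeping the normalization with the partial unit $1_A$ and the non-unital corners correct throughout is where most of the care is needed.
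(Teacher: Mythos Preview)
The paper does not prove this statement at all: it is quoted verbatim as Theorem A of \cite{vaesNormalizersAmalgamatedFree2014} and used as a black-box input in the proofs of \cref{Thm=MainImplication}, \cref{lemma:irreducible-graph-prime-or-amenable}, \cref{theorem:free-indecompose}, and \cref{prop:Cartan-subalgebra}. There is therefore nothing to compare your proposal against in this paper.

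That said, your outline is a reasonable high-level summary of the actual argument in \cite{vaesNormalizersAmalgamatedFree2014}, which combines the Ioana--Popa--Vaes bimodule analysis of amalgamated free products with the Popa--Vaes relative-amenability machinery from \cite{popaUniqueCartanDecomposition2014}. One point where your sketch is imprecise: the passage from $A$-centrality to $P$-centrality does not come from a weak-mixing argument extracted from $A\not\prec_N B$ as you describe. In Vaes's proof the key is rather the malleable deformation of the amalgamated free product (the free malleable deformation $\alpha_t$ on $N\ast_B(B\,\overline{\otimes}\,L(\mathbb{F}_\infty))$) together with Popa's spectral-gap and intertwining arguments; the trichotomy arises from analyzing whether $\alpha_t\to\mathrm{id}$ uniformly on the unit ball of $A$, and the normalizer enters because uniform convergence on $A$ propagates to $\Nor(A)''$ via the transversality/spectral-gap estimates. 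Your ``finite-dimensional subbimodule versus genuine relative amenability'' dichotomy in step three is not quite how alternatives (ii) and (iii) get separated in the original proof.
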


Furthermore, we use the following results that are rather standard.

\begin{proposition}[Proposition 4.2. in \cite{borstClassificationRightangledCoxeter2023} or Proof of Corollary C in \cite{vaesNormalizersAmalgamatedFree2014}]\label{Prop=EmbedSolid}
	Let $N \subseteq M$ be a von Neumann subalgebra and assume $N$ is strongly solid.
	Let $A \subseteq M$ be a diffuse amenable von Neumann subalgebra and $P = \Nor_M(A)''$ and $z \in P \cap P'$ be a non-zero projection. Assume that $zP \prec_M N$. Then $zP$ has an amenable direct summand.
\end{proposition}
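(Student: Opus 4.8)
The statement to prove is \cref{Prop=EmbedSolid}: given a strongly solid von Neumann subalgebra $N\subseteq M$, a diffuse amenable $A\subseteq M$ with normalizer $P=\Nor_M(A)''$, and a non-zero projection $z\in P\cap P'$ with $zP\prec_M N$, one concludes $zP$ has an amenable direct summand.

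\textbf{Plan of proof.} The idea is to upgrade the embedding $zP\prec_M N$ to a unitary conjugacy of a corner of $zP$ into $N$, and then use the structural fact that normalizers are carried along well enough to place a corner of $P$ inside $N$; strong solidity of $N$ then produces an amenable corner, which pulls back to an amenable direct summand of $zP$. First I would invoke the definition of intertwining: since $zP\prec_M N$, there exist projections $p\in zP$, $q\in N$, a normal $\ast$-homomorphism $\theta:p(zP)p\to qNq$ and a non-zero partial isometry $v\in qMp$ with $\theta(x)v=vx$ for all $x\in p(zP)p$; we may as usual arrange $vv^\ast\in (qNq)'\cap qMq$ and $v^\ast v\in (p(zP)p)'\cap pMp$. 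The key point is that $A\subseteq P$ is diffuse and amenable, and $p\in zP$ commutes with $A$ only up to the center issue — more carefully, since $z$ is central in $P$, $pA p$ is a diffuse amenable subalgebra of $p(zP)p$, hence $\theta(pAp)\subseteq qNq$ is diffuse amenable, and $v(pAp)v^\ast=\theta(pAp)vv^\ast$ normalizes (via the image of $\theta$) inside $q N q$.

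\textbf{Key steps, in order.} (1) Extract $\theta,p,q,v$ as above from $zP\prec_M N$. (2) Observe that $\theta(pAp)$ is diffuse and amenable in $N$ (amplified to a corner $qNq$, but that is harmless since corners of strongly solid algebras are strongly solid, or one works directly with $qNq$). (3) Show $\theta(p(zP)p)$ normalizes $\theta(pAp)$ inside $qNq$: indeed for $x\in p(zP)p$ and $a\in pAp$ we have $x a x^\ast\in pAp$ because $P$ normalizes $A$ and $z,p$ are appropriately central/in $zP$; applying $\theta$ gives $\theta(x)\theta(a)\theta(x)^\ast=\theta(xax^\ast)\in\theta(pAp)$. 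Hence $\theta(p(zP)p)\subseteq\Nor_{qNq}(\theta(pAp))''$. (4) Since $N$ is strongly solid, so is $qNq$, and therefore $\Nor_{qNq}(\theta(pAp))''$ is amenable; in particular $\theta(p(zP)p)$ is amenable. (5) Now transport back: $v^\ast\theta(p(zP)p)v$ acts on $\mathcal H\ominus\ker$, and $v^\ast v\in(p(zP)p)'\cap pMp$ is a projection; we get $\theta(x)v=vx$, so $p(zP)p\cdot v^\ast v$ is a corner of $p(zP)p$ that is spatially isomorphic to a corner of $\theta(p(zP)p)$, hence amenable. (6) Finally, $p(zP)p(v^\ast v)$ being amenable means: letting $p_1=v^\ast v\leq p\leq z$, the algebra $p_1 (zP) p_1$ is amenable; since $p_1\in (zP)'$ one can cut down by the central support of $p_1$ in $zP$ — because $A$ is diffuse and $zP\supseteq zA$ has no type I direct summand obstruction, one uses that the set of projections $r\in\mathcal Z(zP)$ with $r(zP)r$ amenable is a directed set whose supremum $z_0$ is a central projection with $z_0(zP)z_0$ amenable and $z_0\neq 0$. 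This $z_0 zP=z_0 P$ is the desired amenable direct summand of $zP$.

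\textbf{Main obstacle.} The delicate step is (6): concluding from "a corner $p_1(zP)p_1$ with $p_1\in(zP)'\cap zMz$" is amenable that $zP$ has an \emph{amenable direct summand}, i.e.\ that one can find a \emph{central} projection of $zP$ below which $zP$ is amenable. This requires that amenability of a corner $p_1 X p_1$ (with $p_1\in X'$) propagates to the central support $z(p_1)X z(p_1)$; this is standard — amenability passes to the von Neumann algebra generated by $\{u p_1 u^\ast : u\in\mathcal U(X)\}$-corners and $z(p_1)X$ is a direct integral/amplification of $p_1 X p_1$ — but it must be stated carefully, and one should also make sure the partial isometry $v$ can be chosen so that $v^\ast v$ has central support as large as desired in $p(zP)p$ (or iterate over all such $v$ and take suprema), exactly in the spirit of \cref{Lem=TransitiveEmbedding} and the maximality arguments used elsewhere in the paper. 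I would handle this by first reducing to the case $v^\ast v=p$ after replacing $p$ by a smaller projection majorized by $v^\ast v$ is not available here, so instead I would take the supremum of central supports of all such $v^\ast v$ and argue this supremum is $z$ or else restrict attention to the summand it defines. In short, the routine part is the intertwining-and-strong-solidity chain (steps 1--5); the part needing care is the central-support/direct-summand bookkeeping in step (6).
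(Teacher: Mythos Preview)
The paper does not prove this proposition itself; it is quoted from \cite{borstClassificationRightangledCoxeter2023} (which in turn extracts the argument from the proof of Corollary~C in \cite{vaesNormalizersAmalgamatedFree2014}). Your overall strategy---push a corner of $zP$ into $N$ via the intertwining data, invoke strong solidity of $N$ on the image, and pull amenability back to a central direct summand of $zP$---is exactly the strategy of those references.

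There is, however, a genuine gap in your step~(3), and it is not the one you flag as the main obstacle. You assert that for $x\in p(zP)p$ and $a\in pAp$ one has $xax^\ast\in pAp$ ``because $P$ normalizes $A$''. This fails on two counts. First, $P=\Nor_M(A)''$ is merely \emph{generated} by normalizing unitaries of $A$; a generic element of $P$, and in particular a generic element of a corner $p(zP)p$, need not satisfy $xAx^\ast\subseteq A$. Second, $pAp$ is not even an algebra unless $p\in A\cup A'$, and the intertwining projection $p\in zP$ has no reason to lie in $A'$ (note $z\in\mathcal Z(P)\subseteq A'$, so $zA$ \emph{is} an algebra, but nothing forces $p\in A'$). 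The fix in the cited references is to first reduce to $\theta$ injective with $q=\theta(p)$, and then exhibit a diffuse amenable $D\subseteq p(zP)p$ with $\Nor_{p(zP)p}(D)''=p(zP)p$; this is obtained from $\Nor_{zP}(zA)''=zP$ by a compression/partial-isometry argument moving pieces of $p$ into $zA$. Once that is in place, $\theta(D)\subseteq qNq$ is diffuse amenable and $\theta(p(zP)p)\subseteq\Nor_{qNq}(\theta(D))''$, so strong solidity of the corner $qNq$ applies. By contrast, your step~(6) is routine in the tracial setting: an amenable corner $eYe$ with $e\in Y$ forces the central-support cut-down $z_Y(e)\,Y$ to be amenable, since the latter is assembled from unitarily conjugate copies of $eYe$; this yields the required nonzero central projection below $z$.
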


 Recall that a von Neumann algebra $M$ is atomic if any projection in $M$ majorizes a minimal projection.  If $M$ is atomic it is a direct sum of type I factors. We state the following proposition.

\begin{proposition}\label{prop:strong-solid-tensor-product}
    Let $N = N_1\overline{\otimes} N_2$ be a tensor product of finite von Neumann algebras $N_1,N_2$. The following statements hold:
    \begin{enumerate}
        \item \label{Item=StronglySolidImpliesAtomic}Suppose $N_1$ is non-amenable and diffuse and $N$ is strongly solid. Then $N_2$ is atomic;
        \item \label{Item=StronglySolidNonDiffCase} Suppose $N_1$ is non-amenable and $N_2$ is diffuse. Then $N$ is not strongly solid;
        \item \label{Item=StronglySolidAtomicTensor} Suppose $N_1$ is strongly solid and diffuse and $N_2$ is atomic. Then $N$ is strongly solid.
    \end{enumerate}
    \begin{proof}
        1)  Write $N_2 = N_c \oplus N_d$ with $N_c$ either 0 or a diffuse von Neumann algebra and $N_d$ an atomic von Neumann algebra. Assume $N_c \not = 0$. Let $A \subseteq N_c, B \subseteq N_1$ be diffuse amenable von Neumann subalgebras. Then $C := \mathbb{C}1_{N_1} \overline{\otimes} A \oplus B \overline{\otimes} \mathbb{C}1_{N_d} \subseteq N$ is diffuse and amenable. Furthermore, $\Nor_N(C)''$ contains $N_1 \overline{\otimes} A \oplus   B \overline{\otimes}  \mathbb{C}1_{N_d}$ which is non-amenable. This contradicts that $N$ is strongly solid and we conclude that $N_c = 0$.\\
        2)  Take any diffuse amenable subalgebra $A \subseteq N_2$,  for instance we may take $A$ to be a maximal abelian subalgebra. Then $\mathbb{C}1_{N_1} \overline{\otimes} A$ is a diffuse amenable subalgebra of $N$ and $\Nor_N(\mathbb{C}1_{N_1} \overline{\otimes} A)''$ contains $N_1 \overline{\otimes} A$ which is non-amenable. Hence $N$ is not strongly solid. \\
        3) As $N_2$ is atomic we may identify  $N_2$  with  $\bigoplus_{\alpha \in I} \Mat_{n_\alpha}(\mathbb{C})$ where $I$ is some index set and $n_\alpha \in \mathbb{N}_{\geq 1}$. Let $1_{\alpha}$ be the unit of  $\Mat_{n_\alpha}(\mathbb{C})$.  Let $A \subseteq N_1 \overline{\otimes} N_2$ be a diffuse amenable von Neumann subalgebra. Then $1_{\alpha} A \subseteq N_1 \otimes \Mat_{n_\alpha}(\mathbb{C})$. So $\Nor_{N_1 \otimes \Mat_{n_\alpha}(\mathbb{C})}( 1_{\alpha}  A   )''$ is amenable by \cite[Proposition 5.2]{houdayerStronglySolidGroup2010} since $N_1$ is strong solid and diffuse. Since $\Nor_{N}(A)''  = \bigoplus_{\alpha \in I} \Nor_{N_1 \otimes \Mat_{n_\alpha}(\mathbb{C})}( 1_{\alpha} A   )'' $ and direct sums of amenable von Neumann algebras are amenable we conclude that $\Nor_{N}(A)'' $ is amenable. It follows that $N$ is strongly solid.
    \end{proof}
\end{proposition}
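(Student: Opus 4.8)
The plan is to treat the three items separately. Items \eqref{Item=StronglySolidImpliesAtomic} and \eqref{Item=StronglySolidNonDiffCase} will be settled by exhibiting an explicit diffuse amenable subalgebra whose normaliser generates a non-amenable von Neumann algebra, while \eqref{Item=StronglySolidAtomicTensor} will be reduced to the stability of strong solidity under finite amplifications. Throughout I would use the elementary remark that if $A$ is abelian then $\CC 1\overline{\otimes} A$ lies in the centre of $M\overline{\otimes} A$ for any $M$, so that every unitary of $M\overline{\otimes} A$ normalises (indeed centralises) it.

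For \eqref{Item=StronglySolidNonDiffCase} I would pick a diffuse abelian (hence amenable) subalgebra $A\subseteq N_2$, for instance a maximal abelian subalgebra, and set $C:=\CC 1_{N_1}\overline{\otimes} A\subseteq N$. Then $C$ is diffuse and amenable, and by the centrality remark $\Nor_N(C)''\supseteq N_1\overline{\otimes} A$, which is non-amenable since $N_1$ is; so $N$ is not strongly solid. For \eqref{Item=StronglySolidImpliesAtomic} I would argue contrapositively with the same idea spread over two summands: if $N_2$ is not atomic, write $N_2=N_c\oplus N_d$ with $N_c\neq 0$ diffuse and $N_d$ atomic, so $N=(N_1\overline{\otimes} N_c)\oplus(N_1\overline{\otimes} N_d)$; choose diffuse abelian subalgebras $A\subseteq N_c$ and $B\subseteq N_1$ and put $C:=(\CC 1_{N_1}\overline{\otimes} A)\oplus(B\overline{\otimes}\CC 1_{N_d})$. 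Then $C$ is diffuse and amenable, while $\Nor_N(C)''$ contains $(N_1\overline{\otimes} A)\oplus(B\overline{\otimes}\CC 1_{N_d})$, which is non-amenable because already $N_1\overline{\otimes} A$ is; this contradicts strong solidity of $N$, forcing $N_2$ to be atomic.

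For \eqref{Item=StronglySolidAtomicTensor} I would write the atomic finite algebra $N_2$ as an $\ell^\infty$-direct sum $\bigoplus_{\alpha\in I}\Mat_{n_\alpha}(\CC)$ with units $1_\alpha$, which are then central in $N$. Given a diffuse amenable subalgebra $A\subseteq N$, I would compress by the $1_\alpha$: each $1_\alpha A$ is a corner of $A$ by a central projection, hence still diffuse and amenable, and it sits inside $N_1\overline{\otimes}\Mat_{n_\alpha}(\CC)$, a finite amplification of the diffuse strongly solid algebra $N_1$. Invoking \cite[Proposition 5.2]{houdayerStronglySolidGroup2010} (strong solidity passes to finite amplifications), $\Nor_{N_1\overline{\otimes}\Mat_{n_\alpha}(\CC)}(1_\alpha A)''$ is amenable for every $\alpha$. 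Since the $1_\alpha$ are central in $N$, a unitary of $N$ normalises $A$ if and only if its compression to each summand normalises $1_\alpha A$, so $\Nor_N(A)''=\bigoplus_{\alpha\in I}\Nor_{N_1\overline{\otimes}\Mat_{n_\alpha}(\CC)}(1_\alpha A)''$; an $\ell^\infty$-direct sum of amenable von Neumann algebras is amenable, hence $\Nor_N(A)''$ is amenable and $N$ is strongly solid.

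I expect the centrality remarks in \eqref{Item=StronglySolidImpliesAtomic} and \eqref{Item=StronglySolidNonDiffCase} and the splitting of the normaliser over central summands in \eqref{Item=StronglySolidAtomicTensor} to be routine. The delicate point, and the one I would verify carefully, is in \eqref{Item=StronglySolidAtomicTensor}: I must ensure that compressing the diffuse algebra $A$ by the central projections $1_\alpha$ preserves diffuseness, so that \cite[Proposition 5.2]{houdayerStronglySolidGroup2010} genuinely applies to each $1_\alpha A$. This is true because $a\mapsto 1_\alpha a$ is a normal surjective $\ast$-homomorphism $A\to 1_\alpha A$ whose kernel is $A(1-z)$ for a central projection $z\in A$, so $1_\alpha A\cong Az$ is a central direct summand of $A$, and a minimal projection there would be minimal in $A$. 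The other point to check is that no unitary normalising $A$ can permute the matrix summands; this again follows from the $1_\alpha$ being central in $N$ and therefore already lying in $\Nor_N(A)''$.
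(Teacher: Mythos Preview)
Your proof is correct and follows essentially the same route as the paper's: the same witnesses $C$ for parts \eqref{Item=StronglySolidImpliesAtomic} and \eqref{Item=StronglySolidNonDiffCase}, and the same reduction via central compressions and \cite[Proposition 5.2]{houdayerStronglySolidGroup2010} for part \eqref{Item=StronglySolidAtomicTensor}. You are in fact slightly more careful than the paper in verifying that each $1_\alpha A$ remains diffuse, which the paper uses implicitly; note also that for the final step only the inclusion $\Nor_N(A)''\subseteq\bigoplus_\alpha\Nor_{N_1\otimes\Mat_{n_\alpha}(\CC)}(1_\alpha A)''$ is needed (and is immediate), so the full equality you and the paper assert need not be checked.
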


We classify atomicity for graph products.
\begin{proposition}\label{Prop=CharacterizeAtomicGraphProduct} Let $\Gamma$ be a  finite simple graph. 
Let $(M_\Gamma, \tau_\Gamma) = \ast_{v, \Gamma} (M_v, \tau_v)$ be a graph product of tracial von Neumann algebras over a simple graph $\Gamma$. Then $M_\Gamma$ is atomic if and only if  $\Gamma$ is complete and each $M_v$ is atomic. 
\begin{proof}
Any subalgebra of an atomic von Neumann algebra is atomic again. It follows that each $M_v$ is atomic. If $\Gamma$ would not be complete then we may pick $v, w \in \Gamma$ not sharing an edge and $(M_v, \tau_v) \ast (M_w, \tau_w) \subseteq M_\Gamma$. However,  $(M_v, \tau_v) \ast (M_w, \tau_w)$ is not atomic by \cite{uedaFactorialityTypeClassification2011}. So $\Gamma$ is complete. Conversely if $\Gamma$ is complete and each $M_v$ is atomic then  $M = \overline{\bigotimes}_{v \in \Gamma} M_v$ is atomic. 
\end{proof}
\end{proposition}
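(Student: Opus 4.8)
The plan is to prove both implications by reducing to known facts about free products and tensor products. First I would observe that atomicity passes to von Neumann subalgebras: if $M$ is atomic and $N \subseteq M$, then any projection in $N$ dominates a minimal projection of $M$, which after cutting down (using a conditional expectation or just the comparison theory inside $N$) yields a minimal projection of $N$; more directly, an atomic von Neumann algebra is a direct sum of type I factors, hence has a separating family of minimal projections whose central supports exhaust the identity, and this property restricts to subalgebras when there is a normal conditional expectation — which we have here via $\EE_{M_v}$. Since each vertex algebra $M_v$ embeds in $M_\Gamma$ with a trace-preserving conditional expectation, atomicity of $M_\Gamma$ forces each $M_v$ to be atomic.

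Next I would rule out the non-complete case. If $\Gamma$ is not complete, pick $v,w \in \Gamma$ with no edge between them; then by the amalgamated free product decomposition recorded in \eqref{Eqn=Amalgam} (or more simply by the universal property of graph products), $(M_v,\tau_v) * (M_w,\tau_w)$ sits inside $M_\Gamma$ as a subalgebra with a trace-preserving conditional expectation. Since $M_v \neq \CC \neq M_w$, this free product is a non-trivial tracial free product of at least two-dimensional algebras, which by the factoriality/type classification results of Ueda \cite{uedaFactorialityTypeClassification2011} is never atomic (it is either a II$_1$ factor or has a diffuse part). Hence $M_\Gamma$ contains a non-atomic subalgebra with expectation, contradicting atomicity of $M_\Gamma$. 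Therefore $\Gamma$ must be complete.

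For the converse, if $\Gamma$ is complete then $M_\Gamma = \overline{\bigotimes}_{v\in\Gamma} M_v$, a finite tensor product of atomic finite von Neumann algebras. Writing each $M_v \cong \bigoplus_{\alpha} \Mat_{n_\alpha}(\CC)$ and distributing the (finite) tensor product over the direct sums, $M_\Gamma$ becomes a direct sum of finite tensor products of matrix algebras, hence a direct sum of matrix algebras, hence atomic. This direction is routine.

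The main obstacle is the precise invocation of Ueda's theorem: one must be sure that $(M_v,\tau_v)*(M_w,\tau_w)$ genuinely fails to be atomic whenever both factors are non-trivial tracial von Neumann algebras (the borderline case being $M_v = M_w = \CC^2$, i.e. $\calL(\ZZ/2 * \ZZ/2) = \calL(D_\infty)$, which is indeed diffuse, so not atomic). Once that dichotomy is in hand — atomic $\iff$ one of the two algebras is $\CC$, which is excluded — the argument closes. One should also double-check that a subalgebra with a normal conditional expectation onto it inherits atomicity; this is standard (the expectation maps the atomic part onto the identity since the ambient algebra has a separating family of minimal projections), but it is the one point where a little care is warranted.
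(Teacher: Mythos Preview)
Your proof is correct and follows essentially the same argument as the paper: atomicity passes to subalgebras so each $M_v$ is atomic, a non-complete $\Gamma$ yields a non-atomic free-product subalgebra $(M_v,\tau_v)\ast(M_w,\tau_w)$ by Ueda, and the complete case is a finite tensor product of atomic algebras. The paper asserts the subalgebra-inheritance step as a bare fact rather than via the conditional expectation, and does not single out the $D_\infty$ borderline case you flag, but otherwise the proofs are the same.
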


We now classify strong solidity for graph products in terms of conditions on subgraphs. These conditions can be verified in most cases (see \cref{Prop=CharacterizeAtomicGraphProduct}, \cref{Thm=AmenableGraphProduct}, \cref{Prop=ClassifyDiffuse} and \cref{thm:Hecke-algebras:diffuseness-and-amenability}).
\begin{theorem}\label{Thm=MainImplication}
	Let $\Gamma$ be a finite graph and for each $v\in \Gamma$ let $M_{v}$ ($\not=\CC$) be a von Neumann algebra with normal faithful trace $\tau_{v}$. Then $M_{\Gamma}$ is strongly solid if and only if the following conditions are  satisfied:
	\begin{enumerate}
		\item \label{Item=StronglySolid1}  For every vertex  $v\in \Gamma$ the von Neumann algebra $M_v$ is strongly solid;
    \item  \label{Item=StronglySolid3}  For every subgraph $\Lambda\subseteq \Gamma$ with $M_{\Lambda}$ non-amenable, we have that $M_{\Link(\Lambda)}$  is not diffuse;
		\item \label{Item=StronglySolid2}  For every subgraph $\Lambda\subseteq \Gamma$ with $M_{\Lambda}$ non-amenable and diffuse, we have moreover that $M_{\Link(\Lambda)}$ is atomic. 
	\end{enumerate} 
\begin{proof}
    Suppose $M_{\Gamma}$ is strongly solid, we show that conditions \eqref{Item=StronglySolid1}, \eqref{Item=StronglySolid3}  and \eqref{Item=StronglySolid2}  are satisfied. Since strong solidity passes to subalgebras, as follows from its very definition,  we obtain that \eqref{Item=StronglySolid1} is satisfied. Furthermore, suppose $\Gamma_0\subseteq \Gamma$ is a subgraph for which $M_{\Gamma_0}$ is non-amenable.  We have $M_{\Gamma_0 \cup \Link(\Gamma_0)} = M_{\Gamma_0} \overline{\otimes} M_{\Link(\Gamma_0)}$ which is strongly solid being a von Neumann subalgebra of $M_\Gamma$. Hence, \cref{prop:strong-solid-tensor-product}\eqref{Item=StronglySolidNonDiffCase} shows that $M_{\Link(\Gamma_0)}$ cannot be diffuse.  This concludes \eqref{Item=StronglySolid3}. If  $M_{\Gamma_0}$  is diffuse then \cref{prop:strong-solid-tensor-product}\eqref{Item=StronglySolidImpliesAtomic} shows that $M_{\Link(\Gamma_0)}$ is atomic. This concludes \eqref{Item=StronglySolid2}.\\

	We now show the reverse direction. The proof is based on induction to the number of vertices of the graph. The statement clearly holds when $\Gamma=\varnothing$ since in that case $M_{\Gamma} =\CC$ is strongly solid.

	\vspace{0.3cm}
	
	\noindent {\it Induction.} 
	Let $\Gamma$ be a non-empty graph, and assume by induction that \cref{Thm=MainImplication} is proved for any strictly smaller subgraph of $\Gamma$, i.e. with less vertices.
	Assume conditions \eqref{Item=StronglySolid1}, \eqref{Item=StronglySolid3} and \eqref{Item=StronglySolid2} are satisfied for $\Gamma$. Observe that condition \eqref{Item=StronglySolid1}, \eqref{Item=StronglySolid3}  and \eqref{Item=StronglySolid2} are then satisfied for all subgraphs of $\Gamma$ as well. Hence by the induction hypothesis we obtain that $M_{\Gamma_0}$ is strongly solid for all strict subgraphs $\Gamma_0\subsetneq \Gamma$. We shall show that $M_\Gamma$ is strongly solid. Let $A \subseteq M$ be diffuse and amenable and denote $P = \Nor_M(A)''$. We will show that $P$ is amenable.

	Suppose there is $v\in \Gamma$ with $\Star(v) = \Gamma$. Then we can decompose the graph product as $M_{\Gamma} = M_{v}\overline{\otimes} M_{\Gamma\setminus \{v\}}$. Now $M_{v}$ is strongly solid by condition \eqref{Item=StronglySolid1}, and $M_{\Gamma\setminus \{v\}}$ is strongly solid by the induction hypothesis as $\Gamma\setminus \{v\}\subsetneq \Gamma$ is a strict subgraph. 	When both $M_{v}$ and $M_{\Gamma\setminus \{v\}}$ are amenable then  $M_{\Gamma} = M_{v}\overline{\otimes} M_{\Gamma\setminus \{v\}}$ is also amenable, and hence $M_{\Gamma}$ is strongly solid. We can thus assume that $M_{v}$ or $M_{\Gamma\setminus \{v\}}$ is non-amenable.
 If $M_{v}$ is non-amenable we need to separate two cases. 
 \begin{itemize}
     \item If $M_v$ is non-amenable and not diffuse then  by condition  \eqref{Item=StronglySolid3}  neither $M_{\Gamma\setminus \{v\}}$ is diffuse and hence neither is $M_\Gamma = M_v \overline{\otimes} M_{\Gamma\setminus \{v\}}$. Then certainly $M_\Gamma$ is strongly solid by the absence of (unital) diffuse subalgebras. 
     \item If $M_v$ is non-amenable and  diffuse   
  then  by condition  \eqref{Item=StronglySolid2} we obtain that $M_{\Link(v)}$ ($=M_{\Gamma\setminus \{v\}}$) is atomic, so that  by \cref{prop:strong-solid-tensor-product}\eqref{Item=StronglySolidAtomicTensor} we have $M_\Gamma = M_{\Link(v) } \overline{\otimes} M_v$ is strongly solid. 
  \end{itemize}
	The case when $M_{\Gamma\setminus \{v\}}$ is non-amenable can be treated in the same way by swapping the roles of $M_v$ and  $M_{\Gamma\setminus \{v\}}$  in the previous argument. We summarize that our proof is complete in case there is $v \in \Gamma$ with $\Star(v) = \Gamma$.

Now we  assume that for all $v\in \Gamma$ we have $\Star(v)\not=\Gamma$.
	Pick $v \in \Gamma$ and set $\Gamma_1:=\Star(v)$ and  $\Gamma_2 := \Gamma \setminus \{v\}$. By \eqref{Eqn=Amalgam}  we can decompose  $M_\Gamma = M_{\Gamma_1} \ast_{ M_{\Gamma_1 \cap \Gamma_2}} M_{\Gamma_2}$. Moreover, as $\Gamma_1$, $\Gamma_2$ and $\Gamma_{1}\cap \Gamma_2$ are strict subgraphs of $\Gamma$ we obtain by our induction hypothesis that $M_{\Gamma_1}$, $M_{\Gamma_2}$ and $M_{\Gamma_1\cap \Gamma_2}$ are strongly solid.

	Let $z\in P\cap P'$ be a central  projection such that $z P$ has no amenable direct summand. Note that $zP \subseteq \Nor_{z M_\Gamma z}( zA)''$. As $zA$ is amenable, it is amenable relative to $M_{\Gamma_1}$ in $M_{\Gamma}$. Therefore by \cref{thm:prelim:alternatives} at least one of the following three holds.
	\begin{enumerate}
		\item\label{Case1} $z A \prec_{M_\Gamma} M_{\Gamma_1 \cap \Gamma_2}$;
		\item\label{Case2} There is $i \in \{ 1, 2\}$ such that $z P \prec_{M_\Gamma} M_{\Gamma_i}$;
		\item\label{Case3} $z P$ is amenable relative to $M_{\Gamma_1 \cap \Gamma_2}$ inside $M_\Gamma$.
	\end{enumerate}
	We now analyse each of the cases. 
	
	\vspace{0.3cm}

	\noindent {\it Case \eqref{Case2}.} In Case \eqref{Case2} we have that Proposition \ref{Prop=EmbedSolid} together with the induction hypothesis shows that $zP$ has an amenable direct summand in case $z \not = 0$. This is a contradiction so we conclude $z=0$ and hence $P$ is amenable. 
	
	\vspace{0.3cm}
	
	\noindent {\it Case \eqref{Case1}.}  In Case \eqref{Case1}, since $zA\prec_{M_\Gamma} M_{\Gamma_1\cap \Gamma_2}$ but $zA\not\prec_{M_\Gamma} \CC = M_{\varnothing}$, there is a subgraph $\Lambda \subseteq \Gamma_1 \cap \Gamma_2$ such that $zA \prec_{M_\Gamma} M_\Lambda$ but $zA \not \prec_{M_\Gamma} M_{\widetilde{\Lambda}}$ for any strict subgraph $\widetilde{\Lambda}\subseteq \Lambda$. Put $\Lambda_{\emb} := \Lambda \cup \Link(\Lambda)$. Observe that $\Lambda_{\emb}$ contains at least $v$ and $\Lambda$. Furthermore, by \cref{prop:embed}\eqref{Item=Embed2} we obtain that $zP\prec_{M_{\Gamma}} M_{\Lambda_{\emb}}$. If $\Lambda_{\emb}\not=\Gamma$ then $M_{\Lambda_{\emb}}$ is strongly solid by the induction hypothesis. Therefore, in case $z\not=0$ we obtain by \cref{Prop=EmbedSolid} that $zP$ has an amenable direct summand, which is a contradiction. Thus $z=0$, and $P$ is amenable. Hence $M_{\Gamma}$ is strongly solid.
	
	We can thus assume that $\Lambda_{\emb} = \Gamma$.
	Suppose $M_{\Lambda}$ is non-amenable. Again we separate two cases:
 \begin{itemize}
     \item  Assume that  $M_{\Lambda}$ is non-amenable and diffuse. Then by condition \eqref{Item=StronglySolid2} we have that $M_{\Link(\Lambda)}$ is atomic and by \cref{Prop=CharacterizeAtomicGraphProduct} we see that $\Link(\Lambda)$  must be complete.  But as $v\in \Link(\Lambda)$ this implies that $\Link(\Lambda)\subseteq \Star(v) = \Gamma_1$ and thus $\Lambda_{\emb}\subseteq \Gamma_1$. Therefore $\Lambda_{\emb}$ is a strict subgraph of $\Gamma$, a contradiction. So this case does not occur;
     \item   Assume that  $M_{\Lambda}$ is non-amenable and not diffuse. Then by \eqref{Item=StronglySolid3}  $M_{\Link(\Lambda)}$ is not diffuse either. As $M_\Gamma = M_{\Lambda }   \overline{\otimes} M_{\Link(\Lambda)}$ we find that $M_\Gamma$ is not diffuse and thus strongly solid by absence of diffuse (unital) subalgebras.
 \end{itemize}
	Next suppose $M_{\Link(\Lambda)}$ is non-amenable. Again we separate two cases: 
 \begin{itemize}
     \item  Assume that  $M_{\Link(\Lambda)}$ is non-amenable and diffuse. 
Then  $M_{\Link(\Link(\Lambda))} = M_\Lambda$ is atomic by \eqref{Item=StronglySolid2}.   But then $zA \prec_{M_\Gamma} M_{\Lambda}$ with $zA$ diffuse leads to a contradiction;
\item  Assume that $M_{\Link(\Lambda)}$ is non-amenable and not diffuse.  Then by  \eqref{Item=StronglySolid3} also $M_\Lambda$ is not diffuse and so $M_\Gamma = M_{\Lambda }   \overline{\otimes} M_{\Link(\Lambda)}$  is not diffuse and thus strongly solid.
 \end{itemize}
So we are left with the case that  $M_{\Lambda}$ and $M_{\Link(\Lambda)}$ are amenable. In this case, $M_{\Gamma}= M_{\Lambda_{\emb}} =  M_{\Lambda}\overline{\otimes} M_{\Link(\Lambda)}$ is amenable and hence $M_{\Gamma}$ is strongly solid.
	\vspace{0.3cm}
	
	\noindent {\it Remainder of the proof of the main theorem in the situation that  Case \eqref{Case1} and Case \eqref{Case2} never occur.}  
	We first recall that if we can find a single vertex $v$ as above such that we are in case  \eqref{Case1} or \eqref{Case2} then the proof is finished. Otherwise for any vertex $v \in \Gamma$  we are in case  \eqref{Case3}.  So  $zP$ is amenable relative to $M_{\Link(v)}$ inside $M_\Gamma$. As $\bigcap_{v\in \Gamma}\Link(v)\subseteq \bigcap_{v\in \Gamma}\Gamma\setminus\{v\}=\varnothing$ we obtain by iteratively using \cref{Thm=Square} that $zP$ is amenable relative to $\bigcap_{v\in V}M_{\Link(v)} = \CC$, that is $zP$ is amenable. So $z =0$ and we conclude again that $P$ is amenable.  
\end{proof}
\end{theorem}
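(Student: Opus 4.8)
The plan is to prove both implications, the reverse one by induction on $|\Gamma|$.

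\emph{Forward direction.} Suppose $M_\Gamma$ is strongly solid. Condition \eqref{Item=StronglySolid1} is immediate, since strong solidity passes to unital von Neumann subalgebras and each $M_v$ sits unitally inside $M_\Gamma$. For \eqref{Item=StronglySolid3} and \eqref{Item=StronglySolid2}, given a subgraph $\Lambda\subseteq\Gamma$ with $M_\Lambda$ non-amenable, I would use that $\Lambda$ and $\Link(\Lambda)$ are disjoint and mutually completely joined, so $M_{\Lambda\cup\Link(\Lambda)}=M_\Lambda\overline{\otimes}M_{\Link(\Lambda)}$ is a unital subalgebra of $M_\Gamma$, hence strongly solid. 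Then \cref{prop:strong-solid-tensor-product}\eqref{Item=StronglySolidNonDiffCase} forces $M_{\Link(\Lambda)}$ not to be diffuse, which is \eqref{Item=StronglySolid3}, and when $M_\Lambda$ is moreover diffuse, \cref{prop:strong-solid-tensor-product}\eqref{Item=StronglySolidImpliesAtomic} forces $M_{\Link(\Lambda)}$ to be atomic, which is \eqref{Item=StronglySolid2}.

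\emph{Reverse direction.} Induct on $|\Gamma|$, with $\Gamma=\emptyset$ trivial ($M_\Gamma=\CC$). Assuming \eqref{Item=StronglySolid1}--\eqref{Item=StronglySolid2} for $\Gamma$, the conditions hold for every subgraph, so the induction hypothesis gives strong solidity of $M_{\Gamma_0}$ for every strict subgraph $\Gamma_0\subsetneq\Gamma$. Fix a diffuse amenable $A\subseteq M_\Gamma$, put $P=\Nor_{M_\Gamma}(A)''$; it suffices to show that any central projection $z\in P\cap P'$ for which $zP$ has no amenable direct summand is zero. If some $v\in\Gamma$ has $\Star(v)=\Gamma$, then $M_\Gamma=M_v\overline{\otimes}M_{\Gamma\setminus\{v\}}$ with both tensor factors strongly solid (by \eqref{Item=StronglySolid1} and induction), and a short case split on amenability/diffuseness using \eqref{Item=StronglySolid3}, \eqref{Item=StronglySolid2} and \cref{prop:strong-solid-tensor-product} finishes it. Otherwise, for a chosen $v$ with $\Gamma_1:=\Star(v)\neq\Gamma$ and $\Gamma_2:=\Gamma\setminus\{v\}$, the amalgamated free product decomposition $M_\Gamma=M_{\Gamma_1}\ast_{M_{\Gamma_1\cap\Gamma_2}}M_{\Gamma_2}$ from \eqref{Eqn=Amalgam} (all three pieces strongly solid by induction) together with \cref{thm:prelim:alternatives} applied to $zA$ yields: (a) $zA\prec_{M_\Gamma}M_{\Gamma_1\cap\Gamma_2}$; (b) $zP\prec_{M_\Gamma}M_{\Gamma_i}$ for some $i$; or (c) $zP$ is amenable relative to $M_{\Gamma_1\cap\Gamma_2}$.

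I would then dispose of the three cases as follows. In (b), \cref{Prop=EmbedSolid} and the induction hypothesis give an amenable direct summand of $zP$, forcing $z=0$. In (a), pick $\Lambda\subseteq\Gamma_1\cap\Gamma_2$ minimal with $zA\prec_{M_\Gamma}M_\Lambda$ and set $\Lambda_{\emb}=\Lambda\cup\Link(\Lambda)$; then $\Lambda\subseteq\Link(v)$, so $v\in\Link(\Lambda)\subseteq\Lambda_{\emb}$, and \cref{prop:embed}\eqref{Item=Embed2} gives $zP\prec_{M_\Gamma}M_{\Lambda_{\emb}}$. If $\Lambda_{\emb}\neq\Gamma$, \cref{Prop=EmbedSolid} again gives $z=0$; if $\Lambda_{\emb}=\Gamma$ then $M_\Gamma=M_\Lambda\overline{\otimes}M_{\Link(\Lambda)}$, and analyzing the four subcases for the amenability/diffuseness of $M_\Lambda$ and $M_{\Link(\Lambda)}$ — using \eqref{Item=StronglySolid3}, \eqref{Item=StronglySolid2}, and \cref{Prop=CharacterizeAtomicGraphProduct} (which turns atomicity of $M_{\Link(\Lambda)}$ into completeness of $\Link(\Lambda)$, hence $\Link(\Lambda)\subseteq\Star(v)$ and $\Lambda_{\emb}\subseteq\Gamma_1\subsetneq\Gamma$, a contradiction) — shows that $M_\Gamma$ is either non-diffuse or amenable. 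Finally, if neither (a) nor (b) ever occurs, then (c) holds for every vertex $v$, i.e.\ $zP$ is amenable relative to $M_{\Link(v)}$ for all $v$; since $\bigcap_{v\in\Gamma}\Link(v)=\emptyset$, iterating \cref{Thm=Square} makes $zP$ amenable relative to $\CC$, hence amenable, so $z=0$. The main obstacle is engineering the amalgamated-free-product trichotomy so that each branch terminates either at a strictly smaller subgraph (where induction applies) or at a configuration where $M_\Gamma$ is visibly non-diffuse or amenable; this hinges on the graph-product-specific normalizer control in \cref{prop:embed} and on the relative-amenability "square" statement \cref{Thm=Square}.
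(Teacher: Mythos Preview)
Your proposal is correct and follows essentially the same approach as the paper: the forward direction via \cref{prop:strong-solid-tensor-product}, and the reverse direction by induction on $|\Gamma|$, splitting on whether some $\Star(v)=\Gamma$, and in the remaining case running the amalgamated-free-product trichotomy of \cref{thm:prelim:alternatives} with termination via \cref{Prop=EmbedSolid}, \cref{prop:embed}\eqref{Item=Embed2}, the subcase analysis when $\Lambda_{\emb}=\Gamma$, and finally \cref{Thm=Square}. The only cosmetic difference is that the paper organizes the $\Lambda_{\emb}=\Gamma$ analysis as five overlapping cases (non-amenability of $M_\Lambda$, non-amenability of $M_{\Link(\Lambda)}$, each split by diffuseness, then both amenable) rather than four, and makes explicit one mechanism you only allude to: when $M_{\Link(\Lambda)}$ is non-amenable and diffuse, condition \eqref{Item=StronglySolid2} applied to $\Link(\Lambda)$ forces $M_\Lambda=M_{\Link(\Link(\Lambda))}$ to be atomic, which contradicts $zA\prec_{M_\Gamma}M_\Lambda$ with $zA$ diffuse.
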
 

\subsection{Classifying strong solidity in specific cases}\label{Sect=ClassStrongSolExamples}
We show that in many concrete cases that one can verify whether or not a graph product $M_{\Gamma}$ is strongly solid.
\cref{Thm=MainImplication} tells us how to decide whether $M_{\Gamma}$ is strongly solid. For this we need to know for each vertex $v$ whether or not $M_v$ is strongly solid. Furthermore, we need to know for each subgraph $\Lambda\subseteq \Gamma$ whether of not $M_{\Lambda}$ is atomic, diffuse, or non-amenable. We observe that in concrete cases we can verify whether $M_{\Lambda}$ is diffuse, atomic or non-amenable. Indeed, atomicity is classified in \cref{Prop=CharacterizeAtomicGraphProduct}. Furthermore, amenability was classifed in \cite{charlesworthStructureGraphProduct2024}. Moreover, in \cite{charlesworthStructureGraphProduct2024} diffuseness was classified under the condition that each vertex algebra $M_v$ contains a unitary element of trace $0$, i.e. a Haar unitary. This in particular applies to the case where $M_v$ is either diffuse or a group von Neumann algebra. We state these results here.

\begin{proposition}[Proposition 6.3 of \cite{charlesworthStructureGraphProduct2024}]    
\label{Thm=AmenableGraphProduct}
	Let $\Gamma$ be a simple graph. For $v\in \Gamma$ let $M_v$ ($\not=\CC$) be a von Neumann algebra with normal faithful state $\varphi_v$.  
    Then the graph product $M_{\Gamma} = *_{v,\Gamma}(M_{v},\varphi_v)$ is amenable if and only if the following conditions hold:
	\begin{enumerate}
		\item \label{Thm:Item=Amen1} Each vertex von Neumann algebra $M_v,v\in \Gamma$ is amenable;
		\item \label{Thm:Item=Amen2} If $v\not=w\in \Gamma$ share no edge, then $\dim M_v =\dim M_w =2$ and $\Link(\{v,w\}) = \Gamma\setminus \{v,w\}$.
	\end{enumerate}
\end{proposition}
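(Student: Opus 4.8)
The plan is to establish the two directions separately; the only non-combinatorial ingredient needed is the classification of amenability for a free product of two von Neumann algebras, namely: \emph{$(N_1,\psi_1)\ast(N_2,\psi_2)$ with $N_1,N_2\neq\CC$ is amenable (in fact of type I) if and only if $N_1\cong N_2\cong\CC^2$} — this follows from Dykema's structure theory of reduced free products of hyperfinite algebras together with \cite{uedaFactorialityTypeClassification2011}. Besides this I will only use that amenability descends along normal conditional expectations and is stable under tensor products, and that a graph product over a complete graph is a tensor product.

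For \textbf{necessity}, assuming $M_\Gamma$ amenable: since each $M_v$ is the range of the normal $\varphi_\Gamma$-preserving conditional expectation $\EE_{M_v}$, condition \eqref{Thm:Item=Amen1} follows immediately. For \eqref{Thm:Item=Amen2} I would take non-adjacent $v\neq w$ and note that $M_{\{v,w\}}=(M_v,\varphi_v)\ast(M_w,\varphi_w)$ is again a subalgebra with normal conditional expectation, hence amenable, which by the classification above forces $\dim M_v=\dim M_w=2$. To get $\Link(\{v,w\})=\Gamma\setminus\{v,w\}$ I would argue by contradiction: if some $u\notin\{v,w\}$ is non-adjacent to (say) $w$, then the induced subalgebra $M_{\{u,v,w\}}$ equals $M_u\ast(M_v\ast M_w)$ if $u\not\sim v$, or $(M_v\overline{\otimes}M_u)\ast M_w$ if $u\sim v$; in the first case $M_v\ast M_w\cong\CC^2\ast\CC^2$ contains $M_2(\CC)$ and so is not $\cong\CC^2$, in the second case $M_v\overline{\otimes}M_u$ has dimension $\geq 4$ or is infinite dimensional and so is not $\cong\CC^2$. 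Either way $M_{\{u,v,w\}}$ is a free product of two algebras each $\neq\CC$ and not both $\cong\CC^2$, hence non-amenable by the classification, contradicting amenability of $M_\Gamma$.

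For \textbf{sufficiency}, assuming \eqref{Thm:Item=Amen1} and \eqref{Thm:Item=Amen2}, I would first extract the combinatorial structure of $\Gamma$: the link condition says that whenever $v\not\sim w$ every other vertex is adjacent to $v$, so each vertex has at most one non-neighbour, i.e.\ the complement graph $\Gamma^c$ is a disjoint union of edges $\{v_1,w_1\},\dots,\{v_k,w_k\}$ and isolated vertices $u_1,\dots,u_m$. Taking $\Pi$ the complete graph on $k+m$ vertices and $\Lambda_i$ the two-vertex edgeless graph $\{v_i,w_i\}$ (resp.\ the one-vertex graph $\{u_j\}$), one has $\Gamma=\ast_{i,\Pi}\Lambda_i$ as a graph product of graphs (see \cref{def:graph-product-graph}), so by \cref{remark:graph-product-of-graphs-consistency} and the fact that a graph product over a complete graph is a tensor product, $M_\Gamma=\overline{\bigotimes}_i M_{\Lambda_i}$. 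Each factor is amenable: $M_{\Lambda_i}=M_{u_j}$ for an isolated vertex is amenable by \eqref{Thm:Item=Amen1}, while for a pair $\dim M_{v_i}=\dim M_{w_i}=2$ by \eqref{Thm:Item=Amen2}, so $M_{\Lambda_i}=(\CC^2,\varphi_{v_i})\ast(\CC^2,\varphi_{w_i})$ is of type I, hence amenable. A tensor product of amenable von Neumann algebras being amenable, $M_\Gamma$ is amenable.

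The hard part is really just the external input: the precise classification of amenability for a free product of two von Neumann algebras with respect to faithful normal states, and in particular the boundary fact that $(\CC^2,\varphi)\ast(\CC^2,\psi)$ is amenable for \emph{all} such states (not only the tracial ones) whereas every other free product of two nontrivial algebras is non-amenable. Once this is granted, all remaining steps — descent of amenability along conditional expectations, the observation that $\Gamma^c$ has maximum degree $\leq 1$, and the regrouping of $M_\Gamma$ as a tensor product via \cref{remark:graph-product-of-graphs-consistency} — are routine.
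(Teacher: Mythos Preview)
The paper does not give its own proof of this proposition: it is quoted verbatim as Proposition~6.3 of \cite{charlesworthStructureGraphProduct2024}, so there is nothing to compare your argument against. That said, your outline is correct and follows the natural line of reasoning. The necessity argument is fine; for the Link condition your case split on whether $u\sim v$ is clean, and in both cases the resulting free product has a factor of dimension $>2$ (recall you already know $\dim M_v=\dim M_w=2$, and in the case $u\not\sim w$ also $\dim M_u=2$, so $M_u\overline{\otimes}M_v$ is four-dimensional). The sufficiency argument via the observation that $\Gamma^c$ has maximum degree $\leq 1$, hence $\Gamma$ is a graph product of singletons and non-edges over a complete graph, is exactly the right combinatorial unpacking, and the regrouping via \cref{remark:graph-product-of-graphs-consistency} is legitimate. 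You are right to flag the one genuinely external ingredient, namely that $(\CC^2,\varphi)\ast(\CC^2,\psi)$ is type I for \emph{arbitrary} faithful normal states while any other nontrivial reduced free product is non-amenable; this is indeed where the analytic content lives and is available in the literature you cite. One small point: the proposition as stated allows $\Gamma$ infinite, in which case the sufficiency direction needs the remark that $M_\Gamma$ is an increasing union (with conditional expectations) of the $M_{\Gamma_0}$ for $\Gamma_0\subseteq\Gamma$ finite, and amenability passes to such limits.
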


\begin{proposition}[Theorem E of \cite{charlesworthStructureGraphProduct2024}]\label{Prop=ClassifyDiffuse}     
 Let $(M_\Gamma, \tau_\Gamma) = \ast_{v, \Gamma} (M_v, \tau_v)$ be a graph product of tracial von Neumann algebras over a simple graph $\Gamma$. Assume that each $M_v, v \in \Gamma$ contains a unitary $u_v$ with $\tau_v(u_v) = 0$.  Then $M_\Gamma$ is diffuse if either (a) there is $v \in \Gamma$ with $M_v$ diffuse; (b) $\Gamma$ is not a complete graph.  
\end{proposition}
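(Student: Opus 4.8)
The plan is to verify that $M_\Gamma$ has no minimal projections by exhibiting, for every non-zero projection $p \in M_\Gamma$, a Haar unitary in $M_\Gamma$; equivalently, it suffices to produce a diffuse abelian (or just diffuse) subalgebra of $M_\Gamma$ whose central support is $1$, since a von Neumann algebra with a diffuse subalgebra of full central support cannot have atoms in its center... but more directly: $M_\Gamma$ is diffuse if and only if it contains no non-zero minimal projection, and a clean sufficient condition is that $M_\Gamma$ contains a Haar unitary $u$ (one with $\tau(u^k) = 0$ for all $k \neq 0$), because the abelian von Neumann algebra generated by a Haar unitary is diffuse and, $M_\Gamma$ being a finite factor-like trace space, diffuseness of the whole algebra follows once we know there is no atom; so I would actually argue by ruling out atoms directly. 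First I would handle case (a): if some $M_{v}$ is diffuse, then $M_v \subseteq M_\Gamma$ is a diffuse (unital) von Neumann subalgebra; since the trace $\tau_\Gamma$ restricts to $\tau_v$ on $M_v$, any minimal projection of $M_\Gamma$ would restrict to give a non-zero projection dominated inside $M_v$ (intersect with $M_v$ via the conditional expectation $\EE_{M_v}$, which is trace-preserving and faithful), contradicting diffuseness of $M_v$. I would make this precise: if $p \in M_\Gamma$ is minimal, then for a Haar unitary $u_v \in M_v$ we have $u_v p u_v^*$ also minimal with the same trace, and using freeness-type orthogonality relations in the graph product together with $\tau_v(u_v) = 0$ one shows $\EE_{M_v}(u_v p u_v^* p) = \tau(p)^2 \cdot 1$, forcing $\tau(p) = \tau(p)^2$ whence $\tau(p) \in \{0,1\}$; since $M_v$ is diffuse we can further subdivide, ruling out $\tau(p) = 1$ unless $p = 1$ and $M_\Gamma = \CC$, which is excluded.

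For case (b), suppose $\Gamma$ is not complete, so there exist $v \neq w \in \Gamma$ with no edge between them. Then by the amalgamated free product decomposition \eqref{Eqn=Amalgam}, or more simply by the universal property, $M_\Gamma$ contains the free product $(M_v,\tau_v) * (M_w,\tau_w)$ as a subalgebra with trace-preserving conditional expectation. Let $u_v \in M_v$ and $u_w \in M_w$ be the given unitaries with $\tau_v(u_v) = \tau_w(u_w) = 0$. I would show that the unitary $x := u_v u_w \in M_\Gamma$ is a Haar unitary: the powers $x^k = u_v u_w u_v u_w \cdots$ ($k$ factors of each) are reduced words in the free product of alternating length $2k$, with each letter in $\mathring{M}_v = \ker\varphi_v$ or $\mathring{M}_w = \ker\varphi_w$ (since $\tau(u_v) = \tau(u_w) = 0$), hence $\tau(x^k) = 0$ for all $k \neq 0$ by the defining moment formula for free products (equivalently, $x^k \Omega \in \mathring{\calH}_{\vv}$ for the word $\vv = (vw)^k \neq e$, which is orthogonal to $\CC\Omega$). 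Therefore $\{x\}'' \cong L(\ZZ)$ is diffuse, and the same argument as in case (a) — conjugating a putative minimal projection by $x$ and its powers — shows $M_\Gamma$ has no atom, hence is diffuse.

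The main obstacle, I expect, is the bookkeeping in case (b) to confirm that $x = u_v u_w$ is genuinely a Haar unitary rather than merely a non-trivial unitary: one must check $u_v u_w u_v u_w$ does not collapse, which uses crucially that $u_v, u_w$ are \emph{unitaries of trace zero} (mere non-triviality of $M_v, M_w$ would not suffice, and indeed if $M_v = \CC^2$ with a non-tracial weight there might be no trace-zero unitary). Once the Haar unitary is in hand, the passage from "$M_\Gamma$ contains a Haar unitary" to "$M_\Gamma$ is diffuse" is where I would invoke the trace-preserving conditional expectation onto the diffuse subalgebra: if $p$ were a minimal projection, $\EE_{\{x\}''}(p)$ would be a non-zero positive element of the diffuse abelian algebra $\{x\}''$ of trace $\tau(p)$, and minimality of $p$ combined with $\|\EE_{\{x\}''}(p)\| \leq 1$ and the averaging $\frac{1}{N}\sum_{k=1}^N x^k p x^{-k} \to \EE_{\{x\}''}(p)$ strongly would force $\tau(p) = \tau(p)^2$, and then a further subdivision of $p$ inside the diffuse algebra yields the contradiction, exactly as in case (a). I would present both cases through this single mechanism: \emph{if $M_\Gamma$ contains a diffuse von Neumann subalgebra $D$ with trace-preserving conditional expectation and $D$ has no atoms of trace $< 1$, and $M_\Gamma \neq \CC$, then $M_\Gamma$ is diffuse}, and then supply the diffuse subalgebra ($M_v$ in case (a), $\{u_v u_w\}''$ in case (b)).
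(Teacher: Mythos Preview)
The paper does not prove this proposition — it is quoted verbatim as Theorem~E of \cite{charlesworthStructureGraphProduct2024} and no argument is given. So there is no paper proof to compare against; I can only assess your argument on its own.

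Your overall strategy is correct and standard: in each case produce a Haar unitary in $M_\Gamma$, then use the general fact that a tracial von Neumann algebra containing a Haar unitary is diffuse. In case~(a), a diffuse tracial $M_v$ contains a Haar unitary (take any generator of a diffuse masa), hence so does $M_\Gamma$. In case~(b), your observation that $x := u_v u_w$ is a Haar unitary is exactly right and is the heart of the matter: for $k>0$ the word $x^k = u_v u_w \cdots u_v u_w$ is reduced of length $2k$ with all letters in $\ker\tau_v \cup \ker\tau_w$, so $x^k\Omega \in \mathring{\calH}_{(vw)^k} \perp \CC\Omega$ and $\tau(x^k)=0$.

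Where your writeup has a genuine gap is the passage from ``$M_\Gamma$ contains a Haar unitary $x$'' to ``$M_\Gamma$ is diffuse''. Two of your claimed steps are not correct as stated. First, the Ces\`aro averages $\tfrac{1}{N}\sum_{k=1}^N x^k p x^{-k}$ converge in $\|\cdot\|_2$ to $\EE_{\{x\}' \cap M_\Gamma}(p)$, the expectation onto the \emph{relative commutant} of $x$, not onto $\{x\}''$; this is von Neumann's mean ergodic theorem for $\Ad_x$, and the two expectations are in general different. Second, the formula $\EE_{M_v}(u_v p u_v^* p) = \tau(p)^2 \cdot 1$ has no justification — there is no freeness-type orthogonality between $u_v$ and an \emph{arbitrary} $p \in M_\Gamma$ — and I do not see how it would yield $\tau(p)=\tau(p)^2$ in any case. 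A clean replacement: if $p$ is a minimal projection then $p Z(M_\Gamma) \subseteq pM_\Gamma p = \CC p$, so the central support $z$ of $p$ is a minimal central projection and $M_\Gamma z$ is a finite factor with a minimal projection, hence $M_\Gamma z \cong M_n(\CC)$ for some $n$. The spectral distribution of $x$ with respect to $\tau$ then splits as $\tau(z)\cdot\nu + \tau(1-z)\cdot\nu'$, where $\nu$ is the distribution of the unitary $xz \in M_n(\CC)$, a finite sum of point masses. Since $x$ is Haar this distribution is Lebesgue measure on $\mathbb{T}$, which is non-atomic; contradiction. With this repair your proof is complete.
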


In case not every vertex von Neumann algebra contain a unitary of trace 0 the situation becomes more subtle and the analysis becomes significantly more intricate. However, if the vertex von Neumann algebras are 2-dimensional then the results in
\cite{caspersGraphProductKhintchine2021},
\cite{garncarekFactorialityHeckeNeumann2016}, \cite{raumFactorialMultiparameterHecke2023} again yield a classification of diffuseness (and amenability) of graph products.  

\begin{definition}
Suppose that $M_{v, q_v}, q_v \in (0,1]$ is the 2-dimensional Hecke algebra which is the $\ast$-algebra spanned by the unit $1_v$ and an element $T_{v, q_v}$ satisfying the Hecke relation
\[
(T_{v, q_v} - q_v^{\frac{1}{2}}) (T_{v, q_v} + q_v^{-\frac{1}{2}}) = 0, \quad  T_{v, q_v}^\ast = T_{v, q_v}. 
\]
Define the tracial state $\tau_v$ by setting $\tau_v(T_{v, q_v} ) = 0$ and $\tau_v(1_v) = 1$.  For a simple graph $\Gamma$ and ${\bf q} := (q_v)_{v \in \Gamma} \in (0, 1]^{\Gamma}$ we let $M_{\Gamma, {\bf q} } = \ast_{v, \Gamma} (M_v, \tau_{v, q_v})$ be the graph product von Neumann algebra which is called the {\it right-angled Hecke von Neumann algebra}. 
\end{definition}

\begin{remark}
Note that $(M_{v, q_v}, \tau_v)$ is isomorphic to $\mathbb{C}^2$ with tracial state $\tau_{\alpha}(x\oplus y) := \alpha x + (1-\alpha)y$ with $\alpha = \frac{1}{2}\left(1+\sqrt{1-\frac{4}{p_v(q)^2 +4}}\right)$ where $p_v(q) := \frac{q_v -1}{\sqrt{q_v}}\in (-\infty,0]$. Hence a general 2-dimensional von Neumann algebra with a (necessarily tracial) faithful state is of the form $(M_{v, q_v}, \tau_q)$ and Hecke algebras correspond to a general graph product of 2-dimensional von Neumann algebras.  
\end{remark}

Let  $L$ be the graph with 3 points and no edges and $L^+$ be the graph with 3 points and 1 edge between two of the points. 

\begin{theorem}[Theorem A of \cite{raumFactorialMultiparameterHecke2023}, Theorem 6.2 of \cite{caspersGraphProductKhintchine2021}] 
\label{thm:Hecke-algebras:diffuseness-and-amenability}
 Let $\Gamma$ be a  finite simple graph and ${\bf q} := (q_v)_{v \in \Gamma} \in (0, 1]^{\Gamma}$. Then
 \begin{enumerate}
\item  The Hecke von Neumann algebra  $M_{\Gamma, {\bf q} }$ is not diffuse if and only if the sum $\sum_{\ww \in \mathcal{W}_\Gamma } q_{\ww }$,  converges where $q_{\ww} = q_{w_1} \ldots q_{w_n}$ and $\ww = w_1 \ldots w_n$ reduced;
 \item  $M_{\Gamma, {\bf q} }$ is non-amenable if and only if $\mathcal{W}_\Gamma$ is non-amenable if and only if $L$ or $L^+$ is a subgraph of $\Gamma$. 
 \end{enumerate}
 \end{theorem}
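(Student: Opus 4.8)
The plan is to prove the two assertions separately, reducing each to structural facts about right-angled Hecke von Neumann algebras and right-angled Coxeter groups that are available in \cite{garncarekFactorialityHeckeNeumann2016, raumFactorialMultiparameterHecke2023, caspersGraphProductKhintchine2021}. For the ``if'' direction of (1) the device is the one-dimensional $*$-representation $\chi_{+}\colon M_{\Gamma,\mathbf{q}}\to\CC$ determined by $\chi_{+}(T_{v,q_v})=q_v^{1/2}$: this is compatible with the Hecke relation $(T_{v,q_v}-q_v^{1/2})(T_{v,q_v}+q_v^{-1/2})=0$ and with the commutation relations among distinct vertices, so it extends to a normal $*$-homomorphism onto $\CC$. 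Realising $M_{\Gamma,\mathbf{q}}$ on $\ell^{2}(\calW_\Gamma)$ in the standard way, the formal vector $\xi=\sum_{\ww\in\calW_\Gamma}q_{\ww}^{1/2}\delta_{\ww}$ is an eigenvector of each $T_{v,q_v}$ with eigenvalue $q_v^{1/2}$, and $\|\xi\|^{2}=\sum_{\ww}q_{\ww}$. Hence if $\sum_{\ww}q_{\ww}<\infty$, then $\xi$ is a genuine nonzero vector, $x\mapsto\langle x\xi,\xi\rangle/\|\xi\|^{2}$ is a normal character of $M_{\Gamma,\mathbf{q}}$, its support is a nonzero central projection $z$, and $M_{\Gamma,\mathbf{q}}z\cong\CC$ (a von Neumann algebra carrying a faithful normal character is $\CC$); so $M_{\Gamma,\mathbf{q}}$ is not diffuse.

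For the converse in (1) --- divergence of $\sum_{\ww}q_{\ww}$ forces diffuseness --- I would invoke the structure theory behind the cited works: $\chi_{+}$ and $\chi_{-}\colon T_{v,q_v}\mapsto -q_v^{-1/2}$ are the only one-dimensional $*$-representations of $M_{\Gamma,\mathbf{q}}$, the vector implementing $\chi_{-}$ never lies in $\ell^{2}(\calW_\Gamma)$ (its coefficients grow), and every atom of $M_{\Gamma,\mathbf{q}}$ must arise from $\chi_{+}$ --- this last point is what Garncarek's factoriality theorem establishes in the single-parameter case and \cite{raumFactorialMultiparameterHecke2023} in general, and it can be approached through the graph-product Khintchine inequality of \cite{caspersGraphProductKhintchine2021} together with a centre-valued trace estimate. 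When $\sum_{\ww}q_{\ww}=\infty$ the vector $\xi$ above is not square-summable, so $\chi_{+}$ is not normal and $M_{\Gamma,\mathbf{q}}$ has no atoms at all. I expect this implication to be the main obstacle: it is precisely the step where the quantitative combinatorics of $\calW_\Gamma$ --- the behaviour of the multi-parameter growth series at the critical radius --- has to be controlled, rather than a soft functional-analytic argument.

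For (2), I would first record the combinatorial equivalence ``$\calW_\Gamma$ is non-amenable $\iff$ $L$ or $L^{+}$ is an induced subgraph of $\Gamma$''. If $\Gamma$ contains no induced $L$ and no induced $L^{+}$, then every three vertices span at least two edges, equivalently the complement $\Gamma^{c}$ is a disjoint union of edges and isolated vertices; writing $\Gamma$ as the corresponding graph join then gives $\calW_\Gamma\cong(\ZZ/2\ZZ)^{\ell}\times(\ZZ/2\ZZ\ast\ZZ/2\ZZ)^{m}$, which is virtually $\ZZ^{m}$ and hence amenable, and correspondingly $M_{\Gamma,\mathbf{q}}$ is a tensor product of copies of $\CC^{2}$ and of two-vertex free products $\CC^{2}\ast\CC^{2}=M_{\{a,b\},\mathbf{q}}$; the latter are generated by two projections, hence hyperfinite, so $M_{\Gamma,\mathbf{q}}$ is amenable. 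Conversely, an induced $L$ yields a copy of $\ZZ/2\ZZ\ast\ZZ/2\ZZ\ast\ZZ/2\ZZ$ and an induced $L^{+}$ a copy of $(\ZZ/2\ZZ\times\ZZ/2\ZZ)\ast\ZZ/2\ZZ$ inside $\calW_\Gamma$, each of which contains a nonabelian free group, so $\calW_\Gamma$ is non-amenable; and on the von Neumann algebra side $M_{L,\mathbf{q}}$, respectively $M_{L^{+},\mathbf{q}}$, sits inside $M_{\Gamma,\mathbf{q}}$ as the range of the $\tau$-preserving conditional expectation onto the subgraph algebra (graph product structure), so it suffices to know that $M_{L,\mathbf{q}}$ and $M_{L^{+},\mathbf{q}}$ are non-amenable for every $\mathbf{q}$. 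Each of these is a free product of finite-dimensional algebras lying outside the short list of amenable free products, so non-amenability follows from the classification of free product von Neumann algebras (see \cite{uedaFactorialityTypeClassification2011}); the only mild subtlety is to quote the amenability criterion for free products of finite-dimensional algebras with arbitrary faithful tracial states, after which the argument is soft.
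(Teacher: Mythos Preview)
The paper does not prove this theorem: it is stated with attribution to \cite{raumFactorialMultiparameterHecke2023} and \cite{caspersGraphProductKhintchine2021} and used as a black box, so there is no proof in the paper to compare your proposal against.

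That said, your sketch is broadly sound for the directions that are actually accessible without the heavy machinery. For (1), the ``if'' direction via the normal character $\chi_{+}$ and the explicit $\ell^{2}$ eigenvector is correct and is indeed how one exhibits an atom. You are right to flag the converse as the hard part: showing that divergence of $\sum_{\ww} q_{\ww}$ forces diffuseness is precisely the content of the cited results, and your paragraph there is an outline of what those papers do rather than an independent argument. For (2), your combinatorial reduction is correct (no induced $L$ or $L^{+}$ forces $\Gamma^{c}$ to be a matching plus isolated vertices, hence $\calW_\Gamma$ is a direct product of copies of $\ZZ/2\ZZ$ and $D_\infty$), and the non-amenability of $M_{L,\mathbf{q}}$ and $M_{L^{+},\mathbf{q}}$ via free product classification is the right route. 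One small point: you should be slightly careful that for arbitrary $\mathbf{q}$ the free product $(\CC^2,\tau_{q_1})\ast(\CC^2,\tau_{q_2})\ast(\CC^2,\tau_{q_3})$ really falls outside the amenable list in \cite{uedaFactorialityTypeClassification2011}; this holds, but the state weights enter and the check is not entirely trivial.
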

  
 Hence, by \cref{Thm=MainImplication} and \cref{Prop=CharacterizeAtomicGraphProduct} and \cref{thm:Hecke-algebras:diffuseness-and-amenability}  the classification of strongly solid right-angled Hecke von Neumann algebras with finitely many generators is complete. Partial results toward this classification had been obtained earlier in \cite{caspersAbsenceCartanSubalgebras2020} and \cite{borstBimoduleCoefficientsRiesz2023}.

\section{Classification of primeness for graph products}
\label{Sect=Primeness}
We start by recalling the definition of primeness. 
\begin{definition}\label{Dfn=Prime}
A II$_1$-factor $M$ is called \textit{prime} if it can not factorize as a tensor product $M = M_1\overline{\otimes} M_2$ with $M_1,M_2$ diffuse.
\end{definition}
We study primeness for graph product $M_{\Gamma} = *_{v,\Gamma}(M_v,\tau_v)$ of tracial von Neumann algebras $M_v$. In \cref{sub:primeness-II1-factors} we prove \cref{thm:primeness-for-graph-products-II1-factors} which characterizes primeness for graph products of II$_1$-factors. In \cref{sub:UPF-results-graph-products} we use this to prove \cref{thm:rigid-graph-decomposition-irreducible-components} concerning irreducible components in rigid graph products. Moreover, we prove \cref{thm:unique-prime-factorization} which establishes UPF-results for the class $\Crigid$.
Last, in \cref{sub:primeness-general-setting} we extend the primeness characterization from \cref{thm:primeness-for-graph-products-II1-factors} to a larger class of graph products.

\subsection{Primeness results for graph products of II$_1$-factors}
\label{sub:primeness-II1-factors}
We prove \cref{lemma:ICC-group-not-embed-in-smaller-graph-extended} which we use in \cref{lemma:irreducible-graph-prime-or-amenable} to give sufficient conditions for a graph product to be either prime or amenable. For graph products of II$_1$-factors we then characterize primeness in \cref{thm:primeness-for-graph-products-II1-factors}.

\begin{lemma}\label{lemma:ICC-group-not-embed-in-smaller-graph-extended}Let $\Gamma$ be a finite simple graph that is irreducible. 
 For $v\in \Gamma$ let $M_{v}$ ($\not=\CC$) be a von Neumann algebra with a normal faithful trace $\tau_v$. 
 Suppose $N\subseteq M_{\Gamma}$ is a diffuse von Neumann subalgebra. The following are equivalent:
	\begin{enumerate}
		\item $N\not\prec_{M_{\Gamma}} M_{\Gamma_0}$ for any strict subgraph $\Gamma_0\subsetneq \Gamma$;
		\item $\Nor_{M_{\Gamma}}(N)''\not\prec_{M_{\Gamma}}M_{\Gamma_0}$ for any strict subgraph $\Gamma_0\subsetneq \Gamma$.
	\end{enumerate}   
	\begin{proof}
	As $N\subseteq \Nor_{M_{\Gamma}}(N)''$, it is clear that $(1)\implies (2)$. We will show that $(2)\implies (1)$.
		
		As $N\subseteq M_{\Gamma}$ is a subalgebra, we have that $N\prec_{M_{\Gamma}}M_{\Gamma}$. Therefore, there is a (minimal) subgraph $\Lambda\subseteq \Gamma$ such that 
		$N\prec_{M_{\Gamma}}M_{\Lambda}$ and $N\not \prec_{M_{\Gamma}} M_{\widetilde{\Lambda}}$ for all strict subgraphs $\widetilde{\Lambda}\subsetneq \Lambda$.  By \cref{prop:embed} \eqref{Item=Embed2} we obtain that $\Nor_{M_{\Gamma}}(N)'' \prec_{M_{\Gamma}}M_{\Lambda_{\emb}}$ where $\Lambda_{\emb} = \Lambda\cup \Link(\Lambda)$. Now by our assumption this implies that $\Lambda_{\emb} = \Gamma$. Now, as $\Gamma$ is irreducible and $\Gamma =  \Lambda\cup \Link(\Lambda)$ we have that $\Lambda$ or $\Link(\Lambda)$ is empty. As $N\not\prec_{M_{\Gamma}}\CC1_{M_{\Gamma}}$ (since $N$ is diffuse) and $N\prec_{M_{\Gamma}}M_{\Lambda}$ we must have that $\Lambda$ is non-empty, and thus that $\Link(\Lambda)$ is empty. Thus $\Lambda = \Gamma$, and this proves the statement.
	\end{proof}
\end{lemma}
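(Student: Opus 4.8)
The implication $(1)\implies(2)$ is immediate from $N\subseteq \Nor_{M_\Gamma}(N)''$ together with transitivity/monotonicity of intertwining. The content is in $(2)\implies(1)$, and the natural strategy is the contrapositive dressed up positively: start from the fact that $N\prec_{M_\Gamma} M_\Gamma$ trivially (since $N$ is a unital subalgebra of $M_\Gamma$, or after passing to a corner), and then choose $\Lambda\subseteq\Gamma$ \emph{minimal} among subgraphs with $N\prec_{M_\Gamma} M_\Lambda$. Minimality means $N\not\prec_{M_\Gamma} M_{\widetilde\Lambda}$ for every strict subgraph $\widetilde\Lambda\subsetneq\Lambda$, which is exactly the hypothesis needed to feed into the embedding machinery for quasi-normalizers.

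\textbf{Key steps.} First I would invoke \cref{prop:embed}\eqref{Item=Embed2} with the collection $\{\Lambda_j\}_{j\in\calJ}$ taken to be the (finite) list of all strict subgraphs of $\Lambda$; then $\Lambda_{\emb}=\Lambda\cup\Link_\Gamma(\Lambda)$ by the final remark of that proposition, and we conclude $\Nor_{M_\Gamma}(N)''\prec_{M_\Gamma} M_{\Lambda\cup\Link(\Lambda)}$. (Strictly, \cref{prop:embed}\eqref{Item=Embed2} is phrased with a projection $r\in P\cap P'$; one applies it with $r=1$, or notes that the statement we want only needs the $r=1$ case.) Next, the hypothesis $(2)$ forces $\Lambda\cup\Link_\Gamma(\Lambda)=\Gamma$, because otherwise $M_{\Lambda\cup\Link(\Lambda)}$ sits inside a strict subgraph von Neumann algebra, contradicting $(2)$. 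Now irreducibility of $\Gamma$ enters: writing $\Gamma=\Lambda\cup\Link_\Gamma(\Lambda)$ with $\Link_\Gamma(\Lambda)=\Link_\Gamma(\Lambda)$ playing the role of the ``complementary'' piece — one checks that $\Link_\Gamma(\Lambda)$ and $\Lambda$ are mutually linked in the sense of the definition of irreducibility (every vertex of $\Lambda$ links every vertex of $\Link_\Gamma(\Lambda)$ by definition of $\Link$), so irreducibility yields $\Lambda=\emptyset$ or $\Link_\Gamma(\Lambda)=\emptyset$. Since $N$ is diffuse, $N\not\prec_{M_\Gamma}\CC1 = M_{\emptyset}$, so $\Lambda\neq\emptyset$; hence $\Link_\Gamma(\Lambda)=\emptyset$ and $\Lambda=\Gamma$. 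But then minimality of $\Lambda=\Gamma$ is precisely statement $(1)$: $N\not\prec_{M_\Gamma} M_{\Gamma_0}$ for any strict $\Gamma_0\subsetneq\Gamma$.

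\textbf{Main obstacle.} The only genuinely delicate point is verifying that $\Gamma = \Lambda \cup \Link_\Gamma(\Lambda)$ together with irreducibility really does force one of the two pieces to be empty — i.e., that the decomposition $\Gamma=\Gamma_1\cup\Gamma_2$ with $\Gamma_1=\Lambda$, $\Gamma_2=\Link_\Gamma(\Lambda)$ satisfies the hypothesis $\Link(\Gamma_1)=\Gamma_2$ (or rather the weaker symmetric condition in the definition of irreducible: not writable as $\Gamma_1\cup\Gamma_2$ with $\Link(\Gamma_1)=\Gamma_2$ and both nonempty). One should be careful that $\Link_\Gamma(\Lambda)$ may properly contain vertices, and that $\Lambda\cap\Link_\Gamma(\Lambda)$ could be nonempty in principle — but any vertex $v\in\Lambda\cap\Link_\Gamma(\Lambda)$ would satisfy $v\in\Link_\Gamma(v)$, impossible in a simple graph, so the two are disjoint; and $\Link_\Gamma(\Lambda\cup\Link_\Gamma(\Lambda))$ considerations show the decomposition is of the prohibited type unless a piece vanishes. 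This is a short combinatorial check but it is where all the hypotheses (irreducibility, diffuseness of $N$, and the structure of $\Lambda_{\emb}$) are simultaneously used, so it deserves to be written out carefully rather than waved through.
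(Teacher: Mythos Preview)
Your proposal is correct and follows essentially the same route as the paper's proof: choose $\Lambda$ minimal with $N\prec_{M_\Gamma}M_\Lambda$, apply \cref{prop:embed}\eqref{Item=Embed2} with $\{\Lambda_j\}$ equal to all strict subgraphs of $\Lambda$ (so $\Lambda_{\emb}=\Lambda\cup\Link(\Lambda)$), use hypothesis~(2) to force $\Lambda\cup\Link(\Lambda)=\Gamma$, and then invoke irreducibility and diffuseness to get $\Lambda=\Gamma$. Your extra care about the $r=1$ case and the disjointness of $\Lambda$ and $\Link(\Lambda)$ is fine but not strictly needed---the paper simply reads the conclusion off the definition of irreducibility.
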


\begin{lemma}\label{lemma:irreducible-graph-prime-or-amenable}
Let $\Gamma$ be a  finite irreducible graph with $|\Gamma|\geq 2$. For $v\in \Gamma$ let $M_{v}$ ($\not=\CC$) be a von Neumann algebra with a normal faithful trace $\tau_v$. Suppose the graph product $M_{\Gamma} = *_{v,\Gamma}(M_v,\tau_v)$ is a II$_1$-factor and $M_{\Gamma}\not\prec_{M_{\Gamma}} M_{\Gamma_0}$ for any strict subgraph $\Gamma_0\subsetneq \Gamma$. Then $M_{\Gamma}$ is prime or amenable.
	\begin{proof} Suppose that $M_{\Gamma}$ is not prime, we show it is amenable. As $M_{\Gamma}$ is not prime, we can write $M_{\Gamma} = N_1\overline{\otimes} N_2$ with $N_1,N_2$ both diffuse.
		We observe that $\Nor_{M_{\Gamma}}(N_1)'' = M_{\Gamma}$. Therefore, using our assumption on $M_{\Gamma}$ and applying		\cref{lemma:ICC-group-not-embed-in-smaller-graph-extended} we obtain that $N_1\not\prec_{M_{\Gamma}}M_{\Gamma_0}$ for any strict subgraph $\Gamma_0\subsetneq \Gamma$.
		
		As $N_2$ is diffuse it contains a diffuse amenable von Neumann subalgebra $A\subseteq N_2$. Now observe that 
		$\Nor_{M_{\Gamma}}(A)''$ contains $N_1$ and hence $\Nor_{M_{\Gamma}}(A)''\not\prec_{M_{\Gamma}} M_{\Gamma_0}$ for any strict subgraph $\Gamma_0\subsetneq \Gamma$. Thus, again by 	\cref{lemma:ICC-group-not-embed-in-smaller-graph-extended} we obtain that $A\not\prec_{M_{\Gamma}} M_{\Gamma_0}$ for any strict subgraph $\Gamma_0\subsetneq \Gamma.$

		Let $v\in \Gamma$ and put $\Gamma_1 := \Star(v)$ and $\Gamma_2 := \Gamma\setminus \{v\}$. We can write 
		\begin{align}
			M_{\Gamma} = M_{\Gamma_1} *_{M_{\Link(v)}} M_{\Gamma_2}.
		\end{align}
	As $A$ is amenable relative to $M_{\Gamma_1}$ inside $M_{\Gamma}$ (as $A$ is amenable), we obtain using \cref{thm:prelim:alternatives} that at least one of the following holds:
	\begin{enumerate}
		\item $A\prec_{M_{\Gamma}} M_{\Link(v)}$;
		\item $\Nor_{M_{\Gamma}}(A)'' \prec_{M_{\Gamma}} M_{\Gamma_i}$ for some $i\in\{1,2\}$;
		\item $\Nor_{M_{\Gamma}}(A)''$ is amenable relative to $M_{\Link(v)}$ inside $M_{\Gamma}$.
	\end{enumerate}
	Now as $\Gamma_1,\Gamma_2$ and $\Link(v)$ are strict subgraphs of $\Gamma$ (as $\Gamma$ is irreducible and $|\Gamma|\geq 2$), we obtain that only option (3) is possible. Thus $\Nor_{M_{\Gamma}}(A)''$ is amenable relative to $M_{\Link(v)}$ inside $M_{\Gamma}$. Note that $v\in \Gamma$ was chosen arbitarily. Thus, applying \cref{Thm=Square} repeatedly, and using that $\bigcap_{v\in \Gamma}\Link(v)=\varnothing$, we obtain that $\Nor_{M_{\Gamma}}(A)''$ is amenable relative to  $M_{\varnothing}$ ($= \CC1_{M_{\Gamma}}$) inside $M_{\Gamma}$, i.e. $\Nor_{M_{\Gamma}}(A)''$ is amenable. Hence the subalgebra $N_1\subseteq \Nor_{M_{\Gamma}}(A)''$ is amenable as well. Interchanging the roles of $N_1$ and $N_2$ we obtain that $N_2$ is also amenable, and hence $M_{\Gamma} = N_1\overline{\otimes} N_2$ is amenable.
	\end{proof}
\end{lemma}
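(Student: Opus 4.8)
The plan is to argue by contraposition: assuming $M_\Gamma$ is not prime, I will show it is amenable. Since $M_\Gamma$ is a non-prime II$_1$-factor, write $M_\Gamma = N_1 \overline{\otimes} N_2$ with $N_1, N_2$ both diffuse. The first observation is that the normalizer of a tensor factor is everything: since $N_2 \subseteq N_1' \cap M_\Gamma$, every unitary of $N_2$ commutes with $N_1$ and hence normalizes it, so $\Nor_{M_\Gamma}(N_1)''$ contains both $N_1$ and $N_2$ and therefore equals $M_\Gamma$. By hypothesis $M_\Gamma = \Nor_{M_\Gamma}(N_1)'' \not\prec_{M_\Gamma} M_{\Gamma_0}$ for every strict subgraph $\Gamma_0 \subsetneq \Gamma$, so the implication $(2) \Rightarrow (1)$ of \cref{lemma:ICC-group-not-embed-in-smaller-graph-extended} gives $N_1 \not\prec_{M_\Gamma} M_{\Gamma_0}$ for every such $\Gamma_0$.

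Next I pass to an amenable piece. Choose a diffuse amenable von Neumann subalgebra $A \subseteq N_2$ (for instance a copy of $\calL(\ZZ)$, or a MASA of $N_2$). Since $A \subseteq N_2 \subseteq N_1' \cap M_\Gamma$, the unitaries of $N_1$ commute with $A$ and so $N_1 \subseteq \Nor_{M_\Gamma}(A)''$. Because intertwining is monotone in its source entry—if $B \subseteq C$ is a unital inclusion and $C \prec_{M_\Gamma} Q$, then restricting the finite-index $C$-$Q$ bimodule witnessing $C \prec_{M_\Gamma} Q$ to a $B$-$Q$ bimodule gives $B \prec_{M_\Gamma} Q$—the non-embedding of $N_1$ forces $\Nor_{M_\Gamma}(A)'' \not\prec_{M_\Gamma} M_{\Gamma_0}$ for every strict $\Gamma_0$. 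Applying \cref{lemma:ICC-group-not-embed-in-smaller-graph-extended} once more, now to $A$, yields $A \not\prec_{M_\Gamma} M_{\Gamma_0}$ for every strict subgraph as well.

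Now fix an arbitrary vertex $v \in \Gamma$ and use the amalgamated free product decomposition $M_\Gamma = M_{\Star(v)} \ast_{M_{\Link(v)}} M_{\Gamma \setminus \{v\}}$ from \eqref{Eqn=Amalgam}. Irreducibility together with $|\Gamma| \geq 2$ guarantees that $\Star(v)$, $\Gamma \setminus \{v\}$ and $\Link(v)$ are all strict subgraphs: if $\Star(v) = \Gamma$ then $\{v\}$ and $\Link(v) = \Gamma \setminus \{v\}$ would witness reducibility. As $A$ is amenable it is amenable relative to $M_{\Star(v)}$, so \cref{thm:prelim:alternatives} offers three alternatives; the non-embedding facts just established exclude the first two (which assert $A \prec_{M_\Gamma} M_{\Link(v)}$ and $\Nor_{M_\Gamma}(A)'' \prec_{M_\Gamma} M_{\Gamma_i}$ for a strict subgraph $\Gamma_i$), leaving only that $\Nor_{M_\Gamma}(A)''$ is amenable relative to $M_{\Link(v)}$. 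Since $v$ was arbitrary and $\bigcap_{v \in \Gamma} \Link(v) = \emptyset$, iterating \cref{Thm=Square} shows $\Nor_{M_\Gamma}(A)''$ is amenable relative to $M_\emptyset = \CC 1$, i.e. amenable. Then $N_1 \subseteq \Nor_{M_\Gamma}(A)''$ is amenable, and the symmetric argument with the roles of $N_1$ and $N_2$ swapped gives that $N_2$ is amenable too; hence $M_\Gamma = N_1 \overline{\otimes} N_2$ is amenable. The main subtlety is the bookkeeping that makes all three of $\Star(v)$, $\Gamma \setminus \{v\}$, $\Link(v)$ strict, so that Vaes's first two alternatives can be excluded at every vertex simultaneously; everything else is a clean assembly of \cref{lemma:ICC-group-not-embed-in-smaller-graph-extended}, \cref{thm:prelim:alternatives}, and \cref{Thm=Square}.
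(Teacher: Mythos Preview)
Your proof is correct and follows essentially the same approach as the paper's own argument: the same contrapositive setup, the same two applications of \cref{lemma:ICC-group-not-embed-in-smaller-graph-extended}, the same use of the amalgamated free product decomposition at each vertex combined with \cref{thm:prelim:alternatives}, and the same iteration of \cref{Thm=Square} to conclude amenability. Your version even supplies a few justifications the paper leaves implicit (why $\Nor_{M_\Gamma}(N_1)'' = M_\Gamma$, why $\Star(v) \subsetneq \Gamma$ under irreducibility).
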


We characterize primeness for graph products of II$_1$-factors.
\begin{theorem}\label{thm:primeness-for-graph-products-II1-factors}
    Let $\Gamma$ be a finite simple graph of size $|\Gamma|\geq 2$. For each $v\in \Gamma$ let $M_v$ be a II$_1$-factor. Then the graph product $M_{\Gamma} = *_{v,\Gamma}(M_v,\tau_v)$ is prime if and only if $\Gamma$ is irreducible.
    \begin{proof}
        Take the finite simple graph $\Gamma$ with $|\Gamma|\geq 2$ and the II$_1$-factors $(M_{v},\tau_v)$ for $v\in \Gamma$. By \cite[Theorem 1.2]{caspersGraphProductsOperator2017a} the von Neumann algebra $M_{\Gamma}$ is a factor. Furthermore, by \cref{Lem=Selfembedding} we have that $M_{\Gamma}\not\prec_{M_{\Gamma}} M_{\Gamma_0}$ for any strict subgraph $\Gamma_0\subsetneq \Gamma$.
        Suppose that $\Gamma$ is irreducible. Then by applying \cref{lemma:irreducible-graph-prime-or-amenable} we obtain that $M_{\Gamma}$ is prime or amenable. Since $\Gamma$ is irreducible and has size $|\Gamma|\geq 2$ we obtain that $\Gamma$ is not complete. We then see by \cref{Thm=AmenableGraphProduct} that $M_{\Gamma}$ is non-amenable. Thus $M_{\Gamma}$ is prime, which shows one direction.
        Now suppose $\Gamma$ is reducible, so that we can decompose $\Gamma = \Gamma_1\cup \Gamma_2$ with $\Gamma_1,\Gamma_2\subseteq \Gamma$ non-empty and such that $\Link(\Gamma_1)=\Gamma_2$. But then we can decompose $M_{\Gamma} = M_{\Gamma_1}\overline{\otimes} M_{\Gamma_2}$ as a tensor product and again by \cite[Theorem 1.2]{caspersGraphProductsOperator2017a} $M_{\Gamma_1}$ and $M_{\Gamma_2}$ are II$_1$-factors. Thus $M_{\Gamma}$ is not prime.
    \end{proof}
\end{theorem}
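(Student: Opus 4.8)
The plan is to prove the two directions of the equivalence, leveraging the machinery already developed. First I would record the easy direction. If $\Gamma$ is reducible, then by definition $\Gamma = \Gamma_1 \cup \Gamma_2$ with $\Gamma_1, \Gamma_2$ non-empty subgraphs satisfying $\Link(\Gamma_1) = \Gamma_2$ (so every vertex of $\Gamma_1$ is joined to every vertex of $\Gamma_2$). The universal property of graph products then gives a tensor decomposition $M_\Gamma = M_{\Gamma_1} \overline{\otimes} M_{\Gamma_2}$, and by \cite[Theorem 1.2]{caspersGraphProductsOperator2017a} each factor $M_{\Gamma_i}$ is a II$_1$-factor, hence diffuse. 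Therefore $M_\Gamma$ is not prime.

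For the converse, suppose $\Gamma$ is irreducible with $|\Gamma| \geq 2$. The first step is to invoke \cite[Theorem 1.2]{caspersGraphProductsOperator2017a} to know $M_\Gamma$ is a II$_1$-factor, and \cref{Lem=Selfembedding} applied to a diffuse subalgebra of any vertex algebra $M_v$ to conclude $M_\Gamma \not\prec_{M_\Gamma} M_{\Gamma_0}$ for every strict subgraph $\Gamma_0 \subsetneq \Gamma$. This is precisely the hypothesis of \cref{lemma:irreducible-graph-prime-or-amenable}, which yields that $M_\Gamma$ is either prime or amenable. The second step is to rule out amenability: since $\Gamma$ is irreducible with $|\Gamma| \geq 2$, it cannot be complete (a complete graph on $\geq 2$ vertices is reducible, e.g. split off one vertex), so \cref{Thm=AmenableGraphProduct}, condition \eqref{Thm:Item=Amen2}, fails — any two non-adjacent vertices $v,w$ would need $\dim M_v = \dim M_w = 2$, contradicting that the $M_v$ are II$_1$-factors. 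Hence $M_\Gamma$ is non-amenable, and combining the two steps, $M_\Gamma$ is prime.

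The real content of the argument is already packaged inside \cref{lemma:irreducible-graph-prime-or-amenable} (and behind it \cref{lemma:ICC-group-not-embed-in-smaller-graph-extended}, which in turn rests on the embedding result \cref{prop:embed}\eqref{Item=Embed2} and the relative-amenability square \cref{Thm=Square}), so at the level of this theorem there is no genuine obstacle — it is a clean assembly. If I were reconstructing the proof from scratch without those lemmas, the hard part would be the ``prime or amenable'' dichotomy: given a hypothetical decomposition $M_\Gamma = N_1 \overline{\otimes} N_2$ with both $N_i$ diffuse, one must show that whenever $M_\Gamma$ does not embed into any proper subgraph algebra, the $N_i$ are forced to be amenable. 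This requires controlling the normalizer of a diffuse amenable $A \subseteq N_2$ (which contains $N_1$) via Vaes' amalgamated-free-product alternatives \cref{thm:prelim:alternatives} at each star decomposition $M_\Gamma = M_{\Star(v)} \ast_{M_{\Link(v)}} M_{\Gamma \setminus \{v\}}$, ruling out the embedding options using \cref{lemma:ICC-group-not-embed-in-smaller-graph-extended}, and then iterating \cref{Thm=Square} over all vertices $v$ using $\bigcap_{v} \Link(v) = \emptyset$ to push relative amenability down to $\mathbb{C}$. But since all of this is available, the proof as written simply cites \cref{lemma:irreducible-graph-prime-or-amenable}, \cref{Thm=AmenableGraphProduct}, \cref{Lem=Selfembedding}, and \cite[Theorem 1.2]{caspersGraphProductsOperator2017a}.
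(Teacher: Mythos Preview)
Your proposal is correct and follows essentially the same approach as the paper: both directions invoke \cite[Theorem 1.2]{caspersGraphProductsOperator2017a}, \cref{Lem=Selfembedding}, \cref{lemma:irreducible-graph-prime-or-amenable}, and \cref{Thm=AmenableGraphProduct} in the same way. Your extra remark that condition \eqref{Thm:Item=Amen2} fails because II$_1$-factors are not $2$-dimensional is a welcome elaboration of what the paper leaves implicit.
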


\subsection{Unique prime factorization results}
\label{sub:UPF-results-graph-products}
We prove \cref{thm:rigid-graph-decomposition-irreducible-components} which strengthens the statement of \cref{thm:rigid-graph-decomposition} by showing for irreducible components $\Gamma_0$ that $M_{\Gamma_0}$ is isomorphic to an amplification of $N_{\alpha(\Gamma_0)}$. We then use this result to prove \cref{thm:unique-prime-factorization} which establishes UPF results for the class $\Crigid$.

\begin{theorem}\label{thm:rigid-graph-decomposition-irreducible-components}
    Given a  finite rigid graph $\Gamma$. For each $v\in \Gamma$ let $M_v\in \Cvert$. Let $M_{\Gamma} = *_{v,\Gamma}(M_v,\tau_v)$ be the graph product. Suppose $M_{\Gamma} =*_{w,\Lambda}(N_w,\tau_w)$, with another rigid graph $\Lambda$ and other von Neumann algerbas $N_w\in\Cvert$. Let $\alpha:\Gamma\to \Lambda$ be the graph isomorphism from \cref{thm:rigid-graph-decomposition}. Then for any irreducible component $\Gamma_0\subseteq \Gamma$,  $M_{\Gamma_0}$ is isomorphic to an amplification of  $N_{\alpha(\Gamma_0)}$.
    \begin{proof}
    
    We observe that $M_{\Gamma\setminus \Gamma_0}' \cap M_{\Gamma} = M_{\Gamma_0}$ is non-amenable.
    Hence, by \cref{Thm=KeyAlternatives} we obtain a subgraph $\Lambda_0\subseteq \Lambda$ such that $M_{\Gamma\setminus \Gamma_0} \prec_{M_{\Gamma}} N_{\Lambda_0}$ and
    $\Link_{\Lambda}(\Lambda_0)\not=\varnothing$. Choose $\widetilde{\Lambda}_0\subseteq \Lambda_0$ minimal with the property that $M_{\Gamma\setminus \Gamma_0}\prec_{M_{\Gamma}} N_{\widetilde{\Lambda}_0}$. We show $\widetilde{\Lambda}_{0} = \alpha(\Gamma\setminus \Gamma_0)$.
    By \cref{prop:embed}\eqref{Item=Embed2} we have 
    $N_{\Lambda} = M_{\Gamma} = \Nor_{M_{\Gamma}}(M_{\Gamma\setminus \Gamma_0})'' \prec_{M_{\Gamma}} N_{\Lambda_{\emb}}$ where $\Lambda_{\emb} = \widetilde{\Lambda}_0 \cup \Link_{\Lambda}(\widetilde{\Lambda}_0)$.
    By \cref{Lem=Selfembedding} we conclude $\Lambda_{\emb} = \Lambda$.
    We note for $v\in \Gamma\setminus \Gamma_0$ that $N_{\alpha(v)}\prec_{M_{\Gamma}} M_{\Gamma\setminus \Gamma_0}$ and $M_{\Gamma\setminus \Gamma_0}\prec_{M_{\Gamma}}^s N_{\widetilde{\Lambda}_0}$ by \cref{Lem=StableEmbedding}\eqref{Item=StableEmbedding:condition}. Hence by \cref{Lem=StableEmbedding}\eqref{Item=StableEmbedding:transative} we obtain $N_{\alpha(v)}\prec_{M_{\Gamma}}N_{\widetilde{\Lambda}_0}$. Thus $\alpha(\Gamma\setminus \Gamma_0)\subseteq \widetilde{\Lambda}_0$ by \cref{Lem=Selfembedding}. Put $S = \widetilde{\Lambda}_0 \cap \alpha(\Gamma_0)$. Then $$S\cup \Link_{\alpha(\Gamma_0)}(S) = (\widetilde{\Lambda}_0\cup \Link_{\Lambda}(S))\cap  \alpha(\Gamma_0) \supseteq (\widetilde{\Lambda_0} \cup  \Link_{\Lambda}(\widetilde{\Lambda}_0)) \cap \alpha(\Gamma_0) = \alpha(\Gamma_0).$$ 
    Since the graph $\alpha(\Gamma_0)$ is irreducible, we conclude that $S = \varnothing$ or $S = \alpha(\Gamma_0)$.
    Now, if $S = \alpha(\Gamma_0)$ then $\alpha(\Gamma_0)\subseteq \widetilde{\Lambda}_0$, so that $\Lambda = \alpha(\Gamma_0)\cup \alpha(\Gamma\setminus \Gamma_0) \subseteq \widetilde{\Lambda}_0$ . But since $\widetilde{\Lambda}_0\subseteq \Lambda_0\subseteq \Lambda$ this implies $\Lambda_0 = \Lambda$, which contradicts the fact that $\Link_{\Lambda}(\Lambda_0)\not=\varnothing$. We conclude that $S = \varnothing$ and thus $\widetilde{\Lambda}_0 = \alpha(\Gamma\setminus \Gamma_0)$. 

    We have obtained $M_{\Gamma\setminus \Gamma_0} \prec_{M_{\Gamma_0}} N_{\alpha(\Gamma\setminus \Gamma_0)}$. Taking relative commutants, by \cref{Lem=RelativeCom}, we get $N_{\alpha(\Gamma_0)}\prec_{M_{\Gamma}}M_{\Gamma_0}$.
    Since we are dealing with II$_1$-factors, these embeddings are also with expectation, i.e. $N_{\alpha(\Gamma_0)}\preceq_{M_{\Gamma}} M_{\Gamma_0}$ as in \cite[Definition 4.1]{houdayerUniquePrimeFactorization2017}.
    Thus, since $M_{\Gamma} = M_{\Gamma_0}\overline{\otimes} M_{\Gamma\setminus \Gamma_0}$ we obtain by \cite[Lemma 4.13]{houdayerUniquePrimeFactorization2017} 
non-zero projections $p,q\in M_{\Gamma}$ and a partial isometry $v\in M_{\Gamma}$ with $v^*v =p$ and $vv^* =q$ and a subfactor $P\subseteq qN_{\alpha(\Gamma_0)}q$ so that
\begin{align*}
    qN_{\alpha(\Gamma_0)}q = vM_{\Gamma_0}v^* \overline{\otimes} P& &and& &vM_{\Gamma\setminus \Gamma_0}v^* = P\overline{\otimes} qN_{\alpha(\Gamma\setminus \Gamma_0)}q.
\end{align*}
By \cref{thm:primeness-for-graph-products-II1-factors} we have that $N_{\alpha(\Gamma_0)}$ is prime. Hence $qN_{\alpha(\Gamma_0)}q$ is prime. Thus, since $vM_{\Gamma_0}v^*$ is a II$_1$-factor, we obtain that $P$ is a type I$_n$ factor for some $n\in\NN$. We conclude that $N_{\alpha(\Gamma_0)}$ is isomorphic to some amplification of $M_{\Gamma_0}$. 
    \end{proof}
\end{theorem}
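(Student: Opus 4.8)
The plan is to run the same deformation–rigidity blueprint used in \cref{thm:rigid-graph-decomposition} for individual vertex algebras, but applied to the whole irreducible component $\Gamma_0$ at once, and then to upgrade the resulting two-sided intertwining into a genuine tensor splitting via the prime-factorization toolkit of \cite{houdayerUniquePrimeFactorization2017}. First I would record the structure of an irreducible component: since $\Gamma_0$ is an irreducible component, $\Link_\Gamma(\Gamma_0)=\Gamma\setminus\Gamma_0$, and a short check (no vertex lies in its own link) gives $\Link_\Gamma(\Gamma\setminus\Gamma_0)=\Gamma_0$ as well. Because the vertex algebras are factors, this yields $M_\Gamma=M_{\Gamma_0}\overline{\otimes}M_{\Gamma\setminus\Gamma_0}$ together with $M_{\Gamma\setminus\Gamma_0}'\cap M_\Gamma=M_{\Gamma_0}$, which is non-amenable. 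I then apply \cref{Thm=KeyAlternatives} to the diffuse subalgebra $Q=M_{\Gamma\setminus\Gamma_0}$: since its relative commutant is non-amenable, alternative \eqref{Item=MainOne} fails, so there is $\Lambda_0\subseteq\Lambda$ with $M_{\Gamma\setminus\Gamma_0}\prec_{M_\Gamma}N_{\Lambda_0}$ and $\Link_\Lambda(\Lambda_0)\neq\emptyset$. Passing to a minimal subgraph $\widetilde\Lambda_0\subseteq\Lambda_0$ with $M_{\Gamma\setminus\Gamma_0}\prec_{M_\Gamma}N_{\widetilde\Lambda_0}$, the goal becomes to identify $\widetilde\Lambda_0$ precisely with $\alpha(\Gamma\setminus\Gamma_0)$.

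The technical heart is this identification. For the inclusion $\alpha(\Gamma\setminus\Gamma_0)\subseteq\widetilde\Lambda_0$, I would note for each $v\in\Gamma\setminus\Gamma_0$ that $N_{\alpha(v)}\prec_{M_\Gamma}M_v\subseteq M_{\Gamma\setminus\Gamma_0}$ (the first embedding coming from \cref{thm:rigid-graph-decomposition}), and that $M_{\Gamma\setminus\Gamma_0}\prec_{M_\Gamma}^{s}N_{\widetilde\Lambda_0}$ by \cref{Lem=StableEmbedding}\eqref{Item=StableEmbedding:condition} since $M_{\Gamma\setminus\Gamma_0}'\cap M_\Gamma=M_{\Gamma_0}$ is a factor; transitivity (\cref{Lem=StableEmbedding}\eqref{Item=StableEmbedding:transative}) then gives $N_{\alpha(v)}\prec_{M_\Gamma}N_{\widetilde\Lambda_0}$, whence $\alpha(v)\in\widetilde\Lambda_0$ by \cref{Lem=Selfembedding}. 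For the reverse, I use that $\Nor_{M_\Gamma}(M_{\Gamma\setminus\Gamma_0})''=M_\Gamma$ and apply \cref{prop:embed}\eqref{Item=Embed2} with the minimality of $\widetilde\Lambda_0$ to obtain $M_\Gamma\prec_{M_\Gamma}N_{\Lambda_{\emb}}$ where $\Lambda_{\emb}=\widetilde\Lambda_0\cup\Link_\Lambda(\widetilde\Lambda_0)$; since $M_\Gamma$ is a factor, this embedding is stable, so each vertex algebra $N_w$ embeds into $N_{\Lambda_{\emb}}$ and \cref{Lem=Selfembedding} forces $\Lambda_{\emb}=\Lambda$.

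With $\widetilde\Lambda_0\cup\Link_\Lambda(\widetilde\Lambda_0)=\Lambda$ in hand, I set $S:=\widetilde\Lambda_0\cap\alpha(\Gamma_0)$ and compute, using $S\subseteq\widetilde\Lambda_0$ (so $\Link_\Lambda(S)\supseteq\Link_\Lambda(\widetilde\Lambda_0)$),
\[
S\cup\Link_{\alpha(\Gamma_0)}(S)=\bigl(\widetilde\Lambda_0\cup\Link_\Lambda(S)\bigr)\cap\alpha(\Gamma_0)=\alpha(\Gamma_0).
\]
Since $\alpha$ is a graph isomorphism, $\alpha(\Gamma_0)$ is again an irreducible component of $\Lambda$, hence irreducible; as $S\cap\Link_{\alpha(\Gamma_0)}(S)=\emptyset$, the irreducibility dichotomy forces $S=\emptyset$ or $S=\alpha(\Gamma_0)$. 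The case $S=\alpha(\Gamma_0)$ would give $\widetilde\Lambda_0\supseteq\alpha(\Gamma_0)\cup\alpha(\Gamma\setminus\Gamma_0)=\Lambda$, so $\Lambda_0=\Lambda$, contradicting $\Link_\Lambda(\Lambda_0)\neq\emptyset$. Thus $S=\emptyset$, and combined with $\alpha(\Gamma\setminus\Gamma_0)\subseteq\widetilde\Lambda_0$ this yields $\widetilde\Lambda_0=\alpha(\Gamma\setminus\Gamma_0)$.

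Finally I convert this into the amplification statement. Taking relative commutants of $M_{\Gamma\setminus\Gamma_0}\prec_{M_\Gamma}N_{\alpha(\Gamma\setminus\Gamma_0)}$ via \cref{Lem=RelativeCom}, and using $N_{\alpha(\Gamma\setminus\Gamma_0)}'\cap M_\Gamma=N_{\alpha(\Gamma_0)}$ together with $M_{\Gamma\setminus\Gamma_0}'\cap M_\Gamma=M_{\Gamma_0}$, gives $N_{\alpha(\Gamma_0)}\prec_{M_\Gamma}M_{\Gamma_0}$. As all algebras are II$_1$-factors the embedding is automatically with expectation, so \cite[Lemma 4.13]{houdayerUniquePrimeFactorization2017} applied to $M_\Gamma=M_{\Gamma_0}\overline{\otimes}M_{\Gamma\setminus\Gamma_0}$ produces a corner $qN_{\alpha(\Gamma_0)}q=vM_{\Gamma_0}v^*\overline{\otimes}P$ for a subfactor $P$. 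Primeness of $N_{\alpha(\Gamma_0)}$ (from \cref{thm:primeness-for-graph-products-II1-factors}, since $\alpha(\Gamma_0)$ is irreducible) together with $vM_{\Gamma_0}v^*$ being a II$_1$-factor forces $P$ to be a type I$_n$ factor, so $qN_{\alpha(\Gamma_0)}q$ is an amplification of $M_{\Gamma_0}$; being also an amplification of $N_{\alpha(\Gamma_0)}$, we conclude $M_{\Gamma_0}\simeq N_{\alpha(\Gamma_0)}^{t_0}$ for some $t_0\in(0,\infty)$. I expect the main obstacle to be the combinatorial identification $\widetilde\Lambda_0=\alpha(\Gamma\setminus\Gamma_0)$, which requires threading the minimal-embedding subgraph through the normalizer-embedding result and the link computation, and crucially exploiting irreducibility of the component to obtain the clean dichotomy.
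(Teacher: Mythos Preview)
Your proposal is correct and follows essentially the same approach as the paper's proof: apply \cref{Thm=KeyAlternatives} to $M_{\Gamma\setminus\Gamma_0}$, pass to a minimal $\widetilde\Lambda_0$, identify $\widetilde\Lambda_0=\alpha(\Gamma\setminus\Gamma_0)$ via \cref{prop:embed}\eqref{Item=Embed2}, \cref{Lem=Selfembedding}, \cref{Lem=StableEmbedding} and the irreducibility dichotomy on $S=\widetilde\Lambda_0\cap\alpha(\Gamma_0)$, then take relative commutants and finish with \cite[Lemma 4.13]{houdayerUniquePrimeFactorization2017} together with primeness from \cref{thm:primeness-for-graph-products-II1-factors}. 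The steps, the auxiliary lemmas invoked, and the combinatorial argument all coincide with the paper's proof.
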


\begin{theorem}\label{thm:unique-prime-factorization}
    Any von Neumann algebra $M\in \Crigid^f$ have a prime factorization inside $\Crigid^f$, i.e. \begin{align}
        M = M_1\overline{\otimes} \cdots \overline{\otimes}M_m,
        \end{align}
        for some $1\leq m< \infty$ and prime factors $M_1,\ldots, M_m\in \Crigid^f$.
    
    Suppose there is another prime factorization of $M$ inside $\Crigid^f$, i.e.
    \begin{align}
        M =N_1\overline{\otimes} \cdots \overline{\otimes} N_n,
    \end{align}
    for another $1\leq n< \infty$ and other prime factors $N_1,\ldots,N_n\in \Crigid^f$. Then $m=n$ and there is a permutation $\sigma$ of $\{1,\ldots, m\}$ such that $M_{i}$ is isomorphic to some amplification of $N_{\sigma(i)}$.
    \begin{proof}
        Since $M\in \Crigid^f$, we can write $M = *_{v,\Gamma}(M_v,\tau_v)$ for some finite rigid graph $\Gamma$ and some $M_v\in \Cvert$ for $v\in \Gamma$. Let $\Gamma_1,\ldots, \Gamma_m   (1\leq m< \infty)$ be the irreducible components of $\Gamma$. Let $\Pi = \{1,\ldots,m\}$ be the complete graph with $m$ vertices and put $M_i = M_{\Gamma_i}$ for $i\in \Pi$.  Then since $\Gamma = \Gamma_{\Pi}$ we have by \cref{remark:graph-product-of-graphs-consistency} that $M = *_{v,\Gamma}(M_v,\tau_v) = *_{i,\Pi}(*_{v,\Gamma_i}(M_{v},\tau_v)) = M_1 \overline{\otimes} \cdots \overline{\otimes} M_m$. Now, for $i\in \Pi$ we have by \cref{thm:primeness-for-graph-products-II1-factors} that $M_i$ is prime since $\Gamma_i$ is irreducible. Note furthermore that $\Gamma_i$ is rigid by \cref{remark:rigid-components} and hence $M_i\in \Crigid^f$. 

        Now since $N_i\in \Crigid^f$ for $i\in\{1,2,\ldots,n\}$, we can write $N_i = *_{v,\Lambda_i}(N_v,\tau_v)$ for some non-empty, rigid graph $\Lambda_i$. We note that $\Lambda_i$ is irreducible by \cref{thm:primeness-for-graph-products-II1-factors} since $N_i$ is prime.         
        Let $\Pi' = \{1,\ldots, n\}$ be a complete graph with $n$ vertices and put $\Lambda := \Lambda_{\Pi}$ which is rigid by \cref{lemma:graph-product-of-rigid-graphs}. Then by \cref{remark:graph-product-of-graphs-consistency} we have $M = N_1\overline{\otimes} \cdots \overline{\otimes} N_n = *_{i,\Pi}(N_i,\tau_i) = *_{i,\Pi}(*_{v,\Lambda_i}(N_{v},\tau_v)) = *_{v,\Lambda}(N_v,\tau_v)$. Hence, we can apply \cref{thm:rigid-graph-decomposition} to obtain a graph isomorphism  $\alpha:\Gamma\to \Lambda$. We note that $\Lambda_1,\ldots, \Lambda_n$ are the irreducible components of $\Lambda$ and that $\Gamma_1,\ldots, \Gamma_m$ are the irreducible components of $\Gamma$. Since $\alpha$ is a graph isomorphism, this implies that $m=n$ and that there is a permutation $\sigma$ of $\{1,\ldots, m\}$ such that
        $\alpha(\Gamma_i) = \Lambda_{\sigma(i)}$. Now, for $1\leq i< m$ we obtain   by \cref{thm:rigid-graph-decomposition-irreducible-components}   a real number $0<t_i<\infty$ such that $M_i = M_{\Gamma_i} \simeq N_{\alpha(\Gamma_i)}^{t_i} = N_{\Lambda_{\sigma(i)}}^{t_i} = N_{\sigma(i)}^{t_i}$.
    \end{proof}
\end{theorem}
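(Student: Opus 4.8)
The plan is to combine the structural decomposition results already proved in the paper — namely \cref{thm:rigid-graph-decomposition}, \cref{thm:primeness-for-graph-products-II1-factors}, and the consistency of iterated graph products from \cref{remark:graph-product-of-graphs-consistency} — together with the purely graph-theoretic observation that the irreducible components of a graph $\Gamma$ are exactly the ``atoms'' that witness $\Gamma$ as a complete-graph-indexed graph product. First I would establish existence of a prime factorization: given $M\in\Crigid$, write $M = \ast_{v,\Gamma}(M_v,\tau_v)$ with $\Gamma$ a finite rigid graph and $M_v\in\Cvert$; decompose $\Gamma$ into its irreducible components $\Gamma_1,\dots,\Gamma_m$; observe $\Gamma = \Gamma_\Pi$ for $\Pi$ the complete graph on $m$ vertices, so by \cref{remark:graph-product-of-graphs-consistency} we get $M = M_{\Gamma_1}\overline{\otimes}\cdots\overline{\otimes}M_{\Gamma_m}$; each $M_{\Gamma_i}$ is prime by \cref{thm:primeness-for-graph-products-II1-factors} (since $\Gamma_i$ is irreducible, and one must note $|\Gamma_i|\geq 1$ — if $|\Gamma_i|=1$ primeness of $M_v\in\Cvert$ is already known from \cref{remark:classes-graph-products}\eqref{Item=class-Cvert}) and lies in $\Crigid$ because $\Gamma_i$ is rigid by \cref{remark:rigid-components}.

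For uniqueness, I would take a second prime factorization $M = N_1\overline{\otimes}\cdots\overline{\otimes}N_n$ inside $\Crigid$, write each $N_i = \ast_{w,\Lambda_i}(N_w,\tau_w)$ with $\Lambda_i$ finite rigid, and note $\Lambda_i$ must be irreducible by \cref{thm:primeness-for-graph-products-II1-factors} (again handling $|\Lambda_i|=1$ separately). Setting $\Pi'$ the complete graph on $n$ vertices and $\Lambda := \Lambda_{\Pi'}$ (rigid by \cref{lemma:graph-product-of-rigid-graphs}), \cref{remark:graph-product-of-graphs-consistency} gives $M = \ast_{w,\Lambda}(N_w,\tau_w)$. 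Now \cref{thm:rigid-graph-decomposition} applies to the two rigid graph product decompositions $\ast_{v,\Gamma}(M_v,\tau_v)$ and $\ast_{w,\Lambda}(N_w,\tau_w)$ of $M$, yielding a graph isomorphism $\alpha:\Gamma\to\Lambda$. Since graph isomorphisms carry irreducible components to irreducible components, and $\{\Gamma_i\}$ resp. $\{\Lambda_j\}$ are precisely the irreducible components of $\Gamma$ resp. $\Lambda$, we conclude $m=n$ and obtain a permutation $\sigma$ with $\alpha(\Gamma_i)=\Lambda_{\sigma(i)}$. Finally, \cref{thm:rigid-graph-decomposition-irreducible-components} gives for each $i$ that $M_i = M_{\Gamma_i}$ is isomorphic to an amplification of $N_{\alpha(\Gamma_i)} = N_{\Lambda_{\sigma(i)}} = N_{\sigma(i)}$, i.e. $M_i$ is stably isomorphic to $N_{\sigma(i)}$.

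The main obstacle — really the only non-bookkeeping point — is making sure the hypotheses of \cref{thm:rigid-graph-decomposition} and \cref{thm:rigid-graph-decomposition-irreducible-components} are genuinely met: one needs that $\Lambda = \Lambda_{\Pi'}$ is rigid (this is exactly \cref{lemma:graph-product-of-rigid-graphs}, since each $\Lambda_j$ is rigid and $\Pi'$ complete so condition \eqref{item:rigid-graph-products:rigidity-at-vertex} holds at every vertex) and that the vertex algebras appearing are in $\Cvert$ (immediate from $N_i\in\Crigid$). A second subtlety is the degenerate case $|\Gamma_i|=1$ or $|\Lambda_j|=1$: \cref{thm:primeness-for-graph-products-II1-factors} is stated for $|\Gamma|\geq 2$, so for singleton components one invokes instead the known primeness of algebras in $\Cvert$; one should remark this explicitly. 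Everything else is a direct assembly of the cited theorems, so the proof is essentially the one sketched above; I would write it at roughly the length of the existing \cref{thm:unique-prime-factorization} proof in the excerpt, perhaps with the two small remarks about degenerate components and rigidity of $\Lambda$ spelled out.

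\begin{proof}
Since $M\in\Crigid$, we can write $M = \ast_{v,\Gamma}(M_v,\tau_v)$ for some finite rigid graph $\Gamma$ and factors $M_v\in\Cvert$, $v\in\Gamma$. Let $\Gamma_1,\dots,\Gamma_m$ be the irreducible components of $\Gamma$. Let $\Pi = \{1,\dots,m\}$ be the complete graph on $m$ vertices, so that $\Gamma = \Gamma_\Pi$ with vertex graphs $\Gamma_i$. Put $M_i := M_{\Gamma_i}$. By \cref{remark:graph-product-of-graphs-consistency} we then have $M = \ast_{v,\Gamma}(M_v,\tau_v) = \ast_{i,\Pi}\bigl(\ast_{v,\Gamma_i}(M_v,\tau_v)\bigr) = M_1\overline{\otimes}\cdots\overline{\otimes}M_m$. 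Each $\Gamma_i$ is rigid by \cref{remark:rigid-components}, so $M_i\in\Crigid$. Moreover $M_i$ is prime: if $|\Gamma_i|\geq 2$ this is \cref{thm:primeness-for-graph-products-II1-factors} since $\Gamma_i$ is irreducible, while if $|\Gamma_i| = 1$ then $M_i = M_v$ for some $v$, which is prime by \cref{remark:classes-graph-products}\eqref{Item=class-Cvert}. This proves existence of a prime factorization inside $\Crigid$.

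Now suppose $M = N_1\overline{\otimes}\cdots\overline{\otimes}N_n$ is another prime factorization inside $\Crigid$. For each $1\leq i\leq n$ we can write $N_i = \ast_{w,\Lambda_i}(N_w,\tau_w)$ for some non-empty finite rigid graph $\Lambda_i$ and factors $N_w\in\Cvert$. Since $N_i$ is prime, $\Lambda_i$ is irreducible: if $|\Lambda_i|\geq 2$ this is \cref{thm:primeness-for-graph-products-II1-factors}, while if $|\Lambda_i| = 1$ then $\Lambda_i$ is trivially irreducible. Let $\Pi' = \{1,\dots,n\}$ be the complete graph on $n$ vertices and put $\Lambda := \Lambda_{\Pi'}$. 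Since each $\Lambda_i$ is rigid and $\Pi'$ is complete, condition \eqref{item:rigid-graph-products:rigidity-at-vertex} of \cref{lemma:graph-product-of-rigid-graphs} holds at every vertex of $\Pi'$, so $\Lambda$ is rigid. By \cref{remark:graph-product-of-graphs-consistency} we have $M = N_1\overline{\otimes}\cdots\overline{\otimes}N_n = \ast_{i,\Pi'}(N_i,\tau_i) = \ast_{i,\Pi'}\bigl(\ast_{w,\Lambda_i}(N_w,\tau_w)\bigr) = \ast_{w,\Lambda}(N_w,\tau_w)$.

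We have thus exhibited $M$ as a graph product over the rigid graph $\Gamma$ with vertex algebras in $\Cvert$ and also as a graph product over the rigid graph $\Lambda$ with vertex algebras in $\Cvert$. By \cref{thm:rigid-graph-decomposition} there is a graph isomorphism $\alpha:\Gamma\to\Lambda$. A graph isomorphism maps irreducible components to irreducible components; since $\Gamma_1,\dots,\Gamma_m$ are the irreducible components of $\Gamma$ and $\Lambda_1,\dots,\Lambda_n$ are the irreducible components of $\Lambda$, we conclude that $m = n$ and that there is a permutation $\sigma$ of $\{1,\dots,m\}$ with $\alpha(\Gamma_i) = \Lambda_{\sigma(i)}$ for all $i$. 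Finally, by \cref{thm:rigid-graph-decomposition-irreducible-components}, for each $1\leq i\leq m$ the algebra $M_i = M_{\Gamma_i}$ is isomorphic to an amplification of $N_{\alpha(\Gamma_i)} = N_{\Lambda_{\sigma(i)}} = N_{\sigma(i)}$. Hence $M_i$ is stably isomorphic to $N_{\sigma(i)}$ for $1\leq i\leq m$.
\end{proof}
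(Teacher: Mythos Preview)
Your proof is correct and follows essentially the same approach as the paper's own proof: decompose $\Gamma$ into irreducible components for existence, then for uniqueness assemble the $\Lambda_i$ into a single rigid graph $\Lambda$ via \cref{lemma:graph-product-of-rigid-graphs}, apply \cref{thm:rigid-graph-decomposition} to get the graph isomorphism $\alpha$, and read off the permutation $\sigma$ from the action of $\alpha$ on irreducible components. Your version is in fact slightly more careful than the paper's in two places: you explicitly handle the degenerate case $|\Gamma_i|=1$ (where \cref{thm:primeness-for-graph-products-II1-factors} does not literally apply and one must invoke primeness of algebras in $\Cvert$ directly), and you cite \cref{thm:rigid-graph-decomposition-irreducible-components} rather than \cref{thm:rigid-graph-decomposition} for the final stable isomorphism $M_{\Gamma_i}\simeq N_{\alpha(\Gamma_i)}^{t_i}$, which is the correct reference since \cref{thm:rigid-graph-decomposition} alone only gives amplifications at the level of individual vertices.
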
 

\begin{remark}\label{remark:new-UPF-results}
    In \cref{figure:graph-UPF-result} we give an example of a von Neumann algebra  for which we obtain a unique prime factorization. This example was not yet covered by \cite[Theorem A]{houdayerUniquePrimeFactorization2017} since the graph $\Gamma$ is not complete. The example is also not covered by \cite[Theorem 6.16]{chifanTensorProductDecompositions2018} in case the vertex von Neumann algebras $M_v \in \Cvert$ are not known to be group von Neumann algebras. Examples of such $M_v$ can be found  as von Neumann algebras of free orthogonal quantum groups \cite{VaesVergnioux}  or $q$-Gaussian algebras of finite dimensional Hilbert spaces and $q \in (-1, 1)$ sufficiently far away from 0, see \cite[Remark 4.5]{borstIsomorphismClassGaussian2023} which is essentially proved in \cite{Kuzmin}.  We emphasize that it is not known whether such von Neumann algebras are group von Neumann algebras; we do not make the more definite claim that they cannot be isomorphic to group von Neumann algebras.  
\end{remark}

\begin{figure}[h!]	
		\begin{tikzpicture}[baseline]
			% place nodes
			\node at (-2,2) 	  	(La) {$a$};
			\node at (-1.5,1) 	  	(Lb) {$b$};
			\node at (-1.3,0) 	  	(Lc) {$c$};
			\node at (-1.5,-1) 	  	(Ld) {$d$};
                \node at (-2,-2) 	  	(Le) {$e$};
			
			\node at (2,2) 	  	    (Ra) {$f$};
			\node at (1.5,1) 	  	(Rb) {$g$};
			\node at (1.3,0) 	  	    (Rc) {$h$};
			\node at (1.5,-1) 	  	(Rd) {$i$};
                \node at (2,-2) 	  	(Re) {$j$};

			% draw edges
			\draw[-] (La) -- node {} (Lb);
                \draw[-] (Lb) -- node {} (Lc);
                \draw[-] (Lc) -- node {} (Ld);
                \draw[-] (Ld) -- node {} (Le);
                \draw[-] (Le) -- node {} (La);

                \draw[-] (Ra) -- node {} (Rb);
                \draw[-] (Rb) -- node {} (Rc);
                \draw[-] (Rc) -- node {} (Rd);
                \draw[-] (Rd) -- node {} (Re);
                \draw[-] (Re) -- node {} (Ra);

                \draw[-] (La) -- node {} (Ra);
                \draw[-] (La) -- node {} (Rb);
                \draw[-] (La) -- node {} (Rc);
                \draw[-] (La) -- node {} (Rd);
                \draw[-] (La) -- node {} (Re);

                \draw[-] (Lb) -- node {} (Ra);
                \draw[-] (Lb) -- node {} (Rb);
                \draw[-] (Lb) -- node {} (Rc);
                \draw[-] (Lb) -- node {} (Rd);
                \draw[-] (Lb) -- node {} (Re);

                \draw[-] (Lc) -- node {} (Ra);
                \draw[-] (Lc) -- node {} (Rb);
                \draw[-] (Lc) -- node {} (Rc);
                \draw[-] (Lc) -- node {} (Rd);
                \draw[-] (Lc) -- node {} (Re);

                \draw[-] (Ld) -- node {} (Ra);
                \draw[-] (Ld) -- node {} (Rb);
                \draw[-] (Ld) -- node {} (Rc);
                \draw[-] (Ld) -- node {} (Rd);
                \draw[-] (Ld) -- node {} (Re);

                \draw[-] (Le) -- node {} (Ra);
                \draw[-] (Le) -- node {} (Rb);
                \draw[-] (Le) -- node {} (Rc);
                \draw[-] (Le) -- node {} (Rd);
                \draw[-] (Le) -- node {} (Re);
                
		\end{tikzpicture}

		\caption{An example of a rigid graph $\Gamma$ is depicted. Put $M_v \in \Cvert$  for $v\in \Gamma$. Then \cref{thm:unique-prime-factorization} obtains for $M_{\Gamma} = *_{v,\Gamma}(M_v,\tau_v)$ the unique prime factorization $M_{\Gamma} = M_{\Gamma_1}\overline{\otimes} M_{\Gamma_2}$, where  $\Gamma_1 = \{a,b,c,d,e\}$ and $\Gamma_2 = \{f,g,h,i,j\}$ are the irreducible components of $\Gamma$. }
     \label{figure:graph-UPF-result}
\end{figure}
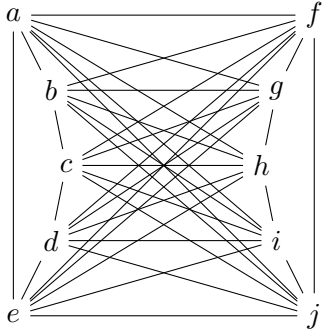

\subsection{Primeness results for other graph products}
\label{sub:primeness-general-setting}
In case the von Neumann algebras $M_v$ are not (all) type II$_1$-factors, it is interesting to know whether the condition $M_{\Gamma}\not\prec_{M_{\Gamma}} M_{\Gamma_0}$ for any strict subgraph
$\Gamma_0\subsetneq \Gamma$, is satisfied. In \cref{lemma:ICC-group-not-embed-in-smaller-graph} we will give sufficient conditions for the property to hold. To prove this, we need the following lemma.
\begin{lemma}\label{lemma:expectation-conjugation}
	Let $\Gamma$ be a simple graph and for $v\in \Gamma$ let $(M_{v},\tau_v)$ be a tracial von Neumann algebra. Let $\Lambda\subseteq \Gamma$ be a subgraph and let $\uu\in \calW_{\Gamma}\setminus \calW_{\Lambda}$. Let $\vv,\vv'\in \calW_{\Gamma}$ be such that every letters at the start of $\vv$ respectively $\vv'$ does not commute with any letters at the end of $\uu^{-1}$ respectively $\uu$. Let $\ww,\ww'\in \calW_{\Gamma}$ with $|\ww|\leq |\vv|$ and $|\ww'|\leq |\vv'|$. Then 
	\begin{align}
		\EE_{M_{\Lambda}}(axb) = 0 &\quad \quad \text{ for } a\in \mathring{M}_{\ww}, x \in \mathring{M}_{\vv^{-1} \uu\vv'}, b\in \mathring{M}_{\ww'}.
	\end{align}
	\begin{proof}
		Let $\uu,\vv,\vv',\ww,\ww'$ be given as stated. Observe by the assumptions on $\vv$ and $\vv'$ that in particular $\vv^{-1}\uu\vv'$ is reduced. 
		Denote 
		\begin{align}
			\calH(\uu) &:=\bigoplus_{\ww_0\in \calW(\uu)} \mathring{\calH}_{\ww_0},	&
			\boldM(\uu) &:= \bigoplus_{\ww_0\in \calW(\uu)} \mathring{\boldM}_{\ww_0}.	
		\end{align}	
		Observe for $y_1\in \lambda(\boldM(\uu^{-1}))$, $y_2\in \mathring{M}_{\uu}$ and $y_3\in \lambda(\boldM(\uu))$ that if we denote $y:=y_1^*y_2y_3$ and write $y = \sum_{\ww\in \calW_{\Gamma}}y_{\ww}$ where $y_{\ww}\in \mathring{M}_{\ww}$, then we have that $y_{\ww} = 0$ whenever $\ww$ does not contain $\uu$ as a subword. Thus, in particular $\EE_{M_{\Lambda}}(y_1^*y_2y_3) = 0$. We will apply this to obtain the result.

		Let $x\in \mathring{\boldM}_{\vv^{-1}\uu\vv'}$ be a pure tensor, and let $x_1\in \mathring{\boldM}_{\vv}$, $x_2\in \mathring{\boldM}_{\uu}$ and  $x_3\in \mathring{\boldM}_{\vv'}$ be s.t.$\lambda(x) = \lambda(x_1)^*\lambda(x_2)\lambda(x_3)$. Let $a\in \mathring{M}_{\ww}$ and  $b\in \mathring{M}_{\ww'}$. 
		Let $\omega \in \calS_{\vv'}$, then we can write $\omega = (\vv_1',\vv_2',\vv_3')$ for some $\vv_1',\vv_2',\vv_3'\in \calW_{\Gamma}$ with $\vv' = \vv_1'\vv_2'\vv_3'$. 

        By \cref{lemma:word_action_non_zero} we have $\eta_{\omega}:=\lambda_{(\vv_1',\vv_2',\vv_3')}(x_3)b\Omega\in \mathring{\calH}_{\vv_0'}$ where $\vv_0' = \vv_1'\vv_3'\ww'$. We show that $\eta_{\omega}\in \calH(\uu)$. In particular, we can assume that $\eta_{\omega}$ is non-zero, so that $\ww'$ starts with $(\vv_3')^{-1}\vv_2'$ and $\vv_0'$ starts with $\vv_1'\vv_2'$. If $\vv_1'\vv_2' =e$ then $\vv_3' = \vv'$, so that $|\vv_3'| + |\vv_3'\ww'| = |\ww'|\leq |\vv'| = |\vv_3'|$ and therefore $\vv_3'\ww' = e$. We then conclude that $\eta_{\omega}\in \mathring{\calH}_{e}\subseteq \calH(\uu)$. Thus, suppose $\vv_1'\vv_2'\not=e$.
	Then $\vv_1'\vv_2'\ww_0'$ ($=\vv_0'$) starts with a letter $v_0'$ at the start of $\vv'$. Now, by the assumption on $\vv'$ we obtain that $v_0'$ does not commute with elements at the end of $\uu$. This implies that $\uu\vv_0'$ is reduced and so $\eta_{\omega}\in \calH(\uu)$. Now, as $\lambda(x_3)\lambda(b)\Omega = \sum_{\omega\in \calS_{\vv'}}\lambda_{\omega}(x_3)\lambda(b)\Omega \in \calH(\uu)$ we obtain that $y_3:=\lambda(x_3)\lambda(b)\in \boldM(\uu)$. In a similar way we obtain $y_1:= \lambda(x_1)\lambda(a)^*\in \boldM(\uu^{-1})$. Hence, putting $y_2 :=\lambda(x_2)$ we obtain that $\EE_{M_{\Lambda}}(\lambda(a)\lambda(x)\lambda(b)) = \EE_{M_{\Lambda}}(y_1^*y_2y_3)=0$. The result now follows by density of $\lambda(\mathring{\boldM}_{\zz})\subseteq \mathring{M}_{\zz}$ for $\zz\in \calW_{\Gamma}$.
    \end{proof}
\end{lemma}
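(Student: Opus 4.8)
The plan is to show that the operator $\lambda(a)\lambda(x)\lambda(b)$, when written as a sum $\sum_{\zz\in\calW_\Gamma}y_\zz$ of its word-homogeneous components $y_\zz\in\mathring{M}_\zz$, is supported only on words $\zz$ that contain $\uu$ as a subword. Since $\uu\notin\calW_\Lambda$, no element of $\calW_\Lambda$ admits $\uu$ as a subword, so the defining formula for the conditional expectation gives $\EE_{M_\Lambda}(\lambda(a)\lambda(x)\lambda(b))=\sum_{\zz\in\calW_\Lambda}y_\zz=0$. By $\|\cdot\|_2$-density of $\lambda(\mathring{\boldM}_\zz)$ in $\mathring{M}_\zz$, together with $\|\cdot\|_2$-continuity of $\EE_{M_\Lambda}$ and boundedness of $a$ and $b$, it suffices to treat the case that $a,b$ are reduced operators on $\ww,\ww'$; moreover, since the non-commutation hypotheses force $\vv^{-1}\uu\vv'$ to be reduced, any pure tensor in $\mathring{\boldM}_{\vv^{-1}\uu\vv'}$ factorises, and we may write $\lambda(x)=\lambda(x_1)^*\lambda(x_2)\lambda(x_3)$ for pure tensors $x_1\in\mathring{\boldM}_\vv$, $x_2\in\mathring{\boldM}_\uu$, $x_3\in\mathring{\boldM}_{\vv'}$.

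The key step is to \emph{absorb} $b$ into the right factor and $a$ into the left one: I claim $y_3:=\lambda(x_3)\lambda(b)$ lies in $\lambda(\boldM(\uu))$ and, symmetrically, $y_1:=\lambda(x_1)\lambda(a)^*$ lies in $\lambda(\boldM(\uu^{-1}))$, and it is enough to prove $y_3\Omega\in\calH(\uu)$. Using \cref{prelim:lemma-actions-graph-products} I would expand $\lambda(x_3)=\sum_{\omega\in\calS_{\vv'}}\lambda_\omega(x_3)$; writing $\omega=(\vv_1',\vv_2',\vv_3')$, \cref{lemma:word_action_non_zero} says that $\lambda_\omega(x_3)\lambda(b)\Omega$ lies in $\mathring{\calH}_{\vv_0'}$ with $\vv_0'=\vv_1'\vv_3'\ww'$, and, if it is non-zero, that $\ww'$ starts with $(\vv_3')^{-1}\vv_2'$ and $\vv_0'$ starts with $\vv_1'\vv_2'$. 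When $\vv_1'\vv_2'=e$ one has $\vv_3'=\vv'$, and combining the two prefix conditions with the hypothesis $|\ww'|\le|\vv'|$ forces $\vv_0'=e$, so $\lambda_\omega(x_3)\lambda(b)\Omega\in\mathring{\calH}_e\subseteq\calH(\uu)$. When $\vv_1'\vv_2'\ne e$ the first letter of $\vv_0'$ is a start-letter of $\vv'$, which by hypothesis does not commute with any end-letter of $\uu$; hence $\uu\vv_0'$ is reduced, i.e. $\vv_0'\in\calW(\uu)$, and again $\lambda_\omega(x_3)\lambda(b)\Omega\in\calH(\uu)$. Summing over $\omega$ gives $y_3\Omega\in\calH(\uu)$, and the claim for $y_1$ follows by the same argument with $(\vv,\ww,\uu^{-1})$ in place of $(\vv',\ww',\uu)$, using the other half of the non-commutation hypothesis and $|\ww|\le|\vv|$.

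With the absorption in hand, $\lambda(a)\lambda(x)\lambda(b)=y_1^*\,\lambda(x_2)\,y_3$ with $y_1\in\lambda(\boldM(\uu^{-1}))$, $\lambda(x_2)\in\mathring{M}_\uu$ and $y_3\in\lambda(\boldM(\uu))$. Writing $y_1^*$ and $y_3$ as linear combinations of reduced operators on words $\zz_1$ with $\zz_1\uu$ reduced, respectively on words $\zz_3$ with $\uu\zz_3$ reduced, the graph-product multiplication rule shows that no cancellation in the ensuing product can involve a letter of $x_2$ (such a letter would have to cancel against $\zz_1$, forbidden by $\zz_1\uu$ reduced, or against $\zz_3$, forbidden by $\uu\zz_3$ reduced). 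Hence every word-homogeneous component admits a reduced expression with the block $\uu$ surviving in the middle, so $\lambda(a)\lambda(x)\lambda(b)$ is supported on words containing $\uu$ as a subword, and we conclude as in the first paragraph.

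The main obstacle will be the absorption step, and in particular the combinatorial bookkeeping inside \cref{lemma:word_action_non_zero}: one must feed the length constraints $|\ww|\le|\vv|$, $|\ww'|\le|\vv'|$ together with the non-commutation hypotheses into the shuffle combinatorics so as to guarantee that the word carrying each image vector remains left-prependable by $\uu^{\pm1}$. The concluding observation about products of operators supported on $\calW'(\uu)$, $\mathring{M}_\uu$ and $\calW(\uu)$, and the reduction by density, are then routine.
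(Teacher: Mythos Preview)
Your proposal is correct and follows essentially the same approach as the paper's proof: the same factorisation $\lambda(x)=\lambda(x_1)^*\lambda(x_2)\lambda(x_3)$, the same absorption step showing $\lambda(x_3)\lambda(b)\in\lambda(\boldM(\uu))$ via the decomposition $\lambda(x_3)=\sum_{\omega\in\calS_{\vv'}}\lambda_\omega(x_3)$ and the same two-case analysis from \cref{lemma:word_action_non_zero}, and the same concluding observation that $y_1^*\lambda(x_2)y_3$ is supported on words containing $\uu$ as a subword.
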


\begin{corollary}
	Let $\Gamma$ be a simple graph, $\Lambda\subseteq \Gamma$ be a subgraph and let $\uu\in \calW_{\Gamma}\setminus \calW_{\Lambda}$. Let $\vv,\vv'\in \calW_{\Gamma}$ be such that every letters at the start of $\vv$ respectively $\vv'$ does not commute with any letters at the end of $\uu^{-1}$ respectively $\uu$. Let $\ww,\ww'\in \calW_{\Gamma}$ with $|\ww|\leq |\vv|$ and $|\ww'|\leq |\vv'|$. Then $\ww\vv^{-1} \uu\vv'\ww'\not\in \calW_{\Lambda}$.
		\begin{proof}
			 For $v\in \Gamma$ let $M_{v} := \calL(\ZZ/2\ZZ)$, so that $M_{\Gamma}= \calL(\calW_{\Gamma})$. Take $a = \lambda_{\ww}$, $x = \lambda_{\vv^{-1}\uu\vv'}$ and $b = \lambda_{\ww'}$. Then \cref{lemma:expectation-conjugation} shows that  $\EE_{M_{\Gamma\setminus \Lambda}}(\lambda_{\ww\vv^{-1}\uu\vv'\ww}) = \EE_{M_{\Lambda}}(axb) = 0$. This means that $\ww_1\vv^{-1}\uu\vv\ww_2 \not\in \calW_{\Lambda}$.  
		\end{proof}
\end{corollary}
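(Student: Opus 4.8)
The plan is to deduce this corollary directly from \cref{lemma:expectation-conjugation} by specializing to the group von Neumann algebra attached to the right-angled Coxeter group $\calW_\Gamma$. Concretely, I would put $M_v := \calL(\ZZ/2\ZZ)$ for every $v \in \Gamma$, equipped with its canonical trace, so that $M_\Gamma = \calL(\calW_\Gamma)$ and the graph product state $\varphi$ is the canonical trace $\tau$. The entire content of the corollary is then the translation of the operator-algebraic vanishing statement of the lemma back into pure word combinatorics.

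First I would check that the hypotheses of \cref{lemma:expectation-conjugation} are met by the obvious candidates. Each canonical generator $\lambda_v$ of $M_v$ has trace zero, hence lies in $\mathring{M}_v$; consequently, for any reduced word $\zz = z_1\cdots z_k \in \calW_\Gamma$, the group operator $\lambda_\zz = \lambda_{z_1}\cdots \lambda_{z_k}$ is a reduced operator and lies in $\mathring{M}_\zz$. In particular $\lambda_\ww \in \mathring{M}_\ww$ and $\lambda_{\ww'} \in \mathring{M}_{\ww'}$. Moreover, the non-commuting hypotheses on $\vv,\vv'$ guarantee (as already observed inside the proof of \cref{lemma:expectation-conjugation}) that $\vv^{-1}\uu\vv'$ is reduced, so that $\lambda_{\vv^{-1}\uu\vv'} \in \mathring{M}_{\vv^{-1}\uu\vv'}$. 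Thus the triple $a = \lambda_\ww$, $x = \lambda_{\vv^{-1}\uu\vv'}$, $b = \lambda_{\ww'}$ satisfies all the requirements of the lemma, together with the length bounds $|\ww|\leq |\vv|$ and $|\ww'|\leq |\vv'|$.

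Applying \cref{lemma:expectation-conjugation} then yields $\EE_{M_\Lambda}(axb) = 0$. Since we are in the group algebra, the product $axb$ is the single group operator $\lambda_g$ with $g = \ww\,\vv^{-1}\uu\vv'\,\ww'$, the product taken in $\calW_\Gamma$. Finally, I would invoke the standard description of the trace-preserving conditional expectation onto $M_\Lambda = \calL(\calW_\Lambda)$ on group operators, namely $\EE_{M_\Lambda}(\lambda_g) = \lambda_g$ when $g \in \calW_\Lambda$ and $\EE_{M_\Lambda}(\lambda_g) = 0$ otherwise. The vanishing $\EE_{M_\Lambda}(\lambda_g) = 0$ therefore forces $g = \ww\vv^{-1}\uu\vv'\ww' \notin \calW_\Lambda$, which is exactly the claim.

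There is no genuine obstacle here: all of the combinatorial substance is carried by \cref{lemma:expectation-conjugation}, and this corollary is merely its dictionary translation into the language of words in $\calW_\Gamma$. The only points requiring (minor) care are the verification that the chosen elements actually lie in the correct mean-zero subspaces $\mathring{M}_{\cdot}$ and the recollection that $\EE_{M_\Lambda}$ detects membership in $\calW_\Lambda$ exactly. I would accordingly keep the write-up to a few lines, presenting it as the one-line specialization indicated above rather than reproving any part of the underlying lemma.
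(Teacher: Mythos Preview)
Your proposal is correct and follows exactly the same approach as the paper's proof: specialize to $M_v = \calL(\ZZ/2\ZZ)$ so that $M_\Gamma = \calL(\calW_\Gamma)$, apply \cref{lemma:expectation-conjugation} to $a=\lambda_\ww$, $x=\lambda_{\vv^{-1}\uu\vv'}$, $b=\lambda_{\ww'}$, and read off the word-combinatorial conclusion from the vanishing of the conditional expectation. Your write-up is in fact slightly more careful than the paper's in justifying that the chosen operators lie in the correct $\mathring{M}_{\cdot}$ spaces.
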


\begin{lemma}\label{lemma:ICC-group-not-embed-in-smaller-graph}
	Let $\Gamma$ be a simple graph of size $|\Gamma|\geq 3$ such that for any $v\in \Gamma$, $Star(v)\not=\Gamma$. For $v\in \Gamma$ let $(M_{v},\tau_{v})$ be a von Neumann algebra with a normal faithful trace. Suppose for any $v\in \Gamma$ there is a unitary $u_{v}\in M_{\Gamma}$ with $\tau_{v}(u_v)=0$. 
	Then $M_{\Gamma}\not\prec_{M_{\Gamma}}M_\Lambda$ for any strict subgraph $\Lambda\subsetneq \Gamma$.
	\begin{proof}
		First observe that the Coxeter group $\calW_{\Gamma}$ is icc since $|\Gamma|\geq 3$ and $\Star(v)\not=\Gamma$ for all $v\in \Gamma$. 
		Now let $\Lambda\subsetneq \Gamma$ be a strict subgraph and fix $v\in \Gamma\setminus \Lambda$. As the conjugacy class $\{\vv^{-1} v \vv: \vv\in \calW_{\Gamma}\}$ is infinite, we can for $n\in \NN$ choose $\vv_n\in \calW_{\Gamma}$ such that $|\vv_n^{-1} v\vv_n|\geq 2n+1$. If a letters $s$ commuting with $v$ is at the start of $\vv_n$ then we can replace $\vv_n$ with $\widetilde{\vv}_n := s\vv_n \in \calW_{\Gamma}$ which does not start with $s$ and is such that $\widetilde{\vv}_n^{-1}v\widetilde{\vv}_n= \vv_n^{-1}v\vv_n$. Repeating the argument, we may thus assume that every letter at the start of $\vv_n$ does not commute with $v$. Then in particular $\vv_n^{-1}v\vv_n$ is reduced and $|\vv_n|\geq n$. Let $(v_{n,1},\ldots, v_{n,l_n})$ be a reduced expression for $\vv_n^{-1} v\vv_n$ and define $u_{n} := u_{v_{n,1}}\ldots u_{v_{n,l_n}}\in \mathring{M}_{\vv_n^{-1}\uu\vv_n}$. Then $u_n$ is a unitary and for any $\ww,\ww'\in \calW_{\Gamma}$ with $|\ww|,|\ww'|\leq n$ and $a\in \mathring{M}_{\ww}$, $b\in \mathring{M}_{\ww'}$, we have by \cref{lemma:expectation-conjugation} that
		 \begin{align}\label{eq:conditional-expectation-conjugation-is-zero}
		 	\EE_{M_{\Lambda}}(au_nb) =0.
		 \end{align}
		 We take $x,y\in M_{\Gamma}$ and $\epsilon>0$. We can choose $x_0\in M_{\Gamma}$ of the form $x_0 = \sum_{i=1}^{K_1}x_i$ for some $K_1\geq 1$, $x_i\in \mathring{M}_{\ww_i}$ with some $\ww_i\in \calW_{\Gamma}$,  and with $\|y\|\cdot\|x_0 - x\|_2\leq \epsilon$. We can now also choose $y_0\in M_{\Gamma}$ of the form  $y_0 = \sum_{i=1}^{K_2}y_{i}$ for some $K_2\geq 1$, $y_{i}\in \mathring{M}_{\ww_i'}$, with some $\ww_i'\in W_{\Gamma}$ and $\|x_0\|\cdot\|y_0 - y\|_2\leq \epsilon$. Put $l_1:= \max_{1\leq i\leq K_1} |\ww_{i}|$, $l_2:= \max_{1\leq i\leq K_2} |\ww_{i}'|$ and $l = \max\{l_1,l_2\}$. Let $n\geq l$ so that by \eqref{eq:conditional-expectation-conjugation-is-zero} and linearity we have $\EE_{M_{\Lambda}}(x_0u_ny_0) =0$ and hence
		 \begin{align}
		 	\EE_{M_{\Lambda}}(xu_ny) = \EE_{M_{\Lambda}}((x - x_0)u_ny) + \EE_{M_{\Lambda}}(x_0u_n(y-y_0)).
		 \end{align}
 		Furthermore,
	 	\begin{align}
	 		\|(x-x_0)u_ny\|_{2} &\leq \|x-x_0\|_{2}\cdot\|u_ny\|\leq \epsilon,\\
	 		\|x_0u_n(y-y_0)\|_{2}& \leq \|x_0u_n\|\cdot\|y-y_0\|_{2}\leq \epsilon.
	 	\end{align}
	 	Thus, as the conditional expectation $\EE_{M_{\Lambda}}$ is $\|\cdot\|_{2}$-decreasing (this follows from the Schwarz inequality \cite[Proposition 3.3]{paulsenCompletelyBoundedMaps2002} as $\EE_{M_{\Lambda}}$ is trace-preserving and u.c.p.), we obtain for $n\geq l$ that 
	 	\begin{align}
	 		\|\EE_{M_{\Lambda}}(xu_ny)\|_{2}\leq 2\epsilon.
	 	\end{align} This shows for any $x,y\in M_{\Gamma}$ that $\|\EE_{M_{\Lambda}}(x u_n y)\|_{2}\to 0$ as $n\to \infty$.
	 	By \cref{Dfn=Intertwine}\eqref{Item=Intertwine2} this means that $M_{\Gamma}\not\prec_{M_{\Gamma}}M_{\Lambda}$. 
	\end{proof}
\end{lemma}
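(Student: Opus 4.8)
The statement to prove is \cref{lemma:ICC-group-not-embed-in-smaller-graph}: for a finite graph $\Gamma$ with $|\Gamma|\geq 3$ and $\Star(v)\neq \Gamma$ for all $v$, and tracial vertex algebras $(M_v,\tau_v)$ each containing a Haar unitary $u_v$ (i.e. $\tau_v(u_v)=0$), one has $M_\Gamma\not\prec_{M_\Gamma} M_\Lambda$ for every strict subgraph $\Lambda\subsetneq\Gamma$. The strategy is to produce, for a fixed $\Lambda\subsetneq\Gamma$, an explicit sequence of unitaries $(u_n)_n$ in $M_\Gamma$ witnessing the negation in \cref{Dfn=Intertwine}\eqref{Item=Intertwine2}, namely $\|\EE_{M_\Lambda}(x u_n y)\|_2\to 0$ for all $x,y\in M_\Gamma$. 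The unitaries should be built by ``spreading out'' a single vertex letter $v\in\Gamma\setminus\Lambda$ via long conjugates in the right-angled Coxeter group $\calW_\Gamma$: since $\calW_\Gamma$ is icc under the hypotheses ($|\Gamma|\geq 3$ and no vertex dominates), the conjugacy class of $v$ is infinite, so we can pick $\vv_n\in\calW_\Gamma$ with $|\vv_n^{-1}v\vv_n|$ arbitrarily large, and after a standard reduction (absorbing commuting letters at the front of $\vv_n$) we may assume no letter at the start of $\vv_n$ commutes with $v$, so $\vv_n^{-1}v\vv_n$ is reduced with $|\vv_n|\geq n$.

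The core technical input is the vanishing statement \cref{lemma:expectation-conjugation}: if $\uu\in\calW_\Gamma\setminus\calW_\Lambda$ and $\vv,\vv'$ are words whose starting letters fail to commute with the ending letters of $\uu^{-1}$, resp. $\uu$, then $\EE_{M_\Lambda}(axb)=0$ whenever $a\in\mathring{M}_{\ww}$, $x\in\mathring{M}_{\vv^{-1}\uu\vv'}$, $b\in\mathring{M}_{\ww'}$ with $|\ww|\leq|\vv|$, $|\ww'|\leq|\vv'|$. First I would define $u_n := u_{v_{n,1}}\cdots u_{v_{n,l_n}}$ where $(v_{n,1},\dots,v_{n,l_n})$ is a reduced expression for $\vv_n^{-1}v\vv_n$; since each $u_{v_{n,i}}$ lies in $\mathring{M}_{v_{n,i}}$ and the word is reduced, $u_n$ is a reduced operator lying in $\mathring{M}_{\vv_n^{-1}v\vv_n}$, and it is a product of unitaries hence unitary. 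Applying \cref{lemma:expectation-conjugation} with $\uu=v$, $\vv=\vv'=\vv_n$ gives $\EE_{M_\Lambda}(a u_n b)=0$ for all $a\in\mathring{M}_\ww$, $b\in\mathring{M}_{\ww'}$ with $|\ww|,|\ww'|\leq n$.

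Then I would run the standard density/approximation argument: given $x,y\in M_\Gamma$ and $\epsilon>0$, approximate $x\approx x_0=\sum_{i=1}^{K_1} x_i$ and $y\approx y_0=\sum_{i=1}^{K_2} y_i$ in $\|\cdot\|_2$ by finite sums of homogeneous elements $x_i\in\mathring{M}_{\ww_i}$, $y_i\in\mathring{M}_{\ww_i'}$, with the errors controlled so that $\|y\|\cdot\|x-x_0\|_2\leq\epsilon$ and $\|x_0\|\cdot\|y-y_0\|_2\leq\epsilon$. Setting $l=\max_i\{|\ww_i|,|\ww_i'|\}$, for $n\geq l$ linearity and the vanishing above give $\EE_{M_\Lambda}(x_0 u_n y_0)=0$, so $\EE_{M_\Lambda}(xu_ny)=\EE_{M_\Lambda}((x-x_0)u_ny)+\EE_{M_\Lambda}(x_0u_n(y-y_0))$; since $\EE_{M_\Lambda}$ is $\|\cdot\|_2$-contractive (trace-preserving u.c.p., via the Schwarz inequality) and $u_n$ is unitary, $\|\EE_{M_\Lambda}(xu_ny)\|_2\leq 2\epsilon$ for $n\geq l$. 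Hence $\|\EE_{M_\Lambda}(xu_ny)\|_2\to 0$, which by \cref{Dfn=Intertwine}\eqref{Item=Intertwine2} yields $M_\Gamma\not\prec_{M_\Gamma}M_\Lambda$.

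\textbf{Main obstacle.} The substantive content is entirely in \cref{lemma:expectation-conjugation} and in the fact that $\calW_\Gamma$ is icc under the stated hypotheses; once those are in hand the remainder is a routine approximation. The one subtlety I would double-check is the front-of-word reduction: replacing $\vv_n$ by $s\vv_n$ when a letter $s$ at the start of $\vv_n$ commutes with $v$ leaves the conjugate $\vv_n^{-1}v\vv_n$ unchanged (since $s$ commutes with $v$ and $s^2=e$), strictly decreases $|\vv_n|$ or keeps the conjugate reduced, and terminates; iterating gives a representative with no starting letter commuting with $v$, which is exactly the hypothesis needed to invoke \cref{lemma:expectation-conjugation} with $\vv=\vv'=\vv_n$. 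I would also make sure the lower bound $|\vv_n|\geq n$ survives this reduction, which it does because $|\vv_n^{-1}v\vv_n|=2|\vv_n|+1$ is preserved.
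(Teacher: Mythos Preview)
Your proposal is correct and follows essentially the same argument as the paper: use the icc property of $\calW_\Gamma$ to build long conjugates $\vv_n^{-1}v\vv_n$, normalize so no starting letter of $\vv_n$ commutes with $v$, define $u_n$ as the corresponding product of vertex Haar unitaries, apply \cref{lemma:expectation-conjugation} to get $\EE_{M_\Lambda}(a u_n b)=0$ for short $a,b$, and finish with the density/approximation step. Your treatment of the subtleties (the front-of-word reduction and the preservation of $|\vv_n|\geq n$) matches the paper's reasoning.
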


\begin{theorem}\label{thm:primeness:irreducible-graph}
Let $\Gamma$ be a irreducible finite simple graph of size $|\Gamma|\geq 3$ and for $v\in \Gamma$, let $M_{v}$ ($\not=\CC$) be a von Neumann algebra with a normal faithful trace $\tau_{v}$ such that there exists a unitary $u_v\in M_{v}$ with $\tau_v(u_v)=0$. Then $M_{\Gamma}$ is a prime factor.
\begin{proof}
    It follows from \cite[Theorem E]{charlesworthStructureGraphProduct2024} and our assumptions that $M_{\Gamma}$ is a II$_1$-factor. Furthermore, by \cref{lemma:ICC-group-not-embed-in-smaller-graph} we have that $M_{\Gamma}\not\prec_{M_{\Gamma}}M_{\Lambda}$ for any strict subgraph $\Lambda\subsetneq \Gamma$. Hence, by \cref{lemma:irreducible-graph-prime-or-amenable} we obtain that $M_{\Gamma}$ is either prime or amenable. Since $\Gamma$ is irreducible and $|\Gamma|\geq 3$ it follows from \cref{Thm=AmenableGraphProduct} that $M_{\Gamma}$ is non-amenable. Hence, $M_{\Gamma}$ is prime.
\end{proof}
\end{theorem}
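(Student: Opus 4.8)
The plan is to follow the same strategy that proved \cref{thm:primeness-for-graph-products-II1-factors}, but substituting the appropriate general-setting ingredients for the II$_1$-factor facts. First I would establish that $M_\Gamma$ is a II$_1$-factor: since each $M_v$ is tracial, non-trivial, and contains a Haar unitary $u_v$ with $\tau_v(u_v) = 0$, the factoriality criterion \cite[Theorem E]{charlesworthStructureGraphProduct2024} applies (note that $\Gamma$ irreducible of size $\geq 3$ forces $\Gamma$ to be non-complete and in fact $\Star(v) \neq \Gamma$ for every $v$, so the hypotheses of that theorem are met). This also gives that $M_\Gamma$ is diffuse via \cref{Prop=ClassifyDiffuse}, and in particular a genuine II$_1$-factor rather than a matrix algebra.

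Next I would verify the non-embedding hypothesis $M_\Gamma \not\prec_{M_\Gamma} M_\Lambda$ for every strict subgraph $\Lambda \subsetneq \Gamma$. This is precisely \cref{lemma:ICC-group-not-embed-in-smaller-graph}: the hypotheses there — $|\Gamma| \geq 3$, $\Star(v) \neq \Gamma$ for all $v$, and the existence of trace-zero unitaries $u_v \in M_v$ — are exactly what we have assumed (irreducibility of $\Gamma$ with $|\Gamma| \geq 3$ implies $\Star(v) \neq \Gamma$ for every $v$, since otherwise $\Gamma = \Star(v) = \{v\} \cup \Link(v)$ would decompose as $\{v\} \cup \Link(v)$ with $\Link(\{v\}) = \Link(v)$, contradicting irreducibility). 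With this in hand, \cref{lemma:irreducible-graph-prime-or-amenable} applies directly — $\Gamma$ is finite irreducible with $|\Gamma| \geq 2$, $M_\Gamma$ is a II$_1$-factor, and the non-embedding property holds — so $M_\Gamma$ is prime or amenable.

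Finally I would rule out amenability. Since $\Gamma$ is irreducible with $|\Gamma| \geq 3$, it cannot be complete (a complete graph on $\geq 2$ vertices is reducible, being the link-join of any single vertex with the rest). Then by \cref{Thm=AmenableGraphProduct}, amenability of $M_\Gamma$ would require condition \eqref{Thm:Item=Amen2}, i.e. any two non-adjacent vertices $v, w$ satisfy $\dim M_v = \dim M_w = 2$ and $\Link(\{v,w\}) = \Gamma \setminus \{v,w\}$. But if every pair of non-adjacent vertices had this property, one checks that $\Gamma$ would be a join and hence reducible; alternatively, more simply, since $\Gamma$ is non-complete there is a non-adjacent pair, and the condition $\Link(\{v,w\}) = \Gamma\setminus\{v,w\}$ together with irreducibility is contradictory. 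So $M_\Gamma$ is non-amenable, hence prime.

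The main obstacle is essentially packaging: each individual step is a citation to a result already established in the paper, so the real work is checking that the hypotheses line up — in particular the implication ``$\Gamma$ irreducible of size $\geq 3$'' $\Rightarrow$ ``$\Star(v) \neq \Gamma$ for all $v$ and $\Gamma$ non-complete'', which feeds both \cref{lemma:ICC-group-not-embed-in-smaller-graph} and the non-amenability argument via \cref{Thm=AmenableGraphProduct}. I expect no genuine difficulty here, only the need to state these small graph-theoretic observations cleanly.
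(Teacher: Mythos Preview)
Your proposal is correct and follows essentially the same route as the paper's proof: factoriality via \cite{charlesworthStructureGraphProduct2024}, non-embedding via \cref{lemma:ICC-group-not-embed-in-smaller-graph}, the prime-or-amenable dichotomy via \cref{lemma:irreducible-graph-prime-or-amenable}, and non-amenability via \cref{Thm=AmenableGraphProduct}. The only difference is that you spell out the small graph-theoretic checks (e.g.\ that irreducibility with $|\Gamma|\geq 3$ forces $\Star(v)\neq\Gamma$ and that condition \eqref{Thm:Item=Amen2} would contradict irreducibility), which the paper leaves implicit.
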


\begin{theorem}\label{thm:primeness:graph-product}
    Let $\Gamma$ be a simple graph. For $v\in \Gamma$, let $M_{v}$ ($\not=\CC$) be a von Neumann algebra with a normal faithful trace $\tau_{v}$ and assume that $M_{\Gamma} = *_{v,\Gamma}(M_{v},\tau_v)$ is a II$_1$-factor. Then $M_{\Gamma}$ is prime if and only if  there is an irreducible component $\Lambda\subseteq \Gamma$ for which $M_{\Lambda}$ is prime and $M_{\Gamma\setminus \Lambda}$ is finite-dimensional.
    \begin{proof}
        Suppose there is an irreducible component  $\Lambda\subseteq \Gamma$ with
        $M_{\Lambda}$ prime and with $\dim M_{\Gamma\setminus \Lambda}<\infty$. Then the factor $M_{\Gamma} = M_{\Lambda}\overline{\otimes} M_{\Gamma\setminus \Lambda}$ is prime since it is a matrix amplification of $M_{\Lambda}$.

        For the other direction, suppose that $M_{\Gamma}$ is a prime factor. Denote 
        $$\Lambda := \{v\in \Gamma: \Star_{\Gamma}(v)\not=\Gamma \text{ or } \dim M_{v}=\infty\}.$$ If $w\in \Gamma\setminus \Lambda$ then $\Star_{\Gamma}(w) = \Gamma$ and $\dim M_w<\infty$, so $w\in \Link_{\Gamma}(\Lambda)$. Hence $\Link_{\Gamma}(\Lambda) = \Gamma\setminus \Lambda$ and so $M_{\Gamma} = M_{\Lambda}\overline{\otimes} M_{\Gamma\setminus \Lambda}$. Now, since $\Gamma\setminus \Lambda$ is complete, and since $\dim M_v<\infty$ for $v\in \Gamma\setminus \Lambda$ we have that $M_{\Gamma\setminus \Lambda}$ is finite-dimensional. Hence, since $M_{\Gamma}$ is a prime factor also $M_{\Lambda}$ is a prime factor. 
        
        We now show that the graph $\Lambda$ is irreducible so that from $\Link_{\Gamma}(\Lambda) = \Gamma\setminus \Lambda$ it follows that $\Lambda$ is an irreducible component of $\Gamma$. Suppose there is a non-empty subgraph $\Lambda_1\subseteq \Lambda$ s.t. $\Lambda_2 := \Lambda\setminus \Lambda_1$ is non-empty and  $\Link_{\Lambda}(\Lambda_1) = \Lambda_2$. We show a contradiction. We can write $M_{\Lambda} = M_{\Lambda_1}\overline{\otimes} M_{\Lambda_2}$. Hence, by primeness of the factor $M_{\Lambda}$ there is $i\in \{1,2\}$ s.t. $\dim M_{\Lambda_i}<\infty$. Let $v\in \Lambda_i$. Since $\dim M_{\Lambda_i}<\infty$ we have $\dim M_{v}<\infty$. Hence, since $v\in \Lambda$ we have by definition of $\Lambda$ that $\Star_{\Gamma}(v)\not=\Gamma$. Let $w\in \Gamma\setminus \Star_{\Gamma}(v)$. Then $\Star_{\Gamma}(w)\not=\Gamma$ so that $w\in \Lambda$. Furthermore, $w\not\in \Link_{\Gamma}(v)$ so that $w\not\in \Link_{\Lambda}(\Lambda_i) = \Lambda\setminus \Lambda_i$, i.e. $w\in \Lambda_i$. Hence, since the vertices $v,w$ in $\Lambda_i$ share no edge we have $\dim M_{\Lambda_i} = \infty$, which is a contradiction. Thus $\Lambda$ is irreducible.
    \end{proof}
\end{theorem}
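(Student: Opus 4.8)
The goal is to characterize primeness of a graph-product II$_1$-factor $M_\Gamma = *_{v,\Gamma}(M_v,\tau_v)$ when the vertex algebras need not all be factors, reducing to the genuine factor case treated in \cref{thm:primeness-for-graph-products-II1-factors}. The statement says $M_\Gamma$ is prime iff there is an irreducible component $\Lambda \subseteq \Gamma$ with $M_\Lambda$ prime and $M_{\Gamma\setminus\Lambda}$ finite-dimensional.

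\medskip

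\textbf{Plan.} The backward direction (``if'') is short: given such a $\Lambda$, since $\Link_\Gamma(\Lambda) = \Gamma\setminus\Lambda$ (that's what it means to be an irreducible component), the amalgamated/tensor decomposition gives $M_\Gamma = M_\Lambda \overline{\otimes} M_{\Gamma\setminus\Lambda}$, and since $M_{\Gamma\setminus\Lambda}$ is finite-dimensional and $M_\Gamma$ is assumed a II$_1$-factor, $M_{\Gamma\setminus\Lambda}$ must be a matrix algebra $\Mat_k(\CC)$, so $M_\Gamma \cong \Mat_k(M_\Lambda)$ is a matrix amplification of the prime factor $M_\Lambda$, hence prime.

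\medskip

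\textbf{Forward direction.} Assume $M_\Gamma$ is a prime factor. The key device is to define
\[
\Lambda := \{ v \in \Gamma : \Star_\Gamma(v) \neq \Gamma \text{ or } \dim M_v = \infty \}.
\]
First I would check that $\Link_\Gamma(\Lambda) = \Gamma\setminus\Lambda$: if $w \notin \Lambda$ then $\Star_\Gamma(w) = \Gamma$, so $w$ is adjacent to every vertex, in particular to every vertex of $\Lambda$, giving $w \in \Link_\Gamma(\Lambda)$; the reverse inclusion $\Link_\Gamma(\Lambda)\subseteq \Gamma\setminus\Lambda$ is automatic since a vertex is never in its own link. This yields the decomposition $M_\Gamma = M_\Lambda \overline{\otimes} M_{\Gamma\setminus\Lambda}$. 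Moreover $\Gamma\setminus\Lambda$ is a complete graph (any two vertices $w_1,w_2 \notin \Lambda$ have $\Star(w_1) = \Gamma \ni w_2$, so they share an edge) and each $M_w$ for $w \notin \Lambda$ is finite-dimensional by the defining condition, so $M_{\Gamma\setminus\Lambda}$ is a finite tensor product of finite-dimensional algebras, hence finite-dimensional. Primeness of $M_\Gamma$ then forces $M_\Lambda$ to be a prime factor (it is a tensor factor of $M_\Gamma$ with finite-dimensional complement, so a corner of $M_\Gamma$, and corners of prime factors are prime).

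\medskip

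\textbf{Main obstacle: $\Lambda$ is irreducible.} The real work is showing $\Lambda$ is irreducible as a graph, so that together with $\Link_\Gamma(\Lambda) = \Gamma\setminus\Lambda$ it becomes a genuine irreducible component. I would argue by contradiction: suppose $\Lambda = \Lambda_1 \cup \Lambda_2$ with $\Lambda_1,\Lambda_2$ non-empty and $\Link_\Lambda(\Lambda_1) = \Lambda_2$. Then $M_\Lambda = M_{\Lambda_1}\overline{\otimes} M_{\Lambda_2}$, and primeness of the factor $M_\Lambda$ forces $\dim M_{\Lambda_i} < \infty$ for some $i \in \{1,2\}$. Pick $v \in \Lambda_i$; then $\dim M_v < \infty$, so by definition of $\Lambda$ we must have $\Star_\Gamma(v) \neq \Gamma$, i.e.\ there is $w \in \Gamma$ not adjacent to $v$. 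This $w$ satisfies $\Star_\Gamma(w) \neq \Gamma$ (it's not adjacent to $v$), so $w \in \Lambda$; and $w \notin \Link_\Gamma(v) \supseteq \Link_\Lambda(v)$, so $w \notin \Link_\Lambda(\Lambda_i) = \Lambda\setminus\Lambda_i$, forcing $w \in \Lambda_i$. Thus $\Lambda_i$ contains two non-adjacent vertices $v, w$, giving a non-trivial free product inside $M_{\Lambda_i}$ and hence $\dim M_{\Lambda_i} = \infty$ --- a contradiction. This settles irreducibility of $\Lambda$, and then $\Link_\Gamma(\Lambda) = \Gamma\setminus\Lambda$ exactly says $\Lambda$ is an irreducible component of $\Gamma$, completing the proof. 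The delicate point throughout is bookkeeping with $\Star$ versus $\Link$ inside $\Gamma$ versus inside $\Lambda$, and making sure the tensor splitting $M_\Lambda = M_{\Lambda_1}\overline{\otimes}M_{\Lambda_2}$ is valid precisely because $\Link_\Lambda(\Lambda_1) = \Lambda_2$.
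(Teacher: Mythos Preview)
Your proof is correct and follows essentially the same approach as the paper's: the same definition of $\Lambda = \{v\in\Gamma : \Star_\Gamma(v)\neq\Gamma \text{ or } \dim M_v=\infty\}$, the same verification that $\Link_\Gamma(\Lambda)=\Gamma\setminus\Lambda$ and that $M_{\Gamma\setminus\Lambda}$ is finite-dimensional, and the identical contradiction argument for irreducibility of $\Lambda$. If anything, you spell out a couple of routine steps (the reverse inclusion $\Link_\Gamma(\Lambda)\subseteq\Gamma\setminus\Lambda$, the reason $M_{\Gamma\setminus\Lambda}$ is a matrix algebra) slightly more explicitly than the paper does.
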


\section{Classification of free indecomposability for graph products}
\label{Sect=freely-indecomposable}
In this section we study free-indecomposability for graph product of II$_1$-factors.
In \cref{theorem:free-indecompose} we characterize for graph products of II$_1$-factors (with separable predual) when they can  decompose as tracial free products of II$_1$-factors. In \cref{thm:free-product-decomposition} we combine this result with \cref{thm:rigid-graph-decomposition} to show unique free product decompositions for von Neumann algebras in the class $\Crigid\setminus \Cvert$.  Hereafter, we show that \cref{theorem:free-indecompose} and \cref{thm:free-product-decomposition} really cover new examples. Indeed, in \cref{prop:Cartan-subalgebra} we give sufficient conditions for a graph product to not possess a Cartan-subalgebra, which in \cref{remark:class-anti-free} we use to give examples of freely indecomposable von Neumann algebras $M\in \Crigid\setminus\Cvert$ that are not in the class $\calC_{\text{anti-free}}$ from \cite{houdayerRigidityFreeProduct2016}. 
In \cref{remark:covers_new_ufd} we show that the unique free product decomposition from \cref{thm:free-product-decomposition} also covers new examples.

\begin{theorem}\label{theorem:free-indecompose}
    Let $\Gamma$ be a simple graph of size $|\Gamma|\geq 2$, and for $v\in \Gamma$ let $(M_v,\tau_v)$ be tracial II$_1$-factor with separable predual. Then the graph product $M_{\Gamma}:= *_{v,\Gamma}(M_{v},\tau_v)$ can decompose as a tracial free product $M_{\Gamma} = (M_1,\tau_1)*(M_2,\tau_2)$ of II$_1$-factors  $M_1$,$M_2$ if and only if $\Gamma$ is not connected.
    
    \begin{proof}
    Let $\Gamma$ and $(M_{v},\tau_v)_{v\in \Gamma}$ be given as stated. If $\Gamma$ is not connected then for any connected component $\Gamma_0$ of $\Gamma$ we have $M_{\Gamma} = (M_{\Gamma_0},\tau_1)* (M_{\Gamma\setminus \Gamma_0},\tau_2)$, which shows one direction. 
    
    For another direction suppose that $\Gamma$ is connected. Assume we can write $M_\Gamma =(M_1,\tau_1)*(M_2,\tau_2)$ for some II$_1$-factors $M_1,M_2$.
    Fix $v\in \Gamma$ and by \cite[Proposition 13]{ozawaPrimeFactorizationResults2004} let $N_0\subseteq M_{v}$ be an amenable II$_1$-subfactor with $N_0' \cap M_{\Gamma} = M_v' \cap M_{\Gamma}$. Then $N_0$ is amenable relative to $M_i$ inside $M$ for $i=1,2$. Therefore, by 
    \cref{thm:prelim:alternatives} one of the following holds true:
    \begin{enumerate}
        \item $N_0\prec_{M_{\Gamma}} \CC 1_{M_{\Gamma}}$;\label{Item:free-indecomposability:option1}
        \item $\Nor_{M_{\Gamma}}(N_0)''\prec_{M_{\Gamma}} M_i$ for some $1\leq i\leq 2$;\label{Item:free-indecomposability:option2}
        \item $\Nor_{M_{\Gamma}}(N_0)''$ is amenable relative to $\CC 1_{M_{\Gamma}}$ inside $M_{\Gamma}$.\label{Item:free-indecomposability:option3}
    \end{enumerate}
    Since $N_0$ is diffuse, we can not have \eqref{Item:free-indecomposability:option1}.

  We show that \eqref{Item:free-indecomposability:option2} is also not satisfied. Suppose $\Nor_{M_{\Gamma}}(N_0)''\prec_{M_{\Gamma}} M_1$. We first argue that $\Nor_{M_{\Gamma}}(N_0)''$ unitarily conjugates into $M_1$ through an application of   \cref{lemma:unitary-inclusion-in-star-for-vNa} to the case of a free product of two II$_1$ factors and so the ambient graph in that theorem consists of two points and no edges (this is essentially \cite[Proof of Theorem 3.3]{OzawaKurosh}). In \cref{lemma:unitary-inclusion-in-star-for-vNa}  we further take $Q = N_0$ and note that $N_0$ and $N_0'\cap M_{\Gamma} = M_{v}'\cap M_{\Gamma} = M_{\Link(v)}$ are indeed factors. For $\Lambda$ we take the single vertex corresponding to the first free product component and $\{ \Lambda_j \}_{j \in \mathcal{J}} = \{ \varnothing \}$ so there is only one index in  $\mathcal{J}$.   By assumption $N_0 \prec_{M_{\Gamma}} M_1$  and as $N_0$ is diffuse $N_0 \not \prec_{M_{\Gamma}} M_{\varnothing}$. Then   \cref{lemma:unitary-inclusion-in-star-for-vNa} gives that  $u^* N_0 u\subseteq M_1$  for some unitary $u\in M_{\Gamma}$. Hence      $u^*\Nor_{M_{\Gamma}}(N_0)''u\subseteq M_1$.

    Now take $w\in \Gamma$ arbitrarily. Since $\Gamma$ is connected there is a path $P$ from $v$ to $w$, i.e. $P=(v_0,v_1,\ldots, v_n)$ for some $n\geq 0$ and vertices $v_0,v_1,\ldots, v_n\in \Gamma$ such that $v_i\in \Link(v_{i-1})$ for $1\leq i\leq n$ and such that $v_0 =v$ and $v_n =w$. As $|\Gamma|\geq 2$ we can moreover choose this path such that it has length $n\geq 1$.
    
    For $i\in\{1,\ldots,n\}$, denote $N_i := M_{v_i}$. Then, since $v_i\in \Link(v_{i-1})$ we obtain $N_i\subseteq \Nor_{M_{\Gamma}}(N_{i-1})''$.
    Since $u^*\Nor_{M_{\Gamma}}(N_0)''u\subseteq M_1$ we obtain $u^*N_1u\subseteq M_1$. Then since  $u^*N_1u\not\prec_{M_{\Gamma}}M_{\varnothing}$ (since $u^*N_1u$ is diffuse) we obtain by \cref{prop:embed}\eqref{Item=Embed1b}  with $\Lambda$ and $\{ \Lambda_j \}_{j \in \mathcal{J}} = \{ \varnothing \}$  the same as above that $\Nor_{M_{\Gamma}}(u^*N_1u)''\subseteq M_1$  (note that this also follows from \cite[Theorem 1.1]{ioanaAmalgamatedFreeProducts2008}).
    Now, observe that $\Nor_{M_{\Gamma}}(u^*N_1u) = u^*\Nor_{M_{\Gamma}}(N_1)u$ and hence $\Nor_{M_{\Gamma}}(u^*N_1u)'' = u^*\Nor_{M_{\Gamma}}(N_1)''u$.    
    We thus obtain that $u^*\Nor_{M_{\Gamma}}(N_1)''u\subseteq M_1$. Continuing in this way we obtain $u^*\Nor_{M_{\Gamma}}(N_{i})''u\subseteq M_1$ for all $0\leq i\leq n$. Thus, in particular $u^*M_{w}u\subseteq u^*\Nor_{M_{\Gamma}}(N_{n-1})''u\subseteq M_{1}$. Since $w$ was arbitrary, we obtain that $M_{w}\subseteq uM_1u^*$ for each $w\in \Gamma$. But this implies $M_{\Gamma} = (\bigcup_{w\in \Gamma}M_{w})'' \subseteq uM_1u^*$. Hence $M_{\Gamma} = M_1$, which is a contradiction. We conclude that $\Nor_{M_{\Gamma}}(N_0)''\not\prec_{M_{\Gamma}} M_1$. By symmetry also $\Nor_{M_{\Gamma}}(N_0)''\not\prec_{M_{\Gamma}} M_2$. We obtain that  \eqref{Item:free-indecomposability:option2} is not satisfied.

    We conclude that \eqref{Item:free-indecomposability:option3} is satisfied, i.e. $\Nor_{M_{\Gamma}}(N_0)''$ is amenable. Hence $M_{\Link(v)}\subseteq \Nor_{M_{\Gamma}}(N_0)''$ is amenable as well.  Therefore, by \cref{Thm=AmenableGraphProduct} we obtain that $\Link(v)$ is a clique and that $M_{w}$ is amenable for any $w\in \Link(v)$. We observe that $v\in \Gamma$ was arbitrary, thus for each vertex $z\in \Gamma$ its $\Link(z)$ is a clique. Since $\Gamma$ is connected, it follows that $\Gamma$ is a complete graph   (see Lemma \ref{Lem=CompleteAdd}). Moreover, for any $v\in \Gamma$ choose $z\in \Gamma\setminus \{v\}$ we have $M_{v}\subseteq M_{\Link(z)}$, which shows that $M_{v}$ is amenable. Hence $M_{\Gamma}$ is a tensor product of amenable II$_1$-factors and so $M_{\Gamma}$ is amenable.
    But the amenable II$_1$-factor can not decompose as a free product of type II$_1$-factors. This gives a contradiction and we conclude that $M_{\Gamma}$ can not  decompose as free product of II$_1$-factors.
    \end{proof}
\end{theorem}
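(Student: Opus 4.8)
\textbf{Proof plan for \cref{theorem:free-indecompose}.}
The plan is to prove the two directions of the equivalence separately, with essentially all the work going into the ``connected $\implies$ freely indecomposable'' direction. The easy direction is immediate: if $\Gamma$ is disconnected, pick a connected component $\Gamma_0$, and then the amalgamated free product decomposition machinery (or directly the universal property of graph products) gives $M_\Gamma = (M_{\Gamma_0},\tau)\ast (M_{\Gamma\setminus\Gamma_0},\tau)$; both factors are II$_1$-factors by \cite[Theorem 1.2]{caspersGraphProductsOperator2017a}, since each vertex $M_v$ is a II$_1$-factor. For the hard direction, assume $\Gamma$ is connected and suppose towards a contradiction that $M_\Gamma = (M_1,\tau_1)\ast(M_2,\tau_2)$ with $M_1,M_2$ II$_1$-factors. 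Fix a vertex $v$, and using \cite[Proposition 13]{ozawaPrimeFactorizationResults2004} choose an amenable II$_1$-subfactor $N_0\subseteq M_v$ with $N_0'\cap M_\Gamma = M_v'\cap M_\Gamma = M_{\Link(v)}$. Being amenable, $N_0$ is amenable relative to both $M_1$ and $M_2$ inside $M_\Gamma$, so Vaes' dichotomy \cref{thm:prelim:alternatives} applies and gives one of three alternatives.

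The middle of the argument is ruling out the first two alternatives. Alternative (i), $N_0\prec_{M_\Gamma}\mathbb{C}1$, is impossible since $N_0$ is diffuse. Alternative (ii), $P:=\Nor_{M_\Gamma}(N_0)''\prec_{M_\Gamma}M_i$ for some $i$, is where the connectedness of $\Gamma$ really enters. Say $P\prec_{M_\Gamma}M_1$. Since $N_0$ is diffuse, $P\not\prec_{M_\Gamma}\mathbb{C}1$, so because $N_0$ and its relative commutant $M_{\Link(v)}$ are both factors, \cref{lemma:unitary-inclusion-in-star-for-vNa} (with $\Lambda_{\emb}$ the whole graph, or more precisely conjugating into $M_1$) yields a unitary $u\in M_\Gamma$ with $u^*Pu\subseteq M_1$. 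Now I would run a ``spreading along paths'' argument: given any $w\in\Gamma$, connectedness supplies a path $v=v_0,v_1,\dots,v_n=w$ with $v_i\in\Link(v_{i-1})$, of length $n\ge 1$ since $|\Gamma|\ge 2$. Putting $N_i:=M_{v_i}$ we have $N_i\subseteq\Nor_{M_\Gamma}(N_{i-1})''$. Starting from $u^*N_1u\subseteq u^*Pu\subseteq M_1$ and using that $u^*N_1u$ is diffuse hence $\not\prec_{M_\Gamma}\mathbb{C}1$, \cref{prop:embed}\eqref{Item=Embed1b} gives $\Nor_{M_\Gamma}(u^*N_1u)''\subseteq M_1$, i.e.\ $u^*\Nor_{M_\Gamma}(N_1)''u\subseteq M_1$. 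Iterating this along the path, $u^*\Nor_{M_\Gamma}(N_{n-1})''u\subseteq M_1$, so $u^*M_wu\subseteq M_1$, i.e.\ $M_w\subseteq uM_1u^*$. Since $w$ was arbitrary and the $M_w$ generate $M_\Gamma$, we get $M_\Gamma\subseteq uM_1u^*$, forcing $M_\Gamma=M_1$ — contradicting that $M_1$ is a proper free factor. By symmetry $P\not\prec_{M_\Gamma}M_2$ either, so alternative (ii) is excluded.

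This leaves alternative (iii): $P=\Nor_{M_\Gamma}(N_0)''$ is amenable relative to $\mathbb{C}1$ inside $M_\Gamma$, i.e.\ $P$ is amenable. In particular $M_{\Link(v)}\subseteq P$ is amenable, so by the amenability characterization \cref{Thm=AmenableGraphProduct} the subgraph $\Link(v)$ is a clique and each $M_w$ with $w\in\Link(v)$ is amenable. Since $v$ was arbitrary, every link in $\Gamma$ is a clique; combined with connectedness this forces $\Gamma$ to be a complete graph, and moreover choosing for each $v$ some $z\ne v$ we get $M_v\subseteq M_{\Link(z)}$ amenable, so every $M_v$ is amenable. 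Then $M_\Gamma=\overline{\bigotimes}_{v\in\Gamma}M_v$ is an amenable II$_1$-factor, which cannot be written as a free product of two II$_1$-factors — the final contradiction. I expect the main obstacle to be the bookkeeping in the path-spreading step: one must be careful that each application of \cref{prop:embed}\eqref{Item=Embed1b} is legitimate (the relevant algebra stays diffuse, so it does not embed into $\mathbb{C}1$) and that conjugation by the single fixed unitary $u$ propagates correctly through $\Nor_{M_\Gamma}(u^*N_iu)''=u^*\Nor_{M_\Gamma}(N_i)''u$ at every stage; everything else is an assembly of results already available in the paper.
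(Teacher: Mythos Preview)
Your proposal is correct and follows essentially the same argument as the paper's proof: both use Ozawa--Popa's amenable subfactor $N_0\subseteq M_v$, apply Vaes' trichotomy \cref{thm:prelim:alternatives} to the free product decomposition, rule out alternative (ii) by the same path-spreading argument via \cref{lemma:unitary-inclusion-in-star-for-vNa} and \cref{prop:embed}\eqref{Item=Embed1b}, and handle alternative (iii) via \cref{Thm=AmenableGraphProduct} to force $\Gamma$ complete and $M_\Gamma$ amenable. The bookkeeping concern you flag (that conjugation by a single $u$ propagates correctly and that each $u^*N_iu$ stays diffuse) is handled exactly as you describe, matching the paper.
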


\begin{theorem}\label{thm:free-product-decomposition}
     Any von Neumann algebra $M\in \Crigid\setminus \Cvert$ can decompose as tracial free product inside $\Crigid\setminus \Cvert$:
     \begin{align}\label{eq:free-product-decomposition}
         M =*_{i\in I} M_i,
     \end{align}
     for some index set $I$ and for every $i\in  I$ a II$_1$-factor $M_i\in \Crigid\setminus \Cvert$ that can not decompose as any tracial free product of II$_1$-factors. 
     
     Furthermore, suppose $M$ has another free product decomposition:$$M=*_{j\in J}N_j,$$ for another index set $J$ and for every $j\in J$ a II$_1$-factor $N_j\in \Crigid\setminus\Cvert$ that can not decompose as tracial free product of II$_1$-factors. Then $|I|=|J|$ and there is a bijection $\sigma$ between $J$ and $I$ such that for each $j \in J$, $N_j$ is unitarily conjugate to $M_{\sigma(j)}$ in $M$.
    \begin{proof}
        Since $M\in \Crigid\setminus \Cvert$ we can write $M = M_{\Gamma} = *_{v,\Gamma}(M_v,\tau_v)$ for some rigid graph $\Gamma$ of size $|\Gamma|\geq 2$ and some II$_1$-factors $M_v\in \Cvert$. Let $\{\Gamma_i\}_{i\in I}$ be the connected components of $\Gamma$ for some index set $I$, which are rigid by \cref{remark:rigid-components}. We let $\Pi = \{i\}_{i\in I}$ be the graph with $m$ vertices and no edges. We claim that $|\Gamma_i|\geq 2$ for all $i\in \Pi$. Indeed, if $|I|=1$ then $\Pi = \{1\}$ and $\Gamma_1 = \Gamma$ so that $|\Gamma_i| = |\Gamma|\geq 2$ for all $i\in \Pi$. On the other hand, if $|I|\geq 2$ then $\Link_{\Pi}(\Link_{\Pi}(i)) = \Pi\not=\{i\}$ for all $i\in \Pi$, so it follows from \cref{lemma:graph-product-of-rigid-graphs} and rigidity of $\Gamma_{\Pi}\simeq \Gamma$ that $|\Gamma_i|\geq 2$ for all $i\in \Pi$.  
        
        We denote $M_i := M_{\Gamma_i}\in \Crigid$ for $i\in \Pi$. By \cref{thm:rigid-graph-decomposition} and rigidity of $\Gamma_i$ and the fact that $|\Gamma_i|\geq 2$ it follows that $M_i\not\in \Cvert$. Furthermore, since $\Gamma_i$ is connected we obtain by \cref{theorem:free-indecompose} that $M_i$ can not decompose as tracial free product of II$_1$-factors. By \cref{remark:graph-product-of-graphs-consistency} we conclude that
        $M_{\Gamma} = *_{v,\Gamma}(M_{v},\tau_v) = *_{i,\Pi}(M_{\Gamma_i},\tau_i) =*_{i\in I} M_i$
        which shows \eqref{eq:free-product-decomposition}.
       
        Since for every $j\in J$ $N_j\in \Crigid\setminus\Cvert$ we can write $N_j = *_{z,\Lambda_j}(N_{(j,z)},\tau_{(j,z)})$ where $\Lambda_j$ is a rigid graph and $(N_{(j,z)})_{z\in \Lambda_j}$ are II$_1$-factors in $\Cvert$. Observe for $j\in J$ that $|\Lambda_j|\geq 2$ since $N_j\not\in \Cvert$ and that $\Lambda_j$ is connected by \cref{theorem:free-indecompose} since $N_i$ can not decompose as tracial free product of II$_1$-factors. 
        Let $\Pi' = \{j\}_{j\in J}$ be the graph with vertices of all point in $J$ and no edges.
        Then by \cref{remark:graph-product-of-graphs-consistency} we have:
        $$M = *_{j\in J}N_j= *_{j, \Pi'}(*_{v,\Lambda_j}(N_{(j,v)},\tau_{(j,v)})) \simeq *_{w,\Lambda_{\Pi'}}(N_{w},\tau_{w}) = N_{\Lambda_{\Pi'}}.$$ 
        Then since $\Lambda_{\Pi'}$ is rigid by \cref{lemma:graph-product-of-rigid-graphs}, we obtain by \cref{thm:rigid-graph-decomposition} that $\Lambda_{\Pi'} \simeq \Gamma$. The connected components of $\Lambda_{\Pi'}$ respectively $\Gamma$ are $\{\Lambda_j\}_{j\in J}$ respectively $\{\Gamma_i\}_{i\in I}$. Hence $|I|=|J|$. Moreover, \cref{thm:rigid-graph-decomposition} asserts, for some bijection $\sigma$ of between $J$ and $I$, that  $\widetilde{N}_{\Lambda_j}$ ($=N_{j}$) is unitarily conjugate to $M_{\Gamma_{\sigma(j)}}$ ($=M_{\sigma(j)}$) in $M_{\Gamma}$.
    \end{proof}
\end{theorem}

  We give sufficient conditions for absence of Cartan subalgebras in graph products. We note that in \cite{caspersAbsenceCartanSubalgebras2020} absence of Cartan was studied for right-angled Hecke algebras and that in \cite{chifanCartanSubalgebrasNeumann2023} absence of Cartan was fully characterized for von Neumann algebras associated to graph products of groups.

For a non-empty connected graph $\Gamma$ we define its radius as 
\begin{align}\label{definition:radius-graph}
    \Radius(\Gamma):=\inf_{s\in \Gamma}\sup_{t\in \Gamma}\Dist_{\Gamma}(s,t),
\end{align}
where $\Dist_{\Gamma}(s,t)$ denotes the minimal length of a path in $\Gamma$ from $s$ to $t$. Furthermore, we set $\Radius(\Gamma) = 0$ if $\Gamma$ is empty and set $\Radius(\Gamma) = \infty$ if $\Gamma$ is not connected.
\begin{proposition}\label{prop:Cartan-subalgebra}
    Let $\Gamma$ be a simple graph with $\Radius(\Gamma)\geq 3$ and for $v\in \Gamma$ let $M_{v}$ be a II$_1$-factor with normal faithful trace $\tau_v$. Then $M_{\Gamma} = *_{v,\Gamma}(M_v,\tau_v)$ does not possess a Cartan-subalgebra.
    \begin{proof}
        Suppose $M_{\Gamma}$ has a Cartan subalgebra $A\subseteq M_{\Gamma}$. Fix $v\in \Gamma$. Then $M_{\Gamma} = M_{\Star(v)}*_{M_{\Link(v)}}M_{\Gamma\setminus\{v\}}$. Since $A$ is amenable, one of the statements of \cref{thm:prelim:alternatives} must hold.
        Since $\Radius(\Gamma)\geq 3$, we have $\Star(v)\not=\Gamma$. Hence $\Nor_{M_{\Gamma}}(A)'' = M_{\Gamma}\not\prec_{M_{\Gamma}} M_{\Star(v)}$ and $\Nor_{M_{\Gamma}}(A)''  = M_{\Gamma}\not\prec_{M_{\Gamma}} M_{\Gamma\setminus 
    \{v\}}$ by \cref{Lem=Selfembedding}. Thus we must either have $A\prec_{M_{\Gamma}} M_{\Link(v)}$ or $\Nor_{M_{\Gamma}}(A)''$ is amenable relative to $M_{\Link(v)}$ inside $M_{\Gamma}$. 
    Suppose that $A\prec_{M_{\Gamma}} M_{\Link(v)}$ then since $A\not\prec_{M_{\Gamma}} M_{\varnothing}$ we obtain by \cref{prop:embed}\eqref{Item=Embed1b} that $M_{\Gamma} =\Nor_{M_{\Gamma}}(A)''\subseteq M_{\Lambda_{\emb}}$ where $\Lambda_{\emb} = \Link(v) \cup \bigcup_{w\in \Link(v)}\Link(w)$.
    We see that $\Radius(\Lambda_{\emb})\leq 2$ (indeed take as center $v$). Hence, since $\Radius(\Gamma)\geq 3$ we have $M_{\Gamma} = \Nor_{M_{\Gamma}}(A)''\subseteq M_{\Lambda_{\emb}} \subsetneq M_{\Gamma}$, a contradiction. We conclude that $\Nor_{M_{\Gamma}}(A)''$ ($=M_{\Gamma}$) is amenable relative to $M_{\Link(v)}$ in $M_{\Gamma}$. Since $v$ was arbitrary we obtain using \cref{Thm=Square} that $M_{\Gamma}$ is amenable. This is a contradiction since $M_{\Gamma}$ is non-amenable by \cref{Thm=AmenableGraphProduct} (since $\Radius(\Gamma)\geq 3$).  Thus $M_{\Gamma}$ does not have a Cartan subalgebra.
    \end{proof}
\end{proposition}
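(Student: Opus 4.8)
The plan is to prove the statement by induction on the number of vertices of $\Gamma$, following closely the strategy of \cref{Thm=MainImplication} but simplified since we only need the conclusion that $M_\Gamma$ has no Cartan subalgebra. Suppose for contradiction that $A \subseteq M_\Gamma$ is a Cartan subalgebra, so in particular $A$ is diffuse, amenable, and $\Nor_{M_\Gamma}(A)'' = M_\Gamma$. First I would dispose of the degenerate cases: if $\Gamma$ is empty or a single vertex, $\Radius(\Gamma) \leq 0 < 3$, so we may assume $|\Gamma| \geq 2$ and moreover that $\Star(v) \neq \Gamma$ for every $v \in \Gamma$ (since $\Star(v) = \Gamma$ would force $\Radius(\Gamma) \leq 1$). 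Fix any $v \in \Gamma$ and use the amalgamated free product decomposition \eqref{Eqn=Amalgam}, namely $M_\Gamma = M_{\Star(v)} \ast_{M_{\Link(v)}} M_{\Gamma \setminus \{v\}}$.

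Since $A$ is amenable it is amenable relative to $M_{\Star(v)}$ inside $M_\Gamma$, so \cref{thm:prelim:alternatives} applies to $A$ and $P = \Nor_{M_\Gamma}(A)'' = M_\Gamma$: one of the three alternatives holds. Alternative (ii), that $M_\Gamma \prec_{M_\Gamma} M_{\Star(v)}$ or $M_\Gamma \prec_{M_\Gamma} M_{\Gamma \setminus \{v\}}$, is ruled out by \cref{Lem=Selfembedding} because both $\Star(v)$ and $\Gamma \setminus \{v\}$ are strict subgraphs (using $|\Gamma| \geq 2$ and $\Star(v) \neq \Gamma$). So either (i) $A \prec_{M_\Gamma} M_{\Link(v)}$, or (iii) $M_\Gamma$ is amenable relative to $M_{\Link(v)}$ inside $M_\Gamma$. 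In case (i), since $A$ is diffuse we have $A \not\prec_{M_\Gamma} M_\emptyset$, so \cref{prop:embed}\eqref{Item=Embed1b} (applied with $\{\Lambda_j\}$ enumerating all strict subgraphs of $\Link(v)$, giving $\Lambda_{\emb} = \Link(v) \cup \Link_\Gamma(\Link(v))$) yields $M_\Gamma = \Nor_{M_\Gamma}(A)'' = 1_A \qNor_{M_\Gamma}(A)'' 1_A \subseteq M_{\Lambda_{\emb}}$, hence $\Lambda_{\emb} = \Gamma$. But $\Lambda_{\emb} = \Link(v) \cup \bigcup_{w \in \Link(v)} \Link(w)$ has radius at most $2$: indeed any vertex $t \in \Lambda_{\emb}$ is within distance $2$ of $v$ (if $t \in \Link(v)$ it is at distance $1$; if $t \in \Link(w)$ for some $w \in \Link(v)$ it is at distance $\leq 2$ via $v - w - t$), so $\sup_{t \in \Lambda_{\emb}} \Dist_{\Lambda_{\emb}}(v,t) \leq 2$ and thus $\Radius(\Lambda_{\emb}) \leq 2$. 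This contradicts $\Radius(\Gamma) \geq 3$. Here one must be slightly careful that distances inside $\Lambda_{\emb}$ (not inside $\Gamma$) are what enter $\Radius$, but the paths $v - w - t$ lie within $\Lambda_{\emb}$ since $v, w \in \Link(v) \cup \{v\}$ are in $\Lambda_{\emb}$ (as $\Lambda_{\emb} \supseteq \Link(v)$ and $v$ is adjacent to everything in $\Link(v)$ — actually $v$ itself need not be in $\Lambda_{\emb}$, but $w$ and $t$ are, and $\Dist(w,t) \leq 1$, while any two vertices of $\Link(v)$ are at distance $\leq 2$ through $v$ only if $v \in \Lambda_{\emb}$; so it is cleanest to note $\Link(v) \subseteq \Lambda_{\emb}$ and that $\Lambda_{\emb}$ is connected with a vertex $w_0 \in \Link(v)$ at distance $\leq 2$ from all others). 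I would double-check this radius bound but it is the routine part.

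We are left with case (iii): for every $v \in \Gamma$, $M_\Gamma$ is amenable relative to $M_{\Link(v)}$ inside $M_\Gamma$. Now apply \cref{Thm=Square} iteratively over all $v \in \Gamma$: since $\bigcap_{v \in \Gamma} \Link(v) \subseteq \bigcap_{v \in \Gamma} (\Gamma \setminus \{v\}) = \emptyset$, we conclude that $M_\Gamma$ is amenable relative to $M_\emptyset = \CC 1_{M_\Gamma}$ inside $M_\Gamma$, i.e. $M_\Gamma$ is amenable. This contradicts the fact that $M_\Gamma$ is non-amenable: indeed $\Radius(\Gamma) \geq 3$ forces $\Gamma$ to contain a path of length $\geq 3$ and in particular a pair of non-adjacent vertices $v, w$ with $\Link(\{v,w\}) \neq \Gamma \setminus \{v,w\}$ — more directly, if $\Radius(\Gamma) \geq 3$ then there exist $s, t$ with $\Dist_\Gamma(s,t) \geq 3$, so $s$ and $t$ are non-adjacent and have no common neighbour, whence condition \eqref{Thm:Item=Amen2} of \cref{Thm=AmenableGraphProduct} fails and $M_\Gamma$ is non-amenable (the vertex algebras being II$_1$-factors are non-amenable of dimension $\neq 2$ in any case). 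This contradiction completes the proof, so no Cartan subalgebra $A$ can exist. The main obstacle, such as it is, is making sure the two geometric facts — that $\Lambda_{\emb}$ has radius $\leq 2$, and that $\Radius(\Gamma) \geq 3$ implies the non-amenability hypothesis fails — are argued cleanly; everything else is a direct application of the machinery (\cref{thm:prelim:alternatives}, \cref{Lem=Selfembedding}, \cref{prop:embed}, \cref{Thm=Square}, \cref{Thm=AmenableGraphProduct}) already in place.
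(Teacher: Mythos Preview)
Your approach is the paper's: decompose $M_\Gamma$ at each vertex as an amalgamated free product, apply \cref{thm:prelim:alternatives} with $P=\Nor_{M_\Gamma}(A)''=M_\Gamma$, rule out (ii) via \cref{Lem=Selfembedding}, show (i) forces $\Gamma=\Lambda_{\emb}$ of radius $\leq 2$, and in (iii) intersect over all $v$ via \cref{Thm=Square} to get amenability, contradicting \cref{Thm=AmenableGraphProduct}. A few cleanups: the induction framing is unused and should be dropped; in case (i) take $\{\Lambda_j\}=\{\emptyset\}$ rather than all strict subgraphs (you have not verified $A\not\prec M_{\Lambda_j}$ for those), which gives the formula $\Lambda_{\emb}=\Link(v)\cup\bigcup_{w\in\Link(v)}\Link(w)$ you use afterwards; strictly speaking one invokes \cref{prop:embed}\eqref{Item=Embed2} here (since $A$ is not contained in $M_{\Link(v)}$), obtaining $M_\Gamma\prec M_{\Lambda_{\emb}}$ and hence $\Lambda_{\emb}=\Gamma$ by \cref{Lem=Selfembedding}; and your worry about $v\notin\Lambda_{\emb}$ is unfounded, since $v\in\Link(w)$ for every $w\in\Link(v)$ and $\Link(v)\neq\emptyset$ in case (i), so $v$ is a valid centre witnessing $\Radius(\Lambda_{\emb})\leq 2$.
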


\begin{remark}\label{remark:class-anti-free}
We argue that we find new classes of finite von Neumann algebras that are freely indecomposable. More precisely we argue that Theorem  \ref{theorem:free-indecompose} covers von Neumann algebras that are not in the class $\calC_{\text{anti-free}}$ from \cite{houdayerRigidityFreeProduct2016}.  Indeed, let $\Gamma$ be a graph with  $\Radius(\Gamma)\geq 3$ (hence $\Gamma$ is irreducible) and for $v\in \Gamma$ let $M_{v}$ be a II$_1$-factor with separable predual and possessing the Haagerup property. Then the II$_1$-factor $M_{\Gamma}$ does not lie in the class $\calC_{\text{anti-free}}$ from \cite{houdayerRigidityFreeProduct2016}. Indeed, (i) $M_{\Gamma}$ is prime by \cref{thm:primeness-for-graph-products-II1-factors}, (ii)  $M_{\Gamma}$ is full (so no property Gamma) by \cite[Theorem E]{charlesworthStructureGraphProduct2024}, (iii)  $M_{\Gamma}$  does not have a Cartan subalgebra by \cref{prop:Cartan-subalgebra}, and (iv)  $M_{\Gamma}$ has the Haagerup property (so no property (T) by \cite[Theorem 3]{connesPropertyNeumannAlgebras1985}) by \cite[Theorem 0.2]{caspersGraphProductsOperator2017a}. If $\Gamma$ is moreover connected and rigid and if $M_v$ lies in $\Cvert$ for each $v\in \Gamma$, then $M_{\Gamma}$ lies in $\Crigid$ and can not decompose as free product of II$_1$-factors.  As a concrete example, take the cyclic graph $\Gamma = \ZZ_{n}$ for some $n\geq 6$ and for each $v\in \Gamma$ let $M_v = \calL(\FF_2)\in \Cvert$ which has the Haagerup property by \cite[Theorem 12.2.5]{brownAlgebrasFiniteDimensionalApproximations2008}. Then $M_{\Gamma}$ is a II$_1$-factor in $\calC_{\Rigid}\setminus \calC_{\text{anti-free}}$ that can not decompose as a (tracial) reduced free product of II$_1$-factors.  
\end{remark}
 
\begin{remark}\label{remark:covers_new_ufd}
    We argue that the unique free product decompositions from \cref{thm:free-product-decomposition} are not covered by \cite{houdayerRigidityFreeProduct2016} nor \cite{dingStructureRelativelyBiexact2024}. Indeed, let $\Gamma$ be a simple graph whose connected components $\Gamma_i$ for $i=1,\ldots, m$ are of the form $\ZZ_{n_i}$ for some $n_i\geq 6$. Observe that $\Gamma$ is rigid. For $v\in \Gamma$ put $M_{v} = \calL(\FF_2)\in \Cvert$. Then \cref{thm:free-product-decomposition} asserts the unique free product decomposition $M_{\Gamma} = M_{\Gamma_1}*\cdots *M_{\Gamma_m}$. Since the factors $M_{\Gamma_i}$ for $i=1,\ldots, m$ are not in the class $\Cantifree$, this result is not covered by \cite{houdayerRigidityFreeProduct2016}. 
    Furthermore, we note for $i=1,\ldots, m$ that the group $*_{v,\Gamma_i}\FF_2$ is properly proximal by \cite[Proposition 3.7]{dingProperProximalityVarious2024} since $\Radius(\Gamma_i)\geq 3$. Hence, also \cite[Corollary 1.8]{dingStructureRelativelyBiexact2024} does not apply.
\end{remark}

 \section{Graph radius rigidity}\label{section:graph-radius-rigidity}
In this section we generalize the ideas from the proof of \cref{theorem:free-indecompose} and show that we can, in certain cases, retrieve the radius of the graph $\Gamma$ from the graph product $M_{\Gamma}$. In \cref{subsection:radius-of-von-Neumann-algebra} we introduce the notion of the radius of a von Neumann algebra. Furthermore, we establish good estimates on $\Radius(M_{\Gamma})$ in terms of the radius of $\Gamma$ whenever the vertex algebras $M_v$ posses the property strong (AO). In \cref{subsection:radius-estimates-groups} we establish similar estimates when the vertex algebras $M_v$ are group von Neumann algebras $\calL(G_v)$ of countable icc groups $G_v$.

\subsection{Radius of von Neumann algebras}\label{subsection:radius-of-von-Neumann-algebra}
We introduce the following definition for a simple graph. 

\begin{definition}
	Let $\Gamma$ be a simple graph and let $\Lambda\subseteq \Gamma$ be a subgraph. For $d\in \ZZ_{\geq 0}$ put $$B_\Gamma(\Lambda;d) = \{v\in \Gamma: \Dist_{\Gamma}(v,w)\leq d \text{ for some } w\in \Lambda\}.$$ 
	which is the closed ball of size $d$ around $\Lambda$. Furthermore, define $B_{\Gamma}(\Lambda,\infty) = \bigcup_{d\geq 1}B_{\Gamma}(\Lambda,d)$.
\end{definition}

We will now introduce a similar definition for von Neumann algebras.
\begin{definition}
	Let $M$ be a  diffuse  von Neumann algebra and $A\subseteq M$ a  diffuse  von Neumann subalgebra. For $d\geq 0$ we define the von Neumann algebra $B_M(A;d)$ inductively. Put $B_M(A;0) =A$ and for $d\geq 1$ define 
	$$B_M(A;d) = \left(\bigcup_{\substack{B\subseteq B_M(A;d-1)\\ \text{ diffuse vNa}}}\Nor_{M}(B)\right)''$$
	Moreover, we also define $$B_{M}(A;\infty) = \left(\bigcup_{d\geq 0}B_{M}(A;d)\right)''$$
\end{definition} 

\begin{remark}
For $n,m\in \ZZ_{\geq 0}$ we have that 
\[
B_{M}(A;n+m) = (B_M(  \: \cdot \: ;1) \circ \ldots \circ   B_M(  \: \cdot \: ;1) )(A)  =  B_{M}(B_{M}(A;n);m),
\]
where we have $n+m$ compositions of taking $B_M(  \: \cdot \: ;1)$.
%Indeed, for any subalgebra $N \subseteq M$ we may define 
%\[
%\mathcal{N}(N) = \left(\bigcup_{\substack{B\subseteq N \\ \text{ diffuse vNa}}}\Nor_{M}(B) \right)''
%\]
%Then, 
%\[
%B_{M}(A;n+m) =  (\mathcal{N}\circ \ldots \circ \mathcal{N})(A)  =   B_{M}(B_{M}(A;n);m),
%\]
%where we have $n+m$ compositions of $\mathcal{N}$.
\end{remark}

Recall that the radius of a graph $\Gamma$ was defined in \eqref{definition:radius-graph} and note that it is equal to the infimum of all $d\in \ZZ_{\geq 0}$ for which there exists a vertex $v\in \Gamma$ with $B_{\Gamma}(v;d) = \Gamma$. In a similar way we can introduce the notion of the radius of a von Neumann algebras. 

\begin{definition}\label{definition:radius-von-Neumann-algebra}
	Let $M$ be a diffuse von Neumann algebra. We define $\Radius(M)$ as the infimum of all $d\in \ZZ_{\geq 0}$ such that there exists a diffuse, amenable subfactor $A\subseteq M$ for which $A'\cap M$ is a non-amenable factor and such that $B_M(A;d) = M$. 
\end{definition}
We remark that the definition of $\Radius(M)$ would be more natural with the relaxation that $A$ can be any diffuse amenable von Neumann subalgebra satisfying $B_{M}(A;d) = M$. However, we need the extra restrictions in order to get appropriate lower bounds on $\Radius(M)$.

\begin{proposition}\label{prop:ball-radius}
	Let $\Gamma$ be a simple graph and let $\Lambda\subseteq \Gamma$ be a subgraph. Let $M_{\Gamma} = *_{v,\Gamma}(M_v,\tau_v)$ be a graph product of II$_1$-factors with separable preduals. Then
	\begin{enumerate}
		\item \label{Item=Radius1} For $d\geq 0$ we have \label{Item=prop:ball-radius:ball_equality} $B_{M_{\Gamma}}(M_{\Lambda};d) = M_{B_{\Gamma}(\Lambda;d)}$;
		\item \label{Item=prop:ball-radius:radius} If $\Gamma$ is not complete then $\Radius(M_{\Gamma})\leq \max\{2,\Radius(\Gamma)\}$.
	\end{enumerate}
	\begin{proof}
		\eqref{Item=Radius1} 
		The statement holds true for $d=0$ since $B_{M_{\Gamma}}(M_\Lambda,0) = M_{\Lambda} = M_{B_{\Gamma}(\Lambda,0)}$. We show the statement for $d=1$.
		Let $A\subseteq M_{\Lambda}$ be amenable and diffuse.
		Then $A\not\prec_{M_{\Gamma}} \CC$. Let $\{\Lambda_j\}_{j\in \calJ}$ be the family $\{\varnothing\}$. Then by \cref{prop:embed}\eqref{Item=Embed1b} we obtain 
		$\qNor_{M_{\Gamma}}(A)''\subseteq M_{\Lambda_{\emb}}$ where $\Lambda_{\emb} = \Lambda\cup \bigcup_{v\in \Lambda}\Link_{\Gamma}(v) = B_{\Gamma}(\Lambda;1)$.
		Hence $B_{M}(M_{\Lambda};1)\subseteq M_{B_{\Gamma}(\Lambda;1)}$. To show equality, take $w\in B_{\Gamma}(\Lambda;1)\setminus\Lambda$ and let $v\in \Lambda$ such that $v$ and $w$ share an edge. Let $A\subseteq M_v$ be an amenable and diffuse. Then $\Nor_{M_{\Gamma}}(A)'' \supseteq M_{\Link(v)}\supseteq M_w$. Hence, $M_w\subseteq B_{M_{\Gamma}}(M_{\Lambda};1)$. Hence, we obtain equality. Now let $d\geq 1$ and suppose the statement holds true for $d-1$. Then
		$$B_{M_{\Gamma}}(M_{\Lambda};d) = B_{M_{\Gamma}}(B_{M_{\Gamma}}(M_{\Lambda};d-1);1) = B_{M_{\Gamma}}(M_{B_{\Gamma}(\Lambda;d-1)};1) = M_{B_{\Gamma}(B_{\Gamma}(\Lambda;d-1);1)} = M_{B_{\Gamma}(\Lambda;d)}$$
		This proves the statement by induction.\\

		\eqref{Item=prop:ball-radius:radius} Put $r = \Radius(\Gamma)$. We know $r\geq 1$ and furthermore we may assume $r<\infty$ since otherwise the statement is trivial.  Let $v\in \Gamma$ such that $B_{\Gamma}(v;r) = \Gamma$. Observe, since $\Gamma$ is not complete, that $v$ can be chosen such that $\Link_{\Gamma}(v)$ is not a clique in $\Gamma$.
		By \cite[Proposition 13]{ozawaPrimeFactorizationResults2004} we may let $A\subseteq M_v$ be a diffuse amenable subfactor for which $A'\cap M_{\Gamma} = M_v'\cap M_{\Gamma} = M_{\Link(v)}$. Thus $A'\cap M_{\Gamma}$ is a non-amenable factor. We show that $B_{M_{\Gamma}}(A;r) = M_{\Gamma}$.
		We see that 
		$$M_{\Link(v)} \subseteq \Nor_{M_{\Gamma}}(A)'' \subseteq B_{M_{\Gamma}}(A;1)\subseteq B_{M_{\Gamma}}(M_v;1)\subseteq M_{B_{\Gamma}(v;1)}$$
		Hence,
		\begin{align}\label{eq:radius-graph-estimate}
			M_{B_{\Gamma}(\Link_{\Gamma}(v);1)} = B_{M_{\Gamma}}(M_{\Link(v)};1)\subseteq B_{M_{\Gamma}}(B_{M_{\Gamma}}(A;1);1) \subseteq B_{M_{\Gamma}}(M_{B_{\Gamma}(v;1)};1) = M_{B_{\Gamma}(v;2)}
		\end{align}
		Now, observe that $B_{M_{\Gamma}}(A;2) = B_{M_{\Gamma}}(B_{M_{\Gamma}}(A;1);1)$ and $B_{\Gamma}(\Link_{\Gamma}(v);1) = B_{\Gamma}(v;2)$. If $r\leq 2$ then $B_{\Gamma}(v;2) = \Gamma$ which shows that $\Radius(M_{\Gamma})\leq 2 = \max\{2,r\}$. Thus assume $r\geq 2$. 		By \eqref{eq:radius-graph-estimate} we obtain 
		\[
          M_{B_{\Gamma}(v;2)} = M_{B_{\Gamma}(\Link_{\Gamma}(v);1)}  \subseteq B_{M_\Gamma}(A;2) \subseteq  M_{B_{\Gamma}(v;2)},
        \]
        and so these sets are all equal.
        Thus we obtain $$B_{M_{\Gamma}}(A;r) = B_{M_{\Gamma}}(B_{M_{\Gamma}}(A;2);r-2) = B_{M_{\Gamma}}(M_{B_{\Gamma}(v;2)};r-2) =M_{B_{\Gamma}(v;r)} = M_{\Gamma}$$
		This shows $\Radius(M_{\Gamma})\leq r = \max\{2,r\}$.\\
	\end{proof}
	
\end{proposition}

\begin{proposition}\label{prop:radius_lower_bound}
	Let $\Gamma$ be a simple graph. Let $M_{\Gamma} = *_{v,\Gamma}(M_v,\tau_v)$ be a graph product of II$_1$-factors $M_v$. Let $K\geq 1$ be a constant. Suppose for any amenable diffuse subfactor $A\subseteq M_{\Gamma}$ with $A'\cap M_{\Gamma}$ a non-amenable factor there is a subgraph $\Lambda\subseteq \Gamma$ with $\Radius(B_{\Gamma}(\Lambda,1))\leq K$ such that $A\prec_{M} M_{\Lambda}$.
	Then $$\Radius(\Gamma) -K \leq \Radius(M_{\Gamma}).$$
	\begin{proof}		
		Denote $R = \Radius(M_{\Gamma})$. We may assume  $R<\infty$.
		Let $A\subseteq M_{\Gamma}$ be an amenable, diffuse subfactor for which $A'\cap M_{\Gamma}$ is a non-amenable factor and for which $B_{M_{\Gamma}}(A;R)=M_{\Gamma}$. By assumption $A\prec_{M_{\Gamma}}M_{\Lambda}$ for some subgraph $\Lambda\subseteq \Gamma$ with $\Radius(B_{\Gamma}(\Lambda;1))\leq K$. Let $\{\Lambda_j\}_{j\in \calJ}$ denote the non-empty familiy $\{\varnothing\}$. Then by 	\cref{lemma:unitary-inclusion-in-star-for-vNa} we obtain a unitary $u\in M_{\Gamma}$ so that $u^*Au\subseteq M_{\Lambda_{\emb}}$ where $\Lambda_{\emb} = B_{\Gamma}(\Lambda,1)$.
		Hence, for $d\geq 0$,  we obtain by using \cref{prop:ball-radius} \eqref{Item=Radius1} in the last equality,        
        $$u^*B_{M_{\Gamma}}(A,d)u = B_{M_{\Gamma}}(u^*Au,d)\subseteq B_{M_{\Gamma}}(M_{B_{\Gamma}(\Lambda;1)};d) = M_{B_{\Gamma}(B_{\Gamma}(\Lambda;1);d)}$$
		Then 
		$$M_{\Gamma} = u^*B_{M_{\Gamma}}(A;R)u \subseteq M_{B_{\Gamma}(B_{\Gamma}(\Lambda;1);R)}$$
		so that $\Gamma = B_{\Gamma}(B_{\Gamma}(\Lambda;1);R)$.
		Therefore we obtain $$\Radius(\Gamma)\leq \Radius(B_{\Gamma}(\Lambda;1)) + R = K+\Radius(M_{\Gamma}),$$
		which completes the proof.
	\end{proof}
\end{proposition}

\begin{theorem}\label{theorem:radius-graph-AO}
		Let $\Gamma$ be a simple graph that is not complete. Let $M_{\Gamma} = *_{v,\Gamma}(M_v,\tau_v)$ be a graph product of II$_1$-factors $M_v$ that satisfy condition strong (AO) and have separable predual. Then
		$$\Radius(\Gamma)-2 \leq \Radius(M_{\Gamma})\leq \max\{2,\Radius(\Gamma)\}$$
		In particular this holds true when $M_{\Gamma}$ is a graph products of hyperfinite II$_1$-factors.
		\begin{proof}
			The upper bound is due to \cref{prop:ball-radius}\eqref{Item=prop:ball-radius:radius}. To obtain the lower bound we show that the condition of \cref{prop:radius_lower_bound} is satisfied with constant $K=2$. Let $A\subseteq M_{\Gamma}$ be amenable and diffuse and such that $A'\cap M_{\Gamma}$ is non-amenable. By \cref{Thm=KeyAlternatives} we obtain $A\prec_{M_{\Gamma}} M_{\Lambda}$ for some non-empty subgraph $\Lambda\subseteq \Gamma$ with $\Link(\Lambda)$ non-empty. Let $v\in \Link(\Lambda)$; so certainly $v \not \in  \Lambda$. Then $\Lambda\subseteq \Link(v)$. Hence, $B_{\Gamma}(\Lambda;1)$ equals $B_{\Gamma}(v,2)$ and has radius at most $2$. 
			This proves the lower bound.
		\end{proof}
\end{theorem}

\begin{remark}\label{remark:distinguish-vNa-using-radius}
    We remark that we can use \cref{theorem:radius-graph-AO} to distinguish von Neumann algebras coming from graph products. Indeed, let $\Gamma$ and $\Lambda$ be two simple graphs with $2\leq \Radius(\Gamma)< \Radius(\Lambda)-2$. Let $R_{\Gamma} = *_{v,\Gamma}(R_v,\tau_v)$ and $R_{\Lambda} = *_{v,\Lambda}(R_v,\tau_v)$ be graph products of hyperfinite II$_1$-factors $R_v$. Then by \cref{theorem:radius-graph-AO}  we obtain $$\Radius(R_{\Gamma}) \leq \Radius(\Gamma) < \Radius(\Lambda) -2\leq \Radius(R_{\Lambda})$$
    Thus, in particular $R_{\Gamma}\not\simeq R_{\Lambda}$.
\end{remark}

\subsection{Radius estimates for graph products groups}	
\label{subsection:radius-estimates-groups}
We now show that the statement of \cref{theorem:radius-graph-AO} also holds true when the vertex von Neumann algebras $M_v$ are group von Neumann algebras $\calL(G_v)$ of countable icc groups (\cref{theorem:radius-graph-groups}).
We state the following definitions.
\begin{definition}
	Let $G$ be a countable discrete group and let $\calS$ be a family of subgroups of $G$. Then a subset $F\subseteq G$ is called \textit{small relative to} $\calS$ if  $$F\subseteq \bigcup_{i=1}^k g_iG_ih_i$$ for some $k\geq 1$, groups $G_1,\ldots, G_k\in \calS$ and elements $g_1,\ldots, g_k,h_1,\ldots, h_k\in G$. 
\end{definition}

\begin{definition}
	Let $G$ be a countable discrete group and let $\calS$ be a family of subgroups of $G$. Let $V\subseteq  \calL(G)$   be a norm bounded subset. 
	We write $$V\subseteq_{\text{approx}}\calL(\calS)$$  if for every $\epsilon>0$ there is a subset $F\subseteq G$ that is small relative to $\calS$ and satisfies for all  $v \in V$  that $\|v - P_{F}(v)\|_{2} \leq \epsilon$ (here $P_{F}:\ell^2(G)\to \ell^2(F)$ denotes the orthogonal projection). 
\end{definition}

The following proposition is similar to \cite[Claim 6.15]{chifanTensorProductDecompositions2018} and follows from the results in \cite{vaesOnecohomologyUniquenessGroup2013}. In the proof we write $(B)_1$ for the closed unit ball of the von Neumann algebra $B$. 
	\begin{proposition}\label{prop:intersection-embeddings-for-groups}
		Let $\Gamma$ be a simple graph and for $v\in \Gamma$ let $G_v$ be a countable icc group.
		Let $G_{\Gamma} = *_{v,\Gamma}G_v$ be the graph product and let $B\subseteq \calL(G_{\Gamma})$ be a von Neumann subalgebra for which $B'\cap \calL(G_{\Gamma})$ is a factor. Let $\{\Lambda_i\}_{i\in I}$ be a collection of subgraphs of $\Gamma$ and let $\Lambda = \bigcap_{i}\Lambda_i$ be their intersection. If $B\prec_{\calL(G_{\Gamma})}\calL(G_{\Lambda_i})$ for all $i$ then $B\prec_{\calL(G_{\Gamma})}\calL(G_{\Lambda})$
		\begin{proof}
			Assume $B\prec_{\calL(G_{\Gamma})} \calL(G_{\Lambda_{i}})$ for $i\in I$. We show $B\prec_{\calL(G_{\Gamma})}\calL(G_{\Lambda})$.  For $i\in I$ we can by
			\cite[Lemma 2.5]{vaesOnecohomologyUniquenessGroup2013} obtain a non-zero projection $q_i\in B'\cap \calL(G_{\Gamma})$ such that for $\calS_{i}:= \{G_{\Lambda_i}\}$ we have
			\[
			(Bq_i)_1 \subset_{{\rm approx}} \calL(\calS_i)
			\]
			Moreover, by \cite[Proposition 2.6]{vaesOnecohomologyUniquenessGroup2013} we may assume $q_i\in \calZ(\Nor_{\calL(G_{\Gamma})}(B)'')$. Note that
			\begin{align}
				q_i \in \calZ(\Nor_{\calL(G_{\Gamma})}(B)'') \cap (B' \cap \calL(G_{\Gamma}))\subseteq \calZ(B' \cap \calL(G_{\Gamma})) = \CC1
			\end{align}
			Thus $q_i = 1$. Denote 
			$$\calS = \{  \bigcap_{i\in I} h_i G_{\Lambda_i} h_i^{-1} \mid h_i \in G_{\Gamma} \text{ for } i\in I \}.$$
			From \cite[Lemma 2.7]{vaesOnecohomologyUniquenessGroup2013} it follows that  
			$(B)_1 \subset_{{\rm approx}} \mathcal{L}(\mathcal{S})$.
			Then from \cite[Proposition 3.4]{antolinTitsAlternativesGraph2013} for each $(h_i)_{i\in I}, h_i \in G_{\Gamma}$ there is a subgraph $\Lambda_{0} \subseteq \Lambda$ and $k \in G_{\Gamma}$ such that 
			\[
			\bigcap_{i\in I} h_i G_{\Lambda_i} h_i^{-1} = k G_{\Lambda_0} k^{-1} \subseteq k G_{\Lambda} k^{-1}. 
			\]
			Thus, putting $\calS_0 = \{G_{\Lambda}\}$ it follows that $(B)_1 \subseteq_{\text{approx}} \calL(\calS_0)$ and hence by \cite[Lemma 2.5]{vaesOnecohomologyUniquenessGroup2013} we obtain $B \prec_{\calL(G_{\Gamma})} \mathcal{L}( G_{\Lambda} )$.  
		\end{proof}
	\end{proposition}

\begin{remark}
\cref{prop:intersection-embeddings-for-groups} allows us to prove the following theorem which holds for a very general class of graph products of group von Neumann algebras. The reason we restrict to group von Neumann algebras is that a version of \cref{prop:intersection-embeddings-for-groups} for more general von Neumann algebras is not known to the authors. 
\end{remark}

	\begin{theorem}\label{theorem:radius-graph-groups}
		Let $\Gamma$ be a simple graph that is not complete. For $v\in \Gamma$ let $G_v$ be a countable icc group.	Let $G_{\Gamma} = *_{v,\Gamma}G_v$ be the graph product. Then
		$$\Radius(\Gamma) - 2 \leq \Radius(\calL(G_{\Gamma}))\leq \max\{2,\Radius(\Gamma)\}.$$
		\begin{proof}
			The upper bound on $\Radius(\calL(G_{\Gamma}))$ follows immediately from \cref{prop:ball-radius} since $\calL(G_{v})$ is a II$_1$-factor for $v\in \Gamma$. To prove the lower bound we show the condition of \cref{prop:radius_lower_bound} is satisfied with $K=2$. Put $M_{v} = \calL(G_v)$ and let $M_{\Gamma} = *_{v,\Gamma}(M_v,\tau_v) = \calL(G_\Gamma)$ be the graph product. Let $R\subseteq M_{\Gamma}$ be an amenable II$_1$-factor for which $R'\cap M_{\Gamma}$ is a non-amenable factor. We need to show that $R\prec_{M_{\Gamma}} M_{\Lambda}$ for some $\Lambda\subseteq \Gamma$ with $\Radius(B_{\Gamma}(\Lambda;1))\leq 2$.
			Let $I$ be the set of all vertices $v$ in $\Gamma$ for which $\Nor_{M_{\Gamma}}(R)''$ is amenable relative to $M_{\Link_{\Gamma}(v)}$ inside $M_{\Gamma}$.			
			By \cref{Thm=Square} we obtain that $\Nor_{M_{\Gamma}}(R)''$ is amenable relative to $M_{\Link_{\Gamma}(I)}$ inside $M_{\Gamma}$. Since $\Nor_{M_{\Gamma}}(R)''$ is non-amenable (as it contains $R'\cap M_{\Gamma}$), we obtain that $\Link_{\Gamma}(I)$ is non-empty. Let $w\in \Link_\Gamma(I)$. Then $ I\subseteq B_{\Gamma}(w;1)$ so that $B_{\Gamma}(I;1)\subseteq B_{\Gamma}(w,2)$. Thus since $w\in B_{\Gamma}(I;1)$ we see that $B_{\Gamma}(I;2)$ has radius at most $2$.
			
			Now let $J\subseteq \Gamma$ be the set of all $v\in \Gamma$ for which 
			$R\prec_{M_{\Gamma}} M_{\Gamma\setminus \{v\}}$. Then since $R'\cap \calL(G_{\Gamma})$ is a factor we obtain by \cref{prop:intersection-embeddings-for-groups} that $R \prec_{M_{\Gamma}} M_{\Gamma\setminus J}$. Now, if $\Gamma\setminus J\subseteq I$ then $R\prec_{M_{\Gamma}} M_{I}$ which shows that we may take $\Lambda = I$.
			Thus assume $\Gamma\setminus J\not\subseteq I$. Take $v\in \Gamma\setminus J$ with $v\not\in I$.
			We can decompose 
			$$M_{\Gamma} = M_{\Star(v)} *_{M_{\Link(v)}} M_{\Link(v)}.$$
			Since $R$ is amenable we get by \cref{thm:prelim:alternatives} that at least one of the following holds true
			\begin{enumerate}
				\item $R\prec_{M_{\Gamma}} M_{\Link(v)}$
				\item $\Nor_{M_{\Gamma}}(R)'' \prec_{M_{\Gamma}} M_{\Star(v)}$ or $\Nor_{M_{\Gamma}}(R)'' \prec_{M_{\Gamma}} M_{\Gamma\setminus \{v\}}$.
				\item $\Nor_{M_{\Gamma}}(R)''$ is amenable relative to $M_{\Link(v)}$ inside $M_{\Gamma}$.
			\end{enumerate}
			Since $v$ is not in $I\cup J$ we must have that $R\prec_{M_{\Gamma}} M_{\Link(v)}$ or $\Nor_{M_{\Gamma}}(R)'' \prec_{M_{\Gamma}} M_{\Star(v)}$. Thus in particular we obtain $R\prec_{M_{\Gamma}} M_{\Star(v)}$. Observe that $B_{\Gamma}(\Star(v);1)$ has radius at most $2$. Hence we may take $\Lambda := \Star(v)$. This finishes the proof. 
		\end{proof}
	\end{theorem}

\printbibliography

@online{IoanaHorbez,
      title={Rigidity for graph product von Neumann algebras},
      author={Camille Horbez and Adrian Ioana},
      year={2025},
      eprint={2508.03662},
      archivePrefix={arXiv},
      primaryClass={math.OA},
      url={https://arxiv.org/abs/2508.03662},
}

@online{VaesDrimbe,
      title={W*-correlations of II$_1$ factors and rigidity of tensor products and graph products},
      author={Daniel Drimbe and Stefaan Vaes},
      year={2025},
      eprint={2507.04691},
      archivePrefix={arXiv},
      primaryClass={math.OA},
      url={https://arxiv.org/abs/2507.04691},
}

@online{CaspersChen,
      title={Internal graphs of graph products of hyperfinite II$_1$-factors},
      author={Martijn Caspers and Enli Chen},
      year={2025},
      eprint={2505.05179},
      archivePrefix={arXiv},
      primaryClass={math.OA},
      url={https://arxiv.org/abs/2505.05179},
}

@article{popaStrongRigidityII12006a,
  title = {Strong Rigidity of II$_1$ Factors Arising from Malleable Actions of W-Rigid Groups, {{II}}},
  author = {Popa, Sorin},
  year = {2006},
  month = aug,
  journal = {Inventiones mathematicae},
  volume = {165},
  number = {2},
  pages = {409--451},
  issn = {1432-1297},
  doi = {10.1007/s00222-006-0502-3},
  urldate = {2024-06-14},
  abstract = {We prove that any isomorphism {\texttheta}:M0{$\simeq$}M of group measure space II1 factors, \$M\_0=L{\textasciicircum}{\textbackslash}infty(X\_0, {\textbackslash}mu\_0) {\textbackslash}rtimes\_\{{\textbackslash}sigma\_0\} G\_0\$, \$M=L{\textasciicircum}{\textbackslash}infty(X, {\textbackslash}mu){\textbackslash}rtimes\_\{{\textbackslash}sigma\} G\$, with G0 an ICC group containing an infinite normal subgroup with the relative property (T) of Kazhdan-Margulis (i.e. G0w-rigid) and {$\sigma$} a Bernoulli action of some ICC group G, essentially comes from an isomorphism of probability spaces which conjugates the actions with respect to some identification G0{$\simeq$}G. Moreover, any isomorphism {\texttheta} of M0 onto a ``corner'' pMp of M, for p{$\in$}M an idempotent, forces p=1. In particular, all group measure space factors associated with Bernoulli actions of w-rigid ICC groups have trivial fundamental group and any isomorphism of such factors comes from an isomorphism of the corresponding groups. This settles a ``group measure space version'' of Connes rigidity conjecture, shown in fact to hold true in a greater generality than just for ICC property (T) groups. We apply these results to ergodic theory, establishing new strong rigidity and superrigidity results for orbit equivalence relations.},
  langid = {english},
  keywords = {Countable Discrete Group,Orbit Equivalence,Partial Isometry,Strong Rigidity,Unitary Conjugate},
  file = {}
}

@article {antolinTitsAlternativesGraph2013,
    AUTHOR = {Antol\'in, Yago and Minasyan, Ashot},
     TITLE = {Tits alternatives for graph products},
   JOURNAL = {J. Reine Angew. Math.},
  FJOURNAL = {Journal f\"ur die Reine und Angewandte Mathematik. [Crelle's
              Journal]},
    VOLUME = {704},
      YEAR = {2015},
     PAGES = {55--83},
      ISSN = {0075-4102,1435-5345},
   MRCLASS = {20F65 (03E10)},
  MRNUMBER = {3365774},
MRREVIEWER = {J\"org\ Lehnert},
       DOI = {10.1515/crelle-2013-0062},
       URL = {https://doi.org/10.1515/crelle-2013-0062},
}

@article{asherKuroshtypeTheoremType2009,
  title = {A {{Kurosh-type}} Theorem for Type III Factors},
  author = {Asher, Jason},
  year = {2009},
  journal = {Proceedings of the American Mathematical Society},
  volume = {137},
  number = {12},
  pages = {4109--4116},
  issn = {0002-9939, 1088-6826},
  doi = {10.1090/S0002-9939-09-10009-6},
  urldate = {2024-06-11},
  abstract = {Advancing research. Creating connections.},
  langid = {english},
  file = {C:\Users\mborst6\Zotero\storage\EG3HF95L\Asher - 2009 - A Kurosh-type theorem for type III factors.pdf}
}

@article{borstBimoduleCoefficientsRiesz2023,
  title = {Bimodule Coefficients, {{Riesz}} Transforms on {{Coxeter}} Groups and Strong Solidity},
  author = {Borst, Matthijs and Caspers, Martijn and Wasilewski, Mateusz},
  year = {2023},
  month = dec,
  journal = {Groups, Geometry, and Dynamics},
  pages = {1--49},
  issn = {1661-7207},
  doi = {10.4171/ggd/752},
  urldate = {2024-01-18},
  abstract = {Matthijs Borst, Martijn Caspers, Mateusz Wasilewski},
  langid = {english},
  file = {C:\Users\mborst6\Zotero\storage\AADPMUXU\Borst et al. - 2021 - Bimodule coefficients, Riesz transforms on Coxeter.pdf}
}

@article{borstCCAPGraphProducts2024,
  title = {The {{CCAP}} for Graph Products of Operator Algebras},
  author = {Borst, Matthijs},
  year = {2024},
  month = apr,
  journal = {Journal of Functional Analysis},
  volume = {286},
  number = {8},
  pages = {110350},
  issn = {0022-1236},
  doi = {10.1016/j.jfa.2024.110350},
  urldate = {2024-03-19},
  abstract = {For a simple graph {$\Gamma$} and for unital C*-algebras with GNS-faithful states (Av,{$\varphi$}v) for v{$\in$}V{$\Gamma$}, we consider the reduced graph product (A,{$\varphi$})=⁎v,{$\Gamma$}(Av,{$\varphi$}v), and show that if every C*-algebra Av has the completely contractive approximation property (CCAP) and satisfies some additional condition, then the graph product has the CCAP as well. The additional condition imposed is satisfied in natural cases, for example for the reduced group C*-algebra of a discrete group G that possesses the CCAP. Our result is an extension of the result of Ricard and Xu in [28, Proposition 4.11] where they prove this result under the same conditions for free products. Moreover, our result also extends the result of Reckwerdt in [27, Theorem 5.5], where he proved for groups that weak amenability with Cowling-Haagerup constant 1 is preserved under graph products. Our result further covers many new cases coming from Hecke-algebras and discrete quantum groups.},
  keywords = {Completely contractive approximation property (CCAP),Graph products of operator algebras,Khintchine inequalities,Weak amenability with Cowling-Haagerup constant 1},
  file = {C\:\\Users\\mborst6\\Zotero\\storage\\CX6G542E\\Borst - 2024 - The CCAP for graph products of operator algebras.pdf;C\:\\Users\\mborst6\\Zotero\\storage\\V8DVAHM3\\S0022123624000387.html}
}

@article {borstClassificationRightangledCoxeter2023,
    AUTHOR = {Borst, Matthijs and Caspers, Martijn},
     TITLE = {Classification of right-angled {C}oxeter groups with a
              strongly solid von {N}eumann algebra},
   JOURNAL = {J. Math. Pures Appl. (9)},
  FJOURNAL = {Journal de Math\'ematiques Pures et Appliqu\'ees. Neuvi\`eme
              S\'erie},
    VOLUME = {189},
      YEAR = {2024},
     PAGES = {103591},
      ISSN = {0021-7824,1776-3371},
   MRCLASS = {46L10 (20F55 22D50)},
  MRNUMBER = {4779391},
       DOI = {10.1016/j.matpur.2024.06.006},
       URL = {https://doi.org/10.1016/j.matpur.2024.06.006},
}

@article{borstIsomorphismClassGaussian2023,
  title = {On the Isomorphism Class of {$q$}-{{Gaussian C}}*-Algebras for Infinite Variables},
  author = {Borst, Matthijs and Caspers, Martijn and Klisse, Mario and Wasilewski, Mateusz},
  year = {2023},
  month = feb,
  journal = {Proceedings of the American Mathematical Society},
  volume = {151},
  number = {02},
  pages = {737--744},
  issn = {0002-9939, 1088-6826},
  doi = {10.1090/proc/16165},
  urldate = {2024-07-19},
  abstract = {Advancing research. Creating connections.},
  langid = {english},
  keywords = {-Gaussian C*-algebras,Akemann-Ostrand property},
  file = {C:\Users\mborst6\Zotero\storage\56QBLW6P\Borst e.a. - 2023 - On the isomorphism class of -Gaussian C-algebra.pdf}
}

@article{boutonnetStrongSolidityFree2018,
  title = {Strong {{Solidity}} of Free {{Araki-Woods}} Factors},
  author = {Boutonnet, R{\'e}mi and Houdayer, Cyril and Vaes, Stefaan},
  year = {2018},
  journal = {American Journal of Mathematics},
  volume = {140},
  number = {5},
  pages = {1231--1252},
  publisher = {Johns Hopkins University Press},
  issn = {1080-6377},
  doi = {10.1353/ajm.2018.0029},
  urldate = {2021-05-06},
  abstract = {, abstract:, We show that Shlyakhtenko's free Araki-Woods factors are strongly solid, meaning that for any diffuse amenable von Neumann subalgebra that is the range of a normal conditional expectation, the normalizer remains amenable. This provides the first class of nonamenable strongly solid type III factors.},
  file = {C:\Users\mborst6\Zotero\storage\R6J8MN6Y\Boutonnet e.a. - 2018 - Strong Solidity of free Araki-Woods factors.pdf}
}

@book{brownAlgebrasFiniteDimensionalApproximations2008,
  title = {{{C}}{$^\ast$}-Algebras and {{Finite-Dimensional Approximations}}},
  author = {Brown, Nathanial P. and Ozawa, Narutaka},
  year = {2008},
  publisher = {American Mathematical Soc.},
  isbn = {978-0-8218-4381-9},
  langid = {english},
  file = {C:\Users\mborst6\Zotero\storage\W4KHJUNF\Brown en Ozawa - C$^∗$-algebras and Finite-Dimensional Approximations.pdf}
}

@article{caspersAbsenceCartanSubalgebras2020,
  title = {Absence of {{Cartan}} Subalgebras for Right-Angled {{Hecke}} von {{Neumann}} Algebras},
  author = {Caspers, Martijn},
  year = {2020},
  month = jan,
  journal = {Analysis \& PDE},
  volume = {13},
  number = {1},
  pages = {1--28},
  publisher = {Mathematical Sciences Publishers},
  issn = {1948-206X},
  doi = {10.2140/apde.2020.13.1},
  urldate = {2021-06-10},
  file = {C\:\\Users\\mborst6\\Zotero\\storage\\PP6TNULV\\Caspers - 2020 - Absence of Cartan subalgebras for right-angled Hec.pdf;C\:\\Users\\mborst6\\Zotero\\storage\\EZCPW6MW\\p01.html}
}

@article {connesPropertyNeumannAlgebras1985,
    AUTHOR = {Connes, A. and Jones, V.},
     TITLE = {Property {$T$} for von {N}eumann algebras},
   JOURNAL = {Bull. London Math. Soc.},
  FJOURNAL = {The Bulletin of the London Mathematical Society},
    VOLUME = {17},
      YEAR = {1985},
    NUMBER = {1},
     PAGES = {57--62},
      ISSN = {0024-6093,1469-2120},
   MRCLASS = {46L35 (22D25)},
  MRNUMBER = {766450},
MRREVIEWER = {Colin\ E.\ Sutherland},
       DOI = {10.1112/blms/17.1.57},
       URL = {https://doi.org/10.1112/blms/17.1.57},
}

@incollection {OzawaNuclearFree,
    AUTHOR = {Ozawa, Narutaka},
     TITLE = {Nuclearity of reduced amalgamated free product
              {$C^*$}-algebras},
      NOTE = {Theory of operator algebras and its applications (Japanese)
              (Kyoto, 2001)},
   JOURNAL = {S\=urikaisekikenky\=usho K\B oky\=uroku},
  FJOURNAL = {S\=urikaisekikenky\=usho K\B oky\=uroku},
    NUMBER = {1250},
      YEAR = {2002},
     PAGES = {49--55},
   MRCLASS = {46L09},
  MRNUMBER = {1915435},
}

@article{caspersGraphProductKhintchine2021,
  title = {Graph Product {{Khintchine}} Inequalities and {{Hecke C}}$^\ast$-Algebras: {{Haagerup}} Inequalities, (Non)Simplicity, Nuclearity and Exactness},
  shorttitle = {Graph Product {{Khintchine}} Inequalities and {{Hecke C}}⁎-Algebras},
  author = {Caspers, Martijn and Klisse, Mario and Larsen, Nadia S.},
  year = {2021},
  month = jan,
  journal = {Journal of Functional Analysis},
  volume = {280},
  number = {1},
  pages = {108795},
  issn = {0022-1236},
  doi = {10.1016/j.jfa.2020.108795},
  urldate = {2021-06-10},
  abstract = {Graph products of groups were introduced by Green in her thesis [32]. They have an operator algebraic counterpart introduced and explored in [14]. In this paper we prove Khintchine type inequalities for general C⁎-algebraic graph products which generalize results by Ricard and Xu [50] on free products of C⁎-algebras. We apply these inequalities in the context of (right-angled) Hecke C⁎-algebras, which are deformations of the group algebra of Coxeter groups (see [22]). For these we deduce a Haagerup inequality which generalizes results from [33]. We further use this to study the simplicity and trace uniqueness of (right-angled) Hecke C⁎-algebras. Lastly we characterize exactness and nuclearity of general Hecke C⁎-algebras.},
  langid = {english},
  keywords = {C-simplicity,Free products,Hecke C-algebras,Khintchine inequalities},
  file = {C\:\\Users\\mborst6\\Zotero\\storage\\JWDTL7K7\\Caspers e.a. - 2021 - Graph product Khintchine inequalities and Hecke C⁎.pdf;C\:\\Users\\mborst6\\Zotero\\storage\\CMSFUP3D\\S0022123620303384.html}
}

@article{caspersGraphProductsOperator2017a,
  title = {Graph Products of Operator Algebras},
  author = {Caspers, Martijn and Fima, Pierre},
  year = {2017},
  month = mar,
  journal = {Journal of Noncommutative Geometry},
  volume = {11},
  number = {1},
  pages = {367--411},
  issn = {1661-6952},
  doi = {10.4171/jncg/11-1-9},
  urldate = {2022-10-31},
  abstract = {Graph products for groups were defined by Green in her thesis {\textbackslash}[25] as a generalization of both Cartesian and free products. In this paper we define the corresponding graph product for reduced and maximal \$C{\textasciicircum}{\textbackslash}*\$-algebras, von Neumann algebras and quantum groups. We prove stability properties including permanence of II\${\textbackslash}\_1\$-factors, the Haagerup property, exactness and, under suitable conditions, the property of rapid decay for quantum groups.},
  langid = {english},
  file = {C\:\\Users\\mborst6\\Zotero\\storage\\7PULFXAK\\Caspers en Fima - 2017 - Graph products of operator algebras.pdf;C\:\\Users\\mborst6\\Zotero\\storage\\D24DUFQ8\\Caspers en Fima - 2020 - Graph products of operator algebras.pdf;C\:\\Users\\mborst6\\Zotero\\storage\\FYJ8DTXM\\Caspers en Fima - 2017 - Graph products of operator algebras.pdf;C\:\\Users\\mborst6\\Zotero\\storage\\VLWB8UDM\\14680.html}
}

@article {charlesworthStructureGraphProduct2024,
    AUTHOR = {Charlesworth, Ian and de Santiago, Rolando and Hayes, Ben and
              Jekel, David and Kunnawalkam Elayavalli, Srivatsav and Nelson,
              Brent},
     TITLE = {On the {S}tructure of {G}raph {P}roduct von {N}eumann
              {A}lgebras},
   JOURNAL = {Publ. Res. Inst. Math. Sci.},
  FJOURNAL = {Publications of the Research Institute for Mathematical
              Sciences},
    VOLUME = {61},
      YEAR = {2025},
    NUMBER = {4},
     PAGES = {713--762},
      ISSN = {0034-5318,1663-4926},
   MRCLASS = {46L10 (46L54)},
  MRNUMBER = {4975999},
       DOI = {10.4171/prims/61-4-3},
       URL = {https://doi.org/10.4171/prims/61-4-3},
}

@article {Kuzmin,
    AUTHOR = {Kuzmin, Alexey},
     TITLE = {C{CR} and {CAR} algebras are connected via a path of
              {C}untz-{T}oeplitz algebras},
   JOURNAL = {Comm. Math. Phys.},
  FJOURNAL = {Communications in Mathematical Physics},
    VOLUME = {399},
      YEAR = {2023},
    NUMBER = {3},
     PAGES = {1623--1645},
      ISSN = {0010-3616,1432-0916},
   MRCLASS = {46L60 (46L65 81R15 81S05)},
  MRNUMBER = {4580530},
MRREVIEWER = {Pradip\ Kumar\ Das},
       DOI = {10.1007/s00220-022-04580-x},
       URL = {https://doi.org/10.1007/s00220-022-04580-x},
}

@article {chifanRigidityNeumannAlgebras2022,
    AUTHOR = {Chifan, Ionuţ and Davis, Michael and Drimbe, Daniel},
     TITLE = {Rigidity for von {N}eumann algebras of graph product groups
              {I}: {S}tructure of automorphisms},
   JOURNAL = {Anal. PDE},
  FJOURNAL = {Analysis \& PDE},
    VOLUME = {18},
      YEAR = {2025},
    NUMBER = {5},
     PAGES = {1119--1146},
      ISSN = {2157-5045,1948-206X},
   MRCLASS = {46L10 (20E06 46L36)},
  MRNUMBER = {4904384},
       DOI = {10.2140/apde.2025.18.1119},
       URL = {https://doi.org/10.2140/apde.2025.18.1119},
}

@article {CaspersJussieu,
    AUTHOR = {Caspers, Martijn},
     TITLE = {Riesz transforms on compact quantum groups and strong
              solidity},
   JOURNAL = {J. Inst. Math. Jussieu},
  FJOURNAL = {Journal of the Institute of Mathematics of Jussieu. JIMJ.
              Journal de l'Institut de Math\'ematiques de Jussieu},
    VOLUME = {21},
      YEAR = {2022},
    NUMBER = {6},
     PAGES = {2135--2171},
      ISSN = {1474-7480,1475-3030},
   MRCLASS = {46L67 (46L10)},
  MRNUMBER = {4515291},
MRREVIEWER = {Zhengwei\ Liu},
       DOI = {10.1017/S1474748021000165},
       URL = {https://doi.org/10.1017/S1474748021000165},
}

@article {VaesVergnioux,
    AUTHOR = {Vaes, Stefaan and Vergnioux, Roland},
     TITLE = {The boundary of universal discrete quantum groups, exactness,
              and factoriality},
   JOURNAL = {Duke Math. J.},
  FJOURNAL = {Duke Mathematical Journal},
    VOLUME = {140},
      YEAR = {2007},
    NUMBER = {1},
     PAGES = {35--84},
      ISSN = {0012-7094,1547-7398},
   MRCLASS = {46L55 (46L54 46L65)},
  MRNUMBER = {2355067},
MRREVIEWER = {Partha\ Sarathi\ Chakraborty},
       DOI = {10.1215/S0012-7094-07-14012-2},
       URL = {https://doi.org/10.1215/S0012-7094-07-14012-2},
}

@incollection {HigsonGuentner,
    AUTHOR = {Higson, Nigel and Guentner, Erik},
     TITLE = {Group {$C^\ast$}-algebras and {$K$}-theory},
 BOOKTITLE = {Noncommutative geometry},
    SERIES = {Lecture Notes in Math.},
    VOLUME = {1831},
     PAGES = {137--251},
 PUBLISHER = {Springer, Berlin},
      YEAR = {2004},
      ISBN = {3-540-20357-5},
   MRCLASS = {46L80 (19K35 22D25)},
  MRNUMBER = {2058474},
MRREVIEWER = {Paul\ D.\ Mitchener},
       DOI = {10.1007/978-3-540-39702-1\_3},
       URL = {https://doi.org/10.1007/978-3-540-39702-1_3},
}

@article {ChifanDaviesDrimbeII,
    AUTHOR = {Chifan, Ionuţ and Davis, Michael and Drimbe, Daniel},
     TITLE = {Rigidity for von {N}eumann algebras of graph product groups
              {II}. {S}uperrigidity results},
   JOURNAL = {J. Inst. Math. Jussieu},
  FJOURNAL = {Journal of the Institute of Mathematics of Jussieu. JIMJ.
              Journal de l'Institut de Math\'ematiques de Jussieu},
    VOLUME = {24},
      YEAR = {2025},
    NUMBER = {1},
     PAGES = {117--156},
      ISSN = {1474-7480,1475-3030},
   MRCLASS = {46L10 (46L36)},
  MRNUMBER = {4847116},
MRREVIEWER = {Simeng\ Wang},
       DOI = {10.1017/S147474802400015X},
       URL = {https://doi.org/10.1017/S147474802400015X},
}

@article{chifanStructuralTheoryRm2013,
  title = {On the Structural Theory of II$_1$ Factors of Negatively Curved Groups},
  author = {Chifan, Ionuţ and Sinclair, Thomas},
  year = {2013},
  journal = {Annales Scientifiques de l'{\'E}cole Normale Sup{\'e}rieure. Quatri{\`e}me S{\'e}rie},
  volume = {46},
  number = {1},
  pages = {1--33 (2013)},
  issn = {0012-9593},
  doi = {10.24033/asens.2183},
  urldate = {2021-06-15},
  mrnumber = {3087388},
  file = {C\:\\Users\\mborst6\\Zotero\\storage\\7DZL93PF\\Chifan en Sinclair - 2013 - On the structural theory of $rm II_1$ factors of .pdf;C\:\\Users\\mborst6\\Zotero\\storage\\J5QFEZAR\\publdoc.html}
}

@article{chifanTensorProductDecompositions2018,
  title = {Tensor {{Product Decompositions}} of {{II$_1$ Factors Arising}} from {{Extensions}} of {{Amalgamated Free Product Groups}}},
  author = {Chifan, Ionuţ and {de Santiago}, Rolando and Sucpikarnon, Wanchalerm},
  year = {2018},
  month = dec,
  journal = {Communications in Mathematical Physics},
  volume = {364},
  number = {3},
  pages = {1163--1194},
  issn = {1432-0916},
  doi = {10.1007/s00220-018-3175-z},
  urldate = {2024-02-08},
  abstract = {In this paper we introduce a new family of icc groups \$\$\{{\textbackslash}Gamma\}\$\$which satisfy the following product rigidity phenomenon, discovered in Drimbe et~al. (J Reine Angew Math, 2016. arXiv:1611.02209): all tensor product decompositions of the II1 factor \$\$\{L({\textbackslash}Gamma)\}\$\$arise only from the canonical direct product decompositions of the underlying group \$\$\{{\textbackslash}Gamma\}\$\$. Our groups are assembled from certain HNN-extensions and amalgamated free products and include many remarkable groups studied throughout mathematics such as graph product groups, poly-amalgam groups, Burger--Mozes groups, Higman group, various integral two-dimensional Cremona groups, etc. As a consequence we obtain several new examples of groups that give rise to prime factors.},
  langid = {english},
  file = {C:\Users\mborst6\Zotero\storage\4SRBH89I\Chifan e.a. - 2018 - Tensor Product Decompositions of II1 Factors Arisi.pdf}
}

@book{conwayCourseFunctionalAnalysis1997,
  title = {A Course in Functional Analysis},
  author = {Conway, John B.},
  year = {1997},
  series = {Graduate Texts in Mathematics},
  edition = {2nd ed},
  number = {96},
  publisher = {Springer},
  address = {New York},
  isbn = {978-0-387-97245-9},
  langid = {english},
  lccn = {QA320 .C658 1997},
  keywords = {Functional analysis},
  file = {C:\Users\mborst6\Zotero\storage\KCDPA72T\Conway - 1997 - A course in functional analysis.pdf}
}

@article{drimbePrimeII1Factors2019,
  title = {Prime II$_1$ Factors Arising from Irreducible Lattices in Products of Rank One Simple {{Lie}} Groups},
  author = {Drimbe, Daniel and Hoff, Daniel and Ioana, Adrian},
  year = {2019},
  month = dec,
  journal = {Journal f{\"u}r die reine und angewandte Mathematik (Crelles Journal)},
  volume = {2019},
  number = {757},
  pages = {197--246},
  publisher = {De Gruyter},
  issn = {1435-5345},
  doi = {10.1515/crelle-2017-0039},
  urldate = {2023-12-08},
  abstract = {We prove that if {$\Gamma$} is an icc irreducible lattice in a product of connected non-compact rank one simple Lie groups with finite center, then the II1\{{\textbackslash}mathrm\{II\}\_\{1\}\} factor L⁢({$\Gamma$})\{L({\textbackslash}Gamma)\} is prime. In particular, we deduce that the II1\{{\textbackslash}mathrm\{II\}\_\{1\}\} factors associated to the arithmetic groups PSL2⁢({$\mathbb{Z}$}⁢[d])\{{\textbackslash}mathrm\{PSL\}\_\{2\}({\textbackslash}mathbb\{Z\}[{\textbackslash}sqrt\{d\}])\} and PSL2⁢({$\mathbb{Z}$}⁢[S-1])\{{\textbackslash}mathrm\{PSL\}\_\{2\}({\textbackslash}mathbb\{Z\}[S{\textasciicircum}\{-1\}])\} are prime for any square-free integer d{$\geq$}2\{d{\textbackslash}geq 2\} with d{$\not\equiv$}1⁢(mod⁡\,4)\{d{\textbackslash}not{\textbackslash}equiv 1{\textasciitilde}\{\}({\textbackslash}operatorname\{mod\}{\textbackslash},4)\} and any finite non-empty set of primes S . This provides the first examples of prime II1\{{\textbackslash}mathrm\{II\}\_\{1\}\} factors arising from lattices in higher rank semisimple Lie groups. More generally, we describe all tensor product decompositions of L⁢({$\Gamma$})\{L({\textbackslash}Gamma)\} for icc countable groups {$\Gamma$} that are measure equivalent to a product of non-elementary hyperbolic groups. In particular, we show that L⁢({$\Gamma$})\{L({\textbackslash}Gamma)\} is prime, unless {$\Gamma$} is a product of infinite groups, in which case we prove a unique prime factorization result for L⁢({$\Gamma$})\{L({\textbackslash}Gamma)\}.},
  chapter = {Journal f{\"u}r die reine und angewandte Mathematik},
  copyright = {De Gruyter expressly reserves the right to use all content for commercial text and data mining within the meaning of Section 44b of the German Copyright Act.},
  langid = {english},
  file = {C:\Users\mborst6\Zotero\storage\RDCZEVE7\Drimbe e.a. - 2019 - Prime II1 factors arising from irreducible lattice.pdf}
}

@article{dykemaInterpolatedFreeGroup1994,
  title = {Interpolated Free Group Factors},
  author = {Dykema, Kenneth},
  year = {1994},
  month = mar,
  journal = {Pacific Journal of Mathematics},
  volume = {163},
  number = {1},
  pages = {123--135},
  issn = {0030-8730, 0030-8730},
  doi = {10.2140/pjm.1994.163.123},
  urldate = {2022-11-29},
  langid = {english},
  file = {C:\Users\mborst6\Zotero\storage\2WR3Q2JT\Dykema - 1994 - Interpolated free group factors.pdf}
}

@article{garncarekFactorialityHeckeNeumann2016,
  title = {Factoriality of {{Hecke}}--von {{Neumann}} Algebras of Right-Angled {{Coxeter}} Groups},
  author = {Garncarek, {\L}ukasz},
  year = {2016},
  month = feb,
  journal = {Journal of Functional Analysis},
  volume = {270},
  number = {3},
  pages = {1202--1219},
  issn = {0022-1236},
  doi = {10.1016/j.jfa.2015.11.014},
  urldate = {2023-02-20},
  abstract = {The Hecke algebra Cq[W] of a Coxeter group W, associated to parameter q, can be completed to a von Neumann algebra Nq(W). We study such algebras in case where W is right-angled. We determine the range of q for which Nq(W) is a factor, i.e. has trivial center. Moreover, in case of nontrivial center, we prove a result allowing to decompose Nq(W) into a finite direct sum of factors.},
  langid = {english},
  keywords = {Hecke algebra,Hecke-von Neumann algebra,Right-angled Coxeter group,von Neumann factor},
  file = {C\:\\Users\\mborst6\\Zotero\\storage\\QLK5355T\\Garncarek - 2016 - Factoriality of Hecke–von Neumann algebras of righ.pdf;C\:\\Users\\mborst6\\Zotero\\storage\\L9A9A3AB\\S0022123615004796.html}
}

@article{gePrimeFactors1996,
  title = {Prime\,Factors},
  author = {Ge, Liming},
  year = {1996},
  month = nov,
  journal = {Proceedings of the National Academy of Sciences},
  volume = {93},
  number = {23},
  pages = {12762--12763},
  publisher = {Proceedings of the National Academy of Sciences},
  doi = {10.1073/pnas.93.23.12762},
  urldate = {2024-01-11},
  abstract = {We use Voiculescu's free probability theory to prove the existence of prime factors, hence answering a longstanding problem in the theory of von Neumann algebras.},
  file = {C:\Users\mborst6\Zotero\storage\83RPKXD7\Ge - 1996 - Prime factors.pdf}
}

@phdthesis{greenGraphProductsGroups1990,
  title = {Graph Products of Groups},
  author = {Green, Elisabeth Ruth},
  year = {1990},
  urldate = {2022-08-24},
  abstract = {In the 1970's Baudisch introduced the idea of the semifree group, that is, a group in which the only relators are commutators of generators. Baudisch was mainly concerned with subgroup problems, employing length arguments on the elements of these groups. More recently Droms and Servatius have continued the study of semifree, or graph groups, as they call them. They answer some of the questions left open by the work of Baudisch. It is possible to take the graph analogy a level higher and study graph products of groups, which not only generalise graph groups, but also free and direct products. In this thesis we seek to explore the properties of graph products of groups. After some preliminaries in chapter 1, chapter 2 quotes the main results from the work of Baudisch, Droms and Servatius on graph groups, and includes a few elementary results. In chapter 3 we show that many of the well known results about free products and direct products will generalise to graph products. We also extend some of the results on graph groups and give a counter example to a plausible conjecture. We develop a normal form for elements in graph products and, with the use of a generalised free product representation, show solvability of the word and conjugacy problems. In-chapter 4 we examine the concept of graphological indecomposability. Having disposed of an obvious conjecture by way of a counter example we present a number of isomorphism theorems. Chapter 5 is devoted to residual properties of graph products. Much work has been done by Stebe, Allenby and others on residual finiteness, conjugacy separability and potency of free groups and free products. We generalise some of these results. Finally, in chapter 6 we return to graph groups for a look at the Freiheitssatz. Various subclasses have been covered by Pride, Baumslag and Howie, and we seek to extend their results.},
  langid = {english},
  school = {University of Leeds},
  file = {C\:\\Users\\mborst6\\Zotero\\storage\\62H9M4WV\\Green - 1990 - Graph products of groups.pdf;C\:\\Users\\mborst6\\Zotero\\storage\\FQMX5H2H\\236.html}
}

@article{houdayerRigidityFreeProduct2016,
  title = {Rigidity of Free Product von {{Neumann}} Algebras},
  author = {Houdayer, Cyril and Ueda, Yoshimichi},
  year = {2016},
  month = dec,
  journal = {Compositio Mathematica},
  volume = {152},
  number = {12},
  pages = {2461--2492},
  issn = {0010-437X, 1570-5846},
  doi = {10.1112/S0010437X16007673},
  urldate = {2023-03-13},
  abstract = {Let I be any nonempty set and let (Mi, {$\phi$}i)i{$\in$}I be any family of nonamenable factors, endowed with arbitrary faithful normal states, that belong to a large class Canti-free of (possibly type III) von Neumann algebras including all nonprime factors, all nonfull factors and all factors possessing Cartan subalgebras. For the free product (M, {$\phi$}) = {$\ast$}i{$\in$}I (Mi, {$\phi$}i), we show that the free product von Neumann algebra M retains the cardinality {\textbar}I{\textbar} and each nonamenable factor Mi up to stably inner conjugacy, after permutation of the indices. Our main theorem unifies all previous Kurosh-type rigidity results for free product type II1 factors and is new for free product type III factors. It moreover provides new rigidity phenomena for type III factors.},
  langid = {english},
  file = {C:\Users\mborst6\Zotero\storage\GFRIK327\Houdayer en Ueda - 2016 - Rigidity of free product von Neumann algebras.pdf}
}

@article {SakoPrime,
    AUTHOR = {Sako, Hiroki},
     TITLE = {Measure equivalence rigidity and bi-exactness of groups},
   JOURNAL = {J. Funct. Anal.},
  FJOURNAL = {Journal of Functional Analysis},
    VOLUME = {257},
      YEAR = {2009},
    NUMBER = {10},
     PAGES = {3167--3202},
      ISSN = {0022-1236,1096-0783},
   MRCLASS = {37A20 (20F65 28D15 37A15 46L09)},
  MRNUMBER = {2568688},
MRREVIEWER = {Claire\ Anantharaman-Delaroche},
       DOI = {10.1016/j.jfa.2009.04.012},
       URL = {https://doi.org/10.1016/j.jfa.2009.04.012}
}

@article {SizemoreWinchester,
    AUTHOR = {Sizemore, J. Owen and Winchester, Adam},
     TITLE = {Unique prime decomposition results for factors coming from
              wreath product groups},
   JOURNAL = {Pacific J. Math.},
  FJOURNAL = {Pacific Journal of Mathematics},
    VOLUME = {265},
      YEAR = {2013},
    NUMBER = {1},
     PAGES = {221--232},
      ISSN = {0030-8730,1945-5844},
   MRCLASS = {46L36 (47L65)},
  MRNUMBER = {3095120},
MRREVIEWER = {Bachir\ Bekka},
       DOI = {10.2140/pjm.2013.265.221},
       URL = {https://doi.org/10.2140/pjm.2013.265.221},
}

@article {PetersonInventiones,
    AUTHOR = {Peterson, Jesse},
     TITLE = {{$L^2$}-rigidity in von {N}eumann algebras},
   JOURNAL = {Invent. Math.},
  FJOURNAL = {Inventiones Mathematicae},
    VOLUME = {175},
      YEAR = {2009},
    NUMBER = {2},
     PAGES = {417--433},
      ISSN = {0020-9910,1432-1297},
   MRCLASS = {46L10 (46L54 46L55 46L57)},
  MRNUMBER = {2470111},
MRREVIEWER = {Narutaka\ Ozawa},
       DOI = {10.1007/s00222-008-0154-6},
       URL = {https://doi.org/10.1007/s00222-008-0154-6},
}

@article {ChifanSinclairUdrea,
    AUTHOR = {Chifan, Ionuţ and Sinclair, Thomas and Udrea, Bogdan},
     TITLE = {On the structural theory of II$_1$ factors of
              negatively curved groups, {II}: {A}ctions by product groups},
   JOURNAL = {Adv. Math.},
  FJOURNAL = {Advances in Mathematics},
    VOLUME = {245},
      YEAR = {2013},
     PAGES = {208--236},
      ISSN = {0001-8708,1090-2082},
   MRCLASS = {46L10 (20F67 46L36)},
  MRNUMBER = {3084428},
MRREVIEWER = {Tim\ de Laat},
       DOI = {10.1016/j.aim.2013.06.017},
       URL = {https://doi.org/10.1016/j.aim.2013.06.017},
}

@article {ChifanKidaEtAl,
    AUTHOR = {Chifan, Ionuţ and Kida, Yoshikata and Pant, Sujan},
     TITLE = {Primeness results for von {N}eumann algebras associated with
              surface braid groups},
   JOURNAL = {Int. Math. Res. Not. IMRN},
  FJOURNAL = {International Mathematics Research Notices. IMRN},
      YEAR = {2016},
    NUMBER = {16},
     PAGES = {4807--4848},
      ISSN = {1073-7928,1687-0247},
   MRCLASS = {22D25 (20F36 20F67)},
  MRNUMBER = {3556425},
MRREVIEWER = {Antonio\ M.\ Peralta},
       DOI = {10.1093/imrn/rnv271},
       URL = {https://doi.org/10.1093/imrn/rnv271},
}

@article{houdayerStronglySolidGroup2010,
  title = {Strongly Solid Group Factors Which Are Not Interpolated Free Group Factors},
  author = {Houdayer, Cyril},
  year = {2010},
  month = apr,
  journal = {Mathematische Annalen},
  volume = {346},
  number = {4},
  pages = {969--989},
  issn = {1432-1807},
  doi = {10.1007/s00208-009-0417-6},
  urldate = {2023-10-04},
  abstract = {We give examples of non-amenable infinite conjugacy classes groups {$\Gamma$} with the Haagerup property, weakly amenable with constant {$\Lambda$}cb({$\Gamma$}) =~1, for which we show that the associated II1 factors L({$\Gamma$}) are strongly solid, i.e. the normalizer of any diffuse amenable subalgebra \$\$\{P {\textbackslash}subset L({\textbackslash}Gamma)\}\$\$generates an amenable von Neumann algebra. Nevertheless, for these examples of groups {$\Gamma$}, L({$\Gamma$}) is not isomorphic to any interpolated free group factor L(Ft), for 1 {$<~$}t {$\leq~$} {$\infty$}.},
  langid = {english},
  keywords = {37A20,46L54,Primary 46L10,Secondary 22D25},
  file = {C:\Users\mborst6\Zotero\storage\QWHG3ERE\Houdayer - 2010 - Strongly solid group factors which are not interpo.pdf}
}

@article{houdayerUniquePrimeFactorization2017,
  title = {Unique Prime Factorization and Bicentralizer Problem for a Class of Type {{III}} Factors},
  author = {Houdayer, Cyril and Isono, Yusuke},
  year = {2017},
  month = jan,
  journal = {Advances in Mathematics},
  volume = {305},
  pages = {402--455},
  issn = {0001-8708},
  doi = {10.1016/j.aim.2016.09.030},
  urldate = {2024-02-20},
  abstract = {We show that whenever m{$\geq$}1 and M1,{\dots},Mm are nonamenable factors in a large class of von Neumann algebras that we call C(AO) and which contains all free Araki--Woods factors, the tensor product factor M1{$\otimes$}-{$\cdots\otimes$}-Mm retains the integer m and each factor Mi up to stable isomorphism, after permutation of the indices. Our approach unifies the Unique Prime Factorization (UPF) results from [33], [25] and moreover provides new UPF results in the case when M1,{\dots},Mm are free Araki--Woods factors. In order to obtain the aforementioned UPF results, we show that Connes's bicentralizer problem has a positive solution for all type III1 factors in the class C(AO).},
  keywords = {Bicentralizer von Neumann algebras,Ozawa's condition (AO),Popa's intertwining techniques,Tensor product von Neumann algebras,Unique prime factorization},
  file = {C\:\\Users\\mborst6\\Zotero\\storage\\S2JEK8UP\\Houdayer en Isono - 2017 - Unique prime factorization and bicentralizer probl.pdf;C\:\\Users\\mborst6\\Zotero\\storage\\TE2U3T7B\\S0001870816303383.html}
}

@article {Kurosch,
    AUTHOR = {Kurosch, Alexander},
     TITLE = {Die {U}ntergruppen der freien {P}rodukte von beliebigen
              {G}ruppen},
   JOURNAL = {Math. Ann.},
  FJOURNAL = {Mathematische Annalen},
    VOLUME = {109},
      YEAR = {1934},
    NUMBER = {1},
     PAGES = {647--660},
      ISSN = {0025-5831,1432-1807},
   MRCLASS = {99-04},
  MRNUMBER = {1512914},
       DOI = {10.1007/BF01449159},
       URL = {https://doi.org/10.1007/BF01449159},
}

@article{ioanaAmalgamatedFreeProducts2008,
  title = {Amalgamated Free Products of Weakly Rigid Factors and Calculation of Their Symmetry Groups},
  author = {Ioana, Adrian and Peterson, Jesse and Popa, Sorin},
  year = {2008},
  month = jan,
  journal = {Acta Mathematica},
  volume = {200},
  number = {1},
  pages = {85--153},
  publisher = {Institut Mittag-Leffler},
  issn = {0001-5962, 1871-2509},
  doi = {10.1007/s11511-008-0024-5},
  urldate = {2023-10-23},
  abstract = {We consider amalgamated free product II1 factors M = M1*BM2*B{\dots} and use ``deformation/rigidity'' and ``intertwining'' techniques to prove that any relatively rigid von Neumann subalgebra Q {$\subset$} M can be unitarily conjugated into one of the Mi's. We apply this to the case where the Mi's are w-rigid II1 factors, with B equal to either C, to a Cartan subalgebra A in Mi, or to a regular hyperfinite II1 subfactor R in Mi, to obtain the following type of unique decomposition results, {\`a}la Bass--Serre: If M = (N1 * CN2*C{\dots})t, for some t {$>$} 0 and some other similar inclusions of algebras C {$\subset$} Ni then, after a permutation of indices, (B {$\subset$} Mi) is inner conjugate to (C {$\subset$} Ni)t, for all i. Taking B = C and \$ M\_\{i\} = \{{\textbackslash}left( \{L\{{\textbackslash}left( \{Z{\textasciicircum}\{2\} {\textbackslash}rtimes F\_\{2\} \} {\textbackslash}right)\}\} {\textbackslash}right)\}{\textasciicircum}\{\{t\_\{i\} \}\} \$, with \{ti\}i⩾1 = S a given countable subgroup of R+*, we obtain continuously many non-stably isomorphic factors M with fundamental group \$ \{{\textbackslash}user1\{{\textbackslash}mathcal\{F\}\}\}\{{\textbackslash}left( M {\textbackslash}right)\} \$ equal to S. For B = A, we obtain a new class of factors M with unique Cartan subalgebra decomposition, with a large subclass satisfying \$ \{{\textbackslash}user1\{{\textbackslash}mathcal\{F\}\}\}\{{\textbackslash}left( M {\textbackslash}right)\} = \{{\textbackslash}left{\textbackslash}\{ 1 {\textbackslash}right{\textbackslash}\}\} \$ and Out(M) abelian and calculable. Taking B = R, we get examples of factors with \$ \{{\textbackslash}user1\{{\textbackslash}mathcal\{F\}\}\}\{{\textbackslash}left( M {\textbackslash}right)\} = \{{\textbackslash}left{\textbackslash}\{ 1 {\textbackslash}right{\textbackslash}\}\} \$, Out(M) = K, for any given separable compact abelian group K.},
  file = {C:\Users\mborst6\Zotero\storage\CMB58PAP\Ioana e.a. - 2008 - Amalgamated free products of weakly rigid factors .pdf}
}

@article{ioanaCartanSubalgebrasAmalgamated2015,
  title = {Cartan Subalgebras of Amalgamated Free Product II$_1$ Factors},
  author = {Ioana, Adrian},
  year = {2015},
  journal = {Annales scientifiques de l'{\'E}cole normale sup{\'e}rieure},
  volume = {48},
  number = {1},
  pages = {71--130},
  issn = {0012-9593, 1873-2151},
  doi = {10.24033/asens.2239},
  urldate = {2024-05-01},
  file = {C:\Users\mborst6\Zotero\storage\VIADYTL8\Ioana - 2015 - Cartan subalgebras of amalgamated free product $II.pdf}
}

@article{isonoBiexactnessDiscreteQuantum2015,
  title = {On {{Bi-exactness}} of {{Discrete Quantum Groups}}},
  author = {Isono, Yusuke},
  year = {2015},
  month = jan,
  journal = {International Mathematics Research Notices},
  volume = {2015},
  number = {11},
  pages = {3619--3650},
  issn = {1073-7928},
  doi = {10.1093/imrn/rnu043},
  urldate = {2024-06-11},
  abstract = {We define Ozawa's notion of bi-exactness to discrete quantum groups, and then prove some structural properties of associated von Neumann algebras. In particular, we prove that any nonamenable subfactor of free quantum group von Neumann algebras, which is an image of a faithful normal conditional expectation, has no Cartan subalgebras.},
  file = {C:\Users\mborst6\Zotero\storage\386PEGMU\Isono - 2015 - On Bi-exactness of Discrete Quantum Groups.pdf}
}

@article{isonoPrimeFactorizationResults2017,
  title = {Some Prime Factorization Results for Free Quantum Group Factors},
  author = {Isono, Yusuke},
  year = {2017},
  month = jan,
  journal = {Journal f{\"u}r die reine und angewandte Mathematik (Crelles Journal)},
  volume = {2017},
  number = {722},
  pages = {215--250},
  publisher = {De Gruyter},
  issn = {1435-5345},
  doi = {10.1515/crelle-2014-0056},
  urldate = {2024-06-06},
  abstract = {We prove some unique factorization results for tensor products of free quantum group factors. They are type III analogues of factorization results for direct products of bi-exact groups established by Ozawa and Popa. In the proof, we first take continuous cores of the tensor products, which satisfy a condition similar to condition (AO), and discuss some factorization properties for the continuous cores. We then deduce factorization properties for the original type III factors. We also prove some unique factorization results for crossed product von Neumann algebras by direct products of bi-exact groups.},
  chapter = {Journal f{\"u}r die reine und angewandte Mathematik},
  copyright = {De Gruyter expressly reserves the right to use all content for commercial text and data mining within the meaning of Section 44b of the German Copyright Act.},
  langid = {english},
  file = {C:\Users\mborst6\Zotero\storage\R6EBAKG4\Isono - 2017 - Some prime factorization results for free quantum .pdf}
}

@book{lanceHilbertModulesToolkit1995,
  title = {Hilbert {{C}}*-{{Modules}}: {{A Toolkit}} for {{Operator Algebraists}}},
  shorttitle = {Hilbert {{C}}*-{{Modules}}},
  author = {Lance, E. Christopher},
  year = {1995},
  series = {London {{Mathematical Society Lecture Note Series}}},
  publisher = {Cambridge University Press},
  address = {Cambridge},
  doi = {10.1017/CBO9780511526206},
  urldate = {2024-06-11},
  abstract = {Hilbert C*-modules are objects like Hilbert spaces, except that the inner product, instead of being complex valued, takes its values in a C*-algebra. The theory of these modules, together with their bounded and unbounded operators, is not only rich and attractive in its own right but forms an infrastructure for some of the most important research topics in operator algebras. This book is based on a series of lectures given by Professor Lance at a summer school at the University of Trondheim. It provides, for the first time, a clear and unified exposition of the main techniques and results in this area, including a substantial amount of new and unpublished material. It will be welcomed as an excellent resource for all graduate students and researchers working in operator algebras.},
  isbn = {978-0-521-47910-3}
}

@article{mlotkowskiLfreeProbability2004,
  title = {{$\Lambda$}-Free Probability},
  author = {Mlotkowski, Wojciech},
  year = {2004},
  month = mar,
  journal = {Infinite Dimensional Analysis, Quantum Probability and Related Topics},
  volume = {07},
  number = {01},
  pages = {27--41},
  publisher = {World Scientific Publishing Co.},
  issn = {0219-0257},
  doi = {10.1142/S0219025704001517},
  urldate = {2024-06-11},
  abstract = {We introduce and study a notion of {$\Lambda$}-freeness, which generalises both freeness and independence in the context of noncommutative probability. In particular, we extend Voiculescu's construction of the free product of representations of unital *-algebras. A central limit theorem is also proved.},
  keywords = {free product,noncommutative probability space,Star-algebra,star-representation,state}
}

@article{ozawaClassIIFactors2010c,
  title = {On a Class of {{II}}{$_1$} Factors with at Most One {{Cartan}} Subalgebra},
  author = {Ozawa, Narutaka and Popa, Sorin},
  year = {2010},
  journal = {Annals of Mathematics},
  volume = {172},
  number = {1},
  eprint = {20752280},
  eprinttype = {jstor},
  pages = {713--749},
  publisher = {Annals of Mathematics},
  issn = {0003-486X},
  urldate = {2023-12-15},
  abstract = {We prove that the normalizer of any diffuse amenable subalgebra of a free group factor L(픽 r ) generates an amenable von Neumann subalgebra. Moreover, any II{$_1$} factor of the form \$Q{\textbackslash}overline\{{\textbackslash}otimes \}L(\{{\textbackslash}Bbb F\}\_\{r\})\$ , with Q an arbitrary subfactor of a tensor product of free group factors, has no Cartan subalgebras. We also prove that if a free ergodic measure-preserving action of a free group 픽 r , 2 {$\leq$} r {$\leq$} {$\infty$}, on a probability space (X, {$\mu$}) is profinite then the group measure space factor L {$\infty$} (X) ⋊ 픽 r has unique Cartan subalgebra, up to unitary conjugacy.},
  file = {C:\Users\mborst6\Zotero\storage\XAC3Q4RZ\Ozawa en Popa - 2010 - On a class of II₁ factors with at most one Cartan .pdf}
}

@article{OzawaKurosh,
    AUTHOR = {Ozawa, Narutaka},
     TITLE = {A Kurosh-type theorem for type {$\rm II_1$} factors},
   JOURNAL = {Int. Math. Res. Not.},
  FJOURNAL = {International Mathematics Research Notices},
      YEAR = {2006},
     PAGES = {Art. ID 97560, 21},
      ISSN = {1073-7928,1687-0247},
   MRCLASS = {46L10 (20F67 46L09 46L35 46L55)},
  MRNUMBER = {2211141},
MRREVIEWER = {Claire Anantharaman-Delaroche},
       DOI = {10.1155/IMRN/2006/97560},
       URL = {https://doi-org.tudelft.idm.oclc.org/10.1155/IMRN/2006/97560},
}

@article{ozawaPrimeFactorizationResults2004,
  title = {Some Prime Factorization Results for Type {{II}}$_1$ Factors},
  author = {Ozawa, Narutaka and Popa, Sorin},
  year = {2004},
  month = may,
  journal = {Inventiones Mathematicae},
  volume = {156},
  number = {2},
  eprint = {math/0302240},
  pages = {223--234},
  issn = {0020-9910, 1432-1297},
  doi = {10.1007/s00222-003-0338-z},
  urldate = {2021-04-14},
  abstract = {We prove several unique prime factorization results for tensor products of type II\_1 factors coming from groups that can be realized either as subgroups of hyperbolic groups or as discrete subgroups of connected Lie groups of real rank 1. In particular, we show that if \$R {\textbackslash}otimes LF\_\{r\_1\} {\textbackslash}otimes ... {\textbackslash}otimes LF\_\{r\_m\}\$ is isomorphic to a subfactor in \$R {\textbackslash}otimes LF\_\{s\_1\} {\textbackslash}otimes {$>$}... {\textbackslash}otimes LF\_\{s\_n\}\$, for some \$2{\textbackslash}leq r\_i, s\_j {\textbackslash}leq {\textbackslash}infty\$, then \$m{\textbackslash}le n\$.},
  archiveprefix = {arxiv},
  keywords = {46L10,Mathematics - Operator Algebras},
  file = {C\:\\Users\\mborst6\\Zotero\\storage\\J2XXUQZQ\\Ozawa en Popa - 2004 - Some prime factorization results for type II_1 fac.pdf;C\:\\Users\\mborst6\\Zotero\\storage\\SA5H4CTM\\0302240.html}
}

@article{ozawaSolidNeumannAlgebras2004,
  title = {Solid von {{Neumann}} Algebras},
  author = {Ozawa, Narutaka},
  year = {2004},
  journal = {Acta Mathematica},
  volume = {192},
  number = {1},
  pages = {111--117},
  issn = {0001-5962},
  doi = {10.1007/BF02441087},
  urldate = {2021-05-04},
  langid = {english},
  file = {C:\Users\mborst6\Zotero\storage\B63MDWY4\Ozawa - 2004 - Solid von Neumann algebras.pdf}
}

@book{paulsenCompletelyBoundedMaps2002,
  title = {Completely {{Bounded Maps}} and {{Operator Algebras}}},
  author = {Paulsen, Vern},
  year = {2002},
  publisher = {Cambridge University Press},
  abstract = {This book tours the principal results and ideas in the theories of completely positive maps, completely bounded maps, dilation theory, operator spaces and operator algebras, along with some of their main applications. It requires only a basic background in functional analysis. The presentation is self-contained and paced appropriately for graduate students new to the subject. Experts will appreciate how the author illustrates the power of methods he has developed with new and simpler proofs of some of the major results in the area, many of which have not appeared earlier in the literature. An indispensible introduction to the theory of operator spaces.},
  googlebooks = {VtSFHDABxMIC},
  isbn = {978-0-521-81669-4},
  langid = {english},
  keywords = {Mathematics / Algebra / Abstract,Mathematics / Algebra / Linear,Mathematics / Calculus},
  file = {C:\Users\mborst6\Zotero\storage\GQNY4W9S\Paulsen - COMPLETELY BOUNDED MAPS AND OPERATOR ALGEBRAS.pdf}
}

@article{popaOrthogonalPairsSubalgebras1983,
  title = {Orthogonal Pairs of *-Subalgebras on Finite von {{Neumann}} Algebras},
  author = {Popa, Sorin},
  year = {1983},
  journal = {Journal of Operator Theory},
  volume = {9},
  number = {2},
  eprint = {24714153},
  eprinttype = {jstor},
  pages = {253--268},
  publisher = {Theta Foundation},
  issn = {0379-4024},
  urldate = {2021-05-06}
}

@article{popaStrongRigidityII12006,
  title = {Strong Rigidity of II$_1$ Factors Arising from Malleable Actions of W-Rigid Groups, {{I}}},
  author = {Popa, Sorin},
  year = {2006},
  month = aug,
  journal = {Inventiones mathematicae},
  volume = {165},
  number = {2},
  pages = {369--408},
  issn = {1432-1297},
  doi = {10.1007/s00222-006-0501-4},
  urldate = {2024-03-21},
  abstract = {We consider crossed product II1 factors \$M = N{\textbackslash}rtimes\_\{{\textbackslash}sigma\}G\$, with G discrete ICC groups that contain infinite normal subgroups with the relative property (T) and {$\sigma$} trace preserving actions of G on finite von Neumann algebras N that are ``malleable'' and mixing. Examples are the actions of G by Bernoulli shifts (classical and non-classical) and by Bogoliubov shifts. We prove a rigidity result for isomorphisms of such factors, showing the uniqueness, up to unitary conjugacy, of the position of the group von Neumann algebra L(G) inside M. We use this result to calculate the fundamental group of M, \${\textbackslash}mathcal\{F\}(M)\$, in terms of the weights of the shift {$\sigma$}, for \$G={\textbackslash}mathbb\{Z\}{\textasciicircum}2{\textbackslash}rtimes SL(2,{\textbackslash}mathbb\{Z\})\$and other special arithmetic groups. We deduce that for any subgroup S{$\subset\mathbb{R}$}+* there exist II1 factors M (separable if S is countable or S={$\mathbb{R}$}+*) with \${\textbackslash}mathcal\{F\}(M)=S\$. This brings new light to a long standing open problem of Murray and von Neumann.},
  langid = {english},
  keywords = {Bernoulli Shift,Fundamental Group,Gauge Extension,Hilbert Algebra,Partial Isometry},
  file = {C:\Users\mborst6\Zotero\storage\KJYMLA6N\Popa - 2006 - Strong rigidity of II1 factors arising from mallea.pdf}
}

@online{ding2023biexactvonneumannalgebras,
      title={Biexact von Neumann algebras},
      author={Changying Ding and Jesse Peterson},
      year={2023},
      eprint={2309.10161},
      archivePrefix={arXiv},
      primaryClass={math.OA},
      url={https://arxiv.org/abs/2309.10161},
        journal = {arXiv:2309.10161},
     doi = {https://arxiv.org/abs/2309.10161},

}

@article {IsonoTAMS,
    AUTHOR = {Isono, Yusuke},
     TITLE = {Examples of factors which have no {C}artan subalgebras},
   JOURNAL = {Trans. Amer. Math. Soc.},
  FJOURNAL = {Transactions of the American Mathematical Society},
    VOLUME = {367},
      YEAR = {2015},
    NUMBER = {11},
     PAGES = {7917--7937},
      ISSN = {0002-9947,1088-6850},
   MRCLASS = {46L10},
  MRNUMBER = {3391904},
MRREVIEWER = {Hiroyuki\ Osaka},
       DOI = {10.1090/tran/6321},
       URL = {https://doi.org/10.1090/tran/6321},
}

@article{popaUniqueCartanDecomposition2012,
  title = {Unique {{Cartan}} Decomposition for {{II}}$_1$ Factors Arising from Arbitrary Actions of Hyperbolic Groups},
  author = {Popa, S. and Vaes, Stefaan},
  year = {2012},
  month = jan,
  journal = {Journal f{\"u}r die reine und angewandte Mathematik (Crelles Journal)},
  volume = {2014},
  doi = {10.1515/crelle-2012-0104},
  abstract = {We prove that for any free ergodic probability measure preserving action \${\textbackslash}Gamma {\textbackslash}actson (X,{\textbackslash}mu)\$ of a non-elementary hyperbolic group, or a lattice in a rank one simple Lie group, the associated group measure space II\_1 factor \$L{\textasciicircum}{\textbackslash}infty(X) {\textbackslash}rtimes {\textbackslash}Gamma\$ has L{\textasciicircum}{\textbackslash}infty(X) as its unique Cartan subalgebra, up to unitary conjugacy.},
  file = {C:\Users\mborst6\Zotero\storage\QBWQIL9G\Popa en Vaes - 2012 - Unique Cartan decomposition for II_1 factors arisi.pdf}
}

@article{popaUniqueCartanDecomposition2014,
  title = {Unique {{Cartan}} Decomposition for II$_1$ Factors Arising from Arbitrary Actions of Free Groups},
  author = {Popa, Sorin and Vaes, Stefaan},
  year = {2014},
  month = jan,
  journal = {Acta Mathematica},
  volume = {212},
  number = {1},
  pages = {141--198},
  publisher = {Institut Mittag-Leffler},
  issn = {0001-5962, 1871-2509},
  doi = {10.1007/s11511-014-0110-9},
  urldate = {2021-04-20},
  abstract = {We prove that for any free ergodic probability measure-preserving action Fn↷(X,{$\mu$}) of a free group on n generators Fn,2{$\leq$}n{$\leq\infty$}, the associated group measure space II1 factor L{$\infty$}(X)⋊Fn has L{$\infty$}(X) as its unique Cartan subalgebra, up to unitary conjugacy. We deduce that group measure space II1 factors arising from actions of free groups with different number of generators are never isomorphic. We actually prove unique Cartan decomposition results for II1 factors arising from arbitrary actions of a much larger family of groups, including all free products of amenable groups and their direct products.},
  file = {C\:\\Users\\mborst6\\Zotero\\storage\\AY2KUJZI\\Popa en Vaes - 2014 - Unique Cartan decomposition for II1 factors arisin.pdf;C\:\\Users\\mborst6\\Zotero\\storage\\ABAP9SV9\\s11511-014-0110-9.html}
}

@article{raumFactorialMultiparameterHecke2023,
  title = {Factorial Multiparameter {{Hecke}} von {{Neumann}} Algebras and Representations of Groups Acting on Right-Angled Buildings},
  author = {Raum, Sven and Skalski, Adam},
  year = {2023},
  month = apr,
  journal = {Journal de Math{\'e}matiques Pures et Appliqu{\'e}es},
  volume = {172},
  pages = {265--298},
  issn = {0021-7824},
  doi = {10.1016/j.matpur.2023.02.005},
  urldate = {2023-10-25},
  abstract = {We obtain a complete characterisation of factorial multiparameter Hecke von Neumann algebras associated with right-angled Coxeter groups. Considering their {$\ell$}p-convolution algebra analogues, we exhibit an interesting parameter dependence, contrasting phenomena observed earlier for group Banach algebras. Translated to Iwahori-Hecke von Neumann algebras, these results allow us to draw conclusions on spherical representation theory of groups acting on right-angled buildings, which are in strong contrast to behaviour of spherical representations in the affine case. We also investigate certain graph product representations of right-angled Coxeter groups and note that our von Neumann algebraic structure results show that these are finite factor representations. Further classifying a suitable family of them up to unitary equivalence allows us to reveal high-dimensional Euclidean subspaces of the space of extremal characters of right-angled Coxeter groups. R{\'e}sum{\'e} Nous donnons une caract{\'e}risation compl{\`e}te de factorialit{\'e} des alg{\`e}bres de Hecke-von Neumann multiparam{\`e}tres associ{\'e}es {\`a} des groupes de Coxeter {\`a} angles droits. En consid{\'e}rant des alg{\`e}bres de convolution {$\ell$}p, qui leurs sont analogues, nous obtenons une relation de d{\'e}pendance entre les param{\`e}tres, qui est absente dans le cas d'alg{\`e}bres de Banach de groupes. Transpos{\'e} aux alg{\`e}bres de Iwahori-Hecke-von Neumann, ces r{\'e}sultats permettent d'obtenir des conclusions pour la th{\'e}orie des repr{\'e}sentations sph{\'e}riques de groupes agissants sur des immeubles {\`a} angles droits, qui sont tr{\`e}s differentes du cas affine. Nous {\'e}tudions aussi certaines repr{\'e}sentations en graphe-produit de groupes de Coxeter {\`a} angles droits et remarquons que nos r{\'e}sultats prouvent qu'elles donnent des facteurs finis. Finalement, en classifiant une certaine famille de ces repr{\'e}sentations nous d{\'e}montrons l'existence des sous-espaces Euclidiens de grandes dimensions dans l'espace des caract{\`e}res extremaux de groupes de Coxeter {\`a} angles droits.},
  keywords = {Character space,factor,Graph product,Hecke von Neumann algebra,Right-angled building,Right-angled Coxeter group},
  file = {C\:\\Users\\mborst6\\Zotero\\storage\\3TYVYHNL\\Raum en Skalski - 2023 - Factorial multiparameter Hecke von Neumann algebra.pdf;C\:\\Users\\mborst6\\Zotero\\storage\\GITSXWVR\\S0021782423000272.html}
}

@book{takesakiTheoryOperatorAlgebras2002,
  title = {Theory of {{Operator Algebras I}}},
  author = {Takesaki, M.},
  year = {2002},
  series = {Encyclopaedia of {{Mathematical Sciences}}, {{Theory}} of {{Operator Algebras}}},
  publisher = {Springer-Verlag},
  address = {Berlin Heidelberg},
  urldate = {2021-01-23},
  abstract = {Since its inception by von Neumann 70 years ago, the theory of operator algebras has become a rapidly developing area of importance for the understanding of many areas of mathematics and theoretical physics. Accessible to the non-specialist, this first part of a three volume treatise provides a clear, carefully written survey that emphasizes the theory's analytical and topological aspects.The book's unifying theme is the Banach space duality for operator algebras. This allows the reader to recognize the affinity between operator algebras and measure theory on locally compact spaces. Very technical sections are clearly labeled and there are extensive comments by the author, a good historical background and excercises.This book is part of the subseries of the EMS on Operator Algebras and Non-Commutative Geometry.},
  isbn = {978-3-540-42248-8},
  langid = {english},
  file = {C\:\\Users\\mborst6\\Zotero\\storage\\6VUVV9L7\\Takesaki - Theory of Operator Algebras I (2).djvu;C\:\\Users\\mborst6\\Zotero\\storage\\U48MB7BS\\9783540422488.html}
}

@article{uedaFactorialityTypeClassification2011,
  title = {Factoriality, Type Classification and Fullness for Free Product von {{Neumann}} Algebras},
  author = {Ueda, Yoshimichi},
  year = {2011},
  month = dec,
  journal = {Advances in Mathematics},
  volume = {228},
  number = {5},
  eprint = {1011.5017},
  primaryclass = {math},
  pages = {2647--2671},
  issn = {00018708},
  doi = {10.1016/j.aim.2011.07.017},
  urldate = {2022-12-20},
  abstract = {We give a complete answer to the questions of factoriality, type classification and fullness for arbitrary free product von Neumann algebras.},
  archiveprefix = {arxiv},
  keywords = {Mathematics - Operator Algebras},
  file = {C\:\\Users\\mborst6\\Zotero\\storage\\IQDPF2PI\\Ueda - 2011 - Factoriality, type classification and fullness for.pdf;C\:\\Users\\mborst6\\Zotero\\storage\\BEZYA7JL\\1011.html}
}

@article{vaesExplicitComputationsAll2008,
  title = {Explicit Computations of All Finite Index Bimodules for a Family of II$_1$  Factors},
  author = {Vaes, Stefaan},
  year = {2008},
  journal = {Annales scientifiques de l'{\'E}cole normale sup{\'e}rieure},
  volume = {41},
  number = {5},
  pages = {743--788},
  issn = {0012-9593, 1873-2151},
  doi = {10.24033/asens.2081},
  urldate = {2023-09-26},
  abstract = {We study II1 factors M and N associated with good generalized Bernoulli actions of groups having an infinite almost normal subgroup with the relative property (T). We prove the following rigidity result : every finite index M -N -bimodule (in particular, every isomorphism between M and N ) is described by a commensurability of the groups involved and a commensurability of their actions. The fusion algebra of finite index M -M -bimodules is identified with an extended Hecke fusion algebra, providing the first explicit computations of the fusion algebra of a II1 factor. We obtain in particular explicit examples of II1 factors with trivial fusion algebra, i.e. only having trivial finite index subfactors.},
  langid = {english},
  file = {C:\Users\mborst6\Zotero\storage\EAJZBVWQ\Vaes - 2008 - Explicit computations of all finite index bimodule.pdf}
}

@article{vaesNormalizersAmalgamatedFree2014,
  title = {Normalizers inside {{Amalgamated Free Product}} von {{Neumann Algebras}}},
  author = {Vaes, Stefaan},
  year = {2014},
  journal = {Publications of the Research Institute for Mathematical Sciences},
  volume = {50},
  number = {4},
  pages = {695--721},
  issn = {0034-5318},
  doi = {10.4171/PRIMS/147},
  urldate = {2023-09-12},
  abstract = {Recently, Adrian Ioana proved that all crossed products L{$\infty$}(X) ({$\Gamma$}1 {$\ast\Gamma$}2) by free ergodic probability measure preserving actions of a nontrivial free product group {$\Gamma$}1 {$\ast$} {$\Gamma$}2 have a unique Cartan subalgebra up to unitary conjugacy. Ioana deduced this result from a more general dichotomy theorem on the normalizer NM (A) of an amenable subalgebra A of an amalgamated free product von Neumann algebra M = M1 {$\ast$}B M2. We improve this dichotomy theorem by removing the spectral gap assumptions and obtain in particular a simpler proof for the uniqueness of the Cartan subalgebra in L{$\infty$}(X) ({$\Gamma$}1 {$\ast$} {$\Gamma$}2).},
  langid = {english},
  file = {C:\Users\mborst6\Zotero\storage\CIMYN66S\Vaes - 2014 - Normalizers inside Amalgamated Free Product von Ne.pdf}
}

@article{vaesRigidityResultsBernoulli2006a,
  title={Rigidity results for Bernoulli actions and their von Neumann algebras (after Sorin Popa)},
  author={Stefaan Vaes},
  journal={S{\'e}minaire Bourbaki},
  pages={58e},
  year={2006}
}

@article{hayesRandomMatrixApproach2022a,
  title = {A Random Matrix Approach to the {{Peterson-Thom}} Conjecture},
  author = {Hayes, Ben},
  date = {2022},
  journaltitle = {Indiana University Mathematics Journal},
  shortjournal = {Indiana Univ. Math. J.},
  volume = {71},
  number = {3},
  pages = {1243--1297},
  issn = {0022-2518},
  doi = {10.1512/iumj.2022.71.9386},
  abstract = {The Peterson-Thom conjecture asserts that any diffuse, amenable subalgebra of a free group factor is contained in a unique maximal amenable subalgebra. This conjecture is motivated by related results in Popa’s deformation/rigidity theory and Peterson-Thom’s results on L2-Betti numbers. We present an approach to this conjecture in terms of so-called strong convergence of random matrices by formulating a conjecture which is a natural generalization of the HaagerupThorbjørnsen theorem whose validity would imply the Peterson-Thom conjecture. This random matrix conjecture is related to recent work of Collins-Guionnet-Parraud.},
  langid = {english}
}

@online{bordenaveNormMatrixvaluedPolynomials2024,
  title = {Norm of Matrix-Valued Polynomials in Random Unitaries and Permutations},
  author = {Bordenave, Charles and Collins, Benoit},
  date = {2024-01-10},
  eprint = {2304.05714},
  eprinttype = {arXiv},
  eprintclass = {math},
  doi = {10.48550/arXiv.2304.05714},
  url = {http://arxiv.org/abs/2304.05714},

}

@online{belinschiStrongConvergenceTensor2024,
  title = {Strong Convergence of Tensor Products of Independent {{G}}.{{U}}.{{E}}. Matrices},
  author = {Belinschi, Serban and Capitaine, Mireille},
  date = {2024-01-30},
  eprint = {2205.07695},
  eprinttype = {arXiv},
  eprintclass = {math},
  doi = {10.48550/arXiv.2205.07695},
  url = {http://arxiv.org/abs/2205.07695},

}

@online{hayesConsequencesRandomMatrix2024,
  title = {Consequences of the Random Matrix Solution to the {{Peterson-Thom}} Conjecture},
  author = {Hayes, Ben and Jekel, David and Elayavalli, Srivatsav Kunnawalkam},
  date = {2024-07-26},
  eprint = {2308.14109},
  eprinttype = {arXiv},
  eprintclass = {math},
  url = {http://arxiv.org/abs/2308.14109}

}

@article{dingStructureRelativelyBiexact2024,
  title = {Structure of {{Relatively Biexact Group}} von {{Neumann Algebras}}},
  author = {Ding, Changying and Elayavalli, Srivatsav Kunnawalkam},
  date = {2024-04-16},
  journaltitle = {Communications in Mathematical Physics},
  shortjournal = {Commun. Math. Phys.},
  volume = {405},
  number = {4},
  pages = {104},
  issn = {1432-0916},
  doi = {10.1007/s00220-024-04987-8},
  url = {https://doi.org/10.1007/s00220-024-04987-8},
  urldate = {2024-08-28},
  abstract = {Using computations in the bidual of \$\$\{\textbackslash mathbb \{B\}\}(L\textasciicircum 2M)\$\$we develop a new technique at the von Neumann algebra level to upgrade relative proper proximality to full proper proximality. This is used to structurally classify subalgebras of \$\$L\textbackslash Gamma \$\$where \$\$\textbackslash Gamma \$\$is an infinite group that is biexact relative to a finite family of subgroups \$\$\textbackslash\{\textbackslash Lambda \_i\textbackslash\}\_\{i\textbackslash in I\}\$\$such that each \$\$\textbackslash Lambda \_i\$\$is almost malnormal in \$\$\textbackslash Gamma \$\$. This generalizes the result of Ding et al. (Properly proximal von Neumann algebras, 2022. arXiv:2204.00517) which classifies subalgebras of von Neumann algebras of biexact groups. By developing a combination with techniques from Popa’s deformation-rigidity theory we obtain a new structural absorption theorem for free products and a generalized Kurosh type theorem in the setting of properly proximal von Neumann algebras.},
  langid = {english},
  file = {C:\Users\mborst6\Zotero\storage\LC9ZBHLP\Ding en Elayavalli - 2024 - Structure of Relatively Biexact Group von Neumann .pdf}
}

@article{dingProperProximalityVarious2024,
  title = {Proper Proximality among Various Families of Groups},
  author = {Ding, Changying and Elayavalli, Srivatsav Kunnawalkam},
  date = {2024-03-07},
  journaltitle = {Groups, Geometry, and Dynamics},
  volume = {18},
  number = {3},
  pages = {921--938},
  issn = {1661-7207},
  doi = {10.4171/ggd/778},
  url = {https://ems.press/journals/ggd/articles/14297613},
  urldate = {2024-08-28},
  abstract = {Changying Ding, Srivatsav Kunnawalkam Elayavalli},
  langid = {english},
  file = {C:\Users\mborst6\Zotero\storage\KKBK5KMT\Ding en Elayavalli - 2024 - Proper proximality among various families of group.pdf}
}

@article{chifanCartanSubalgebrasNeumann2023,
  title = {Cartan Subalgebras in von {{Neumann}} Algebras Associated with Graph Product Groups},
  author = {Chifan, Ionuţ and Elayavalli, Srivatsav Kunnawalkam},
  date = {2023-12-18},
  journaltitle = {Groups, Geometry, and Dynamics},
  volume = {18},
  number = {2},
  pages = {749--759},
  issn = {1661-7207},
  doi = {10.4171/ggd/756},
  url = {https://ems.press/journals/ggd/articles/13693808},
  urldate = {2024-07-02},
  abstract = {Ionuţ Chifan, Srivatsav Kunnawalkam Elayavalli},
  langid = {english},
  file = {C:\Users\mborst6\Zotero\storage\YZT7VAJJ\Chifan en Elayavalli - 2023 - Cartan subalgebras in von Neumann algebras associa.pdf}
}

@article{vaesOnecohomologyUniquenessGroup2013,
  title = {One-Cohomology and the Uniqueness of the Group Measure Space Decomposition of a {{II}}$_1$ Factor},
  author = {Vaes, Stefaan},
  date = {2013-02-01},
  journaltitle = {Mathematische Annalen},
  shortjournal = {Math. Ann.},
  volume = {355},
  number = {2},
  pages = {661--696},
  issn = {1432-1807},
  doi = {10.1007/s00208-012-0797-x},
  url = {https://doi.org/10.1007/s00208-012-0797-x},
  urldate = {2024-06-19},
  langid = {english},
  keywords = {37A15,46L36,46L57}
}

@article{drimbeMeasureEquivalenceRigidity2023,
  title = {Measure equivalence rigidity via s-malleable deformations},
  author = {Drimbe, Daniel},
  date = {2023-10},
  journaltitle = {Compositio Mathematica},
  volume = {159},
  number = {10},
  pages = {2023--2050},
  issn = {0010-437X, 1570-5846},
  doi = {10.1112/S0010437X2300739X},
  url = {https://www.cambridge.org/core/journals/compositio-mathematica/article/measure-equivalence-rigidity-via-smalleable-deformations/B8EE98C72EB631D2277DFE58E2BB3F50},
  urldate = {2024-08-28},
  abstract = {We single out a large class of groups M\{\textbackslash rm \{\textbackslash boldsymbol \{\textbackslash mathscr \{M\}\}\}\} for which the following unique prime factorization result holds: if Γ1,…,Γn∈M\textbackslash Gamma \_1,\textbackslash ldots,\textbackslash Gamma \_n\textbackslash in \{\textbackslash rm \{\textbackslash boldsymbol \{\textbackslash mathscr \{M\}\}\}\} and Γ1×⋯×Γn\textbackslash Gamma \_1\textbackslash times \textbackslash cdots \textbackslash times \textbackslash Gamma \_n is measure equivalent to a product Λ1×⋯×Λm\textbackslash Lambda \_1\textbackslash times \textbackslash cdots \textbackslash times \textbackslash Lambda \_m of infinite icc groups, then n≥mn \textbackslash ge m, and if n=mn = m, then, after permutation of the indices, Γi\textbackslash Gamma \_i is measure equivalent to Λi\textbackslash Lambda \_i, for all 1≤i≤n1\textbackslash leq i\textbackslash leq n. This provides an analogue of Monod and Shalom's theorem [Orbit equivalence rigidity and bounded cohomology, Ann. of Math. 164 (2006), 825–878] for groups that belong to M\{\textbackslash rm \{\textbackslash boldsymbol \{\textbackslash mathscr \{M\}\}\}\}. Class M\{\textbackslash rm \{\textbackslash boldsymbol \{\textbackslash mathscr \{M\}\}\}\} is constructed using groups whose von Neumann algebras admit an s-malleable deformation in the sense of Sorin Popa and it contains all icc non-amenable groups Γ\textbackslash Gamma for which either (i) Γ\textbackslash Gamma is an arbitrary wreath product group with amenable base or (ii) Γ\textbackslash Gamma admits an unbounded 1-cocycle into its left regular representation. Consequently, we derive several orbit equivalence rigidity results for actions of product groups that belong to M\{\textbackslash rm \{\textbackslash boldsymbol \{\textbackslash mathscr \{M\}\}\}\}. Finally, for groups Γ\textbackslash Gamma satisfying condition (ii), we show that all embeddings of group von Neumann algebras of non-amenable inner amenable groups into L(Γ)L(\textbackslash Gamma ) are ‘rigid’. In particular, we provide an alternative solution to a question of Popa that was recently answered by Ding, Kunnawalkam Elayavalli, and Peterson [Properly Proximal von Neumann Algebras, Preprint (2022), arXiv:2204.00517].},
  langid = {english},
  keywords = {37A20,46L10,46L36,measure equivalence,orbit equivalence,Popa's deformation/rigidity theory,prime II1 factors},
  file = {C:\Users\mborst6\Zotero\storage\KZQ989JQ\Drimbe - 2023 - Measure equivalence rigidity via s-malleable defor.pdf}
}

\end{document}